\newtheorem{thm}{Theorem}[section]
\newtheorem{lem}[thm]{Lemma}
\newtheorem{ex}[thm]{Example}
\numberwithin{equation}{section}
\theoremstyle{definition}
\newtheorem{definition}[thm]{Definition}
\newcommand{\thistheoremname}{}
\newtheorem*{genericthm}{\thistheoremname}
\newenvironment{namedthm}[1]
    {\renewcommand{\thistheoremname}{#1}%
        \begin{genericthm}}
    {\end{genericthm}}
\theoremstyle{remark}
\newtheorem*{remark}{Remark}
\newcommand{\db}{\textrm{\normalfont{db}}}
\newcommand{\var}{\textrm{\normalfont{Var}}}
\newcommand{\oct}{\textsc{Oct}}
\newcommand{\F}{\mathbb{F}}
\newcommand{\Z}{\mathbb{Z}}
\newcommand{\R}{\mathbb{R}}
\newcommand{\str}{\textrm{\normalfont{str}}}
\newcommand{\qsr}{\textrm{\normalfont{qsr}}}
\newcommand{\err}{\textrm{\normalfont{err}}}
\title{Quasirandomness in additive groups and \\ hypergraphs}
\author{Davi Castro-Silva}
\date{\today}
\newcommand{\Addresses}{{
    \bigskip
    \small
    
    D. de Castro Silva, \textsc{Department Mathematik/Informatik, Abteilung Mathematik, Universit\"at zu K\"oln, Weyertal 86--90, 50931 K\"oln, Germany.}\par\nopagebreak
    \textit{E-mail address:} \texttt{davisilva15@gmail.com}
}}
\begin{document}

\maketitle

\begin{abstract}
Quasirandomness is a general mathematical concept meant to encapsulate several characteristics usually satisfied by random combinatorial objects, and which we regard as describing when a given object `looks random'.
In this survey we explore this general concept as it applies to graphs, hypergraphs and additive groups, making clear their many connections to each other and showing how they can be used in order to better study these objects.
\end{abstract}

\section{Introduction} \label{intro}

While mathematicians have undoubtedly thought of objects that `look random' ever since the notion of randomness was first developed, the systematic study of the overarching concept of pseudorandomness is much more recent.
In the field of combinatorics, its foundation lies in the seminal works of Thomason \cite{PseudographsThomason, PseudoGhost} and Chung, Graham and Wilson \cite{QuasirandomGraphs} during the second half of the 1980s;
in these papers they introduced and studied deterministic properties of graphs which capture many characteristics associated with random graphs.

Thomason introduced the notion of `jumbledness', which is a measure of how well-distributed the edges of a given graph are, and showed that any graph which is sufficiently jumbled will behave in many ways like a random graph of the same edge density.
Chung, Graham and Wilson then showed that several properties which are characteristic for random graphs are in fact essentially \emph{equivalent} to each other;
such properties are called quasirandom, and inspired a great deal of research in this subject.
By now, notions of quasirandomness have been introduced and studied for several combinatorial objects, such as hypergraphs \cite{QuasirandomHypergraphs, QuasiClasses, HypergraphQuasirandomnessRegularity}, subsets of abelian groups \cite{QuasirandomZn, LinearQuasirandomness}, tournaments \cite{QuasiTournaments} and general oriented graphs \cite{QuasiOriented}, permutations \cite{QuasiPermutations, 4pointPermutations}, groups \cite{QuasiGroups} and words \cite{QuasiWords}.

The purpose of this paper is to explore the general concept of quasirandomness as it applies to graphs, hypergraphs and additive groups.
We will survey the main pertinent results which can be found in the literature, making also an effort to highlight their many connections to each other and to give an idea of how they can be used in order to better analyze these structures.

\bigskip
\emph{This survey is intended for all those who are interested in the notion of quasirandomness, from specialists in the field to students.
We assume only familiarity with basic undergraduate topics such as linear algebra and elementary calculus.}

\subsection{A few words on quasirandomness}

As suggested by its name, we intuitively think of quasirandom mathematical objects as those which behave like a random object of the same type.
Expanding on this point a little, the main idea behind this concept is the following:
we first identify some important characteristics a random mathematical structure will satisfy with high probability, and then define a quasirandom structure of this type as one which shares these same properties.

These characteristics are usually related to the \emph{lack of correlation} between distinct sub-parts of the object considered, which gives it strong uniformity properties.
Indeed, one can usually break the sampling process of a large random object into several smaller random choices, all independent from each other;
it is the independence of these sub-choices that gives rise to most characteristics usually associated with random objects.

But why would such a study be interesting, or useful?
Of course, as most things in pure mathematics, one of the main reasons for its study is its inherent mathematical beauty
(which this particular concept has no lack of).
Moreover, random structures and probabilistic arguments have become a staple of extremal combinatorics, with probabilistic constructions providing the best known extremal objects for many problems and several existence proofs relying on showing that a well-chosen random event has a positive (and usually very high) probability of occurring;
we refer the reader to Alon and Spencer's book \cite{AlonSpencer} for a veritable wealth of examples.
As remarked by Thomason in his founding paper \cite{PseudographsThomason}, in such cases
it would be useful to have a criterion by which to decide whether a specific object behaves like a random one of the same type, that is, has the property of those random objects that interests us.

We also feel it is important to stress that, while some
quasirandom properties
may at first appear rather strong or rigid, by definition it is satisfied by \emph{almost all} objects of the considered type.
The study of quasirandomness then permits us to analyze almost all elements from a given class of discrete mathematical objects, while using methods and intuition from probability theory to help us along.

Finally, we remark on a fundamental phenomenon in combinatorics which can be expressed as the \emph{dichotomy between structure and randomness};
we refer the reader to Tao's survey articles \cite{DichotomyAndThePrimes, StructureRandomnessCombinatorics} for an excellent discussion and several examples.
This phenomenon is made explicit (and quite useful) in various decomposition theorems usually known as `regularity lemmas', which allow us to decompose \emph{any} object in some given class into a highly structured component and a quasirandom component
(with possibly an additional small error term).
The first component should be easier to analyze directly due to its specific structure, while the second component is analyzed using the methods outlined in this survey;
in this way, the study of quasirandomness provides tools to investigate arbitrary combinatorial objects and are essential ingredients in the proof of several very general results.

\subsection{Overview of the survey}

In Section \ref{graphs} we will present the simpler and more well-know notion of quasirandomness in the setting of graphs, where already many of the methods and ideas will be present in a more easily pictured and less notationally cumbersome way.
We will motivate this concept and then show that several natural and sometimes distant-looking properties one usually associates with random graphs are all roughly equivalent to each other.

We then introduce similar notions of quasirandomness for subsets of additive groups (where the concept is usually called \emph{uniformity} instead of quasirandomness) in Section \ref{GroupsSection}, and for hypergraphs in Section \ref{hypergraphs}.
It turns out that in these two settings there is a hierarchy of several natural notions of quasirandomness, making the theory much richer but also more technical for these objects than it is for graphs.

There are several interesting parallels between the concepts of quasirandomness in additive groups and in hypergraphs, and we shall devote Section \ref{Cayley} to studying them.
We will also show, as an illustrative application of these concepts and their connections to each other, how to use hypergraph theoretic methods to estimate the number of linear configurations inside uniform additive sets.
The ability to estimate this count with high accuracy is very important for many problems in additive combinatorics, and serves to highlight not only the parallels between these two theories but also some of their differences.


In Section \ref{regularity} we will give a brief exposition on regularity lemmas, which allow one to decompose any object of a given class (such as graphs, hypergraphs or subsets of additive groups) into a highly structured component and a quasirandom component.
These results are embodiments of the
dichotomy between structure and randomness in combinatorics, and permit us to use the methods from earlier sections of the paper in order to analyze \emph{arbitrary} objects of the considered class, rather than only those which are quasirandom.

Finally, Appendix \ref{App} gives a succinct account of the (fairly basic) probabilistic notions and results which are useful to us and will be used throughout the paper.

\subsection{General notation and definitions}

We write $O(1)$ to denote any quantity bounded above by an absolute constant, and use $O(X)$ to mean $O(1)X$.
If the implied absolute constant depends also on an additional parameter $K$, we highlight this by writing $O_K(X)$.
Given a positive function $f: \mathbb{N} \rightarrow (0, \infty)$, we use the asymptotic notation $g(n) = o(f(n))$ to mean that $\lim_{n \rightarrow \infty} g(n)/f(n) = 0$;
in particular, $o(1)$ denotes some quantity that goes to zero as the asymptotic variable gets large.
For real numbers $a, b, c$ with $c > 0$, we write $a = b \pm c$ to mean $b - c \leq a \leq b + c$.

The same denomination will be used both for a set and for its indicator function.
If $X$ is a finite set, we use the averaging notation
$\mathbb{E}_{x \in X} := |X|^{-1} \sum_{x \in X}$
so that $\mathbb{E}_{x \in X}[f(x)]$ denotes the average of the function $f$ inside $X$;
we also write $\mathbb{E}_{x \in X, y \in Y} := \mathbb{E}_{x \in X} \mathbb{E}_{y \in Y}$.
The discrete interval $\{1, 2, \dots, n\}$ is denoted more succinctly as $[n]$, and we occasionally use `iff' as a shorthand for `if and only if'.

There is a specific notion of equivalence between properties of combinatorial objects which is of crucial importance when studying quasirandomness.
Suppose we have two properties $P_1 = P_1(c_1)$ and $P_2 = P_2(c_2)$ which a given object $H$ might satisfy, where each property $P_i$ involves a positive constant $0 < c_i \leq 1$.
We say that $P_1$ and $P_2$ are \emph{asymptotically equivalent} if for all $\varepsilon > 0$ there are $\delta > 0$ and $n_0 \geq 1$ so that the following holds:
\begin{itemize}
    \item If $H$ has size at least $n_0$ and satisfies $P_1$ with constant $c_1 \leq \delta$, then it must also satisfy $P_2$ with constant $c_2 = \varepsilon$;
    \item If $H$ has size at least $n_0$ and satisfies $P_2$ with constant $c_2 \leq \delta$, then it must also satisfy $P_1$ with constant $c_1 = \varepsilon$.
\end{itemize}
Being interested also in the quantitative aspects of these equivalences, we will say that a set of properties $P_1, \dots, P_k$ are \emph{polynomially equivalent} if they are (pairwise) asymptotically equivalent with polynomial bounds on all quantities involved
(so there is a constant $C > 0$ such that $\delta \geq \varepsilon^C/C$ and $n_0 \leq C/\varepsilon^C$ in the definition above).

\section{Quasirandom graphs} \label{graphs}

It was in the setting of graphs that the concepts of pseudorandomness and quasirandomness first originated in combinatorics, mainly due to the work of Thomason \cite{PseudographsThomason, PseudoGhost} and of Chung, Graham and Wilson \cite{QuasirandomGraphs} during the
1980s.\footnote{Before then there had already been some examples and applications of pseudorandom graphs, but without it being developed into a systematic study as done by those authors.
We refer the reader to Krivelevich and Sudakov's excellent survey \cite{PseudographsSurvey} for a much fuller discussion on pseudorandom graphs and their history.}
The informal idea of these notions is that a graph is pseudo- or quasirandom if its edge distribution resembles the one of a truly random graph with the same
edge density.\footnote{The way in which pseudorandom graphs resemble their random counterparts may be different for each specific application, while quasirandom graphs are rigorously defined as those  satisfying properties in a large equivalence class that happen to be shared by random graphs;
see Fan Chung's website \cite{ChungSite} for a discussion and for several references related to quasirandom objects.}

There is a very natural and well-studied model of random graphs for any given edge density $0 < p < 1$, which is called the \emph{Erd\H{o}s-R\'enyi random graph $G(n, p)$}:
this is a random graph on $n$ vertices (say $[n] = \{1, 2, \dots, n\}$) where every pair of vertices has probability $p$ of being an edge, all choices independent.
By a common abuse of notation, we will denote by $G(n, p)$ both the `random graph' just defined (which is in fact a probability distribution over graphs) and a graph sampled from this probability distribution.

An important property of this model of random graphs is that their edges are very uniformly distributed, and this is the property to be mimicked by quasirandom graphs.
To make this idea precise, let us define \emph{cuts} in a graph:

\begin{definition}
Given a graph $G$ and two sets $A, B \subseteq V(G)$, we define the \emph{cut between $A$ and $B$ in $G$} as
$$E_G(A, B) := \big\{(x, y) \in A \times B: \, xy \in E(G)\big\},$$
where we write $V(G)$ for the vertex set of $G$ and $E(G)$ for its edge set.
Note that we are considering \emph{ordered} pairs of vertices, so an edge whose vertices are both in $A \cap B$ will be represented twice in the cut.
\end{definition}

Let us first show that the edges of the Erd\H{o}s-R\'enyi random graph $G = G(n, p)$ are (with high probability) uniformly distributed along all cuts.
(See Appendix \ref{App} for the relevant notions and results in finite probability theory.)
For each $1 \leq i < j \leq n$, let $X_{ij} = \mathbf{1}_{\{ij \in G\}}$ be the random variable representing whether or not $ij$ is an edge of $G$;
these variables are jointly independent and satisfy $\mathbb{P}(X_{ij}) = p$.
Given sets $A, B \subseteq [n]$, note that
\begin{equation} \label{ERedges}
    |E_G(A, B)| = \sum_{i \in A} \sum_{j \in B:\, j > i} X_{ij} + \sum_{i \in A} \sum_{j \in B:\, j < i} X_{ji}.
\end{equation}
The expected size of the cut $E_G(A, B)$ is then $p|A| |B| - p|A \cap B| = p|A| |B| \pm n$.

Fix some number $0 < \varepsilon < 1$, and suppose $n \geq 2/\varepsilon$.
Denote the first double sum in equation (\ref{ERedges}) by $Y$ and the second by $Z$;
the indicator random variables which form \emph{each one} of these double sums are jointly independent, so both $Y$ and $Z$ have variance at most $p(1-p) \binom{n}{2} \leq n^2/8$.
Using Chernoff's inequality (Lemma \ref{Chernoff}) for each of the random variables $Y$ and $Z$ separately, we obtain
\begin{align*}
    &\mathbb{P}\big(\, \big| |E_G(A, B)| - p|A| |B| \big| \geq \varepsilon n^2 \,\big) \\
    &\hspace{2cm}\leq \mathbb{P}\big( |Y - \mathbb{E}[Y]| \geq \varepsilon n^2/4 \big)
    + \mathbb{P}\big( |Z - \mathbb{E}[Z]| \geq \varepsilon n^2/4 \big) \\
    &\hspace{2cm}\leq 4 e^{-\varepsilon^2 n^2/8}.
\end{align*}
Since this holds for all pairs $(A, B)$ of subsets of $[n]$ and there are $2^{2n}$ such pairs, it follows from union bound that
$$\mathbb{P} \big( \exists A, B \subseteq [n]:\, \big| |E_G(A, B)| - p|A| |B| \big| \geq \varepsilon n^2 \big) \leq 2^{2n} \cdot 4 e^{-\varepsilon^2 n^2/8} \xrightarrow{n \rightarrow \infty} 0.$$
The actual number of edges in \emph{every} cut $E_G(A, B)$ will thus w.h.p. be highly concentrated around their (approximate) mean $p|A| |B|$, with error $o(n^2)$.\footnote{This argument in fact shows that the error bound can be lowered to $O(n^{3/2})$, but for the purpose of defining quasirandomness the coarser error estimate $o(n^2)$ is more suitable.}

If a graph $G$ satisfies this uniform distribution of edges over all cuts, we shall then say that it is \emph{quasirandom} (a more quantitative definition will be given later, after we define the cut norm of graphs and functions).
We can now state our main result on quasirandom graphs, first obtained by Chung, Graham and Wilson \cite{QuasirandomGraphs}.

For a graph $G$, we denote its number of edges by $|G|$ and its number of vertices by $v(G)$;
its \emph{edge density} is defined as $2|G|/v(G)^2$.
The \emph{adjacency matrix} of $G$ is the symmetric matrix $A = (A_{ij})_{i, j \in V(G)}$ indexed by pairs of vertices, and whose entry $a_{ij}$ is $1$ if $ij \in E(G)$ and is $0$ otherwise.

\begin{thm}[Equivalence theorem for quasirandom graphs] \label{quasigraphs}
Let $G$ be a graph with $n$ vertices and edge density $\delta$.
Then the following statements are polynomially equivalent:
\begin{itemize}
    \item[$(i)$] For any two subsets $A, B \subseteq V(G)$, the size of the cut $E_G(A, B)$ differs from $\delta |A||B|$ by at most $c_1 n^2$.
    \item[$(ii)$] The number of labelled copies of any given graph $F$ in $G$ differs from $\delta^{|F|} n^{v(F)}$ by at most $c_2 |F| n^{v(F)}$.
    \item[$(iii)$] The number of labelled 4-cycles in $G$ is at most $(\delta^4 + c_3)n^4$.
    \item[$(iv)$] The largest eigenvalue of the adjacency matrix of $G$ is $(\delta \pm c_4)n$, and all other eigenvalues are at most $c_4 n$ in absolute value.
\end{itemize}
\end{thm}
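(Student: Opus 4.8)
The plan is to establish the cycle of implications $(i) \Rightarrow (ii) \Rightarrow (iii) \Rightarrow (i)$ together with the equivalence $(i) \Leftrightarrow (iv)$, keeping track of polynomial dependence of constants throughout. The implication $(ii) \Rightarrow (iii)$ is trivial, since a labelled $4$-cycle is just a labelled copy of $C_4$ with $|C_4| = v(C_4) = 4$, so condition $(iii)$ with $c_3 = 4 c_2$ follows immediately from $(ii)$. The bulk of the work lies in the other steps.

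First I would prove $(iii) \Rightarrow (i)$, which is the heart of the matter. The key object is the \emph{defect form}: writing $f = A - \delta$ for the "balanced" version of the adjacency function (so $\mathbb{E}_{x,y}[f(x,y)] $ is small), condition $(i)$ is essentially the assertion that $\sup_{A,B} |\mathbb{E}_{x,y}[f(x,y) \mathbf{1}_A(x) \mathbf{1}_B(y)]| = o(1)$, i.e. that the cut norm of $f$ is small. The standard route is to relate the cut norm to the \emph{$4$-cycle count} via two applications of the Cauchy--Schwarz inequality: one shows that
$$\sup_{A,B} \left| \mathbb{E}_{x,y}[f(x,y)\mathbf{1}_A(x)\mathbf{1}_B(y)] \right|^4 \;\leq\; \mathbb{E}_{x,x',y,y'}\big[ f(x,y)f(x',y)f(x,y')f(x',y') \big],$$
and the right-hand side expands, using the near-regularity of $G$ forced by $(iii)$ itself (one first deduces $|G| = (\delta/2 \pm o(1))n^2$ from the $C_4$ bound, or one may simply include edge density as a hypothesis as the statement does), into the labelled $C_4$-count minus $\delta^4 n^4$, which is $o(n^4)$ by $(iii)$. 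Taking fourth roots gives $(i)$ with $c_1 = O(c_3^{1/4})$ plus lower-order terms, which is polynomial as required.

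Next, $(i) \Rightarrow (ii)$ is proved by a telescoping/hybrid argument over the edges of $F$. Enumerate the edge slots of $F$ and interpolate between counting copies weighted by $A$ on some slots and by the constant $\delta$ on the others; each swap of a single slot changes the count by an average of $f = A - \delta$ against a product of indicator-type functions of the already-fixed vertices, which can be controlled by the cut norm bound in $(i)$ after fixing all but the two endpoints of that slot. Summing the $|F|$ error terms yields $(ii)$ with $c_2 = O(c_1)$. A small technical point is handling vertices of $F$ that appear in several slots, and the degenerate contributions where some vertices of the copy coincide, both of which contribute only $O(n^{v(F)-1})$ and are absorbed into the error.

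Finally, $(i) \Leftrightarrow (iv)$ connects the cut norm to the spectrum. For $(iv) \Rightarrow (i)$: the top eigenvalue/eigenvector pair of $A$ is close to the all-ones direction scaled by $\delta n$ (this needs a short argument using that $\mathbf{1}^\top A \mathbf{1} = 2|G| \approx \delta n^2$), so $f = A - \delta J/n$-type correction has operator norm $O(c_4 n)$; then for any $A, B$, $|\langle \mathbf{1}_A, f \mathbf{1}_B\rangle| \leq \|f\|_{op}\|\mathbf{1}_A\|_2\|\mathbf{1}_B\|_2 = O(c_4 n^2)$, giving $(i)$. For $(i) \Rightarrow (iv)$: I would go through $(iii)$, noting that $\mathrm{tr}(A^4) = \sum \lambda_i^4$ equals the (closed-walk, hence essentially labelled) $C_4$-count, so $(iii)$ forces $\sum_i \lambda_i^4 \leq (\delta^4 + o(1))n^4$; since $\lambda_1 \geq \delta n - o(n)$ already (from the edge count via $\mathbf{1}^\top A \mathbf{1}$), the remaining eigenvalues satisfy $\sum_{i \geq 2}\lambda_i^4 = o(n^4)$, whence $\max_{i\geq 2}|\lambda_i| = o(n)$.

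I expect the main obstacle to be the $(iii) \Rightarrow (i)$ step, specifically getting the Cauchy--Schwarz expansion to close with \emph{polynomial} (not merely qualitative) bounds and correctly accounting for the error terms coming from coincident vertices and from the deviation of the true edge count from $\delta n^2/2$; keeping all of these at the $o(n^4)$ level with explicit powers of $c_3$ is where the bookkeeping is most delicate.
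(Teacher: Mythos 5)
Your overall route differs from the paper's. The paper closes the cycle $(i)\Rightarrow(ii)\Rightarrow(iii)\Rightarrow(iv)\Rightarrow(i)$, putting essentially all the work into the last implication via a spectral decomposition of $A - \delta J$ into three pieces, while the step $(iii)\Rightarrow(iv)$ is a one-line trace computation: $t(C_4,G) = n^{-4}\operatorname{tr}(A^4) = n^{-4}\sum_i \lambda_i^4 \geq n^{-4}(\lambda_1^4 + \lambda_2^4)$, and since $\lambda_1 \geq \delta n$ always holds (by testing the Rayleigh quotient against the all-ones vector), the upper bound on $t(C_4,G)$ directly pins down $\lambda_1$ and $|\lambda_2|$. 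By contrast you propose to go $(iii)\Rightarrow(i)$ directly via a double application of Cauchy--Schwarz, and handle $(i)\Leftrightarrow(iv)$ as a separate equivalence. That route is legitimate and is, in spirit, what the paper itself does for the hypergraph analogue (Theorem \ref{octequiv}).

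However, as written, your $(iii)\Rightarrow(i)$ step has a real gap. After the two Cauchy--Schwarz applications you arrive at
\[
\| G - \delta \|_\square^4 \leq \mathbb{E}_{x,x',y,y'}\big[(G(x,y)-\delta)(G(x',y)-\delta)(G(x,y')-\delta)(G(x',y')-\delta)\big],
\]
but this \emph{balanced} $4$-cycle count does not simply equal $t(C_4,G) - \delta^4$. Expanding the product yields a signed sum $\sum_{F \subseteq C_4} (-\delta)^{4-|F|}\, t(F,G)$ over all $16$ subgraphs $F$ of $C_4$, and the $14$ proper nonempty cross terms (paths, stars, matchings, single edges) are not obviously controlled by the hypothesis $t(C_4,G)\leq \delta^4 + c_3$ alone. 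Your suggestion to invoke near-regularity is a hint in the right direction, but it does not directly close the gap: one needs a further idea. The paper's hypergraph proof uses the device of considering $t(\oct^{(k)}, H+\delta) + t(\oct^{(k)}, H-\delta)$, which kills the odd terms and leaves a sum of nonnegative terms each boundable by a power of $\|H\|_{\oct^k}$ via Gowers--Cauchy--Schwarz, leading to a binomial identity and the desired polynomial bound. Specialized to $k=2$ this would fix your argument; alternatively, the eigenvalue detour the paper actually uses here avoids the expansion entirely. You correctly flagged this as the delicate part, but as stated the expansion claim is false and the fix requires a new idea, not just bookkeeping.

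The remaining pieces of your proposal are sound. The $(i)\Rightarrow(ii)$ telescoping argument matches the paper's counting lemma, $(ii)\Rightarrow(iii)$ is indeed immediate, and your sketch of $(iv)\Rightarrow(i)$ (approximate the top eigenvector by the all-ones direction, bound $\|A-\delta J\|_{\mathrm{op}}$, then apply Cauchy--Schwarz) is the paper's argument in outline, though the quantitative control on $\|v_1 - e_1\|_2$ takes a short computation that should be spelled out to secure a polynomial bound.
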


\begin{remark}
The main theorem in the paper of Chung, Graham and Wilson also considers several other properties that are polynomially equivalent to $(i)-(iv)$, but we shall restrict our attention to just these four stated.
\end{remark}

All of these properties $(i)-(iv)$ were already known to be satisfied by the Erd\H{o}s-R\'enyi random graph $G(n, p)$ with high probability, suggesting they indeed provide some measure of pseudorandomness:
\begin{itemize}
    \item As discussed, the size of the cut $E_G(A, B)$ on a random graph of edge probability $p$ is highly concentrated around its mean $p|A||B|$.
    \item There are $n(n-1) \dots (n-v(F)+1)$ ways of choosing the vertices for a copy of $F$ in $G(n, p)$, and each of the $|F|$ edges has probability $p$ of being in $G(n, p)$.
    The expected number of (labeled) copies of $F$ is thus $p^{|F|} n^{v(F)} + O_F(n^{v(F)-1})$, and its variance is easily seen to be $O_F(n^{2v(F)-2})$;
    by Chebyshev's inequality, the number of copies of $F$ in $G$ then differs from $p^{|F|} n^{v(F)}$ by $O_F(n^{v(F)-1})$ with high probability.
    \item Item $(iv)$ is a well-known property of random graphs, first proven (in a stronger form) by Juh\'asz \cite{RandomEigenvalues} in 1978.
\end{itemize}
As remarked in \cite{QuasirandomGraphs}, the most surprising fact in this result is how strong the seemingly weak property $(iii)$ actually is:
just knowing a graph $G$ has a `small' number of 4-cycles already suffices to estimate the number of every other subgraph in $G$.

Before proving Theorem \ref{quasigraphs}, it will be useful to define a couple of notions in a more analytical/probabilistic framework that will simplify its proof.

\subsection{Cut norm and homomorphism densities}

The central notion of quasirandomness for graphs that we use here is related to the edges having low discrepancy over cuts $(A, B) \subseteq V \times V$.
This can be conveniently measured by the \emph{cut norm} $\|\cdot\|_{\square}$, originally introduced by Frieze and Kannan \cite{FriezeKannan}:

\begin{definition}
Given a function $f: V \times V \rightarrow \R$, we define its \emph{cut norm} by
\begin{align*}
    \| f \|_{\square} &= \frac{1}{|V|^2} \max_{A, B \subseteq V} \Bigg| \sum_{x \in A} \sum_{y \in B} f(x, y) \Bigg| \\
    &= \max_{A, B \subseteq V} \big| \mathbb{E}_{x, y \in V} \big[ f(x, y) A(x) B(y) \big] \big|.
\end{align*}
\end{definition}

For a graph $G$ of edge density $\delta$, the value of $\|G - \delta\|_{\square}$
(where $G$ denotes the indicator function of the edge set $E(G)$ and $\delta = \delta \mathbf{1}$ denotes a constant function)
then quantifies how much the size of a cut $E_G(A, B)$ can deviate from its `expected value' $\delta |A| |B|$, over all sets $A, B \subseteq V(G)$:
$$\|G - \delta\|_{\square} = \frac{1}{|V(G)|^2} \max_{A, B \subseteq V(G)} \big| |E_G(A, B)| - \delta |A| |B| \big|.$$
We say that $G$ is \emph{$\varepsilon$-quasirandom} if $\|G - \delta\|_{\square} \leq \varepsilon$, where $\delta = 2|G|/v(G)^2$ denotes its edge density;
item $(i)$ in Theorem \ref{quasigraphs} is then precisely the assertion that $G$ is $c_1$-quasirandom.

We note that there is an equivalent `weighted' expression for the cut norm of a function $f: V \rightarrow \R$, where the maximum is taken over bounded functions rather than over sets:
$$\| f \|_{\square} = \max_{u,\, v:\, V \rightarrow [0, 1]} \big| \mathbb{E}_{x, y \in V} \big[ f(x, y) u(x) v(y) \big] \big|.$$
Indeed, since the expectation above is bilinear in $u$ and $v$, the extrema occur when $u$ and $v$ are $\{0, 1\}$-valued and can thus be identified with their supports.
This weighted formulation is sometimes more suitable than the one given before.


Another notion that will be very useful for us is that of homomorphism densities, which give a convenient way of counting copies of small graphs inside a large graph:

\begin{definition}
The \emph{homomorphism density} of a graph $F$ in a graph $G$, denoted $t(F, G)$, is the probability that a randomly chosen map $\phi: V(F) \rightarrow V(G)$ preserves edges:
\begin{align*}
    t(F, G) &= \mathbb{P}_{x_1, \dots, x_{v(F)} \in V(G)} \big( x_i x_j \in E(G) \text{ whenever } ij \in E(F) \big) \\
    &= \mathbb{E}_{x_1, \dots, x_{v(F)} \in V(G)} \Bigg[ \prod_{ij \in E(F)} G(x_i, x_j) \Bigg].
\end{align*}
\end{definition}

Those functions $\phi: V(F) \rightarrow V(G)$ which map edges of $F$ to edges of $G$ are called \emph{homomorphisms} from $F$ to $G$, which explains the terminology.
While a homomorphism might map several vertices of $F$ to a single vertex of $G$, if we assume that the considered graph $G$ is large, then only a negligible fraction of all possible maps will be degenerate in this sense;
one may then safely ignore the distinction between homomorphism density and subgraph density when discussing polynomial equivalence.

With this notation, and up to negligible lower-order terms, item $(ii)$ in Theorem \ref{quasigraphs} can be written as $t(F, G) = \delta^{|F|} \pm c_2 |F|$, and item $(iii)$ becomes $t(C_4, G) \leq \delta^4 + c_3$
(where we use $C_4$ to denote the $4$-cycle).

\subsection{Proof of the equivalence theorem}

Using the definitions and notation now developed, we can reformulate our first theorem in a more succinct and convenient way as follows.
We shall denote by $\lambda_1, \lambda_2, \dots, \lambda_n$ the eigenvalues of the adjacency matrix of $G$ ordered in decreasing absolute value:
$|\lambda_1| \geq |\lambda_2| \geq \dots \geq |\lambda_n|$.

\begin{thm}[= Theorem \ref{quasigraphs}] \label{quasigraphs2}
Let $G$ be a graph with $n$ vertices and edge density $\delta$.
Then the following statements are polynomially equivalent:
\begin{itemize}
    \item[$(i)$] \emph{$G$ has `low discrepancy':} $\| G - \delta \|_{\square} \leq c_1$.
    \item[$(ii)$] \emph{$G$ `correctly' counts all graphs:} $t(F, G) = \delta^{|F|} \pm c_2 |F|$ for all graphs $F$.
    \item[$(iii)$] \emph{$G$ has `few' 4-cycles:} $t(C_4, G) \leq \delta^4 + c_3$.
    \item[$(iv)$] \emph{Only the first eigenvalue is `large':} $\lambda_1 = (\delta \pm c_4)n$, $|\lambda_2| \leq c_4 n$.
\end{itemize}
\end{thm}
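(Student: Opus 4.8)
The plan is to establish the cycle of implications $(i) \Rightarrow (ii) \Rightarrow (iii) \Rightarrow (iv) \Rightarrow (i)$, which together with polynomial control on all the constants yields the claimed polynomial equivalence. The implications $(ii) \Rightarrow (iii)$ is trivial, since $(iii)$ is just the single instance $F = C_4$ of $(ii)$ (taking $c_3 = 4 c_2$), and no size hypothesis is needed. So the real work is in the other three arrows.

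First I would prove $(i) \Rightarrow (ii)$ by a telescoping/hybrid argument. Writing $G = \delta + f$ with $\|f\|_\square = \|G - \delta\|_\square \le c_1$, expand the product $\prod_{ij \in E(F)} G(x_i, x_j) = \prod_{ij \in E(F)} (\delta + f(x_i, x_j))$ inside the expectation defining $t(F, G)$. The main term is $\delta^{|F|}$, and each of the remaining $2^{|F|} - 1$ terms contains at least one factor of $f$. The key estimate is that for any single edge $e = ij \in E(F)$, the expectation $\mathbb{E}_{x_1, \dots, x_{v(F)}}\big[f(x_i, x_j) \prod_{e' \ne e} h_{e'}(x_{\cdot}, x_{\cdot})\big]$ with each $h_{e'} \in \{\delta, f\}$ is bounded in absolute value by $\|f\|_\square$: one fixes all variables except one endpoint of $e$, averages the remaining factors involving $x_i$ into a $[0,1]$-valued function $u(x_i)$ and those involving $x_j$ into $v(x_j)$, and invokes the weighted formulation of the cut norm. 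Summing over the $2^{|F|}-1$ error terms gives $|t(F, G) - \delta^{|F|}| \le (2^{|F|} - 1) c_1$. To get the linear-in-$|F|$ bound claimed in $(ii)$ rather than an exponential one, I would instead peel off the edges of $F$ one at a time, at each step replacing one copy of $G$ by $\delta$ and bounding the incurred error by $c_1$ (since the other factors are all bounded by $1$ in absolute value and factor as a product of a function of one endpoint times a function of the other); this telescoping over the $|F|$ edges gives exactly $|t(F,G) - \delta^{|F|}| \le |F|\, c_1$, so $c_2 = c_1$ works.

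Next, $(iv) \Rightarrow (i)$ is a spectral/Fourier-style argument. Let $A$ be the adjacency matrix of $G$, let $\mathbf{1}$ be the all-ones vector, and write $J = \mathbf{1}\mathbf{1}^{\top}$. The rank-one matrix $\frac{\delta}{n}\,\mathbf{1}\mathbf{1}^{\top} \cdot n = \delta J$ has the right ``bulk'': for sets $A, B$, $\langle \mathbf{1}_A, (A - \delta J)\mathbf{1}_B\rangle = |E_G(A,B)| - \delta|A||B|$, so $\|G - \delta\|_\square = n^{-2}\max_{A,B} |\langle \mathbf{1}_A, (A - \delta J)\mathbf{1}_B\rangle| \le n^{-2}\|A - \delta J\|_{\mathrm{op}}\cdot n$ by Cauchy--Schwarz and $\|\mathbf{1}_A\|_2, \|\mathbf{1}_B\|_2 \le \sqrt n$. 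It remains to bound $\|A - \delta J\|_{\mathrm{op}}$. Since $A$ is symmetric with eigenvalues $\lambda_1, \dots, \lambda_n$, and hypothesis $(iv)$ says $\lambda_1 = (\delta \pm c_4)n$ with $|\lambda_i| \le c_4 n$ for $i \ge 2$, one shows the top eigenvector $v_1$ of $A$ is close to the normalized all-ones vector: indeed $\delta n^2 = \mathbf{1}^\top A \mathbf{1} = \sum_i \lambda_i \langle \mathbf{1}, u_i\rangle^2$ where $u_i$ is an orthonormal eigenbasis, and comparing with $\lambda_1 \|\mathbf{1}\|^2 \le (\delta + c_4) n^2$ forces most of the mass of $\mathbf{1}$ onto $u_1$, i.e. $\langle \mathbf{1}, u_1\rangle^2 \ge (1 - O(c_4/\delta))n$. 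Then $\|A - \delta J\|_{\mathrm{op}} \le \|A - \lambda_1 u_1 u_1^\top\|_{\mathrm{op}} + \|\lambda_1 u_1 u_1^\top - \delta J\|_{\mathrm{op}} \le \max_{i\ge 2}|\lambda_i| + (\text{small})$, and both terms are $O(c_4^{1/2} n)$ or so; this gives $(i)$ with $c_1 = O(c_4^{1/2})$. I should be slightly careful to handle the case $\delta$ small, but since $c_1$ only needs to be polynomially small in $c_4$ this is not an obstacle; if $\delta$ itself is tiny one can note $\lambda_1 = O(c_4 n)$ and argue directly.

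The main obstacle is the implication $(iii) \Rightarrow (iv)$ — going from control of a single count (4-cycles) to a spectral gap. The standard route is to first pass through the cut norm: show $(iii) \Rightarrow (i)$ and then $(i) \Rightarrow (iv)$. For $(iii) \Rightarrow (i)$, the key identity is $t(C_4, G) = \sum_i (\lambda_i/n)^4 \cdot (\text{normalization})$, more precisely $\sum_i \lambda_i^4 = \mathrm{tr}(A^4) = n^4\, t(C_4, G)$; combined with $\lambda_1 \ge$ (average degree) $= \delta n - o(n)$ (from $\mathrm{tr}(A^2) = \sum_i \lambda_i^2 = n^2 t(K_2 \text{-ish count})$, namely $\sum \lambda_i^2 \ge \lambda_1^2$ and $\lambda_1 \ge \frac{\mathbf{1}^\top A\mathbf 1}{\mathbf 1^\top \mathbf 1} = \delta n$), hypothesis $(iii)$ gives $\lambda_1^4 + \sum_{i \ge 2}\lambda_i^4 = \mathrm{tr}(A^4) \le (\delta^4 + c_3)n^4$, hence $\sum_{i\ge 2}\lambda_i^4 \le c_3 n^4 + (\delta^4 n^4 - \lambda_1^4) \le c_3 n^4$ (using $\lambda_1 \ge \delta n$), so $\max_{i \ge 2}|\lambda_i| \le c_3^{1/4} n$. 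One also needs $\lambda_1 \le (\delta + c_4)n$: this follows because $\lambda_1^2 \le \sum_i \lambda_i^2 = \mathrm{tr}(A^2) = \sum_{ij} A_{ij}^2 = 2|G| = \delta n^2$, wait that gives $\lambda_1 \le \sqrt{\delta}\, n$ which is too weak; instead use $\lambda_1^4 \le \mathrm{tr}(A^4) \le (\delta^4 + c_3)n^4$ to get $\lambda_1 \le (\delta^4 + c_3)^{1/4} n \le \delta n + c_3^{1/4} n$. Thus $(iii) \Rightarrow (iv)$ directly with $c_4 = c_3^{1/4}$, bypassing the cut norm entirely — this is the cleanest path and all bounds are manifestly polynomial. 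Closing the loop with $(i) \Rightarrow (ii) \Rightarrow (iii) \Rightarrow (iv) \Rightarrow (i)$ then finishes the proof, with every constant polynomially related to every other.
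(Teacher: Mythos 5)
Your proposal follows the same cycle $(i)\Rightarrow(ii)\Rightarrow(iii)\Rightarrow(iv)\Rightarrow(i)$ as the paper, and each step matches the paper's argument: the telescoping proof of the counting lemma for $(i)\Rightarrow(ii)$, the trace-of-$A^4$ argument plus $\lambda_1\ge\delta n$ for $(iii)\Rightarrow(iv)$, and the spectral decomposition of $A-\delta J$ plus Cauchy--Schwarz for $(iv)\Rightarrow(i)$ (you fold the paper's $M_1+M_3$ into a single term, but the estimate is the same). The worry you raise about small $\delta$ in $(iv)\Rightarrow(i)$ is not actually an obstacle --- the bound $\|M_3\|_{sp}\le 2\delta n\,\|v_1-e_1\|_2\le 4\sqrt{c_4\delta}\,n\le 4c_4^{1/2}n$ degrades gracefully because the prefactor $\delta$ cancels the $1/\delta$ from the eigenvector overlap estimate --- so your proof is correct and is essentially the paper's.
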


\begin{proof}
$(i) \Rightarrow (ii)$:
This implication is usually known as the \emph{counting lemma};
the simple proof we present here is taken from \cite{ConvSeqDenseGraphs}.
Let $m$ be the number of vertices of the graph $F$, and assume $V(F) = [m]$ and $E(F) = \{e_1, \dots, e_{|F|}\}$.
For $k = 1, \dots, |F|$, let $i_k$, $j_k$ be the endpoints of the edge $e_k$.
Then $\big| t(F, G) - \delta^{|F|} \big|$ can be rewritten as
\begin{align*}
    \Bigg|\mathbb{E}_{x_1, \dots, x_m \in V(G)}& \Bigg[ \prod_{ij \in E(F)}{G(x_i, x_j)} - \delta^{|F|} \Bigg] \Bigg| \\
    &= \Bigg| \mathbb{E}_{x_1, \dots, x_m \in V(G)} \Bigg[ \sum_{k = 1}^{|F|}{\delta^{k - 1} \big( G(x_{i_k}, x_{j_k}) - \delta \big) \prod_{\ell = k+1}^{|F|}{G(x_{i_{\ell}}, x_{j_{\ell}})}} \Bigg] \Bigg| \\
    &\leq \sum_{k = 1}^{|F|}{\delta^{k - 1} \Bigg| \mathbb{E}_{x_1, \dots, x_m \in V(G)} \Bigg[ \big( G(x_{i_k}, x_{j_k}) - \delta \big) \prod_{\ell = k+1}^{|F|}{G(x_{i_{\ell}}, x_{j_{\ell}})} \Bigg] \Bigg|}.
\end{align*}

Consider the $k$-th term of this last sum, and assume for notational convenience that $i_k = 1$ and $j_k = 2$.
Then for any fixed $x_3, \dots, x_{m} \in V(G)$ we have
\begin{multline*}
\Bigg| \mathbb{E}_{x_1, x_2 \in V(G)} \Bigg[ \big( G(x_1, x_2) - \delta \big) \prod_{\ell = k+1}^{|F|}{G(x_{i_{\ell}}, x_{j_{\ell}})} \Bigg] \Bigg| \\
= \big| \mathbb{E}_{x_1, x_2 \in V(G)} \big[ (G(x_1, x_2) - \delta) a_k(x_1) b_k(x_2) \big] \big|,
\end{multline*}
where $a_k$ and $b_k$ are the functions given by
$$a_k(x_1) := \prod_{\substack{\ell > k \\ 1 \in e_{\ell}}}{G(x_{i_{\ell}}, x_{j_{\ell}})}
\hspace{3mm} \text{and} \hspace{3mm} b_k(x_2) := \prod_{\substack{\ell > k \\ 1 \notin e_{\ell}}}{G(x_{i_{\ell}}, x_{j_{\ell}})}.$$
By hypothesis $\|G - \delta\|_{\square} \leq c_1$, so the expression on the right is at most $c_1$ for all fixed $x_3, \dots, x_{m}$.
Thus
$$\Bigg| \mathbb{E}_{x_1, \dots, x_m \in V(G)} \Bigg[ \big( G(x_{i_k}, x_{j_k}) - \delta \big) \prod_{\ell = k+1}^{|F|}{G(x_{i_{\ell}}, x_{j_{\ell}})} \Bigg] \Bigg| \leq c_1$$
for all $1 \leq k \leq |F|$, implying that $\big| t(F, G) - \delta^{|F|} \big| \leq c_1 |F|$.
We may then take $c_2 = c_1$.

\medskip
$(ii) \Rightarrow (iii)$:
This is just a special case, and we can take $c_3 = 4c_2$.

\medskip
$(iii) \Rightarrow (iv)$:
Suppose the vertices of $G$ are labelled by $\{1, 2, \dots, n\}$, and denote the adjacency matrix of $G$ by $A$.
First note that $\lambda_1 \geq \delta n$, since $\lambda_1$ is positive and
$$|\lambda_1| = \max_{v \neq 0} \frac{\|Av\|_2}{\|v\|_2} \geq \frac{\|Ae\|_2}{\|e\|_2} \geq \frac{e^t A e}{\|e\|_2^2} = \frac{\delta n^2}{n} = \delta n,$$
where $e = (1, 1, \dots, 1)^t$ is the all-ones vector.

An easy induction argument shows that, for any $k \in \mathbb{N}$, the entry $(i, j)$ on the matrix $A^k$ counts the number of walks of length $k$ on $G$ which start at vertex $i$ and end at vertex $j$.
In particular, $(A^4)_{ii}$ counts the number of labelled 4-cycles starting (and ending) at vertex $i$.
This implies that
$$t(C_4, G) = \frac{1}{n^4} \sum_{i=1}^n (A^4)_{ii} = \frac{1}{n^4} tr(A^4) = \frac{1}{n^4} \sum_{i=1}^n \lambda_i^4 \geq \frac{\lambda_1^4 + \lambda_2^4}{n^4}.$$

By assumption we have that $t(C_4, G) \leq \delta^4 + c_3$, which together with $\lambda_1 \geq \delta n$ implies that $\lambda_1 \leq (\delta + c_3^{1/4}) n$ and $|\lambda_2| \leq c_3^{1/4} n$.
We may then take $c_4 = c_3^{1/4}$.

\medskip
$(iv) \Rightarrow (i)$:
We will first show that $\|A - \delta J\|_{sp} \leq 6 c_4^{1/2} n$, where $A$ is the adjacency matrix of $G$, $J = e e^t$ is the $n \times n$ all-ones matrix and $\|\cdot\|_{sp}$ is the spectral norm
(i.e. the largest singular value of the matrix).
For this, let $\{v_1, v_2, \dots, v_n\}$ be an orthonormal basis of eigenvectors of $A$, where $v_i$ is an eigenvector associated to the eigenvalue $\lambda_i$ for all $1 \leq i \leq n$.

If we suppose the graph $G$ is regular of degree $pn$, then the result we want to prove is simple:
in this case $e_1 := e/\sqrt{n}$ is a unitary eigenvector of $A$ with eigenvalue $\lambda_1 = \delta n$, and so
$$A - \delta J = A - \delta n e_1 e_1^t = \sum_{j=2}^n \lambda_j v_j v_j^t$$
has spectral norm equal to $|\lambda_2| \leq c_4 n$.

If we do not suppose $G$ is regular, then we can decompose
\begin{align*}
    A - \delta J &= \sum_{i=1}^n \lambda_i v_i v_i^t - \delta n e_1 e_1^t \\
    &= \lambda_1 v_1 v_1^t - \delta n v_1 v_1^t + \sum_{j=2}^n \lambda_j v_j v_j^t + \delta n v_1 v_1^t - \delta n e_1 e_1^t \\
    &= M_1 + M_2 + M_3,
\end{align*}
where
$$M_1 = (\lambda_1 - \delta n)v_1 v_1^t, \hspace{3mm} M_2 = \sum_{j=2}^n \lambda_j v_j v_j^t, \hspace{3mm} M_3 = \delta n (v_1 v_1^t - e_1 e_1^t).$$
Clearly $\|M_1\|_{sp} = |\lambda_1 - \delta n| \leq c_4 n$ and $\|M_2\|_{sp} = |\lambda_2| \leq c_4 n$.

Let us now bound $\|M_3\|_{sp}$.
Since $M_3$ is symmetric real, we know that
$$\|M_3\|_{sp} = \max_{\|u\|_2 = 1} |u^t M_3 u|.$$
Moreover, for any fixed $u \in \R^n$ with $\|u\|_2 = 1$ we have that
\begin{align*}
    |u^t M_3 u| &= \delta n \cdot |(u^t v_1)^2 - (u^t e_1)^2| \\
    &= \delta n \cdot |u^t (v_1 + e_1)| \cdot |u^t (v_1 - e_1)| \\
    &\leq 2\delta n \cdot \|v_1 - e_1\|_2,
\end{align*}
where the last inequality follows from Cauchy-Schwarz.
It thus suffices to bound $\|v_1 - e_1\|_2$.

Decompose $e_1 = \mu v_1 + w$, where $\mu = e_1^t v_1$ and $w$ is orthogonal to $v_1$.
Note that $\|w\|_2 \leq 1$ (by Pythagoras' theorem) and that, up to changing $v_1$ by $-v_1$, we can assume $\mu \geq 0$.
Then
\begin{align*}
    \delta n = e_1^t A e_1 \leq \|A e_1\|_2 &= \|A (\mu v_1 + w)\|_2 \\
    &\leq \mu \|A v_1\|_2 + \|A w\|_2 \\
    &\leq \mu \lambda_1 + |\lambda_2| \cdot \|w\|_2 \\
    &\leq \mu (\delta + c_4) n + c_4 n
\end{align*}
$$\implies\, e_1^t v_1 = \mu \geq \frac{\delta - c_4}{\delta + c_4} \geq 1 - \frac{2c_4}{\delta}.$$
From this we deduce that
$$\|v_1 - e_1\|_2^2 = v_1^t v_1 - 2 v_1^t e_1 + e_1^t e_1 = 2 (1 - v_1^t e_1) \leq \frac{4 c_4}{\delta}.$$
We then have that $\|M_3\|_{sp} \leq 2\delta n \cdot \|v_1 - e_1\|_2 \leq 4 c_4^{1/2} n$ and
$$\|A - \delta J\|_{sp} \leq \|M_1\|_{sp} + \|M_2\|_{sp} + \|M_3\|_{sp} \leq 6 c_4^{1/2} n,$$
as wished.

The rest follows easily from Cauchy-Schwarz.
Indeed, for any subsets $X, Y \subseteq V(G)$ we have that
\begin{align*}
    \frac{1}{n^2} \Bigg| \sum_{x \in X} \sum_{y \in Y} (G(x, y) - \delta) \Bigg| &= \frac{1}{n^2} \Bigg| \sum_{i=1}^n \sum_{j=1}^n (A_{ij} - \delta) \mathbbm{1}_X(i) \mathbbm{1}_Y(j) \Bigg| \\
    &\leq \frac{1}{n^2} \|A -\delta J\|_{sp} \|\mathbbm{1}_X\|_2 \|\mathbbm{1}_Y\|_2 \\
    &\leq 6 c_4^{1/2}.
\end{align*}
We thus obtain property $(i)$ with $c_1 = 6 c_4^{1/2}$.
\end{proof}

\subsection{Quasirandom partite graphs} \label{PartiteGraphs}

Many of the results given in this section (and also their proofs) can be easily generalized to the case of \emph{partite graphs}, where the vertex set of the graph considered is partitioned into several classes with no edges inside any single class.
This greater generality will be needed when we consider graphs encoding linear systems of equations in additive groups (as will be done in later sections), and also when we discuss the graph regularity lemma in Section \ref{GraphReg}.

There is a natural notion of quasirandomness for partite graphs, which corresponds to the idea that their edges are uniformly distributed across each pair of partition classes.
In order to measure this, we will first extend the definition of cut norm to bipartite graphs and functions:

\begin{definition}
Given a function $f: V_1 \times V_2 \rightarrow \R$, we define its \emph{cut norm} by
$$\| f \|_{\square} := \max_{A\subseteq V_1,\, B \subseteq V_2} \big| \mathbb{E}_{x \in V_1,\, y \in V_2} \big[ f(x, y) A(x) B(y) \big] \big|.$$
If $G$ is a bipartite graph on $(V_1, V_2)$ with edge density $\delta := |G|/|V_1 \times V_2|$, we say $G$ is \emph{$\varepsilon$-quasirandom} if $\| G - \delta \|_{\square} \leq \varepsilon$.
\end{definition}

For a given graph $G$ and two disjoint subsets $U$, $W \subset V(G)$, let us denote by $G[U, W]$ the bipartite graph on $(U, W)$ whose edges are the restriction of $E(G)$ to $U \times W$.
If $G$ is an $\ell$-partite graph on $(V_1, \dots, V_{\ell})$, note that we can decompose it as an edge-disjoint union
$G = \bigcup_{1 \leq i < j \leq \ell} G[V_i, V_j]$
of $\binom{\ell}{2}$ bipartite graphs;
we then say $G$ is $\varepsilon$-quasirandom if each one of these bipartite induced subgraphs is $\varepsilon$-quasirandom.

As in the case of usual (non-partite) graphs, it is possible to estimate with high accuracy the number of copies of each small graph $F$ contained inside a large quasirandom partite graph $G$, just by knowing the edge density between each pair of partition classes.
It is usually more convenient to consider only those \emph{canonical} copies where each vertex of $F$ belongs to the `correct' partition class of $G$;
this is the idea behind the next definition:

\begin{definition}
Let $F$ and $G$ be $\ell$-partite graphs with vertex partition $(U_1, \dots, U_{\ell})$ and $(V_1, \dots, V_{\ell})$, respectively.
A map $\phi: V(F) \rightarrow V(G)$ is an \emph{$\ell$-partite function} if $\phi(U_i) \subseteq V_i$ for all $1 \leq i \leq \ell$;
if moreover $\phi$ is a homomorphism of $F$ on $G$ (i.e. it maps edges of $F$ to edges of $G$), we say that it is a \emph{canonical homomorphism}.
The \emph{canonical homomorphism density} of $F$ on $G$ is the probability that a uniformly chosen $\ell$-partite function is a (canonical) homomorphism:
$$t_{can}(F, G) := \mathbb{E}_{x_{i_1} \in V_1\; \forall i_1 \in U_1} \,\cdots\, \mathbb{E}_{x_{i_{\ell}} \in V_{\ell}\; \forall i_{\ell} \in U_{\ell}} \Bigg[ \prod_{ij \in E(F)} G(x_i, x_j) \Bigg].$$
\end{definition}

\begin{remark}
This definition depends not only on the edge set of the considered graphs $F$ and $G$ but also on their partition classes and how these classes are labeled;
we shall assume this data to be part of the description of partite graphs.
It is most commonly used when the vertex classes of the large graph $G$ are labelled by the vertices of the smaller graph $F$, which is then regarded as a $v(F)$-partite graph with a single vertex in each class.
\end{remark}

Using the notion of canonical homomorphisms one can easily obtain a generalization of the counting lemma adapted to the setting of partite graphs, whose proof is essentially identical to the one given (when proving Theorem \ref{quasigraphs2}) in the usual non-partite setting:

\begin{lem} [Counting lemma]
Let $F$ and $G$ be $\ell$-partite graphs with partition classes $(U_1, \dots, U_{\ell})$ and $(V_1, \dots, V_{\ell})$, respectively. For each vertex $j$ of $F$ let $\imath(j)$ be the index for which $j \in U_{\imath(j)}$, and denote the density of each bipartite graph $G[V_a, V_b]$ by $\delta_{ab}$.
If $G$ is $\varepsilon$-quasirandom, then we have
$$t_{can}(F, G) = \prod_{ij \in E(F)} \delta_{\imath(i) \imath(j)} \pm \varepsilon |F|.$$
\end{lem}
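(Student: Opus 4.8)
The plan is to imitate the telescoping argument used in the proof of $(i) \Rightarrow (ii)$ in Theorem \ref{quasigraphs2}, adapting each step to respect the partition structure. Write $V(F) = [m]$ and $E(F) = \{e_1, \dots, e_{|F|}\}$, with $e_k$ having endpoints $i_k, j_k$. Abbreviate the iterated expectation over $x_j \in V_{\imath(j)}$ for all $j \in [m]$ simply as $\mathbb{E}$, and write $\delta_k := \delta_{\imath(i_k)\imath(j_k)}$ for the relevant bipartite density. The first step is to expand the difference
$$t_{can}(F, G) - \prod_{k=1}^{|F|} \delta_k = \mathbb{E}\Bigg[ \prod_{k=1}^{|F|} G(x_{i_k}, x_{j_k}) - \prod_{k=1}^{|F|} \delta_k \Bigg]$$
as a telescoping sum $\sum_{k=1}^{|F|} \big(\prod_{\ell < k} \delta_\ell\big)\, \mathbb{E}\big[ (G(x_{i_k}, x_{j_k}) - \delta_k) \prod_{\ell > k} G(x_{i_\ell}, x_{j_\ell}) \big]$, and to apply the triangle inequality together with $0 \le \delta_\ell \le 1$ to reduce to bounding each term $\big| \mathbb{E}\big[ (G(x_{i_k}, x_{j_k}) - \delta_k) \prod_{\ell > k} G(x_{i_\ell}, x_{j_\ell}) \big] \big|$ by $\varepsilon$.

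The second step is to fix $k$ and bound this single term. Suppose $e_k$ runs between classes $V_a$ and $V_b$ (with $a = \imath(i_k)$, $b = \imath(j_k)$), and note that $a \ne b$ since $F$ is $\ell$-partite. Freeze all variables $x_j$ with $j \notin \{i_k, j_k\}$; the remaining product $\prod_{\ell > k} G(x_{i_\ell}, x_{j_\ell})$ factors as $a_k(x_{i_k})\, b_k(x_{j_k})\, c_k$, where $a_k$ collects the surviving edge-factors touching $i_k$, the function $b_k$ collects those touching $j_k$, and $c_k$ is the constant (in $x_{i_k}, x_{j_k}$) product of the remaining factors. Here $a_k$ takes values in $[0,1]$ and depends only on $x_{i_k} \in V_a$, likewise $b_k$ on $x_{j_k} \in V_b$, and $0 \le c_k \le 1$; crucially, because $F$ is $\ell$-partite, no surviving edge joins $i_k$ to $j_k$, so there is no leftover cross-term and the factorization is clean. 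Then
$$\Big| \mathbb{E}_{x_{i_k} \in V_a,\, x_{j_k} \in V_b}\big[ (G(x_{i_k}, x_{j_k}) - \delta_{ab})\, a_k(x_{i_k}) b_k(x_{j_k}) \big] \Big| \le \| G[V_a, V_b] - \delta_{ab} \|_{\square} \le \varepsilon$$
by the weighted formulation of the cut norm (the version with the maximum over functions into $[0,1]$, valid since the expression is bilinear), using that $G$ is $\varepsilon$-quasirandom. Averaging this bound over the frozen variables keeps it at most $\varepsilon$, so each of the $|F|$ telescoping terms is bounded by $\varepsilon$ and we conclude $|t_{can}(F,G) - \prod_{ij \in E(F)} \delta_{\imath(i)\imath(j)}| \le \varepsilon |F|$.

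The only genuinely new point compared to the non-partite counting lemma is the bookkeeping in the second step: one must check that when the two endpoints of $e_k$ are released, the rest of the product really does split as a function of $x_{i_k}$ alone times a function of $x_{j_k}$ alone (times a constant), with both functions bounded in $[0,1]$, so that the weighted cut norm of the single bipartite piece $G[V_a,V_b]$ can be invoked. The $\ell$-partite hypothesis is exactly what makes this work, since it forbids an edge (other than $e_k$ itself, which has already been extracted) between the classes of $i_k$ and $j_k$ — without it there could be a residual $G(x_{i_k}, x_{j_k})$ factor spoiling the factorization. Everything else is a routine transcription of the earlier argument, with the single density $\delta$ replaced throughout by the appropriate $\delta_{\imath(i)\imath(j)}$, so I expect this step to be the main (and only) obstacle, and a mild one at that.
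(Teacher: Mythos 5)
Your proof is correct and takes exactly the route the paper intends: the paper states the partite counting lemma but deliberately omits the proof, remarking only that it is ``essentially identical'' to the non-partite argument for $(i)\Rightarrow(ii)$ in Theorem~\ref{quasigraphs2}, which is precisely the telescoping-plus-cut-norm transcription you carry out. The step-by-step bookkeeping (freezing all variables except $x_{i_k},x_{j_k}$, splitting the residual product into $a_k(x_{i_k})\, b_k(x_{j_k})\, c_k$ with all factors valued in $[0,1]$, and invoking the weighted cut norm of $G[V_a,V_b]$) is what the paper has in mind.

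One small correction to your closing remark. You attribute the absence of a leftover cross-term $G(x_{i_k},x_{j_k})$ to the $\ell$-partite hypothesis, but $\ell$-partiteness does \emph{not} forbid edges between $V_a$ and $V_b$ other than $e_k$; it only forbids edges within a single class. What actually guarantees that no surviving factor $G(x_{i_\ell},x_{j_\ell})$ depends on both $x_{i_k}$ and $x_{j_k}$ is simply that $F$ is a graph rather than a multigraph, so $e_k$ is the unique edge with endpoints $\{i_k,j_k\}$, and after it is extracted every remaining factor involves at most one of those two variables. (Another edge of $F$ between $V_a$ and $V_b$, say $\{i',j'\}\ne\{i_k,j_k\}$, causes no trouble: its factor depends only on $x_{i_k}$, or only on $x_{j_k}$, or on neither, and lands cleanly in $a_k$, $b_k$, or $c_k$.) Where the $\ell$-partite hypothesis genuinely enters is the point you mention in passing: it forces $\imath(i_k)\ne\imath(j_k)$, so that $G[V_a,V_b]$ is a bona fide bipartite graph whose cut norm is controlled by $\varepsilon$-quasirandomness, and it is also what makes the canonical homomorphism density $t_{can}$ and the densities $\delta_{ab}$ meaningful objects in the first place.
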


It is also possible to generalize the main equivalence theorem for quasirandom graphs to the setting of quasirandom \emph{bipartite} graphs.
In order to do so we only need to substitute the usual adjacency matrix of a bipartite graph for its bipartite adjacency
matrix,\footnote{If $G$ is a bipartite graph on $(V_1, V_2)$, then its \emph{bipartite adjacency matrix} is the $|V_1| \times |V_2|$ matrix whose element at position $(i, j)$ (with $i \in V_1$ and $j \in V_2$) is $1$ if $ij \in E(G)$ and $0$ otherwise.}
and let the definition of polynomial equivalence take into account the size of \emph{each one} of the partition classes of the graph
(so they are both assumed to be large enough depending on the parameters $c_i$).

For ease of reference we will present this generalization here, but leave the necessary modifications in the proof to the interested reader.

\begin{thm}
Let $G$ be a bipartite graph on $(V_1, V_2)$ with edge density $\delta$.
Then the following statements are polynomially equivalent:
\begin{itemize}
    \item[$(i)$] \emph{$G$ has low discrepancy:} $\| G - \delta \|_{\square} \leq c_1$.
    \item[$(ii)$] \emph{$G$ correctly counts all bipartite graphs:} $t_{can}(F, G) = \delta^{|F|} \pm c_2 |F|$ for all bipartite graphs $F$.
    \item[$(iii)$] \emph{$G$ has few 4-cycles:} $t_{can}(C_4, G) \leq \delta^4 + c_3$.
    \item[$(iv)$] \emph{Only the first singular value is large:} $\sigma_1 = (\delta \pm c_4) |V_1|^{1/2} |V_2|^{1/2}$ and $\sigma_2$ is at most $c_4 |V_1|^{1/2} |V_2|^{1/2}$, where $\sigma_1, \sigma_2$ are the two largest singular values of the bipartite adjacency matrix of $G$.
\end{itemize}
\end{thm}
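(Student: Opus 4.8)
The plan is to prove the cycle of implications $(i) \Rightarrow (ii) \Rightarrow (iii) \Rightarrow (iv) \Rightarrow (i)$ following the proof of Theorem \ref{quasigraphs2} almost verbatim, with the adjacency matrix replaced everywhere by the bipartite adjacency matrix $M \in \{0,1\}^{V_1 \times V_2}$ of $G$ and eigenvalues replaced by singular values. The implication $(i) \Rightarrow (ii)$ needs no new argument: it is precisely the case $\ell = 2$ of the Counting Lemma stated above, in which $G$ carries the single bipartite density $\delta_{12} = \delta$, so $t_{can}(F, G) = \delta^{|F|} \pm c_1 |F|$ and one takes $c_2 = c_1$. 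Likewise $(ii) \Rightarrow (iii)$ is just the instance $F = C_4$ (regarded as the bipartite graph with two vertices in each part), and since $|C_4| = 4$ one takes $c_3 = 4 c_2$.

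For $(iii) \Rightarrow (iv)$ the point is that canonical $4$-cycles are counted by $M M^t$. Expanding the defining product gives
\[
    t_{can}(C_4, G) \;=\; \frac{\operatorname{tr}\!\big((M M^t)^2\big)}{|V_1|^2 |V_2|^2} \;=\; \frac{1}{|V_1|^2 |V_2|^2} \sum_{i} \sigma_i^4 \;\geq\; \frac{\sigma_1^4 + \sigma_2^4}{|V_1|^2 |V_2|^2},
\]
while testing the largest singular value against the normalised all-ones vectors $e = \mathbf{1}_{V_1}/|V_1|^{1/2}$ and $f = \mathbf{1}_{V_2}/|V_2|^{1/2}$ gives the lower bound $\sigma_1 \geq e^t M f = \delta\, |V_1|^{1/2} |V_2|^{1/2}$. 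Feeding the hypothesis $t_{can}(C_4, G) \leq \delta^4 + c_3$ into the displayed inequality then forces $\sigma_1 \leq (\delta + c_3^{1/4}) |V_1|^{1/2} |V_2|^{1/2}$ and $\sigma_2 \leq c_3^{1/4} |V_1|^{1/2} |V_2|^{1/2}$, so one takes $c_4 = c_3^{1/4}$.

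The implication $(iv) \Rightarrow (i)$ is where the only genuinely new work lies, and it is the step I expect to be the main obstacle. Let $M = \sum_i \sigma_i u_i v_i^t$ be a singular value decomposition, with $\{u_i\}$ orthonormal in $\R^{|V_1|}$ and $\{v_i\}$ orthonormal in $\R^{|V_2|}$, and let $J = |V_1|^{1/2} |V_2|^{1/2}\, e f^t$ be the $V_1 \times V_2$ all-ones matrix. I would decompose
\[
    M - \delta J \;=\; \underbrace{(\sigma_1 - \delta |V_1|^{1/2} |V_2|^{1/2})\, u_1 v_1^t}_{M_1} \;+\; \underbrace{\sum_{i \geq 2} \sigma_i u_i v_i^t}_{M_2} \;+\; \underbrace{\delta |V_1|^{1/2} |V_2|^{1/2}\, (u_1 v_1^t - e f^t)}_{M_3},
\]
so that $\|M_1\|_{sp} = \big|\sigma_1 - \delta |V_1|^{1/2} |V_2|^{1/2}\big| \leq c_4 |V_1|^{1/2} |V_2|^{1/2}$ and $\|M_2\|_{sp} = \sigma_2 \leq c_4 |V_1|^{1/2} |V_2|^{1/2}$ by $(iv)$. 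Writing $u_1 v_1^t - e f^t = (u_1 - e) v_1^t + e (v_1 - f)^t$ gives $\|M_3\|_{sp} \leq \delta |V_1|^{1/2} |V_2|^{1/2} \big( \|u_1 - e\|_2 + \|v_1 - f\|_2 \big)$, so what remains is to control the distance of the leading left and right singular vectors from $e$ and $f$. This is the two-sided analogue of the ``near-regularity'' computation in the non-partite proof: after flipping the signs of $u_1$ and $v_1$ simultaneously if necessary — which is legitimate, and makes $\mu := u_1^t e \geq 0$ and $v_1^t f \geq 0$, since $\sigma_1 (u_1^t e)(v_1^t f) \geq e^t M f - \sigma_2 > 0$ — one decomposes $e = \mu u_1 + e'$ with $e' \perp u_1$, notes $\|e'\|_2 \leq 1$, and estimates
\[
    \delta\, |V_1|^{1/2} |V_2|^{1/2} \;=\; e^t M f \;\leq\; \|M^t e\|_2 \;\leq\; \mu \sigma_1 + \sigma_2 \|e'\|_2 \;\leq\; \big(\mu (\delta + c_4) + c_4\big) |V_1|^{1/2} |V_2|^{1/2},
\]
whence $\mu \geq \tfrac{\delta - c_4}{\delta + c_4} \geq 1 - \tfrac{2 c_4}{\delta}$ and $\|u_1 - e\|_2^2 = 2(1 - \mu) \leq 4 c_4 / \delta$; the identical argument with $M$ in place of $M^t$ bounds $\|v_1 - f\|_2$. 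The factor $\delta$ in front of $M_3$ cancels the $\delta^{-1/2}$, giving $\|M_3\|_{sp} \leq 4 c_4^{1/2} |V_1|^{1/2} |V_2|^{1/2}$, hence $\|M - \delta J\|_{sp} \leq 6 c_4^{1/2} |V_1|^{1/2} |V_2|^{1/2}$. Finally, for $X \subseteq V_1$ and $Y \subseteq V_2$, Cauchy--Schwarz yields
\[
    \frac{1}{|V_1| |V_2|} \big| \mathbf{1}_X^t (M - \delta J) \mathbf{1}_Y \big| \;\leq\; \frac{\|M - \delta J\|_{sp}\, |X|^{1/2} |Y|^{1/2}}{|V_1| |V_2|} \;\leq\; 6 c_4^{1/2},
\]
which is exactly property $(i)$ with $c_1 = 6 c_4^{1/2}$. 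Apart from this two-sided singular-vector perturbation and the accompanying sign-consistency bookkeeping, the whole argument is a routine transcription of the square case to rectangular matrices, and all bounds obtained are polynomial in the $c_i$ as required.
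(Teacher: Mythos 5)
Your proof is correct and follows exactly the approach the paper intends, since the paper states only that the bipartite theorem follows by routine modifications of the non-partite proof of Theorem \ref{quasigraphs2} and leaves the details to the reader: you correctly substitute the bipartite adjacency matrix and singular values throughout, use the partite counting lemma for $(i)\Rightarrow(ii)$, identify $t_{can}(C_4,G)=\operatorname{tr}((MM^t)^2)/(|V_1|^2|V_2|^2)$ for $(iii)\Rightarrow(iv)$, and carry out the two-sided singular-vector perturbation (together with the necessary simultaneous sign flip, justified via $\sigma_1(u_1^te)(v_1^tf)\geq e^tMf-\sigma_2>0$) for $(iv)\Rightarrow(i)$, arriving at the same constants as the square case.
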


\section{Uniformity and quasirandomness in additive groups} \label{GroupsSection}

Another fruitful setting for studying quasirandomness is that of \emph{subsets of additive groups}, usually called \emph{additive sets} for short, which are the main subjects of study in the area of additive combinatorics.

\begin{definition}
An \emph{additive group} is an Abelian group $G$ written additively (that is, with group operation denoted by $+$ and identity element by $0$).
We also define a multiplication operation $nx \in G$ for all $n \in \Z$ and $x \in G$ in the usual way:
$0x = 0$, $nx = x + x + \cdots + x$ ($n$ times) if $n>0$,
and $nx = (-n) (-x)$ if $n<0$.
\end{definition}

\begin{remark}
All additive groups considered here will be \emph{finite}, as this is the most natural setting for the kind of results we are interested in.
Most results regarding finite subsets of integers can also be (and many times \emph{are}) analyzed in this framework, by restricting to the first $n$ positive integers for some large enough $n$ and then embedding $[n]$ into the cyclic group $\Z_N$ for some $N$ sufficiently large to prevent `wrapping around'.
\end{remark}

In this setting our main goal is to identify suitable properties which are satisfied by randomly chosen sets (with high probability) and which capture the essence of such `random lack of structure', then study how these properties relate to each other and what interesting consequences one can deduce from them.

Let us now be more specific about which notion of quasirandomness we are interested in.
Since the only kind of structure intrinsic to this setting is that which comes from the group operation, we intuitively think of quasirandom sets as those which have no correlation with the additive structure of the group it is inserted in.
An interesting and useful way of making this idea precise is by using the \emph{Fourier transform}, whose use in additive problems in number theory dates back to Vinogradov's seminal work on his three primes theorem.

We shall now recall the basic definitions and results regarding Fourier analysis on additive groups that will be useful for us.
An excellent source for more details on Fourier analysis and its generalizations in additive combinatorics is Gowers' survey \cite{GeneralizationsFourier}, and we owe much of our presentation to that paper.

\subsection{Review of Fourier analysis on additive groups}

Let $G$ be a finite additive group.
In words, the Fourier transform of a function $f: G \rightarrow \mathbb{C}$ measures the correlation between $f$ and the \emph{characters} of the group $G$.
Let us then take a look at those first:

\begin{definition}
The \emph{characters} of an additive group $G$ are the group homomorphisms from $G$ to the complex multiplicative group $\mathbb{C}^{\times}$.
\end{definition}

More explicitly, a character is a map $\gamma: G \rightarrow \mathbb{C} \setminus \{0\}$ satisfying
$$\gamma(x+y) = \gamma(x)\gamma(y) \text{ for all } x, y \in G.$$
From this formula we immediately obtain that $\gamma(0) = 1$ (where 0 represents the identity element of $G$) and $\gamma(x-y) = \gamma(x)/\gamma(y)$.
Moreover, since the order of every element in $G$ divides its size $|G|$, we have that $|G| x := x+x+\dots+x$ ($|G|$ times) is equal to $0$ in $G$, and so $\gamma(x)^{|G|}=\gamma(0)=1$;
we conclude that $\gamma$ takes values on the $|G|$-th roots of unity, and in particular $\gamma(x)^{-1} = \overline{\gamma(x)}$.

It is easy to see that the pointwise multiplication (or division) of two characters is still a character, as is the identically one function $\mathbf{1}$ (we call it the \emph{trivial character}).
It follows that the set of characters of $G$ forms an Abelian group (with group operation of pointwise multiplication), which we call the \emph{dual group} of $G$ and denote by $\widehat{G}$.

The notion of characters in this generality might seem rather abstract at first, so the reader should keep in mind the following important example:

\begin{ex} \label{exmodn}
Let $G = \Z_n$ be the cyclic group of integers modulo $n$.
Since this group is generated by the element $1$, the value of the character at $1$ determines its value at every other element;
as we have seen that it takes values on the $n$-th roots of unity, it follows that the characters of $G$ are given by $\chi_r: x \mapsto e^{2 \pi i rx/n}$ for $r \in \Z_n$.
The dual group $\widehat{\Z_n}$ is then given by $\{\chi_r: r \in \Z_n\}$ with pointwise multiplication, and it is easy to see that $\chi_r \cdot \chi_s = \chi_{r+s}$ for all $r, s \in \Z_n$.
The map $r \mapsto \chi_r$ thus gives an isomorphism from $\Z_n$ to $\widehat{\Z_n}$.
\end{ex}

Together with the \emph{structure theorem of finite Abelian groups}, this example gives an explicit formula for the characters of any finite additive group $G$.
Indeed, the structure theorem gives a decomposition of $G$ as a direct product of cyclic groups $\Z_n$, and it is easy to show that the characters of $G$ are exactly the products of the characters of the cyclic groups in this decomposition.
This also shows that $G$ and $\widehat{G}$ are isomorphic, since this is true for the cyclic groups.

Another interesting example to consider is that of finite vector spaces $\F_p^n$ over a finite field of prime order $\F_p$, which are also frequent subjects of study in additive combinatorics:

\begin{ex}
Let $G = \F_p^n$, for some prime $p$ and integer $n \geq 1$.
By its obvious decomposition into a direct sum of $n$ copies of $\F_p \cong \Z_p$, we conclude from the last example that its characters are given by $\chi_y: x \mapsto e^{2 \pi i x \cdot y/p}$ for $y \in \F_p^n$ (where $\cdot$ here denotes the inner product in $\F_p^n$).
\end{ex}

A very useful property of characters is that they satisfy the following \emph{orthogonality relations}:
\begin{equation} \label{orthog}
    \mathbb{E}_{x \in G}[\chi(x)] = \mathbbm{1}_{\{\chi = \mathbf{1}\}} \hspace{3mm} \text{and} \hspace{3mm} \sum_{\gamma \in \widehat{G}} \gamma(y) = |G| \mathbbm{1}_{\{y = 0\}}.
\end{equation}
Indeed, the case when $\chi$ is the trivial character $\mathbf{1}$ or when $y = 0$ is clear, so let us suppose $\chi$ is a non-trivial character and $y \in G$ is a non-zero element such that $\chi(y) \neq 1$.
Since
$$\mathbb{E}_{x \in G}[\chi(x)] = \mathbb{E}_{x \in G}[\chi(x+y)] = \chi(y)\mathbb{E}_{x \in G}[\chi(x)],$$
and similarly
$$\sum_{\gamma \in \widehat{G}} \gamma(y) = \sum_{\gamma \in \widehat{G}} (\chi \gamma)(y) = \chi(y) \sum_{\gamma \in \widehat{G}} \gamma(y),$$
it follows that $\mathbb{E}_{x \in G}[\chi(x)] = 0$ and $\sum_{\gamma \in \widehat{G}} \gamma(y) = 0$.

Using these relations one can easily prove that the characters form an orthonormal basis of $\mathbb{C}^G$, with inner product given by $\langle f, g \rangle_{L^2(G)} := \mathbb{E}_{x \in G} \big[ f(x) \overline{g(x)} \big]$.
Indeed, if $\gamma$ and $\chi$ are two distinct characters, then $\gamma \chi^{-1} \in \widehat{G}$ is a non-trivial character and so by the orthogonality relations (\ref{orthog}) we have
$$\langle \gamma, \chi \rangle_{L^2(G)} = \mathbb{E}_{x \in G} \big [\gamma(x) \overline{\chi(x)} \big] = \mathbb{E}_{x \in G} \big[ \gamma\chi^{-1}(x) \big] = 0.$$
Moreover, there are $|\widehat{G}| = |G|$ distinct characters which are all linearly independent by orthogonality, and so they span the $|G|$-dimensional vector space $\mathbb{C}^G$.

It is then natural to expand a function $f$ in terms of this basis, and it is from doing so that we obtain the Fourier transform:

\begin{definition}
Given a function $f: G \rightarrow \mathbb{C}$, we define its \emph{Fourier transform} as the function $\widehat{f}: \widehat{G} \rightarrow \mathbb{C}$ given by
$$\widehat{f}(\gamma) := \langle f, \gamma \rangle_{L^2(G)} = \mathbb{E}_{x \in G} \big[ f(x) \overline{\gamma(x)} \big].$$
\end{definition}

Again, it might be instructive to keep the following example in mind, which is very similar to the usual Fourier transform on the circle $\R/\Z$.

\begin{ex}
If $G = \Z_n$, the Fourier transform translates into the usual \emph{discrete Fourier transform}:
using the same notation as in Example \ref{exmodn}, we can write $\widehat{f}(\chi_r) = \mathbb{E}_{x \in \Z_n} \big[ f(x) e^{-2 \pi i rx/n} \big]$.
Note the similarity between this formula and the formula $\widehat{f}(\xi) = \int_0^1 f(x) e^{-2 \pi i \xi x} dx$ for the classical Fourier transform on the circle $\R/\Z$ (which will not be used in this paper).
\end{ex}

Writing a function $f$ in the orthonormal basis of characters immediately gives us the \emph{Fourier inversion formula}:
$$f(x) \,=\, \sum_{\gamma \in \widehat{G}} \langle f,\, \gamma \rangle_{L^2(G)} \gamma(x) \,=\, \sum_{\gamma \in \widehat{G}} \widehat{f}(\gamma) \gamma(x).$$

Note that so far we have always used the expectation notation $\mathbb{E}_{x \in G}$ for the `physical space' $G$ and the usual sum $\sum_{\gamma \in \widehat{G}}$ for the `frequency space' $\widehat{G}$.
As a matter of fact, even though $G$ and $\widehat{G}$ are isomorphic, it is more convenient to use different measures on them:
\begin{itemize}
    \item For $G$ we use the normalized measure $\mathbb{E}_{x \in G}$, and denote the associated Euclidean space $\mathbb{C}^G$ by $L^2(G)$.
    \item For $\widehat{G}$ we use the counting measure $\sum_{\gamma \in \widehat{G}}$, and denote the associated Euclidean space $\mathbb{C}^{\widehat{G}}$ by $\ell^2(\widehat{G})$.
\end{itemize}
This is done so that the Fourier transform becomes an \emph{isometry} from $L^2(G)$ to $\ell^2(\widehat{G})$,
a fact that follows easily from the Fourier inversion formula and orthonormality of the characters:
\begin{align*}
    \langle f,\, f \rangle_{L^2(G)} \,&=\, \Bigg\langle \sum_{\gamma \in \widehat{G}} \widehat{f}(\gamma) \gamma,\, \sum_{\chi \in \widehat{G}} \widehat{f}(\chi) \chi \Bigg\rangle_{L^2(G)} \\
    &=\, \sum_{\gamma, \chi \in \widehat{G}} \widehat{f}(\gamma) \overline{\widehat{f}(\chi)} \,\langle \gamma,\, \chi \rangle_{L^2(G)} \\
    &=\, \sum_{\gamma \in \widehat{G}} \widehat{f}(\gamma) \overline{\widehat{f}(\gamma)} \\
    &=\, \big\langle \widehat{f},\, \widehat{f} \big\rangle_{\ell^2(\widehat{G})}.
\end{align*}
This result, which can also be written as $\|f\|_{L^2} = \|\widehat{f}\|_{\ell^2}$, is known as \emph{Parseval's identity}.

\subsection{Uniform additive sets}

Let us now return to our main subject of study in this section, namely quasirandom subsets of additive groups.

To see the connection between Fourier analysis and quasirandomness, we make the following simple observation:
the Fourier transform of a function $f$ evaluated at a character $\gamma$ gives how much $f$ correlates with $\gamma$.
Since the characters of $G$ encode the additive structure of the group, correlation with a (non-trivial) character is a good measure of how much additive structure a given function or set has.

With this in mind, we can now define a measure of how quasirandom or uniform a given function/set is:

\begin{definition}
A function $f: G \rightarrow \R$ is said to be \emph{Fourier $\varepsilon$-uniform} if $|\widehat{f}(\gamma)| \leq \varepsilon$ for all $\gamma \in \widehat{G} \setminus \{\mathbf{1}\}$.
A set $A \subseteq G$ is Fourier $\varepsilon$-uniform if its indicator function is.
\end{definition}

We then informally say that a set $A$ is uniform if it is Fourier $\varepsilon$-uniform for some small $\varepsilon > 0$.
Note that $\widehat{A}(\mathbf{1})$ is just the density of $A$ in $G$, which is the reason why we take the trivial character out of our definition.

It is easy to show that random sets are very uniform with high probability, providing a first indication that this notion is a good measure of quasirandomness.
Indeed, suppose $A \subseteq G$ is a random set with $\mathbb{P}(a \in A) = p$ independently for all $a \in G$ (and some $0 < p < 1$ fixed).
For a given nontrivial character $\gamma \in \widehat{G}$, let us define the families of random variables $(X_a)_{a \in G}$ and $(Y_a)_{a \in G}$ by
$$X_a := A(a) \text{Re}(\gamma(a)) \hspace{3mm} \text{and} \hspace{3mm} Y_a := -A(a) \text{Im}(\gamma(a)) \hspace{3mm} \text{for all } a \in G.$$
Denoting their sums by $X := \sum_{a \in G} X_a$, $Y := \sum_{a \in G} Y_a$ we see that
\begin{align*}
    &\mathbb{E}[X] = p \,\text{Re} \bigg( \sum_{a \in G} \gamma(a) \bigg) =0, &\var(X) = p(1 - p) \sum_{a \in G} \text{Re}(\gamma(a))^2 \leq \frac{|G|}{4}, \\
    &\mathbb{E}[Y] = -p \,\text{Im} \bigg( \sum_{a \in G} \gamma(a) \bigg) =0, &\var(Y) = p(1 - p) \sum_{a \in G} \text{Im}(\gamma(a))^2 \leq \frac{|G|}{4},
\end{align*}
and by definition $X + iY = |G| \widehat{A}(\gamma)$.
Using Chernoff's inequality (Lemma \ref{Chernoff}) for each of these families separately with $\lambda = \varepsilon \sqrt{2|G|}$ (for some small enough $\varepsilon > 0$) we conclude that
$$\mathbb{P} \big( |\widehat{A}(\gamma)| \geq \varepsilon \big) \,\leq\, \mathbb{P} \bigg( |X| \geq \frac{\varepsilon |G|}{\sqrt{2}} \bigg) + \mathbb{P} \bigg( |Y| \geq \frac{\varepsilon |G|}{\sqrt{2}} \bigg) \,\leq\, 4 e^{-\varepsilon^2 |G|/2}.$$
Using union bound over all $\gamma \in \widehat{G} \setminus \{\mathbf{1}\}$, we conclude that\footnote{We in fact obtain from this argument that $A$ is (say) Fourier $\sqrt{5 \log |G|/|G|}$-uniform with probability at least $1 - 1/|G|$, provided $|G|$ is large enough depending on $p \in (0, 1)$.} $A$ is Fourier $\varepsilon$-uniform with high probability for any $\varepsilon > 0$ fixed and $|G|$ large.

As with graphs, there are several other natural properties usually satisfied by random subsets $A \subseteq G$ that are all roughly equivalent to $A$ being uniform, and this provides a much stronger indication that uniformity is a good measure of quasirandomness.
We can then obtain a similar result relating such properties as the one we got for quasirandom graphs (see Theorem \ref{linearsets} below).

In fact, the connection to quasirandom graphs is rather strong, as will be made clear by considering the \emph{Cayley graph} of a uniform set $A$.
For our purposes it will be better to consider a slightly different definition of Cayley graphs than the usual one, which admits a natural generalization to hypergraphs as we will see in Section \ref{Cayley}.

\begin{definition}
Given a subset $A \subseteq G$, we define its \emph{Cayley graph} $\Gamma_A$ by
$$V(\Gamma_A) = G, \hspace{3mm} E(\Gamma_A) = \{xy: x + y \in A\}.$$
\end{definition}

With these definitions we can now state our main result of this section.
It was first obtained by Chung and Graham \cite{QuasirandomZn} in the particular case of the group $\Z_n$, but the methods we use here work just as well for any other finite additive group $G$.

\begin{thm}[Equivalence theorem for uniform sets] \label{linearsets}
Let $G$ be an additive group of order $n$ and let $A \subseteq G$ be a set of size $|A| = \delta n$.
Then the following are polynomially equivalent:
\begin{itemize}
    \item[$(i)$] \emph{Fourier uniformity:}
    $|\widehat{A}(\gamma)| \leq c_1$ for all non-trivial characters $\gamma$.
    \item[$(ii)$] \emph{Additive quadruples:}
    There are at most $(\delta^4 + c_2)n^3$ solutions in $A$ of the equation $x + y = z + w$.
    \item[$(iii)$] \emph{Strong translation:}
    For all sets $B \subseteq G$, all but at most $c_3 n$ elements $x \in G$ satisfy $|A \cap (B + x)| = \delta |B| \pm c_3 n$.
    \item[$(iv)$] \emph{Weak translation:}
    All but at most $c_4 n$ elements $x \in G$ satisfy $|A \cap (A + x)| = \delta^2 n \pm c_4 n$.
    \item[$(v)$] \emph{Cayley graph:}
    The Cayley graph $\Gamma_A$ is $c_5$-quasirandom.
\end{itemize}
\end{thm}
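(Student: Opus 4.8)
The plan is to establish the cycle of implications $(i) \Rightarrow (ii) \Rightarrow (iii) \Rightarrow (iv) \Rightarrow (i)$, and then to handle property $(v)$ separately by relating the Cayley graph $\Gamma_A$ directly to the Fourier coefficients of $A$ (for instance showing $(i) \Leftrightarrow (v)$ or $(v) \Leftrightarrow (iii)$). The key computational device throughout will be Parseval's identity together with the convolution identity $\widehat{f \ast g} = |G|\, \widehat{f}\, \widehat{g}$, which translates additive counting problems into Fourier space.

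First I would prove $(i) \Rightarrow (ii)$. The number of solutions of $x+y=z+w$ inside $A$ equals $\sum_{t \in G} (A \ast A)(t)^2 = |G|^2 \sum_{\gamma \in \widehat{G}} |\widehat{A}(\gamma)|^4$ (by Parseval applied to $A \ast A$, whose transform is $|G| \widehat{A}^2$). Splitting off the trivial character gives $\delta^4 n^3$ as the main term, and the remaining sum is bounded by $(\max_{\gamma \neq \mathbf{1}} |\widehat{A}(\gamma)|^2) \sum_{\gamma} |\widehat{A}(\gamma)|^2 \le c_1^2 \cdot \delta$ (again Parseval), yielding at most $(\delta^4 + c_1^2)n^3$ solutions. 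For $(ii) \Rightarrow (i)$ (which I would prove as part of closing the loop, or simply note it runs backwards) one observes that the additive-quadruple count is exactly $n^3 \sum_\gamma |\widehat{A}(\gamma)|^4 \ge \delta^4 n^3 + n^3 |\widehat{A}(\gamma_0)|^4$ for the largest nontrivial coefficient $\gamma_0$, so a bound of $(\delta^4 + c_2)n^3$ forces $|\widehat{A}(\gamma_0)| \le c_2^{1/4}$. For $(i) \Rightarrow (iii)$: write $|A \cap (B+x)| = \sum_y A(y) B(y-x) = (A \ast \widetilde{B})(x)$ where $\widetilde{B}(z) = B(-z)$; the deviation of this from $\delta|B|$ is a function whose Fourier transform is supported on nontrivial characters with coefficients $|G|\widehat{A}(\gamma)\overline{\widehat{B}(\gamma)}$, so its $L^2$-average over $x$ is $\sum_{\gamma \neq \mathbf 1} |\widehat{A}(\gamma)|^2 |\widehat{B}(\gamma)|^2 |G|^2 \le c_1^2 |G| \cdot |B| \le c_1^2 |G|^2$ by Parseval on $B$; Markov's inequality then gives that all but $O(c_1^{2/3} n)$ values of $x$ have deviation at most $c_1^{1/3} n$. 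The implication $(iii) \Rightarrow (iv)$ is immediate by taking $B = A$. Finally $(iv) \Rightarrow (i)$: the identity $\mathbb{E}_x \big[ (|A \cap (A+x)| - \delta^2 n)^2 \big] = n^2 \sum_{\gamma \neq \mathbf 1} |\widehat A(\gamma)|^4$ shows that if $|A \cap (A+x)| = \delta^2 n \pm c_4 n$ off an exceptional set of size $c_4 n$, then the left side is $O(c_4^2 n^2) + O(c_4 n \cdot n^2/n^2)\cdot$ — more carefully, bounding the exceptional contribution crudely by $c_4 n \cdot n^2 / n$ — so $\sum_{\gamma \ne \mathbf 1}|\widehat A(\gamma)|^4 = O(c_4)$, giving $|\widehat A(\gamma)| = O(c_4^{1/4})$ for every nontrivial $\gamma$.

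For property $(v)$, the cleanest route is to compute the cut norm of $\Gamma_A - \delta$ directly. For sets $X, Y \subseteq G$ we have $|E_{\Gamma_A}(X,Y)| = \sum_{x \in X, y \in Y} A(x+y)$, and expanding $A(x+y) = \sum_{\gamma} \widehat{A}(\gamma) \gamma(x)\gamma(y)$ gives $|E_{\Gamma_A}(X,Y)| = \sum_\gamma \widehat{A}(\gamma) \big(\sum_{x \in X}\gamma(x)\big)\big(\sum_{y \in Y}\gamma(y)\big)$; the trivial character contributes exactly $\delta |X||Y|$. The remaining sum is bounded, via Cauchy--Schwarz and Parseval applied to $X$ and $Y$, by $(\max_{\gamma \ne \mathbf 1}|\widehat A(\gamma)|) \cdot |G| \cdot |X|^{1/2}|Y|^{1/2} \le c_1 n^2$ using $|X|, |Y| \le n$. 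Hence $\|\Gamma_A - \delta\|_\square \le \max_{\gamma \ne \mathbf 1}|\widehat A(\gamma)|$, giving $(i) \Rightarrow (v)$ with $c_5 = c_1$. Conversely, to get $(v) \Rightarrow (i)$ one tests the cut norm against $X, Y$ built from a fixed nontrivial character $\gamma_0$: taking $X$ (resp.\ $Y$) to be the set where $\mathrm{Re}\,\gamma_0$ is positive, or more robustly using the weighted formulation of the cut norm and the fact that $\|\Gamma_A - \delta\|_\square$ controls $|\mathbb{E}_{x,y}[(A(x+y) - \delta) u(x) v(y)]|$ for all $u, v: G \to [0,1]$, and choosing $u, v$ to approximate $\overline{\gamma_0(x)}/2 + 1/2$; this recovers $|\widehat A(\gamma_0)| = O(c_5)$ up to a constant factor.

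The main obstacle I anticipate is not any single implication — each is a short Fourier computation — but rather tracking the polynomial dependence of the constants through the chain so that the final equivalences are genuinely \emph{polynomial} in the sense defined in the introduction, and in particular handling the exceptional-set bookkeeping in $(iii)$ and $(iv)$ (where one converts an $L^2$ bound into a statement about "all but $cn$ elements" via Markov, losing a fixed power) and in the reverse direction $(v) \Rightarrow (i)$, where approximating a complex character by $[0,1]$-valued functions in the weighted cut norm costs another constant factor that must be absorbed cleanly. Ensuring all these losses compose to a single polynomial bound, uniformly over the group $G$ and independent of which cyclic decomposition $G$ has, is the part that requires the most care.
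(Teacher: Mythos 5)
Your proof is correct, and it takes a genuinely different route from the paper's. The paper organizes the equivalences around property $(ii)$, which it first reinterprets as a statement about the $U^2$ norm via Lemma~\ref{U2l4} ($\|A\|_{U^2}^4 = \|\widehat{A}\|_{\ell^4}^4$); the chain is then $(i) \Leftrightarrow (ii) \Rightarrow (iii) \Rightarrow (iv) \Rightarrow (ii)$ and $(ii) \Leftrightarrow (v)$, with the step $(ii) \Rightarrow (iii)$ going through the generalized inner product $\langle A - \delta, B, A - \delta, B\rangle_{U^2}$ and the Gowers--Cauchy--Schwarz inequality, and $(ii) \Leftrightarrow (v)$ reduced to property $(iii)$ of Theorem~\ref{quasigraphs2} by the observation $t(C_4, \Gamma_A) = \|A\|_{U^2}^4$. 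You instead make $(i)$ the hub and work throughout with classical Fourier analysis: convolution identities and Parseval give the identity $\sum_x (|A\cap(B+x)| - \delta|B|)^2 = n^3\sum_{\gamma\neq\mathbf 1}|\widehat A(\gamma)|^2|\widehat B(\gamma)|^2$ directly (and similarly for $(iv) \Rightarrow (i)$), and for $(v)$ you compute $\|\Gamma_A - \delta\|_\square$ straight from the Fourier expansion of $A(x+y)$ rather than passing through the graph equivalence theorem. Both arguments are mathematically equivalent in this setting, but the paper's $U^2$-norm phrasing is chosen deliberately because it generalizes to the higher Gowers norms $U^k$ and to hypergraph quasirandomness (Sections~\ref{HigherDegUniformity}, \ref{Cayley}), where the Fourier transform no longer suffices; your Fourier route is more elementary and self-contained, and the direct computation $\|\Gamma_A - \delta\|_\square \le \max_{\gamma\neq\mathbf 1}|\widehat A(\gamma)|$ for $(i) \Rightarrow (v)$ actually yields a cleaner (linear) constant than the paper gets by detouring through Theorem~\ref{quasigraphs2}. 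One caution on the reverse direction $(v) \Rightarrow (i)$: testing the cut norm against sets of the form $\{\mathrm{Re}\,\gamma_0 > 0\}$ does not cleanly isolate $\widehat A(\gamma_0)$; you should stick with your ``more robust'' version, decomposing $\overline{\gamma_0} = a_1 - a_2 + i(a_3 - a_4)$ with each $a_j \colon G \to [0,1]$ and applying the weighted cut norm bound to the resulting $16$ bilinear terms, which gives $|\widehat A(\gamma_0)| \le 16\,\|\Gamma_A - \delta\|_\square$. There are also a couple of stray factors of $|G|$ in your convolution computations (e.g.\ the additive-quadruple count should be $n^3 \sum_\gamma |\widehat A(\gamma)|^4$, not $n^2 \sum_\gamma$) and the exponents in your $(i) \Rightarrow (iii)$ Markov step need to be harmonized so that both the exceptional set and the deviation bound use the same $c_3$, but these are bookkeeping slips, not gaps in the method.
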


\begin{remark}
Chung and Graham also proved (when $G = \Z_n$) that several other properties are polynomially equivalent to $(i) - (v)$, but we will restrict our attention to these five and refer the interested reader to their paper \cite{QuasirandomZn} for the full result;
see also \cite{LinearQuasirandomness}.
\end{remark}

\subsection{The $U^2$ norm} \label{U2section}

In order to prove Theorem \ref{linearsets}, it will be useful to introduce a new norm (due to Gowers \cite{NewProofLengthFour}) which also measures the uniformity of a function:

\begin{definition}
Given a real function $f: G \rightarrow \R$, we define its $U^2$ norm by
$$\left\|f\right\|_{U^2(G)} := \mathbb{E}_{x, h_1, h_2 \in G} \big[f(x) f(x + h_1) f(x + h_2) f(x + h_1 + h_2)\big]^{1/4}.$$
The notion of quasirandomness measured by the $U^2$ norm is called \emph{linear uniformity}.
\end{definition}

\begin{remark}
While not immediately obvious, this definition does indeed give a norm.
This follows, for instance, from Lemma \ref{U2l4} below.
\end{remark}

Note that quadruples of the form $(x,\, x+h_1+h_2,\, x+h_1,\, x+h_2)$ are the same as `additive quadruples' $(x, y, z, w)$ satisfying $x + y = z + w$, and so $\left\|f\right\|_{U^2(G)}^4$ can be seen as a weighted count of additive quadruples.
Property $(ii)$ of our last theorem might then serve as a motivation for this (perhaps mysterious-looking) definition.

Recall that we had already defined a measure for the uniformity of a function, based on its Fourier transform.
It turns out that the $U^2$ norm has a close connection to the Fourier transform, and these measures of uniformity are compatible with each other (at least in the case of bounded functions, as we are interested in here).
This is a simple consequence of the next lemma:

\begin{lem} \label{U2l4}
    For all real functions $f: G \rightarrow \R$, we have
    $\|f\|_{U^2(G)} = \|\widehat{f}\|_{\ell^4(\widehat{G})}$.
\end{lem}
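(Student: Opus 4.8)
The plan is to expand $\|f\|_{U^2(G)}^4$ directly using the Fourier inversion formula and collapse the resulting sum using the orthogonality relations \eqref{orthog}. First I would write each of the four factors $f(x)$, $f(x+h_1)$, $f(x+h_2)$, $f(x+h_1+h_2)$ in the expression
$$\|f\|_{U^2(G)}^4 = \mathbb{E}_{x, h_1, h_2 \in G}\big[f(x) f(x+h_1) f(x+h_2) f(x+h_1+h_2)\big]$$
as a sum over characters via $f(y) = \sum_{\gamma} \widehat{f}(\gamma)\gamma(y)$. Since $f$ is real-valued we have $\widehat{f}(\gamma^{-1}) = \overline{\widehat{f}(\gamma)}$, which I would use at the end, but for the bookkeeping it is cleanest to introduce four independent summation variables $\gamma_1, \gamma_2, \gamma_3, \gamma_4 \in \widehat{G}$, one for each factor, and exploit multiplicativity of characters: $\gamma_1(x)\gamma_2(x+h_1)\gamma_3(x+h_2)\gamma_4(x+h_1+h_2) = (\gamma_1\gamma_2\gamma_3\gamma_4)(x)\cdot(\gamma_2\gamma_4)(h_1)\cdot(\gamma_3\gamma_4)(h_2)$.

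Next I would take the expectation over $x, h_1, h_2$ inside the sum. Each of the three variables appears in exactly one character, so the orthogonality relation $\mathbb{E}_{x\in G}[\chi(x)] = \mathbbm{1}_{\{\chi = \mathbf{1}\}}$ applies three times independently: the $x$-average forces $\gamma_1\gamma_2\gamma_3\gamma_4 = \mathbf{1}$, the $h_1$-average forces $\gamma_2\gamma_4 = \mathbf{1}$, and the $h_2$-average forces $\gamma_3\gamma_4 = \mathbf{1}$. These three constraints leave a one-parameter family: setting $\gamma_4 = \gamma$, we get $\gamma_2 = \gamma^{-1}$, $\gamma_3 = \gamma^{-1}$, and then $\gamma_1 = (\gamma_2\gamma_3\gamma_4)^{-1} = \gamma$. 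So the quadruple $(\gamma_1,\gamma_2,\gamma_3,\gamma_4)$ collapses to $(\gamma, \gamma^{-1}, \gamma^{-1}, \gamma)$ with $\gamma$ ranging freely over $\widehat{G}$, and the sum becomes $\sum_{\gamma \in \widehat{G}} \widehat{f}(\gamma)\widehat{f}(\gamma^{-1})^2\widehat{f}(\gamma)$.

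Finally, using $\widehat{f}(\gamma^{-1}) = \overline{\widehat{f}(\gamma)}$ (valid because $f$ is real), this equals $\sum_{\gamma} \widehat{f}(\gamma)^2 \overline{\widehat{f}(\gamma)}^2 = \sum_{\gamma} |\widehat{f}(\gamma)|^4 = \|\widehat{f}\|_{\ell^4(\widehat{G})}^4$. Taking fourth roots gives the claimed identity. I do not anticipate a genuine obstacle here — this is a routine Fourier expansion — but the one place to be careful is the bookkeeping of which character constraints come from which averaging variable, and in particular verifying that the three constraints are consistent and cut the four-dimensional index set down to exactly one dimension (rather than zero); a clean way to present this is to solve the linear system over the group $\widehat{G}$ explicitly as above. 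A secondary point worth stating plainly is why $\widehat{f}(\gamma^{-1}) = \overline{\widehat{f}(\gamma)}$: it follows from $f$ being real together with $\gamma(x)^{-1} = \overline{\gamma(x)}$, so $\overline{\widehat{f}(\gamma)} = \mathbb{E}_{x}[f(x)\gamma(x)] = \mathbb{E}_x[f(x)\overline{\gamma^{-1}(x)}] = \widehat{f}(\gamma^{-1})$.
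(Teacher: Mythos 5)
Your proof is correct. It takes the dual route to the paper's: you start from the physical-space expression $\|f\|_{U^2(G)}^4$, expand each factor via Fourier inversion, and let orthogonality in the variables $x, h_1, h_2$ collapse the four summation characters $(\gamma_1,\gamma_2,\gamma_3,\gamma_4)$ down to the one-parameter family $(\gamma, \gamma^{-1}, \gamma^{-1}, \gamma)$. The paper instead starts from the frequency side: it expands $|\widehat{f}(\gamma)|^4$ as a four-fold expectation $\mathbb{E}_{x,y,z,w}\big[f(x)f(y)f(z)f(w)\gamma(-x-y+z+w)\big]$, sums over $\gamma$, uses the orthogonality relation $\sum_\gamma \gamma(y) = |G|\mathbbm{1}_{\{y=0\}}$ to pick out the constraint $x+y=z+w$, and then reparametrizes that constraint surface by $(x,h_1,h_2)$ to recognize the $U^2$ norm. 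Both arguments invoke the same two ingredients (orthogonality and $\widehat{f}(\gamma^{-1}) = \overline{\widehat{f}(\gamma)}$ for real $f$), but the paper uses the reality of $f$ at the very start when writing out $|\widehat{f}(\gamma)|^4$, whereas you defer it to the last line. Your version has the small advantage of making fully explicit, via the linear system over $\widehat{G}$, why exactly one free character survives; the paper's version is slightly shorter because the constraint $x+y=z+w$ is parametrized directly without any solving. Either presentation would be acceptable in the paper.
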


\begin{proof}
Since $f$ is real-valued, for all $\gamma \in \widehat{G}$ we have that
\begin{align*}
    |\widehat{f}(\gamma)|^4 &= \mathbb{E}_{x \in G} \big[ f(x) \overline{\gamma(x)} \big] \, \mathbb{E}_{y \in G} \big[ f(y) \overline{\gamma(y)} \big] \, \mathbb{E}_{z \in G} \big[ f(z) \gamma(z) \big] \, \mathbb{E}_{w \in G} \big[ f(w) \gamma(w) \big] \\
    &= \mathbb{E}_{x, y, z, w \in G} \big[ f(x) f(y) f(z) f(w) \gamma(-x-y+z+w) \big].
\end{align*}
Using the orthogonality relations of characters (\ref{orthog}) we then obtain
\begin{align*}
    \| \widehat{f} \|_{\ell^4(\widehat{G})}^4 &= \mathbb{E}_{x, y, z, w \in G} \bigg[ f(x) f(y) f(z) f(w) \sum_{\gamma \in \widehat{G}} \gamma(-x-y+z+w) \bigg] \\
    &= \mathbb{E}_{x, y, z, w \in G} \big[ f(x) f(y) f(z) f(w) \cdot |G| \mathbbm{1}_{\{ x + y = z + w \}} \big].
\end{align*}

Now we note that, when $x, \, h_1, \, h_2$ are uniformly distributed over $G$, the quadruple $(x,\, x+h_1+h_2,\, x+h_1,\, x+h_2)$ is uniformly distributed over all solutions in $G$ to $x+y=z+w$.
Thus the last expression is equal to
$$\mathbb{E}_{x, h_1, h_2 \in G} \big[ f(x) f(x + h_1 + h_2) f(x + h_1) f(x + h_2) \big] \,=\, \|f\|_{U^2(G)}^4,$$
finishing the proof.
\end{proof}

An important property of the $U^2$ norm, which might help explain why it is more suitable for us than the Fourier analytic notion of uniformity, is that it satisfies a kind of Cauchy-Schwarz inequality.
Let us define the \emph{generalized inner product} $\langle \cdot \rangle_{U^2(G)}$ by
$$\langle f_1, f_2, f_3, f_4 \rangle_{U^2(G)} := \mathbb{E}_{x, h_1, h_2 \in G} \big[ f_1(x) f_2(x + h_1) f_3(x + h_2) f_4(x + h_1 + h_2) \big],$$
so that $\|f\|_{U^2(G)} = \langle f, f, f, f \rangle_{U^2(G)}^{1/4}$.
We then have:

\begin{lem}[Gowers-Cauchy-Schwarz inequality] \label{U2GCS}
    For any real functions $f_1$, $f_2$, $f_3$, $f_4: G \rightarrow \R$ we have
    $$\langle f_1, f_2, f_3, f_4 \rangle_{U^2(G)} \leq \| f_1 \|_{U^2(G)} \| f_2 \|_{U^2(G)} \| f_3 \|_{U^2(G)} \| f_4 \|_{U^2(G)}.$$
\end{lem}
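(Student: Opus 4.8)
The plan is to prove the inequality by applying the ordinary Cauchy–Schwarz inequality twice, once to peel off the $h_1$-direction and once to peel off the $h_2$-direction, exploiting the product structure of the generalized inner product. First I would rewrite
$$\langle f_1, f_2, f_3, f_4 \rangle_{U^2(G)} = \mathbb{E}_{x, h_2 \in G} \Big[ \mathbb{E}_{h_1 \in G}\big[ f_1(x) f_2(x+h_1) \big] \cdot \big(\text{shift by } h_2\big) \Big],$$
more precisely grouping the four factors as a product of two terms, one depending on $(x, h_1, h_2)$ through the ``base'' point $x$ and one through the ``shifted'' point $x + h_2$. Reparametrising $y = x$, $y' = x + h_2$ so that $h_2 = y' - y$, the inner product becomes $\mathbb{E}_{y, y', h_1}\big[ f_1(y) f_2(y+h_1) f_3(y') f_4(y'+h_1) \big]$, which factors as $\mathbb{E}_{h_1}\big[ \mathbb{E}_{y}[f_1(y) f_2(y+h_1)] \cdot \mathbb{E}_{y'}[f_3(y') f_4(y'+h_1)] \big]$. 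Applying Cauchy–Schwarz in the $h_1$ average gives
$$\langle f_1, f_2, f_3, f_4 \rangle_{U^2(G)} \leq \Big( \mathbb{E}_{h_1} \big| \mathbb{E}_y[f_1(y)f_2(y+h_1)]\big|^2 \Big)^{1/2} \Big( \mathbb{E}_{h_1} \big| \mathbb{E}_{y'}[f_3(y')f_4(y'+h_1)]\big|^2 \Big)^{1/2}.$$

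Next I would observe that each of these two factors is itself a generalized inner product: expanding the square,
$$\mathbb{E}_{h_1} \big| \mathbb{E}_y[f_1(y)f_2(y+h_1)]\big|^2 = \mathbb{E}_{y, y', h_1}\big[ f_1(y) f_2(y+h_1) f_1(y') f_2(y'+h_1) \big] = \langle f_1, f_2, f_1, f_2 \rangle_{U^2(G)},$$
after undoing the reparametrisation (set $x = y$, $h_2 = y' - y$). So at this stage we have
$$\langle f_1, f_2, f_3, f_4 \rangle_{U^2(G)} \leq \langle f_1, f_2, f_1, f_2 \rangle_{U^2(G)}^{1/2} \, \langle f_3, f_4, f_3, f_4 \rangle_{U^2(G)}^{1/2}.$$
Now I repeat exactly the same maneuver on each of these two inner products, this time grouping the factors to peel off the $h_1$-direction (equivalently, pairing up according to whether the index $j$ of $f_j$ lies in the ``$+h_1$'' position), to arrive at
$$\langle f_1, f_2, f_1, f_2 \rangle_{U^2(G)} \leq \langle f_1, f_1, f_1, f_1 \rangle_{U^2(G)}^{1/2} \, \langle f_2, f_2, f_2, f_2 \rangle_{U^2(G)}^{1/2} = \|f_1\|_{U^2}^2 \|f_2\|_{U^2}^2,$$
and similarly for $\langle f_3, f_4, f_3, f_4 \rangle_{U^2(G)}$. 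Combining the three displayed inequalities yields the claim.

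The main thing to be careful about — and the only real obstacle — is the bookkeeping of which variable plays which role in each application of Cauchy–Schwarz: the generalized inner product has a symmetry group (the hypercube $\{0,1\}^2$ acting on the four arguments), and one must apply the two Cauchy–Schwarz steps along two ``independent'' coordinate directions of that cube so that the third step collapses everything to single-function norms. A clean way to handle this, which I would adopt, is to introduce doubling variables $h_1, h_1' $ and $h_2, h_2'$ explicitly and write $\langle f_1, f_2, f_3, f_4\rangle_{U^2(G)}^4 \le \mathbb{E}_{x, h_1, h_1', h_2, h_2'}\big[\prod_{\omega \in \{0,1\}^2} f_{\omega}\big(x + \omega_1 h_1^{(\cdot)} + \omega_2 h_2^{(\cdot)}\big)\big]$ after the two Cauchy–Schwarz steps, recognise the right-hand side as a positive quantity, and then note it equals $\prod_j \|f_j\|_{U^2}$-type expressions only after the collapse; since the paper only needs the inequality for the particular telescoping used in proving Theorem \ref{linearsets}, the two-step argument above suffices and I would present it in that concrete form to keep the reparametrisations transparent. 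No tool beyond the ordinary Cauchy–Schwarz inequality and the definition of $\langle\cdot\rangle_{U^2(G)}$ is needed.
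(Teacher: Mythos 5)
Your proof is correct and follows the same two-step Cauchy--Schwarz strategy as the paper, just packaged a little differently: the paper first establishes the two symmetric inequalities (CS over $h_1$ bounding by $\langle f_1,f_2,f_1,f_2\rangle\langle f_3,f_4,f_3,f_4\rangle$, and CS over $h_2$ bounding by $\langle f_1,f_1,f_3,f_3\rangle\langle f_2,f_2,f_4,f_4\rangle$) and then composes them, whereas you apply the first CS once and then apply the second CS directly to each of the two resulting intermediate inner products. One small bookkeeping slip worth fixing: in your second step you write ``peel off the $h_1$-direction,'' but to separate $f_1$ from $f_2$ in $\langle f_1, f_2, f_1, f_2\rangle_{U^2}$ one reparametrizes $z = x + h_1$ and applies Cauchy--Schwarz over the variable $h_2$; your parenthetical (pairing according to $+h_1$ position) has the right idea, but the main clause should read ``$h_2$-direction'' for consistency with your step 1, where ``peel off $h_1$'' meant ``apply CS over $h_1$.''
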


\begin{proof}
By the usual Cauchy-Schwarz inequality applied to the variable $h_1$, we see that
\begin{align*}
    \langle f_1, f_2, f_3, f_4 \rangle_{U^2}^2 &= \mathbb{E}_{x, h_1, h_2 \in G} \big[ f_1(x) f_2(x + h_1) f_3(x + h_2) f_4(x + h_1 + h_2) \big]^2 \\
    &= \mathbb{E}_{h_1} \Big[ \mathbb{E}_{x} \big[ f_1(x) f_2(x + h_1) \big] \cdot \mathbb{E}_{y} \big[f_3(y) f_4(y + h_1) \big] \Big]^2 \\
    &\leq \mathbb{E}_{h_1} \Big[\mathbb{E}_{x} \big[ f_1(x) f_2(x + h_1) \big]^2 \Big]
    \cdot \mathbb{E}_{h_1} \Big[ \mathbb{E}_{y} \big[ f_3(y) f_4(y + h_1) \big]^2 \Big] \\
    &= \langle f_1, f_2, f_1, f_2 \rangle_{U^2} \langle f_3, f_4, f_3, f_4 \rangle_{U^2}.
\end{align*}
Applying the same argument to the variable $h_2$ instead of $h_1$, we obtain
$$\langle f_1, f_2, f_3, f_4 \rangle_{U^2}^2 \leq \langle f_1, f_1, f_3, f_3 \rangle_{U^2} \langle f_2, f_2, f_4, f_4 \rangle_{U^2}.$$
We conclude by using both inequalities one after the other:
\begin{align*}
    \langle f_1, f_2, f_3, f_4 \rangle_{U^2}^4 &\leq \langle f_1, f_2, f_1, f_2 \rangle_{U^2}^2 \langle f_3, f_4, f_3, f_4 \rangle_{U^2}^2 \\
    &\leq \langle f_1, f_1, f_1, f_1 \rangle_{U^2} \langle f_2, f_2, f_2, f_2 \rangle_{U^2} \langle f_3, f_3, f_3, f_3 \rangle_{U^2} \langle f_4, f_4, f_4, f_4 \rangle_{U^2} \\
    &= \| f_1 \|_{U^2}^4 \| f_2 \|_{U^2}^4 \| f_3 \|_{U^2}^4 \| f_4 \|_{U^2}^4.
\end{align*}

\vspace*{-1.7\baselineskip}
\end{proof}

With these preparations, we are now ready to prove Theorem \ref{linearsets}.

\subsection{Proof of the equivalence theorem}

First of all, we note that condition $(ii)$ is exactly equivalent to saying that $\|A\|_{U^2(G)}^4 \leq \delta^4 + c_2$.
This will be the central property that we will use to prove the equivalences.

\begin{proof}[Proof of Theorem \ref{linearsets}]
$(i) \Rightarrow (ii)$:
Suppose $|\widehat{A}(\mathbf{1})| = \mathbb{E}_{x \in G}[A(x)] = \delta$ and $|\widehat{A}(\gamma)| \leq c_1$ for all $\gamma \in \widehat{G} \setminus \{\mathbf{1}\}$.
Then
\begin{align*}
	\|A\|_{U^2(G)}^4 = \| \widehat{A} \|_{\ell^4(\widehat{G})}^4 &= \delta^4 + \sum_{\gamma \neq \mathbf{1}} |\widehat{A}(\gamma)|^4 \\
	&\leq \delta^4 + \sum_{\gamma \neq \mathbf{1}} c_1^2 |\widehat{A}(\gamma)|^2 \\
	&\leq \delta^4 + c_1^2 \| \widehat{A} \|_{\ell^2(\widehat{G})}^2 \\
	&= \delta^4 + c_1^2 \| A \|_{L^2(G)}^2 \leq \delta^4 + c_1^2,
\end{align*}
so we can take $c_2 = c_1^2$.

\medskip
$(ii) \Rightarrow (i)$:
If $\|A\|_{U^2(G)}^4 \leq \delta^4 + c_2$, then
$$\delta^4 + c_2 \geq \| \widehat{A} \|_{\ell^4(\widehat{G})}^4 = \delta^4 + \sum_{\gamma \neq \mathbf{1}} |\widehat{A}(\gamma)|^4 \geq \delta^4 + \max_{\gamma \neq \mathbf{1}} |\widehat{A}(\gamma)|^4.$$
This implies that $\max_{\gamma \neq \mathbf{1}} |\widehat{A}(\gamma)| \leq c_2^{1/4}$, and so we can take $c_1 = c_2^{1/4}$.

\medskip
$(ii) \Rightarrow (iii)$:
We first note that, for any fixed $x \in G$, we have
$$\left| A \cap (B - x) \right| - \delta|B| \,=\, \sum_{y \in G} (A(y) - \delta) B(x + y).$$
Using this identity and the Gowers-Cauchy-Schwarz inequality we see that
\begin{align*}
    \sum_{x \in G} \big( \left| A \cap (B - x) \right| - \delta|B| \big)^2 &= \sum_{x \in G} \bigg( \sum_{y \in G} (A(y) - \delta) B(x + y) \bigg)^2 \\
    &= \sum_{x, y, z \in G} (A(y) - \delta) B(x + y) (A(z) - \delta) B(x + z) \\
    &= n^3 \langle A - \delta, \, B, \, A - \delta, \, B \rangle_{U^2(G)} \\
    &\leq n^3 \|A - \delta\|_{U^2(G)}^2.
\end{align*}
Defining the function $f(x) := A(x) - \delta$ in $G$, we easily see that $\widehat{f}(\mathbf{1}) = 0$ and $\widehat{f}(\gamma) = \widehat{A}(\gamma)$ for all $\gamma \in \widehat{G} \setminus \{\mathbf{1}\}$.
Supposing $\|A\|_{U^2(G)}^4 \leq \delta^4 + c_2$, we then obtain
$$\|A - \delta\|_{U^2(G)}^4 = \| \widehat{f} \|_{\ell^4(\widehat{G})}^4 = \| \widehat{A} \|_{\ell^4(\widehat{G})}^4 - \delta^4 = \|A\|_{U^2(G)}^4 - \delta^4 \leq c_2.$$

If less than $(1 - c_3) n$ values $x \in G$ satisfy $\left| A \cap (B - x) \right| = \delta|B| \pm c_3 n$, then
$$\sum_{x \in G} \big( \left| A \cap (B - x) \right| - \delta|B| \big)^2 > c_3 n \cdot (c_3 n)^2 = c_3^3 n^3.$$
It thus suffices to take $c_3 = c_2^{1/6}$ for this last inequality to be incompatible with our previous bound.

\medskip
$(iii) \Rightarrow (iv)$:
This is just a special case, and we may take $c_4 = c_3$.

\medskip
$(iv) \Rightarrow (ii)$:
As in the proof that $(ii) \Rightarrow (iii)$, we see that
$$\sum_{x \in G} |A \cap (A + x)|^2 = n^3 \|A\|_{U^2(G)}^4.$$
Assuming $c_4 \leq 1$ (as otherwise we may just take $c_2 = 1$), we conclude that
\begin{equation*}
    n^3 \|A\|_{U^2(G)}^4 \leq n \cdot (\delta^2 + c_4)^2 n^2 + c_4 n \cdot n^2 \leq (\delta^4 + 4 c_4) n^3.
\end{equation*}
We may then take $c_2 = 4 c_4$.

\medskip
$(ii) \Leftrightarrow (v)$:
We will show that property $(ii)$ applied to $A$ is actually the same as property $(iii)$ of Theorem \ref{quasigraphs2} for quasirandom graphs applied to $\Gamma_A$.
Indeed,
\begin{align*}
    t(C_4, \Gamma_A) &= \mathbb{E}_{a, b, c, d \in G} \big[ \Gamma_A(a, b) \Gamma_A(b, c) \Gamma_A(c, d) \Gamma_A(d, a) \big] \\
    &= \mathbb{E}_{a, b, c, d \in G} \big[ A(a+b) A(b+c) A(c+d) A(d+a) \big].
\end{align*}
Let us now make the change of variables $x := a+b$, $h_1 := c-a$, $h_2 := d-b$.
It is easy to see that $x, h_1, h_2$ are uniformly distributed on $G$, so the last expression is equal to
$$\mathbb{E}_{x, h_1, h_2 \in G} \big[ A(x) A(x+h_1) A(x+h_1+h_2) A(x+h_2) \big] = \|A\|_{U^2(G)}^4.$$
Thus $t(C_4, \Gamma_A) \leq \delta^4 + c_2$ if and only if $\|A\|_{U^2(G)}^4 \leq \delta^4 + c_2$, as wished.

By the proof of Theorem \ref{quasigraphs2}, we may then take $c_5 = 6c_2^{1/8}$ (for $(ii) \Rightarrow (v)$) or $c_2 = 4c_5$ (for $(v) \Rightarrow (ii)$).
\end{proof}

\subsection{Application: counting linear configurations} \label{countLinearGraph}


Many problems in additive combinatorics can be cast in the following general form:
given a set $A \subset G$ and a system $\Phi$ of linear forms $\phi_1, \dots, \phi_m: G^k \rightarrow G$, how many elements $x_1, \dots, x_k \in G$ are there for which $\phi_1(x_1, \dots, x_k)$, $\dots$, $\phi_m(x_1, \dots, x_k)$ simultaneously belong to $A$?
This is the kind of question where the theory of quasirandomness comes in useful.

For instance, we have seen in Theorem \ref{linearsets} that uniformity suffices for us to count additive quadruples in $A$:
if $A$ is Fourier $\varepsilon$-uniform, then it contains between $\delta^4 n^3$ and $(\delta^4 + \varepsilon^2) n^3$ quadruples $(x, y, z, w)$ satisfying to the equation $x + y = z + w$.
Such quadruples are the same as the image of $G^3$ by the system of linear forms
$$\Phi = (\phi_1, \phi_2, \phi_3, \phi_4): (x_1, x_2, x_3) \mapsto (x_1,\, x_1 + x_2 + x_3,\, x_1 + x_2,\, x_1 + x_3).$$

What other linear configurations can we count in $A$ by knowing it is uniform?
As one of the most basic types of linear configurations, let us start by considering three-term arithmetic progressions $(x,\, x+r,\, x+2r)$.

\begin{lem} \label{3APlem}
Let $G$ be an additive group of odd order and suppose $A \subseteq G$ is Fourier $\varepsilon$-uniform.
Then there are between $(\delta^3 - \varepsilon)n^2$ and $(\delta^3 + \varepsilon)n^2$ 3-term arithmetic progressions in $A$.
\end{lem}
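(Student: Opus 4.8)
The plan is to count 3-term arithmetic progressions in $A$ using the Fourier transform, in the same spirit as the proof that $(i) \Rightarrow (ii)$ in Theorem \ref{linearsets}. Write $T(A) := \#\{(x, r) \in G \times G : x, x+r, x+2r \in A\}$, which we can express as $T(A) = n^2 \, \mathbb{E}_{x, r \in G}[A(x) A(x+r) A(x+2r)]$. The first step is to expand each copy of $A$ by Fourier inversion, $A(y) = \sum_{\gamma} \widehat{A}(\gamma) \gamma(y)$, and substitute into the average. Since the order of $G$ is odd, the map $r \mapsto 2r$ is a bijection on $G$, which guarantees that the change of variables works cleanly and that the character $\gamma \mapsto \gamma \circ (x \mapsto 2x)$ ranges over all of $\widehat{G}$ as needed.

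After expanding, I would carry out the averages over $x$ and $r$ using the orthogonality relations (\ref{orthog}): the average over $x$ forces the product of the three characters (evaluated appropriately) to be trivial, and the average over $r$ imposes a second linear relation among the corresponding frequencies. The net effect is that $T(A)/n^2$ collapses to a sum $\sum \widehat{A}(\gamma_1) \widehat{A}(\gamma_2) \widehat{A}(\gamma_3)$ over triples of characters satisfying the single constraint coming from the progression (concretely, $\gamma_1 \gamma_3 = \gamma_2^2$, or in the $\Z_n$ model, $r_1 + r_3 = 2 r_2$). This is the standard identity $\mathbb{E}_{x,r}[A(x)A(x+r)A(x+2r)] = \sum_{\gamma} \widehat{A}(\gamma) \widehat{A}(-2\gamma) \widehat{A}(\gamma)$ written additively; I would state it in whatever character notation is cleanest.

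The second step is to isolate the main term. The contribution of the all-trivial triple $\gamma_1 = \gamma_2 = \gamma_3 = \mathbf{1}$ is exactly $\widehat{A}(\mathbf{1})^3 = \delta^3$, giving the expected count $\delta^3 n^2$. Everything else is an error term: in each remaining triple at least one character is non-trivial, and in fact (using the linear constraint) one can pull out one factor $|\widehat{A}(\gamma)| \le \varepsilon$ with $\gamma \ne \mathbf{1}$ and bound the remaining sum by $\sum_{\gamma} |\widehat{A}(\gamma)|^2 = \|A\|_{L^2(G)}^2 = \delta \le 1$ via Parseval's identity. This yields $|\, T(A)/n^2 - \delta^3 \,| \le \varepsilon$, i.e. the count lies between $(\delta^3 - \varepsilon)n^2$ and $(\delta^3 + \varepsilon)n^2$, as claimed. (One should note the harmless overcounting issue that the $n$ trivial progressions with $r = 0$ are included, but this only perturbs the count by $O(n)$, which is absorbed; alternatively one simply defines the count to include them.)

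The main obstacle — really the only subtle point — is the bookkeeping in the second step: one must check that in every non-trivial triple of characters obeying the constraint $\gamma_1\gamma_3 = \gamma_2^2$, there is indeed a non-trivial character among $\{\gamma_1, \gamma_2, \gamma_3\}$ \emph{that one is entitled to bound by $\varepsilon$}, and that factoring it out leaves a sum controlled by Parseval without double-counting. Here the odd-order hypothesis is doing real work: it ensures $\gamma_2 \mapsto \gamma_2^2$ is injective on $\widehat{G}$, so the constraint does not secretly allow $\gamma_1 = \gamma_3 = \mathbf{1}$ with $\gamma_2$ a non-trivial square root of $\mathbf{1}$ — such a $\gamma_2$ cannot exist. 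Once this is observed the estimate is routine Cauchy-Schwarz/Parseval, exactly parallel to the $(i)\Rightarrow(ii)$ argument already given.
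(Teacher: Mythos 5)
Your proposal follows the same Fourier-analytic route as the paper's proof, and the two key ingredients are right: the identity $\mathbb{E}_{x,r\in G}[A(x)A(x+r)A(x+2r)] = \sum_{\gamma\in\widehat{G}} \widehat{A}(\gamma)^2 \,\widehat{A}(\gamma^{-2})$, and the error estimate that pulls out a factor $|\widehat{A}(\gamma)| \le \varepsilon$ (with $\gamma \ne \mathbf{1}$) and closes with Cauchy--Schwarz and Parseval. Two things in the surrounding bookkeeping should be corrected, however. The constraint on triples is not $\gamma_1\gamma_3 = \gamma_2^2$ (nor $r_1 + r_3 = 2r_2$ in $\Z_n$): the orthogonality averages over $x$ and $r$ produce the two relations $\gamma_1\gamma_2\gamma_3 = \mathbf{1}$ and $\gamma_2\gamma_3^2 = \mathbf{1}$, whose solution set is the one-parameter family $\{(\gamma, \gamma^{-2}, \gamma) : \gamma \in \widehat{G}\}$ --- this is exactly why a single sum over $\gamma$ survives. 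More importantly, your account of where the odd-order hypothesis does its work is off. In the parametrization just given $\gamma_2$ is determined by $\gamma_1 = \gamma_3 = \gamma$, so the ``rogue'' triple $(\mathbf{1}, \gamma_2, \mathbf{1})$ with $\gamma_2 \ne \mathbf{1}$ and $\gamma_2^2 = \mathbf{1}$ simply cannot occur, regardless of the order of $G$; there is nothing to rule out on that front. The hypothesis is needed in the final Parseval step: after Cauchy--Schwarz you are left with $\big(\sum_\gamma |\widehat{A}(\gamma)|^2\big)^{1/2}\big(\sum_\gamma |\widehat{A}(\gamma^{-2})|^2\big)^{1/2}$, and you need $\gamma \mapsto \gamma^{-2}$ to be a \emph{bijection} of $\widehat{G}$ in order to conclude that the second factor also equals $\|\widehat{A}\|_{\ell^2}^2 = \delta$. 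When $|G|$ is even this fails badly: in $\F_2^n$ one has $\gamma^{-2} \equiv \mathbf{1}$ for every $\gamma$, so $\sum_\gamma |\widehat{A}(\gamma^{-2})|^2 = |\widehat{G}|\,\delta^2$, and indeed the lemma itself is false there, as the paper's $\F_2^n$ example shows.
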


Before proving this lemma, let us remark that the assumption that $G$ has odd order cannot be dropped.
This is due to somewhat uninteresting divisibility issues, as can be most easily seen by considering the extreme case where $G = \F_2^n$:
for \emph{any} set $A \subset \F_2^n$ of density $0 < \delta < 1$, we see that
$$\mathbb{E}_{x, r \in \F_2^n}[A(x) A(x+r) A(x+2r)] = \mathbb{E}_{x, r \in \F_2^n}[A(x) A(x+r)] = \delta^2$$
is bounded away from the `expected' value of $\delta^3$.

In order to illustrate the use of Fourier analysis to tackle such problems, we shall give a Fourier analytic proof of Lemma \ref{3APlem}:

\begin{proof}
We will use the identity
$$\mathbb{E}_{x, r \in G} \big[f_1(x) f_2(x+r) f_3(x+2r)\big] = \sum_{\gamma \in \widehat{G}} \widehat{f_1}(\gamma) \widehat{f_2}(\gamma^{-2}) \widehat{f_3}(\gamma),$$
where $\gamma^{-2}$ is the character satisfying $\gamma^{-2}(x) = \gamma(x)^{-2}$ for all $x \in G$.
Indeed, the last sum is equal to
\begin{align*}
    \sum_{\gamma \in \widehat{G}} \mathbb{E}_{x \in G} \big[f_1(x) \overline{\gamma(x)}\big] \, \mathbb{E}_{y \in G} &\big[f_2(y) \overline{\gamma^{-2}(y)}\big] \, \mathbb{E}_{z \in G} \big[f_3(z) \overline{\gamma(z)}\big] \\
    &= \sum_{\gamma \in \widehat{G}} \mathbb{E}_{x, y, z \in G} \big[f_1(x) f_2(y) f_3(z) \gamma(-x+2y-z)\big] \\
    &= \mathbb{E}_{x, y, z \in G} \big[ f_1(x) f_2(y) f_3(z) \cdot |G| \mathbbm{1}_{\{ x+z = 2y\}} \big] \\
    &= \mathbb{E}_{x, r \in G} \big[f_1(x) f_2(x+r) f_3(x+2r)\big],
\end{align*}
where we used the orthogonality relations of characters for the second equality.

As $\widehat{A}(\mathbf{1}) = \delta$, we conclude that
$$\mathbb{E}_{x, r \in G} \big[A(x) A(x+r) A(x+2r)\big] = \delta^3 + \sum_{\gamma \in \widehat{G} \setminus \{\mathbf{1}\}} \widehat{A}(\gamma)^2 \widehat{A}(\gamma^{-2}).$$
We then bound the absolute value of the last sum by
\begin{align*}
    \Bigg| \sum_{\gamma \in \widehat{G} \setminus \{\mathbf{1}\}} \widehat{A}(\gamma)^2 \widehat{A}(\gamma^{-2}) \Bigg|
    &\leq \bigg( \max_{\gamma \in \widehat{G} \setminus \{\mathbf{1}\}} |\widehat{A}(\gamma)| \bigg) \sum_{\gamma \in \widehat{G} \setminus \{\mathbf{1}\}} |\widehat{A}(\gamma)| \cdot |\widehat{A}(\gamma^{-2})| \\
    &\leq \varepsilon \Bigg( \sum_{\gamma \in \widehat{G}} |\widehat{A}(\gamma)|^2 \Bigg)^{1/2} \Bigg( \sum_{\gamma \in \widehat{G}} |\widehat{A}(\gamma^{-2})|^2 \Bigg)^{1/2},
\end{align*}
where for the last inequality we used Cauchy-Schwarz and the fact that $A$ is Fourier $\varepsilon$-uniform.

We will next show that $\{\gamma^{-2}: \gamma \in \widehat{G}\} = \widehat{G}$.
Note that this will conclude the proof, since it implies that the right-hand side of the last inequality is equal to $\varepsilon \|\widehat{A}\|_{\ell^2(\widehat{G})}^2 = \varepsilon \|A\|_{L^2(G)}^2 \leq \varepsilon$.
Since clearly $\{\gamma^{-2}: \gamma \in \widehat{G}\} \subseteq \widehat{G}$, it suffices to show that $\gamma^{-2} \neq \chi^{-2}$ whenever $\gamma$ and $\chi$ are distinct characters.

But if $\gamma \neq \chi$ and $\gamma^{-2} = \chi^{-2}$, then $\chi \gamma^{-1}$ is a nontrivial character satisfying $(\chi \gamma^{-1})^2 = \mathbf{1}$.
This implies that the order of $\chi \gamma^{-1}$ is $2$, which is impossible since it must divide $|\widehat{G}| = |G|$ which is odd.
This contradiction finishes the proof.
\end{proof}


In general, we can count the number of pre-images of $A$ by any system of linear forms which can be expressed as a subgraph of a `Cayley-like' graph of $A$
(in groups where these linear forms incur in no divisibility issues).
This follows from the equivalence between uniformity of $A$ and quasirandomness of its Cayley graph $\Gamma_A$ (or other similar graphs), as the following examples illustrate:

\begin{ex}
Additive quadruples in $A$ correspond to 4-cycles in the Cayley graph $\Gamma_A$.
This has been shown in the proof of Theorem \ref{linearsets}, where we saw that each additive quadruple $(x,\, x + h_1 + h_2,\, x + h_1,\, x + h_2) \in A^4$ is in one-to-$|G|$ correspondence with quadruples $(a+b,\, c+d,\, a+c,\, b+d) \in A^4$ representing 4-cycles in $\Gamma_A$.
\end{ex}

\begin{ex} \label{3APex}
Consider the tripartite graph $\Gamma'$ formed by three copies $X, Y, Z$ of the group $G$, and with the three edge classes between these copies defined by the relations $-2x - y \in A$, $-x + z \in A$ and $y + 2z \in A$ (for $x \in X$, $y \in Y$ and $z \in Z$).
It is easy to check that each\footnote{We shall sometimes write `$k$-AP' as a short for `$k$-term arithmetic progression'.} 3-AP in $A$ gives rise to $|G|$ triangles in $\Gamma'$, and conversely every triangle in $\Gamma'$ represents one 3-AP in $A$.
Moreover, if $G$ has odd order then $A$ is uniform if and only if the tripartite graph $\Gamma'$ is quasirandom (which provides a `graph theoretical' proof of Lemma \ref{3APlem} by using the counting lemma from Section \ref{PartiteGraphs}).
\end{ex}

\begin{ex} \label{SchurEx}
Let $\Gamma''$ be the tripartite graph formed by three copies $X, Y, Z$ of the group $G$, with the three edge classes between these copies defined by the relations $x - y \in A$, $z - x \in A$ and $z - y \in A$ (for $x \in X$, $y \in Y$ and $z \in Z$).
Then Schur triples\footnote{A Schur triple in an additive group $G$ is a triple of the form $(x, y, x + y)$ for some $x, y \in G$.} in $A$ correspond to triangles in $\Gamma''$, with each triple $(x,\, y,\, x + y)$ contained in $A$ being associated to exactly $|G|$ triangles in $\Gamma''$.
Again, $A$ will be a uniform set if and only if the tripartite graph $\Gamma''$ is quasirandom (this time without needing assumptions on the order of $G$).
\end{ex}

These examples might make it seem like Fourier uniformity is a sufficient condition to estimate the number of any linear configuration inside a given set, but this is true only for very `simple' types of linear patterns.
Indeed, as Example \ref{quad_ex} below shows, Fourier uniformity \emph{does not} suffice to estimate the number of 4-term arithmetic progressions.

This example is essentially due to Gowers \cite{NewProofSzemeredi}, and neatly illustrates some issues that lie at the heart of using quasirandomness to count linear configurations in additive sets.
Due to its importance in the theory we will analyze it in detail, following the approach given by Granville \cite{IntroductionGranville}.

\begin{ex} \label{quad_ex}
Let $N$ be a large prime number.
For any $\delta \in (0, 1)$ define the set $A_{\delta} := \{x \in \Z_N: \, \|x^2/N\|_{\R/\Z} \leq \delta/2\} \subset \Z_N$, where we identify $\Z_N$ with $[N]$ in the obvious way and denote by $\|x\|_{\R/\Z}$ the distance from $x \in \R$ to the nearest integer;
note that $\Z_N$ is a field, so multiplication is well-defined and also all nonzero elements are invertible.
In order to lighten the notation, assume all expectations in this example are over $\Z_N$ and let us denote $\omega := e^{2 \pi i/N}$, so that the group characters are given by $\chi_r(x) = \omega^{rx}$ for $r \in \Z_N$.

We will first compute the Fourier coefficients $\widehat{A_{\delta}}(\chi_r)$ of the set $A_{\delta}$.
Denoting $M_{\delta} := \lfloor \delta N/2 \rfloor$, we can write its indicator function as
$$A_{\delta}(x) = \sum_{m=-M_{\delta}}^{M_{\delta}} \mathbbm{1}_{\{x^2 = m\}} = \sum_{m=-M_{\delta}}^{M_{\delta}} \mathbb{E}_s \big[\omega^{s(x^2-m)}\big],$$
where we used the fact that $\mathbb{E}_s[\omega^{sy}]$ is $1$ if $y = 0$ and $0$ otherwise.
Thus
\begin{align*}
    \widehat{A_{\delta}}(\chi_r) &= \mathbb{E}_x \big[A_{\delta}(x) \omega^{-rx}\big] \\
    &= \mathbb{E}_x \Bigg[\Bigg(\sum_{m=-M_{\delta}}^{M_{\delta}} \mathbb{E}_s \big[\omega^{s(x^2-m)}\big] \Bigg) \omega^{-rx}\Bigg] \\
    &= \mathbb{E}_s \Bigg[ \Bigg(\sum_{m=-M_{\delta}}^{M_{\delta}} \omega^{-sm}\Bigg) \mathbb{E}_x \big[\omega^{sx^2 - rx}\big] \Bigg].
\end{align*}
The expression inside the expectation in the last line has two terms which we will analyze separately.

Let us first take a look at the sum inside the parenthesis.
When $s = 0$ it is clearly equal to $2M_{\delta}+1$, and when $s \neq 0$ we obtain
$$\left|\sum_{m=-M_{\delta}}^{M_{\delta}} \omega^{-sm}\right| = \left|\frac{\omega^{-M_{\delta}s} - \omega^{(M_{\delta}+1)s}}{1 - \omega^{s}}\right| \leq \frac{2}{|1 - \omega^{s}|}.$$
We now use the bound $|1 - e^{i \theta}| \geq 2|\theta|/\pi$, which is valid for $-\pi \leq \theta \leq \pi$.
Letting $\theta = 2\pi s/N$ we see that $|1 - \omega^s| \geq 4|s|/N$ whenever $1 \leq |s| \leq N/2$, so
$$\left|\sum_{m=-M_{\delta}}^{M_{\delta}} \omega^{-sm}\right| \leq \frac{N}{2|s|}$$
in this case (which comprises all of $\Z_N \setminus \{0\}$ if we substitute $s$ by $s-N$ when $s > N/2$).

Now let us consider the term $\mathbb{E}_x \big[\omega^{sx^2 - rx}\big]$.
When $s = 0$ it is equal to $\mathbbm{1}_{r=0}$, and when $s \neq 0$ it is a Gauss sum that can be computed very simply as follows.
Denote by $f_s$ the function on $\Z_N$ defined by $f_s(x) = \omega^{sx^2}$, so that $\widehat{f_s}(\chi_r) = \mathbb{E}_x \big[\omega^{sx^2 - rx}\big]$.
Then
$$\mathbb{E}_x \big[\omega^{sx^2 - rx}\big] = \mathbb{E}_x \big[\omega^{s(x-r/2s)^2}\omega^{-r^2/4s}\big] = \omega^{-r^2/4s} \mathbb{E}_x \big[\omega^{sx^2}\big],$$
which implies that $|\widehat{f_s}(\chi_r)| = \big|\mathbb{E}_x \big[\omega^{sx^2}\big] \big|$ is the same for all $r \in \Z_N$.
Since $\|f_s\|_{L^2} = 1$, by Parseval's identity we obtain $\big|\mathbb{E}_x \big[\omega^{sx^2 - rx}\big] \big| = N^{-1/2}$ for all $r$.

Putting everything together, we get for $r \neq 0$
\begin{align*}
    |\widehat{A_{\delta}}(\chi_r)| &\leq \frac{1}{N}\sum_{s=1}^{N-1} \Bigg|\sum_{m=-M_{\delta}}^{M_{\delta}} \omega^{-sm}\Bigg| \big|\mathbb{E}_x \big[\omega^{sx^2 - rx}\big] \big| \\
    &\leq \frac{2}{N}\sum_{s=1}^{(N-1)/2} \frac{N}{2s} \cdot N^{-1/2} \\
    &\leq \frac{\log N}{\sqrt{N}},
\end{align*}
and similarly for $r=0$ we have
\begin{align*}
    \widehat{A_{\delta}}(\mathbf{1}) &= \frac{1}{N}(2M_{\delta}+1) \pm \frac{1}{N}\sum_{s=1}^{N-1} \Bigg|\sum_{m=-M_{\delta}}^{M_{\delta}} \omega^{-sm}\Bigg| \big|\mathbb{E}_x \big[\omega^{sx^2}\big] \big| \\
    &= \frac{1}{N} \bigg(2 \bigg\lfloor \frac{\delta N}{2} \bigg\rfloor + 1 \bigg) \pm \frac{\log N}{\sqrt{N}} \\
    &= \delta \pm \frac{2 \log N}{\sqrt{N}}.
\end{align*}
This shows that the density of $A_{\delta}$ (which is equal to $\widehat{A_{\delta}}(\mathbf{1})$) is very close to $\delta$, and all its non-trivial Fourier coefficients are extremely small in absolute value.
This set is then very uniform, and by Lemma \ref{3APlem} it contains $(1 + o(1))\delta^3 N^2$ 3-term arithmetic progressions.

Let us now consider 4-term arithmetic progressions.
From the easily verified identity
$$(a + 3d)^2 = 3(a + 2d)^2 - 3(a + d)^2 + a^2$$
we see that whenever $a,\, a + d,\, a + 2d \in A_{\delta}$ we have $\|(a + 3d)^2/N\|_{\R/\Z} \leq 7\delta/2$, and so $a,\, a + d,\, a + 2d,\, a + 3d \in A_{7\delta}$.
But then $A_{7\delta}$ is a very uniform set of density $7\delta + o(1)$ which contains at least $(1 + o(1))\delta^3 N^2$ 4-term arithmetic progressions, which is far more than the expected $(7\delta)^4N^2$ if $\delta$ is small enough.
\end{ex}

The moral that one should take from this last example is the following:
while quadratically structured sets may have negligible correlation with the linear patterns measured by Fourier analysis, the relationship between the squares of the individual terms of a 4-term arithmetic progression makes it possible for this quadratic structure to influence the count of 4-APs.

It is a deep and very interesting fact that both these `quadratic dependencies' are in a certain sense \emph{necessary} for what is written in the last paragraph.
Indeed, it turns out (at least when $G$ is $\Z_N$ for $N$ prime or $\F_p^n$) that the $U^2$ norm does \emph{not} control the count of a given linear configuration if and only if the squares of its terms are linearly dependent \cite{TrueComplexity, QuadraticUniformityFpn, QuadraticUniformityZn, HigherDegreeUniformityFpn, ArithmeticRegularity}.
Moreover, a uniform set does \emph{not} have the `correct' count of 4-APs only if it exhibits some (well-defined) kind of generalized quadratic behaviour \cite{NewProofLengthFour, InverseTheoremU3}.

We will have more to say about this in the next subsection and in Section \ref{LinearConfigSection}.

\subsection{Higher-degree uniformity and the Gowers norms} \label{HigherDegUniformity}

The last example has shown us the need to consider stronger notions of quasirandomness in order to control the count of more complicated linear configurations.
In particular, these stronger notions should also be able to detect quadratic (or higher degree) behaviour.
This is the main reason for the $U^2$ norm to be a more suitable measure of quasirandomness than the more natural Fourier analytic notion of uniformity, as it is much better suited for such generalizations.

Indeed, using the combinatorial interpretation of the expression
$$\left\| f \right\|_{U^2(G)}^4 = \mathbb{E}_{x, h_1, h_2 \in G} \big[f(x) f(x + h_1) f(x + h_2) f(x + h_1 + h_2)\big]$$
as a weighted count of `parallelograms' $(x, x+h_1, x+h_2, x+h_1+h_2)$ in $G$, one might be led to consider a similar weighted count of \emph{three-dimensional parallelepipeds} in $G$.
This naturally leads to the following definition, due to Gowers \cite{NewProofLengthFour}:

\begin{definition}
For a real-valued function $f: G \rightarrow \R$, its $U^3$ norm is given by the equation
\begin{align*}
    \left\| f \right\|_{U^3(G)}^8 &= \mathbb{E}_{x, h_1, h_2, h_3 \in G} \big[ f(x) f(x + h_1) f(x + h_2) f(x + h_1 + h_2)\\
    &\hspace{1cm} \times f(x + h_3) f(x + h_1 + h_3) f(x + h_2 + h_3) f(x + h_1 + h_2 + h_3) \big].
\end{align*}
\end{definition}

One can show that this expression indeed provides a norm on $\R^G$, which is stronger than the $U^2$ norm in the sense that $\left\| f \right\|_{U^2(G)} \leq \left\| f \right\|_{U^3(G)}$ for any function $f$.
Moreover, it is not hard to prove that a random $\{-1, 1\}$-valued function $f$ on a large additive group $G$ will have very small $U^3(G)$ norm with high probability.

In analogy with the $U^2$ norm, one might then think of the $U^3$ norm as a measure of quasirandomness.
We shall say that a function $f: G \rightarrow \R$ is \emph{quadratically $\varepsilon$-uniform} if $\|f\|_{U^3(G)} \leq \varepsilon$, and that a set $A \subset G$ is quadratically $\varepsilon$-uniform if its balanced function $A - \delta$ is.
Since the $U^3$ norm is stronger than the $U^2$ norm, being quadratically uniform is a stronger notion than being (linearly) uniform.

In order to make clearer the connection of the $U^3$ norm with quadratic behaviour,
let us first make more explicit the connection of the $U^2$ norm with \emph{linear} behaviour.
This can be done by writing a character $\chi \in \widehat{G}$ as $e^{2\pi i \phi(x)}$ for some \emph{linear phase function} $\phi: G \rightarrow \R/\Z$, that is, a map satisfying the linearity
property\footnote{This is of course the same as a group homomorphism from $G$ to $\R/\Z$, but here we wish to draw attention to its `linearity'.}
$\phi(x + y) = \phi(x) + \phi(y)$.
We conclude from the equivalence of linear uniformity and Fourier uniformity that a bounded function $f: G \rightarrow [-1, 1]$ has non-negligible $U^2$ norm if and only if it correlates with $e^{2\pi i \phi(x)}$ for some linear phase function $\phi$.

Similarly, we will now see that correlation with a quadratic phase implies large $U^3$ norm.
Due to the lack of multiplicative structure on general additive groups, the definition of a quadratic phase function is a bit more indirect and proceeds by considering \emph{discrete derivatives}:
given $u \in G$, we define the difference operator $\nabla_u$ applied to a phase function $\phi: G \rightarrow \R/\Z$ as $\nabla_u \phi(x) := \phi(x + u) - \phi(x)$.
We then say that $\phi$ is a \emph{quadratic phase function} if its third (discrete) derivative vanishes on $G$, i.e. if $\nabla_u \nabla_v \nabla_w \phi \equiv 0$
for all $u, v, w \in G$.
Note that linear phase functions satisfy $\nabla_u \nabla_v \phi \equiv 0$, and conversely any function $\phi: G \rightarrow \R/\Z$ whose second derivative vanishes in this sense can be written as a linear phase function plus a constant.
Moreover, in cyclic groups $\Z_N$ (where there \emph{is} a multiplicative structure) our definition of quadratic phase functions coincides with that of usual quadratic polynomials $x \mapsto (ax^2 + bx + c)/N$ for some $a, b, c \in \Z_N$
(where the map $x \mapsto x/N$ from $\Z_N$ to $\R/\Z$ is defined in the obvious manner).

A simple application of the (complex-valued) \emph{Gowers-Cauchy-Schwarz inequality} for the $U^3$
norm\footnote{This is a generalization of our Lemma \ref{U2GCS}, and follows easily from Lemma \ref{GowersCS} given next section and the definition of the $U^3$ norm for complex-valued functions
(which is obtained by taking complex conjugates of the terms $f(x+h_1)$, $f(x+h_2)$, $f(x+h_3)$ and $f(x+h_1+h_2+h_3)$ in our real-valued definition).}
implies that $\|f\|_{U^3(G)} \geq |\mathbb{E}_{x\in G}[f(x) e^{2\pi i \phi(x)}]|$ holds
whenever $\phi: G \rightarrow \R/\Z$ is a quadratic phase function.
The $U^3$ norm is thus able to detect `quadratic behaviour' of a set/function just like the $U^2$ norm is able to detect their `linear behaviour' measured by the Fourier transform.

As an example, one can show that the set $A_{\delta} = \{x \in \Z_N: \, \|x^2/N\|_{\R/\Z} \leq \delta/2\}$ considered in Example \ref{quad_ex} is \emph{not} quadratically uniform (so $\|A_{\delta} - \delta\|_{U^3(\Z_N)} > c(\delta)$ for some constant $c(\delta) > 0$ independent of $N$), even though it is linearly $o(1)$-uniform.
This greater strength is important since it allows us to count how many 4-term arithmetic progressions $(x, x+r, x+2r, x+3r)$ are contained in a quadratically uniform set $A$:
we have that
$$\mathbb{E}_{x, r \in G} \big[A(x) A(x+r) A(x+2r) A(x+3r)\big] = \delta^4 \pm 4 \|A - \delta\|_{U^3(G)}.$$
This result was first obtained by Gowers \cite{NewProofLengthFour} using repeated applications of Cauchy-Schwarz, and will be proven (in a more general form) in Section \ref{LinearConfigSection}.

In general, for every integer $k \geq 2$ one can define the \emph{Gowers uniformity norm of degree $k$} by the equation
$$\| f \|_{U^k(G)}^{2^k} = \mathbb{E}_{x, h_1, \dots, h_k \in G} \Bigg[ \prod_{\omega \in \{0, 1\}^k} f \bigg( x + \sum_{i=1}^k \omega_i h_i \bigg) \Bigg].$$
These norms were first introduced and studied by Gowers \cite{NewProofLengthFour, NewProofSzemeredi}, with the purpose of providing a new proof (with far better bounds) of Szemer\'edi's theorem on arithmetic progressions \cite{SzemerediTheorem}.

As in the cases where $k=2$ or $3$, the $U^k$ norm can be seen as a weighted count of $k$-dimensional parallelepipeds, and it is able to detect behaviour of degree up to $k-1$ of the considered function.
Moreover, for any fixed $k \geq 2$ a random function $f: G \rightarrow \{-1, 1\}$ will have negligible $U^k(G)$ norm with high probability (assuming $|G|$ is very large).

The Gowers uniformity norms also form a hierarchy where the $U^{k+1}$ norm is stronger than the $U^k$ norm for each $k \geq 2$.
If we define a function $f: G \rightarrow \R$ to be uniform of degree $k$ if it has small $U^{k+1}$ norm, we then obtain an infinite hierarchy of increasingly stronger notions of quasirandomness.

Their significance in additive combinatorics stems from the fact that uniformity of degree $k$ is sufficient to control the count of $(k + 2)$-term arithmetic progressions, as well as several other linear configurations said to have \emph{complexity at most $k$}.
Moreover, \emph{every} `non-degenerate' system of linear forms can be controlled by some uniformity norm $U^k$.

These facts will be proven in Section \ref{Cayley} by making use of the theory of quasirandomness in the hypergraph setting, which is the subject of our next section.

\section{Quasirandomness in hypergraphs} \label{hypergraphs}

We now turn our attention to \emph{hypergraphs}, which are the natural generalization of graphs where edges can contain more than two vertices.
They may also be seen as representing a higher-order relation between elements of a given set, and in this sense are arguably the `purest' form of higher-order objects.

In order to fix notation, let us formally define the notion of a (uniform) hypergraph.
In here and for the rest of this paper we will denote by $\binom{X}{k}$ the collection of all $k$-element subsets of a given set $X$.

\begin{definition}
Given a finite set $V$ and $k \geq 2$, a \emph{$k$-uniform hypergraph} (or \emph{$k$-graph}) on $V$ is defined to be any subset $H \subseteq \binom{V}{k}$.
We call $V$ the \emph{vertex set} of the hypergraph $H$, and denote its cardinality by $v(H)$.
The elements of $H$ are called \emph{edges}, and its edge density is
defined\footnote{Our definition is made so that the edge density of $H$ coincides with the average $\mathbb{E}_{x_1, \dots, x_k \in V}[H(x_1, \dots, x_k)]$.
An alternative (and perhaps more natural) definition for the density would be $|H|/\binom{v(H)}{k}$;
the relative difference between these two quantities is negligible when $|V|$ is very large, and thus essentially irrelevant for our purposes.}
as $k! |H|/v(H)^k$.
\end{definition}

As in the case of graphs, quasirandom hypergraphs are those whose edge distribution resembles the one of a truly random hypergraph of the same edge density.
For this intuition to be made precise we should first specify the model of random hypergraph to be mimicked, and also introduce a quantitative measure for this similarity;
this is what we do next.

\subsection{Motivation: the case of 3-uniform hypergraphs} \label{3graphs}

In order to arrive at natural definitions for these concepts, we shall first quickly review the case of graphs (which are 2-uniform hypergraphs):
\begin{itemize}
    \item The model of random graph is given by $G(n, p)$, where there are $n$ vertices and each pair of vertices has probability $p$ of being an edge independently.
    \item For a two-variable function $f: V \times V \rightarrow \R$ we define the \emph{cut norm} by
    $$\| f \|_{\square} = \max_{A, B \subseteq V} \big| \mathbb{E}_{x, y \in V} \big[ f(x, y) A(x) B(y) \big] \big|.$$
    \item A graph $G$ with edge density $\delta$ is \emph{$\varepsilon$-quasirandom} if $\| G - \delta \|_{\square} \leq \varepsilon$, meaning its edges are uniformly distributed along all cuts (up to an $\varepsilon$ error).
    \item If $G$ is quasirandom, then it contains about $n^{v(F)} \delta^{|F|}$ copies of any given graph $F$ as a subgraph.
\end{itemize}

Let us then try to generalize these notions to higher hypergraphs, concentrating on 3-uniform hypergraphs for simplicity:
\begin{itemize}
    \item The simplest generalization of $G(n, p)$ would be the random 3-graph on $n$ vertices, where each \emph{triple} of vertices has probability $p$ of being an edge independently.
    \item For a three-variable function $f: V \times V \times V \rightarrow \R$, define the norm
    $$\|f\|_{\square^3_1} = \max_{A, B, C \subseteq V} \big| \mathbb{E}_{x, y, z \in V} \big[f(x, y, z) A(x) B(y) C(z)\big] \big|.$$
    \item Let us (for now) say that a 3-uniform hypergraph $H$ with edge density $\delta$ is \emph{$\varepsilon$-quasirandom} if $\| H - \delta \|_{\square^3_1} \leq \varepsilon$, meaning its edges are uniformly distributed along all 3-way vertex cuts.
    \item One can easily show that the random hypergraph defined is very quasirandom w.h.p., and also that it contains about $n^{v(F)} \delta^{|F|}$ copies of any given 3-graph $F$.
\end{itemize}

Up to now it seems that everything went smoothly, and the two notions generalized rather easily.
However, by considering slightly different ways of choosing random hypergraphs, we quickly run into some issues.

For instance, another natural way of choosing a random $3$-uniform hypergraph is by making random choices at the \emph{second level} (that is, for pairs of vertices) instead of the third level (triples of vertices).
This leads us to the following example:

\begin{ex} \label{level2ex}
Let $H$ be a random 3-uniform hypergraph on $n$ vertices chosen in the following way:
first we pick a random graph $G$ according to $G(n, 1/2)$, and then let $H$ be the hypergraph corresponding to the triangles in $G$.

This random hypergraph $H$ will indeed be very quasirandom by our earlier definition, but the counting lemma does \emph{not} hold!
Indeed, let $F$ be the 3-uniform hypergraph on four vertices with two edges.
Then the number of copies of $F$ we would have expected to find in $H$ is about $n^4/64$, while its true number is about $n^4/32$.
\end{ex}

A slightly more complicated (but much more surprising) example of a similar nature was given by R\"odl \cite{RodlExample}, which we reproduce below:

\begin{ex} \label{Rodlex}
Choose a random orientation of the edges of the complete graph $K_n$ on $n$ vertices, each choice being uniform and independent from all others.
This will create a random directed graph $T_n$ on $n$ vertices (known as a \emph{tournament}), and let $H$ be the 3-uniform hypergraph whose edges are the the directed triangles in $T_n$ (i.e. $H = \{\{u, v, w\}:\, \overrightarrow{uv},\, \overrightarrow{vw},\, \overrightarrow{wu} \in T_n\}$).

One can easily show that this hypergraph $H$ will be $o(1)$-quasirandom and have edge density $1/4 + o(1)$ with high probability, but by construction it cannot contain any tetrahedron $K^{(3)}_4 := \binom{[4]}{3}$ at all!
\end{ex}

It turns out that both of these examples have the same issue at heart:
in both cases the hypergraph we wish to count copies of has edges intersecting at \emph{two vertices}, while the cut norm used only measures correlation with functions of \emph{one vertex} at a time.

We will see later that for counting \emph{linear} hypergraphs (i.e. those where any two edges share at most one vertex) such a discrepancy would not happen, and this `weak' cut norm is enough to control the number of copies of any linear hypergraph.
However, in order to control the number of copies of \emph{all} 3-graphs, one has to consider the following stronger norm to measure quasirandomness:
$$\|f\|_{\square^3_2} = \max_{A, B, C \subseteq V \times V} \big| \mathbb{E}_{x, y, z \in V} \big[f(x, y, z) A(x, y) B(x, z) C(y, z)\big] \big|.$$

The need to consider various notions of both random and quasirandom hypergraphs has then become clear.
Let us now define them formally in the general case of $k$-uniform hypergraphs for any $k \geq 3$.

\subsection{Randomness and quasirandomness of every order}

In general, to choose a random $k$-graph $H$ one can make random choices at any level $2 \leq j \leq k$, or indeed at any subset of them:

\begin{namedthm}{Randomness at level $j$}
Pick each $j$-set $f \in \binom{V}{j}$ at random with probability $p_j$, and let $e \in \binom{V}{k}$ be an edge of $H$ iff all its $j$-subsets $f \in \binom{e}{j}$ were chosen.
\end{namedthm}

The general model of random hypergraphs that we will consider here is then `generated' by employing randomness at any subset of the levels $2 \leq j \leq k$.
We shall illustrate this model by providing a recipe for drawing random 3-uniform hypergraphs:

\begin{ex}
To choose a random 3-graph $H$ on the vertex set $V$, pick:
\begin{itemize}
    \item A random subset $G^{(2)} \subseteq \binom{V}{2}$ of all pairs of vertices, each being in $G^{(2)}$ independently with probability $p_2$;
    \item A random subset $G^{(3)} \subseteq \binom{V}{3}$ of all triples of vertices, each being in $G^{(3)}$ independently with probability $p_3$.
\end{itemize}
Then $\{x, y, z\} \in \binom{V}{3}$ is an edge of $H$ iff $\{x, y, z\} \in G^{(3)}$ and each pair $\{x, y\}$, $\{x, z\}$, $\{y, z\}$ is in $G^{(2)}$.
This event has probability $p_2^3 p_3$, so this is the (expected) edge density of the hypergraph, but now the presence of two given edges are no longer independent events if they share a pair of vertices.
\end{ex}



For each level of randomness in the choice of a random $k$-graph there will be an associated notion of quasirandomness.
Intuitively, this notion of quasirandomness associated to a given level $j$ of random choices is related to \emph{lack of correlation} with structures of any order $d < j$.

In order to define this concept more precisely, we will need the following general piece of notation.
For a finite set $A$, any tuple $\mathbf{x} = (x_i)_{i \in A}$ indexed by the elements of $A$ and any subset $B \subseteq A$, we denote by $\mathbf{x}_B := (x_j)_{j \in B}$ the projection of $\mathbf{x}$ onto its $B$-coordinates.

\begin{definition}
Given a function $f: V^{[k]} \rightarrow \R$ and an integer $1 \leq d \leq k-1$, we define the \emph{$(k, d)$-cut norm} of $f$ by
$$\|f\|_{\square^k_d} := \max_{S_B \subseteq V^{B}\; \forall B \in \binom{[k]}{d}} \Bigg| \mathbb{E}_{\mathbf{x} \in V^{[k]}} \Bigg[ f(\mathbf{x}) \prod_{B \in \binom{[k]}{d}}{S_B(\mathbf{x}_{B})} \Bigg] \Bigg|,$$
where the maximum is over all collections of sets $(S_B)_{B \in \binom{[k]}{d}}$ where each $S_B$ is a subset of $V^B$.
We say that the function $f$ is \emph{$\varepsilon$-quasirandom of order $d$} if $\|f\|_{\square^k_d} \leq \varepsilon$, and that a $k$-graph $H$ of edge density $\delta$ is $\varepsilon$-quasirandom of order $d$ if $\|H - \delta\|_{\square^k_d} \leq \varepsilon$.
\end{definition}

\begin{remark}
Some authors prefer the more combinatorial notion of \emph{clique discrepancy}, which is used for instance in \cite{QuasiClasses, HypergraphQuasirandomnessRegularity, PosetQuasirandomness}.
The $d$-clique discrepancy of a $k$-uniform hypergraph $H$ is defined as
$$\frac{1}{|V|^k} \max_{G \subseteq \binom{V}{d}} \big| |H \cap \mathcal{K}_k(G)| - \delta(H) |\mathcal{K}_k(G)| \big|,$$
where $V = V(H)$ is the vertex set of $H$, $\delta(H)$ is its edge density and $\mathcal{K}_k(G)$ is set of $k$-cliques of the $d$-uniform hypergraph $G$ (i.e. the collection of $k$-sets of vertices whose $d$-subsets are all edges of $G$).
This notion is formally very similar to our measure $\|H - \delta(H)\|_{\square^k_d}$ of quasirandomness of order $d$
(once one unpacks all the notation).
We will not use the notion of clique discrepancy here, but in the interest of being through we remark that having small $d$-clique discrepancy is polynomially equivalent to being quasirandom of order $d$.\footnote{This is surprisingly tricky to prove, but it follows by combining the arguments given in the proof of Lemma 4.8 in \cite{SigmaAlgebrasHypergraphs} to those of Proposition 2.9 in \cite{HypergraphQuasirandomnessRegularity}.
The special case where $d=1$ is presented in details in the paper \cite{WeakQuasirandomness}.}
\end{remark}

Note that, as in the case of graphs, a simple argument of multi-linearity implies that the definition given for the $(k, d)$-cut norm is exactly equivalent to
$$\|f\|_{\square^k_d} = \max_{u_{B}: V^{B} \rightarrow [0, 1]\; \forall B \in \binom{[k]}{d}} \Bigg| \mathbb{E}_{\mathbf{x} \in V^{[k]}} \Bigg[ f(\mathbf{x}) \prod_{B \in \binom{[k]}{d}}{u_{B}(\mathbf{x}_{B})} \Bigg] \Bigg|,$$
where the maximum is now taken over all collections of functions $u_B: V^B \rightarrow [0, 1]$ instead of sets $S_B \subseteq V^B$.
This observation will prove useful later on.

The next example shows that random hypergraphs chosen according to our model will be $o(1)$-quasirandom of the suitable order with high probability.
Note that the `suitable order' of quasirandomness associated to a level $j$ of randomness is $j-1$ instead of $j$.

\begin{ex}[Random hypergraphs]
If all levels of randomness involved in the choosing of a random hypergraph $H$ are strictly higher than $d$, then $H$ will be $o(1)$-quasirandom of order $d$ with high probability.
This can be shown by using Chernoff's inequality and union bound in much the same way as we did when proving that $G(n, p)$ is $o(1)$-quasirandom w.h.p. in Section \ref{graphs}.

Conversely, if there is a non-trivial level of randomness in the choice of $H$ which is at most equal to $d$, then $H$ will (with high probability) \emph{not} be quasirandom of order $d$.
This can be seen by taking all sets $S_B$ in the definition of the cut norm to be the collection of elements of $\binom{V}{d}$ chosen in this level of randomness.
\end{ex}

It is clear from the definition that $\|f\|_{\square^k_1} \leq \|f\|_{\square^k_2} \leq \dots \leq \|f\|_{\square^k_{k-1}}$ for any function $f$, and the previous example shows there can be no similar bound on the reverse direction which is valid uniformly on $|V|$.
We thus obtain a hierarchy of quasirandomness concepts for hypergraphs (and more generally for functions), one for each order $1 \leq d < k$;
we will see in Section \ref{Cayley} that this hierarchy is closely related to the one given by the Gowers uniformity norms for functions on additive groups.

We next consider the question of what kind of information one can obtain from these notions of quasirandomness.

\subsection{Counting subhypergraphs}

Perhaps the most important piece of information to have about a large hypergraph is the distribution of what is observed when sampling at random a bounded number of its vertices.
This distribution is characterized by the homomorphism densities of smaller hypergraphs $F$ in the large hypergraph $H$ under consideration:

\begin{definition}
Let $F$ and $H$ be two $k$-uniform hypergraphs, having vertex sets $V(F)$ and $V(H)$ respectively.
The \emph{homomorphism density} of $F$ in $H$ is the probability that a randomly chosen map $\phi: V(F) \rightarrow V(H)$ preserves edges:
\begin{align*}
    t(F, H) &= \mathbb{P}_{x_1, \dots, x_{v(F)} \in V(H)} \big( \{x_i: i \in e\} \in H \text{ for all } e \in F \big) \\
    &= \mathbb{E}_{\mathbf{x} \in V(H)^{V(F)}} \Bigg[ \prod_{e \in F} H(\mathbf{x}_e) \Bigg].
\end{align*}
\end{definition}

We note that computing the homomorphism density of a fixed hypergraph $F$ inside a large hypergraph $H$ is essentially the same as counting the (normalized) number of copies of $F$ inside $H$, up to an error of order $v(F)^2/v(H)$.

It was shown in Examples \ref{level2ex} and \ref{Rodlex} that weaker notions of quasirandomness (e.g. $\|H - \delta\|_{\square^k_1} = o(1)$) are not sufficient to control the count of all subhypergraphs.
There are, however, natural classes of hypergraphs which \emph{can} be counted inside $H$ by knowing it is quasirandom of a given order.
The simplest of these classes is that of linear hypergraphs:
Kohayakawa, Nagle, R\"odl and Schacht \cite{WeakHypergraphRegularity} showed that every $k$-uniform hypergraph $H$ which is quasirandom of order $1$ must necessarily contain approximately the `correct' number of copies of any fixed linear $k$-graph $F$.

More generally, we will next show that quasirandomness of order $d$ suffices to control the number of all \emph{$d$-linear hypergraphs} as defined below:

\begin{definition}
Let $1 \leq d < k$ be positive integers.
We say that a $k$-graph $F$ is \emph{$d$-linear} if every pair of its edges intersect in at most $d$ vertices.
We denote the set of all $d$-linear $k$-graphs by $\mathcal{L}^{(k)}_d$.
\end{definition}

Note that $1$-linear hypergraphs in this definition are the same as usual linear hypergraphs, while every $k$-uniform hypergraph is $(k-1)$-linear.
We have the following lemma:

\begin{lem}[Counting lemma for quasirandomness of order $d$] \label{count_d-linear}
For any $k$-uniform hypergraph $H$ and any number $0 \leq \delta \leq 1$, we have that
$$t(F, H) = \delta^{|F|} \pm |F| \cdot \| H - \delta \|_{\square^k_d} \hspace{5mm} \forall F \in \mathcal{L}^{(k)}_d.$$
\end{lem}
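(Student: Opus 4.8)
The plan is to adapt the telescoping argument used for the graph counting lemma ($(i) \Rightarrow (ii)$ in Theorem \ref{quasigraphs2}) to the hypergraph setting. Write $F = \{e_1, \dots, e_{|F|}\}$ with vertex set $V(F) = [m]$, and expand the difference $t(F, H) - \delta^{|F|}$ as a telescoping sum
$$t(F,H) - \delta^{|F|} = \sum_{k=1}^{|F|} \delta^{k-1}\, \mathbb{E}_{\mathbf{x} \in V(H)^{[m]}}\Bigg[ \big(H(\mathbf{x}_{e_k}) - \delta\big) \prod_{\ell=k+1}^{|F|} H(\mathbf{x}_{e_\ell}) \Bigg],$$
so that it suffices to bound each summand by $\|H - \delta\|_{\square^k_d}$ in absolute value. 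Fix $k$ and freeze all coordinates $x_i$ with $i \notin e_k$; the product over $\ell > k$ then becomes, after grouping, a product of factors each depending only on the variables $\mathbf{x}_{e_k \cap e_\ell}$ (times a constant absorbing the frozen coordinates). The point is that for $\ell > k$ the intersection $e_k \cap e_\ell$ is a \emph{proper} subset of $e_k$ of size at most $d$, since $F$ is $d$-linear and $e_k \ne e_\ell$.

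The key step is therefore the following combinatorial reduction: for each $\ell > k$, the set $e_k \cap e_\ell$ has at most $d$ elements, hence is contained in some $B_\ell \in \binom{e_k}{d}$ (choose any $d$-subset of $e_k$ containing it). Identifying $e_k$ with $[k]$, we can then write $\prod_{\ell > k} H(\mathbf{x}_{e_\ell}) = c(\mathbf{y}) \prod_{B \in \binom{[k]}{d}} u_B(\mathbf{x}_B)$ for suitable functions $u_B: V(H)^B \to [0,1]$ (products of the $[0,1]$-valued factors $H(\cdot)$ assigned to that $B$, with empty products equal to $1$), where $\mathbf{y}$ denotes the frozen coordinates and $c(\mathbf{y}) \in [0,1]$. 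By the weighted (multilinear) reformulation of the $(k,d)$-cut norm quoted in the excerpt, the inner expectation over $\mathbf{x}_{e_k}$ is bounded in absolute value by $\|H - \delta\|_{\square^k_d}$ for every fixed choice of the frozen coordinates; averaging over those coordinates preserves the bound. Summing over $k$ and using $\delta^{k-1} \le 1$ yields $|t(F,H) - \delta^{|F|}| \le |F| \cdot \|H - \delta\|_{\square^k_d}$.

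The main obstacle is purely bookkeeping rather than conceptual: one has to be careful that when assigning each factor $H(\mathbf{x}_{e_\ell})$ to a $d$-subset $B_\ell$ of $e_k$, the restriction $\mathbf{x}_{e_k \cap e_\ell}$ of the relevant variables really is a function of $\mathbf{x}_{B_\ell}$ alone (which it is, since $e_k \cap e_\ell \subseteq B_\ell$), and that several different $\ell$'s may land on the same $B$ — their contributions simply multiply, and the product of $[0,1]$-valued functions stays in $[0,1]$. One should also note the edge cases: if some $e_\ell$ with $\ell > k$ is disjoint from $e_k$, its factor contributes to the constant $c(\mathbf{y})$; and if $\binom{[k]}{d}$-indexed factors are missing for some $B$, we set $u_B \equiv 1$. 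With these conventions the argument goes through verbatim as in the graph case, and in particular recovers the linear ($d=1$) counting lemma of Kohayakawa–Nagle–Rödl–Schacht as the special case $\mathcal{L}^{(k)}_1$.
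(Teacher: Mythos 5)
Your proposal is correct and follows essentially the same route as the paper's proof: the same telescoping expansion, freezing the coordinates outside the current edge $e_i$, observing that $d$-linearity forces $|e_i \cap e_j| \le d$ for $j \ne i$, padding each intersection up to a $d$-subset of $e_i$ so the remaining factors take the shape required by the weighted form of the $(k,d)$-cut norm, and summing. You fill in the bookkeeping (assigning each intersection to a $d$-set $B_\ell$, merging multiple factors on the same $B$, handling disjoint edges as constants) more explicitly than the paper does, which is a mild improvement in rigour; the only thing to fix before inserting this into the text is the notational clash between the telescoping index and the uniformity, both of which you call $k$ (the paper uses $i$ for the former).
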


\begin{proof}
Denoting $F = \{e_1, \dots, e_{|F|}\}$, we can write as a telescoping sum
\begin{align*}
    \big|t(F, H) - \delta^{|F|}\big| &=
    \Bigg| \mathbb{E}_{\mathbf{x} \in V(H)^{V(F)}} \Bigg[ \prod_{e \in F} H(\mathbf{x}_e) - \delta^{|F|} \Bigg] \Bigg| \\
    &= \Bigg| \mathbb{E}_{\mathbf{x} \in V(H)^{V(F)}} \Bigg[ \sum_{i = 1}^{|F|}{\delta^{i - 1} \big( H(\mathbf{x}_{e_i}) - \delta \big) \prod_{j = i+1}^{|F|}{H(\mathbf{x}_{e_j})}} \Bigg] \Bigg| \\
    &\leq \sum_{i = 1}^{|F|}{\delta^{i - 1} \Bigg|\mathbb{E}_{\mathbf{x} \in V(H)^{V(F)}} \Bigg[\big(H(\mathbf{x}_{e_i}) - \delta\big) \prod_{j = i+1}^{|F|}{H(\mathbf{x}_{e_j})} \Bigg]\Bigg|}.
\end{align*}
Consider the expectation inside the $i$-th term of the sum above.
If we fix all variables other than $\mathbf{x}_{e_i}$, then all the factors inside the expectation except for $\left(H(\mathbf{x}_{e_i}) - \delta\right)$ have the form $u(\mathbf{x}_{f})$, for some function $0 \leq u \leq 1$ and some set $f \subset e_i$ which is the intersection of $e_i$ with another edge $e_j$.
Since these intersections have size at most $d$, it follows that this expectation can be bounded by
$\| H - \delta \|_{\square^k_d}$.
Summing over all $|F|$ terms we obtain the result.
\end{proof}

\begin{remark}
This proof can be straightforwardly modified in order to show that
$$|t(F, H_1) - t(F, H_2)| \leq |F| \cdot \| H_1 - H_2 \|_{\square^k_d}$$
for every pair of $k$-graphs $H_1, H_2$ and every $d$-linear $k$-graph $F$;
thus hypergraphs which are close in $(k, d)$-cut norm have similar counts of every $d$-linear $k$-graph.
\end{remark}

One can easily generalize our Example \ref{level2ex} in order to show that the assumption of $d$-linearity is \emph{necessary} for the counting lemma of any given order $d < k - 1$
(for $d = k-1$ this assumption is trivial).
This is done in the following example:

\begin{ex} \label{simplekgraph}
For a given $1 \leq d \leq k-2$, let $F$ be the (unique) connected $k$-graph on $2k-d$ vertices containing two edges.
Note that $F$ is $d$-linear, but not $(d-1)$-linear.

Let $V$ be a (large) set of vertices and choose a random subset $G^{(d)} \subseteq \binom{V}{d}$ of all $d$-sets of vertices, each being in $G^{(d)}$ independently with probability $1/2$.
Finally, let $H$ be the random $k$-graph on $V$ where $\{x_1, \dots, x_k\} \in \binom{V}{k}$ is an edge of $H$ iff all its $d$-element subsets are in $G^{(d)}$.

Then with high probability $H$ will be $o(1)$-quasirandom of order $d-1$ and have density $\delta = 2^{-\binom{k}{d}} + o(1)$, but
$t(F, H) = 2^{-2\binom{k}{d} + 1} + o(1) = 2 \delta^{|F|} + o(1)$.
\end{ex}


\subsection{Equivalence theorems for each order $d$} \label{EveryOrderEquiv}

It follows from the counting lemma (Lemma \ref{count_d-linear}) that quasirandomness of order $d$ is sufficient for approximately counting all $d$-linear subhypergraphs.
It turns out that the converse implication also holds, in the sense that any (large) $k$-graph $H$ containing approximately the correct amount of each (small) $d$-linear $k$-graph is necessarily quasirandom of order $d$.

A much more surprising fact is that, as in the case of graphs, there is a hypergraph which is \emph{complete} for quasirandomness of order $d$:
it suffices for $H$ to have the `correct' number of copies of \emph{a single} $d$-linear $k$-graph (denoted $\textsc{M}^{(k)}_{d}$) in order for us to conclude that it is quasirandom of order $d$,
and thus be able to estimate the count of all other $d$-linear $k$-graphs.

Such a result was first obtained by Conlon, H\`an, Person and Schacht \cite{WeakQuasirandomness}
in the case $d = 1$.
These authors constructed a linear $k$-graph $M$ on $k 2^{k-1}$ vertices and $2^k$ edges
whose homomorphism density is at least $\delta^{2^k}$ on any $k$-graph $H$ having edge density $\delta$, and showed that if $t(M, H)$ is close to this minimum then $H$ is quasirandom of order $1$.
They also conjectured that a similar construction (to be presented below) would yield for each order $1 \leq d < k$ a $d$-linear $k$-graph which has the same role for quasirandomness of order $d$ as $M$ has for quasirandomness of order $1$.

Given a $k$-partite $k$-graph $F$ with vertex partition $X_1, \dots, X_k$ and a $d$-set of indices $I \in \binom{[k]}{d}$, we define the \emph{$I$-doubling} of $F$ to be the hypergraph $\db_I(F)$ obtained by taking two copies of $F$ and identifying the corresponding vertices in the classes $X_i$, for all $i \in I$.
More precisely, the vertex set of the $I$-doubling is
\begin{equation*}
    V(\db_I(F)) = Y_1 \cup \dots \cup Y_k \hspace{3mm} \text{where} \hspace{3mm} Y_i =
    \begin{cases}
        X_i & \text{ if } i \in I, \\
        X_i \times \{0, 1\} & \text{ if } i \notin I 
    \end{cases}
\end{equation*}
and its edge set is the collection of all $k$-sets of the form
$$\{x_i:\, i \in I\} \cup \{(x_j, a):\, j \in [k] \setminus I\},$$
where $a \in \{0, 1\}$ and $\{x_i:\, i \in [k]\}$ is an edge of $F$.

Starting with the $k$-partite hypergraph with $k$ vertices and a single edge, and then applying consecutively $\db_I$ for every $I \in \binom{[k]}{d}$ (in some arbitrary order), we get a $d$-linear $k$-graph which we denote by $\textsc{M}^{(k)}_{d}$.
One can think of this construction of $\textsc{M}^{(k)}_{d}$ as encoding the applications of
Cauchy-Schwarz\footnote{An interesting way to formalize this idea is by considering the \emph{Cauchy-Schwarz tree} (as defined in \cite{GraphNorms}) associated with the cut involutions representing which vertices are fixed by each elementary doubling operation in the construction of $\textsc{M}^{(k)}_{d}$.
We refer the reader to sections 3 and 5 of \cite{GraphNorms}.}
needed in the proof that correctly counting $d$-linear $k$-graphs implies quasirandomness of order $d$.

The conjecture that $\textsc{M}^{(k)}_{d}$ is complete for quasirandomness of order $d$ was later proven by Towsner \cite{SigmaAlgebrasHypergraphs}
(using the framework of ultraproducts and graded probability spaces),
who then obtained the main equivalence theorem for quasirandomness of any fixed order.
We reproduce a quantitative version of his result below:

\begin{thm}[Equivalence theorem for quasirandomness of order $d$] \label{Towsnerthm}
Let $1 \leq d < k$ be integers and let $H$ be a $k$-uniform hypergraph with edge density $\delta$.
Then the following properties are polynomially equivalent:
\begin{itemize}
    \item[$(i)$] $H$ is quasirandom of order $d$: \hspace{2mm}
    $\| H - \delta \|_{\square^{k}_d} \leq c_1$.
    \item[$(ii)$] $H$ correctly counts all $d$-linear hypergraphs:
    $$t(F, H) = \delta^{|F|} \pm c_2 |F| \hspace{5mm} \forall F \in \mathcal{L}^{(k)}_{d}.$$
    \item[$(iii)$] $H$ has few copies of $M = \textsc{M}^{(k)}_{d}$: \hspace{2mm}
    $t(M, H) \leq \delta^{|M|} + c_3$.
    \item[$(iv)$] $H$ has small deviation with respect to $M = \textsc{M}^{(k)}_{d}$:
    $$\mathbb{E}_{\mathbf{x} \in V(H)^{V(M)}} \Bigg[ \prod_{e \in M} \big( H(\mathbf{x}_{e}) - \delta \big) \Bigg] \leq c_4.$$
\end{itemize}
\end{thm}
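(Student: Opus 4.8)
The plan is to establish the cycle $(i) \Rightarrow (ii) \Rightarrow (iii) \Rightarrow (iv) \Rightarrow (i)$, following the template of the proof of Theorem~\ref{quasigraphs2}. Throughout, write $M = \textsc{M}^{(k)}_d$, and recall that $M$ is a $d$-linear $k$-graph obtained from a single edge by applying the doublings $\db_B$ over all $B \in \binom{[k]}{d}$ in some order, so that $|M| = 2^{\binom{k}{d}}$ is a constant depending only on $k$ and $d$. The first two implications are immediate: for $(i) \Rightarrow (ii)$ one invokes the counting lemma (Lemma~\ref{count_d-linear}), which gives $t(F,H) = \delta^{|F|} \pm |F| \cdot \|H - \delta\|_{\square^k_d}$ for every $F \in \mathcal{L}^{(k)}_d$, so $c_2 = c_1$ works; and for $(ii) \Rightarrow (iii)$ one simply notes that $M \in \mathcal{L}^{(k)}_d$, so $(iii)$ is the special case $F = M$ of $(ii)$, with $c_3 = |M|\,c_2$.

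For $(iv) \Rightarrow (i)$ the idea is a Gowers--Cauchy--Schwarz inequality whose chain of ordinary Cauchy--Schwarz applications mirrors, one at a time, the doublings used to build $M$. Set $f := H - \delta$ and pick functions $u_B : V^B \to [0,1]$ (for $B \in \binom{[k]}{d}$) attaining $\|f\|_{\square^k_d} = \big| \mathbb{E}_{\mathbf{x} \in V^{[k]}}\big[ f(\mathbf{x}) \prod_{B} u_B(\mathbf{x}_B) \big] \big|$. One removes the factors $u_B$ in turn: for each $B$, in the order used to construct $M$, rewrite the expectation with $\mathbf{x}_B$ as the outermost variable, apply Cauchy--Schwarz in $\mathbf{x}_B$, and use $u_B \le 1$ to discard the resulting $\mathbb{E}_{\mathbf{x}_B}[u_B^2] \le 1$ factor; this squares the quantity and duplicates the coordinates outside $B$ into two independent copies agreeing on $\mathbf{x}_B$, i.e.\ performs the $B$-doubling. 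After $\binom{k}{d}$ steps all the $u_B$ are gone and one is left with $\|f\|_{\square^k_d}^{|M|} \le \mathbb{E}_{\mathbf{x} \in V(H)^{V(M)}}\big[ \prod_{e \in M} (H(\mathbf{x}_e) - \delta) \big]$, whose right-hand side is precisely the quantity in $(iv)$; hence $(i)$ holds with $c_1 = c_4^{1/|M|}$. (Incidentally, this shows the left-hand side of $(iv)$ is always nonnegative, which re-proves $t(M,H) \ge \delta^{|M|}$.)

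The one substantive step is $(iii) \Rightarrow (iv)$, which carries essentially the full content of Towsner's completeness theorem. The plan is to expand the deviation as the alternating sum $\mathbb{E}_{\mathbf{x}}\big[ \prod_{e \in M}(H(\mathbf{x}_e) - \delta) \big] = \sum_{S \subseteq M} (-\delta)^{|M \setminus S|} t(S, H)$ over subhypergraphs $S \subseteq M$ (deleting isolated vertices leaves again a subgraph of $M$, hence a $d$-linear $k$-graph) and to show that, under hypothesis $(iii)$, each count $t(S,H)$ equals $\delta^{|S|}$ up to an error polynomial in $c_3$; since $\sum_{S \subseteq M}(-\delta)^{|M \setminus S|}\delta^{|S|} = 0$, only such a polynomial error then survives. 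The counts along the doubling chain $M_0 \subseteq M_1 \subseteq \cdots \subseteq M_{\binom{k}{d}} = M$ are easy to pin down: the identity expressing a doubling as a conditional second moment, together with the Cauchy--Schwarz inequality, gives $t(M_i, H) \ge t(M_{i-1}, H)^2$, so from $t(M_0,H) = \delta$ and $(iii)$ one gets $\delta^{|M_i|} \le t(M_i, H) \le \delta^{|M_i|} + c_3^{1/|M|}$ for every $i$. Extending this control to the counts of \emph{arbitrary} subgraphs $S \subseteq M$ --- by completing $S$ within the chain and bounding the resulting cross-terms through further applications of Cauchy--Schwarz, most transparently organized via the Cauchy--Schwarz tree attached to the construction of $M$ --- is where the real work lies, and is precisely the point at which Towsner's proof instead passes to an ultraproduct and a graded probability space in order to avoid the explicit quantitative bookkeeping.
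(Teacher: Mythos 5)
Your implications $(i)\Rightarrow(ii)$, $(ii)\Rightarrow(iii)$ and $(iv)\Rightarrow(i)$ are sound. In particular, for $(iv)\Rightarrow(i)$ the iterated Cauchy--Schwarz argument that removes one factor $u_B$ per doubling and ends with $\|H-\delta\|_{\square^k_d}^{|M|}\leq t(M,H-\delta)$ is exactly the right generalization of Lemma~\ref{cut<oct}, and your constant $c_1=c_4^{1/|M|}$ is fine. Note that the survey itself does not prove this theorem: it defers to Towsner \cite{SigmaAlgebrasHypergraphs} and to \cite{QuasirandomnessHypergraphs, CayleyTypeHypergraphs}, pointing to the proofs of Theorem~\ref{octequiv} and Theorem~\ref{d-deviationThm} as the conceptual template.

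The genuine gap is $(iii)\Rightarrow(iv)$, which, as you concede, you do not carry out; moreover the route you sketch is harder than necessary and probably not salvageable as stated. Expanding the deviation as $\sum_{S\subseteq M}(-\delta)^{|M\setminus S|}t(S,H)$ and trying to show $t(S,H)=\delta^{|S|}\pm\mathrm{poly}(c_3)$ for \emph{every} $S\subseteq M$ requires two-sided estimates: the doubling/Jensen chain pins down only the counts of the chain hypergraphs $M_0\subseteq\cdots\subseteq M_{\binom{k}{d}}=M$, while for an arbitrary subgraph $S$ you would additionally need the lower bound $t(S,H)\geq\delta^{|S|}$ (a Sidorenko-type statement for all subgraphs of $M$), and the alternating signs make one-sided bounds useless. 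The way this step is actually closed --- verbatim in the paper's proofs of Theorem~\ref{octequiv} (the case $d=k-1$) and of Theorem~\ref{d-deviationThm} --- avoids estimating individual $t(S,H)$ altogether: add $t(M,H+\delta)$ to the deviation so that all coefficients $\delta^{|M\setminus S|}+(-\delta)^{|M\setminus S|}$ become nonnegative; bound each term from above by the Gowers--Cauchy--Schwarz-type inequality $t(S,H)\leq t(M,H)^{|S|/|M|}$ for $S\subseteq M$, which is proved by the same chain of Cauchy--Schwarz applications along the doublings that you already use in $(iv)\Rightarrow(i)$ (this is the ``Cauchy--Schwarz tree'' the paper alludes to, carried out in \cite{QuasirandomnessHypergraphs, CayleyTypeHypergraphs}); and collapse the sum by the binomial theorem to $(\|H\|_M+\delta)^{|M|}+(\|H\|_M-\delta)^{|M|}$, where $\|H\|_M:=t(M,H)^{1/|M|}$. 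Hypothesis $(iii)$ gives $\delta\leq\|H\|_M\leq\delta+c_3^{1/|M|}$, the Jensen chain gives $t(M,H+\delta)\geq(2\delta)^{|M|}$, and subtracting yields $t(M,H-\delta)\leq 2^{2|M|}c_3^{1/|M|}$, i.e.\ $(iv)$ with $c_4$ polynomial in $c_3$ --- with no ultraproducts and no per-subgraph lower bounds. Without some such device, your step $(iii)\Rightarrow(iv)$ remains an assertion rather than a proof.
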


We will not prove Theorem \ref{Towsnerthm} here, as it is somewhat technical
(but see the proofs of Theorem \ref{octequiv} and Theorem \ref{d-deviationThm}, which are conceptually similar).
The interested reader is referred to Towsner's original paper \cite{SigmaAlgebrasHypergraphs} for a qualitative version of this result,
or to \cite{QuasirandomnessHypergraphs}
for a combinatorial proof of the main equivalences $(i) \Leftrightarrow (ii) \Leftrightarrow (iv)$ with polynomial bounds.
The paper \cite{CayleyTypeHypergraphs} also outlines a proof of all equivalences using the same methods we use in this survey, and obtains polynomial bounds for them all.

It is important to note that Towsner's result actually applies to a much larger class of quasirandomness notions than the ones considered here.
His paper extended the work of Lenz and Mubayi \cite{PosetQuasirandomness}, who also studied several distinct notions of quasirandomness for hypergraphs and determined the poset of implications between the quasirandom properties they considered.

We also note that in the case $d = 1$ (i.e. for quasirandomness of order $1$) Lenz and Mubayi \cite{HyperEigRegular, HyperEigNonRegular} obtained an interesting `large spectral gap' property of $k$-uniform hypergraphs, which is an extension to hypergraphs of the eigenvalue property $(iv)$ from Theorem \ref{quasigraphs}, and showed that it is polynomially equivalent to the quasirandomness properties considered in the last theorem
(with $d$ substituted for $1$).\footnote{Lenz and Mubayi also considered related notions of quasirandomness for $k$-graphs corresponding to any proper partition $\pi$ of $k$, proving an analogue of Theorem \ref{Towsnerthm} for these `$\pi$-quasirandom' properties (which are similar to but more general than those for quasirandomness of order $1$, which corresponds to the partition $k = 1 + \dots + 1$ into $k$ ones).}

\subsection{Strong quasirandomness and the octahedral norms} \label{OctNormsSection}

In order to control the number of \emph{every} subhypergraph $F$ in a $k$-graph $H$, we need $H$ to be quasirandom of order $k-1$.
We say that such hypergraphs are \emph{strongly quasirandom}.

We have already seen that a random $k$-graph $H$, where each $k$-set of vertices is chosen to be an edge independently with the same probability $p$, will be strongly quasirandom with high probability.
Let us now take a look at a couple of deterministic examples, both taken from Chung and Graham's paper \cite{QuasiSetSystems};
we refer the reader to this paper for the proofs that they are indeed strongly quasirandom.

\begin{ex}
Let $p$ be a fixed (large) prime number.
We define the \emph{Paley $k$-graph} $P^{(k)}_p$ as the hypergraph whose vertices are the elements of the finite field $\F_p$, and where $\{x_1, \dots, x_k\}$ is an edge iff $x_1 + \dots + x_k$ is a square in $\F_p$ (that is, a quadratic residue).
Then $P^{(k)}_p$ has edge density $1/2 + o(1)$, and due to the strong pseudorandomness properties of quadratic residues one can show that $\|P^{(k)}_p - 1/2\|_{\square^k_{k-1}} = o(1)$.
\end{ex}

\begin{ex}
Given a positive integer $n$, define the \emph{even intersection $k$-graph} $I^{(k)}(n)$ as follows:
the vertices of $I^{(k)}(n)$ are all subsets of $[n]$, and a $k$-set $\{X_1, \dots, X_k\}$, $X_i \subseteq [n]$, is an edge iff
$$\left|\bigcap_{i=1}^k X_i\right| \equiv 0 \mod 2.$$
This hypergraph has edge density $1/2 + o(1)$, and it satisfies $\|I^{(k)}(n) - 1/2\|_{\square^k_{k-1}} = o(1)$.
\end{ex}

It was shown by Chung and Graham \cite{QuasirandomHypergraphs, QuasiSetSystems} (for edge density $1/2$) and by Kohayakawa, R\"odl and Skokan \cite{HypergraphQuasirandomnessRegularity} (for general edge density $0 \leq \delta \leq 1$) that a $k$-graph being strongly quasirandom is asymptotically equivalent to it having the almost minimal number of copies of the \emph{$k$-octahedron} $\oct^{(k)}$,
i.e. the complete $k$-partite $k$-graph with vertex classes of size $2$:
$$V(\oct^{(k)}) = \bigcup_{i=1}^k \big\{x^{(0)}_i, x^{(1)}_i\big\}, \hspace{2mm} \oct^{(k)} = \Big\{ \big\{ x^{(\omega_1)}_1, \dots, x^{(\omega_k)}_k \big\}: \omega \in \{0, 1\}^k \Big\}.$$

Note that, when $k=2$, the octahedron is just the $4$-cycle graph $C_4$.
For $k=3$ it is the 3-graph on vertex set $\{ x^{(0)}, x^{(1)}, y^{(0)}, y^{(1)}, z^{(0)}, z^{(1)} \}$ whose edges are given by $\{x^{(\omega_1)}, y^{(\omega_2)}, z^{(\omega_3)}\}$ for all choices of $\omega_1, \omega_2, \omega_3 \in \{0, 1\}$;
it represents the vertices and faces of an octahedron (the three-dimensional polytope), which explains the name.
We also note that $\oct^{(k)}$ coincides with the hypergraph $\textsc{M}^{(k)}_{k-1}$ defined in the last subsection.

The main strongly quasirandom property equivalences proven by Chung and Graham \cite{QuasirandomHypergraphs, QuasiSetSystems} and by Kohayakawa, R\"odl and Skokan \cite{HypergraphQuasirandomnessRegularity} are then the following:

\begin{thm}[Equivalence theorem for strong quasirandomness] \label{octequiv}
Let $H$ be a $k$-uniform hypergraph with edge density $\delta$.
Then the following properties are polynomially equivalent:
\begin{itemize}
    \item[$(i)$] $H$ is strongly quasirandom: \hspace{2mm}
    $\| H - \delta \|_{\square^{k}_{k-1}} \leq c_1$.
    \item[$(ii)$] $H$ correctly counts \emph{all} hypergraphs:
    $$t(F, H) = \delta^{|F|} \pm c_2 |F| \hspace{5mm} \text{for all k-graphs } F.$$
    \item[$(iii)$] $H$ has few octahedra: \hspace{2mm}
    $t(\oct^{(k)}, H) \leq \delta^{2^k} + c_3$.
    \item[$(iv)$] $H$ has small deviation:
    $$\mathbb{E}_{\mathbf{x}^{(0)},\, \mathbf{x}^{(1)} \in V^k} \Bigg[ \prod_{\omega \in \{0, 1\}^k} \big( H \big(x_1^{(\omega_1)},\, \dots,\, x_k^{(\omega_k)}\big) - \delta \big) \Bigg] \leq c_4.$$
\end{itemize}
\end{thm}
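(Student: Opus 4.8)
The plan is to prove the cycle of implications $(i)\Rightarrow(ii)\Rightarrow(iii)\Rightarrow(iv)\Rightarrow(i)$, which is the standard route for equivalence theorems of this kind. The implications $(i)\Rightarrow(ii)$ and $(ii)\Rightarrow(iii)$ are easy and follow known patterns. For $(i)\Rightarrow(ii)$, since $k$-graphs are trivially $(k-1)$-linear, this is exactly the counting lemma of order $d=k-1$ (Lemma \ref{count_d-linear}): we get $t(F,H)=\delta^{|F|}\pm |F|\|H-\delta\|_{\square^k_{k-1}}$, so $c_2=c_1$ works. For $(ii)\Rightarrow(iii)$, note that $\oct^{(k)}$ is a single $k$-graph with $2^k$ edges, so property $(ii)$ applied to $F=\oct^{(k)}$ gives $t(\oct^{(k)},H)=\delta^{2^k}\pm c_2 2^k$, whence $c_3=2^k c_2$ suffices.

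For $(iii)\Rightarrow(iv)$, the idea is that the deviation expression is a multilinear expansion whose `main term' is $t(\oct^{(k)},H)$. Expanding $\prod_{\omega}(H(\mathbf x^{(\omega)})-\delta)$ as a signed sum over subsets $S\subseteq\{0,1\}^k$ of products $\delta^{2^k-|S|}\prod_{\omega\in S}H(\mathbf x^{(\omega)})$, one recognizes each $\mathbb E[\prod_{\omega\in S}H(\mathbf x^{(\omega)})]$ as $t(F_S,H)$ for the subhypergraph $F_S\subseteq\oct^{(k)}$ induced by the edges indexed by $S$; these are all $(k-1)$-linear. One would like to say the deviation equals $t(\oct^{(k)},H)-\delta^{2^k}\le c_3$, but this requires knowing $t(F_S,H)\approx\delta^{|S|}$ for all proper $S$, which is property $(ii)$, not $(iii)$. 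The cleaner approach, following Chung--Graham, is to prove directly that the octahedral count controls the deviation by a Cauchy--Schwarz argument: introduce the `partial deviation' functions obtained by replacing $H-\delta$ by $H$ in some coordinates, and show by induction on the number of `vertex pairs' that the full deviation is bounded by a power of $t(\oct^{(k)},H)-\delta^{2^k}$ plus lower-order terms. Alternatively, one can first establish $(iii)\Rightarrow(i)$ directly and then $(i)\Rightarrow(iv)$ via the counting lemma applied to the signed expansion (now legitimate, since $(i)$ gives $t(F_S,H)=\delta^{|S|}\pm|S|c_1$ for every $F_S$).

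The heart of the matter, and the main obstacle, is $(iv)\Rightarrow(i)$: deducing small $(k,k-1)$-cut norm from small deviation. The strategy is the standard iterated Cauchy--Schwarz / Gowers-type argument. Suppose $\|H-\delta\|_{\square^k_{k-1}}>c_1$; then there exist functions $u_B:V^B\to[0,1]$ for each $B\in\binom{[k]}{k-1}$ (equivalently, each $B$ is $[k]\setminus\{i\}$ for some coordinate $i$) with $|\mathbb E_{\mathbf x}[(H(\mathbf x)-\delta)\prod_B u_B(\mathbf x_B)]|>c_1$. Writing $g=H-\delta$, one applies Cauchy--Schwarz successively in each coordinate $x_i$ (doubling that coordinate into $x_i^{(0)},x_i^{(1)}$), each step eliminating the factor $u_{[k]\setminus\{i\}}$ because it does not depend on $x_i$; after $k$ such steps every $u_B$ has been removed and one is left with $\mathbb E_{\mathbf x^{(0)},\mathbf x^{(1)}}[\prod_{\omega\in\{0,1\}^k} g(x_1^{(\omega_1)},\dots,x_k^{(\omega_k)})]$, which is exactly the deviation in $(iv)$. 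Tracking the inequalities gives (deviation) $\ge c_1^{2^k}$, so taking $c_1=c_4^{1/2^k}$ contradicts $(iv)$. The delicate points to get right are: that at the $i$-th Cauchy--Schwarz step the surviving factors indeed split as a product of two terms each independent of one of $x_i^{(0)},x_i^{(1)}$ (this uses that each $u_B$ omits exactly the coordinate being doubled, and that the $g$-factors already split appropriately once partially doubled); bounding the extra $u_B^2$-type factors by $1$ since $0\le u_B\le 1$; and keeping the bookkeeping of which coordinates have been doubled so far. This is routine in spirit but notationally heavy, and is conceptually the same computation as in the proof of the Gowers--Cauchy--Schwarz inequality (Lemma \ref{U2GCS}) iterated $k$ times; I would present it by induction on the number of already-doubled coordinates, stating the inductive invariant carefully and leaving the (mechanical) verification of the splitting to the reader, exactly as the excerpt does for the analogous partite and higher-$U^k$ statements.
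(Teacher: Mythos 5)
Your treatment of $(i)\Rightarrow(ii)\Rightarrow(iii)$ matches the paper exactly, and your $(iv)\Rightarrow(i)$ is correct — though you re-derive from scratch what the paper simply cites as Lemma~\ref{cut<oct} (that $\|f\|_{\square^k_{k-1}}\le\|f\|_{\oct^k}$), which was itself proved via the Gowers--Cauchy--Schwarz inequality (Lemma~\ref{GowersCS}) in Section~\ref{OctNormsSection}. Also, the analogue of Lemma~\ref{U2GCS} you want is Lemma~\ref{GowersCS}, not the $U^2$ version.

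The genuine gap is $(iii)\Rightarrow(iv)$. You correctly diagnose why the naive signed expansion fails (it would need $(ii)$, not $(iii)$), but neither of your two proposed fixes is carried out. The ``Chung--Graham induction on vertex pairs with partial deviations'' is only gestured at, and it is far from clear that the deviation is ``bounded by a power of $t(\oct^{(k)},H)-\delta^{2^k}$ plus lower-order terms'' without first establishing something like $(i)$; this is exactly the nontrivial part. Your alternative route $(iii)\Rightarrow(i)\Rightarrow(iv)$ is circular in spirit: you offer no argument for $(iii)\Rightarrow(i)$, and the paper's only way to deduce $(i)$ from an octahedron count is precisely by passing through $(iv)$ and Lemma~\ref{cut<oct}. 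The paper's actual device is a specific and clever one that you are missing: it sidesteps the sign problem by considering the sum
\[
t(\oct^{(k)},H+\delta)+t(\oct^{(k)},H-\delta)=\sum_{F\subseteq\oct^{(k)}} t(F,H)\big(\delta^{2^k-|F|}+(-\delta)^{2^k-|F|}\big),
\]
whose terms are now all nonnegative. It then bounds $t(F,H)\le\|H\|_{\oct^k}^{|F|}$ for every $F\subseteq\oct^{(k)}$ via Gowers--Cauchy--Schwarz (padding the missing $\omega$'s by the constant function $1$), collects the resulting sum as a binomial expansion $\big(\|H\|_{\oct^k}+\delta\big)^{2^k}+\big(\|H\|_{\oct^k}-\delta\big)^{2^k}$, and uses $\delta\le\|H\|_{\oct^k}\le\delta+c_3^{1/2^k}$ together with $t(\oct^{(k)},H+\delta)\ge(2\delta)^{2^k}$ to solve for the deviation. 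This is the one step of the theorem that requires an actual idea, and it is absent from your proposal.
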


\begin{remark}
As usual, we give only the `core' properties of the theorem and refer the reader to the original papers for the full results.
\end{remark}

The central concept in Chung and Graham's paper \cite{QuasiSetSystems} was the
\emph{deviation}\footnote{The deviation can also be interpreted as the average difference between the numbers of even and odd partial octahedra (that is, subgraphs of $\oct^{(k)}$ with an even or odd number of edges) present in $H$, which might help to explain the name.}
of a hypergraph $H$, whose definition is equivalent to the density $t(\oct^{(k)}, \mu_H)$ of octahedra weighted by the \emph{multiplicative edge function} $\mu_H$, which maps edges of $H$ to $-1$ and non-edges to $1$.
Their work, however, focused on hypergraphs of edge density $1/2$;
for general edge density $\delta$ the corresponding weight function should be given by the balanced function $f_H := H - \delta$, which gives the expression in property $(iv)$ above.

This expression turns out to be always nonnegative, and if it is small then (by the theorem above) $H$ is strongly quasirandom.
This motivates the following definition, which is essentially due to Gowers \cite{Regularity3graphs, HypergraphRegularityGowers}:

\begin{definition}
Given a function $f: V^k \rightarrow \R$, we define its \emph{octahedral norm} by
\begin{equation} \label{octnorm}
    \|f\|_{\oct^k} := \mathbb{E}_{\mathbf{x}^{(0)},\, \mathbf{x}^{(1)} \in V^k} \Bigg[ \prod_{\omega \in \{0, 1\}^k} f \big( \mathbf{x}^{(\omega)} \big) \Bigg]^{1/2^k},
\end{equation}
where we write $\mathbf{x}^{(\omega)} := \big(x_i^{(\omega_i)}\big)_{i \in [k]}$.
\end{definition}

While not obvious that the right-hand side of (\ref{octnorm}) gives a positive number, we will soon show that this is the case and so $\|f\|_{\oct^k}$ is well-defined and positive for all real functions $f$
(it also satisfies the triangle inequality, as we will see later).
Note that $\|f\|_{\oct^k}^{2^k} = t(\oct^{(k)}, f)$ is the weighted count of $k$-octahedra.

The octahedral norm has an associated \emph{inner product of order $k$}, denoted $\langle \cdot \rangle_{\oct^k}$, which we define for $2^k$ functions $f_{\omega}: V^k \rightarrow \R$, $\omega \in \{0, 1\}^k$, by
\begin{equation} \label{OctInnerProduct}
    \left\langle (f_{\omega})_{\omega \in \{0, 1\}^k} \right\rangle_{\oct^k} := \mathbb{E}_{\mathbf{x}^{(0)}, \mathbf{x}^{(1)} \in V^k} \Bigg[ \prod_{\omega \in \{0, 1\}^k} f_{\omega}\big(\mathbf{x}^{(\omega)}\big) \Bigg].
\end{equation}
With this inner product we have that
$\|f\|_{\oct^k}^{2^k} = \left\langle f, f, \dots, f \right\rangle_{\oct^k}$.

A very useful property of the octahedral norm and inner product is that they satisfy a type of Cauchy-Schwarz inequality.
This result was first established by Gowers (though with a different notation), and is now known as the \emph{Gowers-Cauchy-Schwarz inequality}:

\begin{lem}[Gowers-Cauchy-Schwarz inequality] \label{GowersCS}
For any collection of functions $f_{\omega}: V^k \rightarrow \R$, $\omega \in \{0, 1\}^k$, we have
$$\left\langle (f_{\omega})_{\omega \in \{0, 1\}^k} \right\rangle_{\oct^k} \leq \prod_{\omega \in \{0, 1\}^k} \| f_{\omega} \|_{\oct^k}.$$
\end{lem}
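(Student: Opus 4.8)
The plan is to deduce the inequality from $k$ successive applications of the ordinary Cauchy--Schwarz inequality, one for each coordinate direction, following Gowers' original argument; beforehand I would record the elementary fact that $\|f\|_{\oct^k}^{2^k} \geq 0$ for every real $f$, which is needed for the final product of $2^k$-th powers to make sense. For this last point one groups the $2^k$ factors defining $\|f\|_{\oct^k}^{2^k}$ according to the value of the $k$-th coordinate $\omega_k \in \{0,1\}$: letting $\Phi$ be the function of the first $k-1$ pairs $(x_i^{(0)}, x_i^{(1)})_{i<k}$ and one extra variable $t$ given by the product of the factors whose $k$-th argument is $t$, one gets $\prod_{\omega} f(\mathbf{x}^{(\omega)}) = \Phi(\cdot\,, x_k^{(0)})\,\Phi(\cdot\,, x_k^{(1)})$; since $x_k^{(0)}$ and $x_k^{(1)}$ are independent, taking their expectation yields a perfect square, and averaging over the remaining variables preserves nonnegativity.

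For the inequality proper, consider one coordinate, say $i=k$, and split $\{0,1\}^k$ into its two halves according to $\omega_k$, writing $\omega = (\omega', \omega_k)$ with $\omega' \in \{0,1\}^{k-1}$. The variables $x_k^{(0)}$ and $x_k^{(1)}$ occur in disjoint groups of factors, so averaging over them first expresses $\langle (f_\omega)_{\omega} \rangle_{\oct^k}$ as $\mathbb{E}[A \cdot B]$, where the expectation is over the $2(k-1)$ variables $(x_i^{(0)}, x_i^{(1)})_{i<k}$, the quantity $A$ is the average over $x_k$ of $\prod_{\omega'} f_{(\omega',0)}$ (with first $k-1$ arguments selected by $\omega'$ from the pairs), and $B$ is the analogous average of $\prod_{\omega'} f_{(\omega',1)}$. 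Cauchy--Schwarz in those remaining variables gives $\langle (f_\omega)_{\omega} \rangle_{\oct^k} \leq (\mathbb{E}[A^2])^{1/2} (\mathbb{E}[B^2])^{1/2}$, and the crucial point is that expanding the square $A^2$ reintroduces two independent copies of $x_k$, so that $\mathbb{E}[A^2] = \langle (g_\omega)_{\omega} \rangle_{\oct^k}$ with $g_\omega := f_{(\omega_1, \dots, \omega_{k-1}, 0)}$, and likewise $\mathbb{E}[B^2] = \langle (h_\omega)_{\omega} \rangle_{\oct^k}$ with $h_\omega := f_{(\omega_1, \dots, \omega_{k-1}, 1)}$ --- these new families being independent of the $k$-th coordinate of $\omega$.

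I would then iterate, applying Cauchy--Schwarz in coordinates $k, k-1, \dots, 1$ in turn; after handling coordinate $i$, every inner product still in play involves only functions that do not depend on the coordinates of $\omega$ already processed, so the same step applies again. After all $k$ steps each of the $2^k$ resulting terms is an octahedral inner product all of whose $2^k$ arguments equal one single function $f_\epsilon$ --- hence it equals $\|f_\epsilon\|_{\oct^k}^{2^k}$ --- the $2^k$ leaves of the binary recursion tree being in bijection with $\epsilon \in \{0,1\}^k$ and each carrying the weight $1/2^k$ coming from the $k$ square roots along its branch; multiplying out gives $\langle (f_\omega)_{\omega}\rangle_{\oct^k} \leq \prod_{\epsilon \in \{0,1\}^k} \big(\|f_\epsilon\|_{\oct^k}^{2^k}\big)^{1/2^k} = \prod_{\epsilon} \|f_\epsilon\|_{\oct^k}$. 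The part needing care is exactly this bookkeeping: one must verify that at every stage the squared quantities are genuine octahedral inner products of functions independent of the coordinates handled so far (so the recursion closes cleanly), and that when the dust settles each $f_\omega$ appears precisely once with exponent $2^k \cdot \tfrac{1}{2^k} = 1$.
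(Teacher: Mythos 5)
Your argument is correct and follows the same strategy as the paper: isolate one pair of coordinates at a time, apply the ordinary Cauchy--Schwarz inequality to the outer expectation, observe that the resulting squared terms are again octahedral inner products (now of families independent of the processed coordinate), and iterate over all $k$ coordinates, tracking the exponent $1/2^k$ accumulated along each branch of the binary tree. The only minor difference is that you spell out the nonnegativity of $\|f\|_{\oct^k}^{2^k}$ as a separate preliminary step (grouping by $\omega_k$ to exhibit an average of squares), whereas the paper defers this point and it emerges implicitly from the same Cauchy--Schwarz step with all $f_\omega$ taken equal; this is a harmless, if anything slightly more careful, presentation of the same proof.
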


\begin{proof}
We first isolate the last pair of variables $x_k^{(0)}$, $x_k^{(1)}$ from the rest, breaking the expectation $\mathbb{E}_{\mathbf{x}^{(0)}, \mathbf{x}^{(1)} \in V^k}$
in the definition (\ref{OctInnerProduct})
into one expectation over
$\mathbf{x}^{(0)}_{[k-1]}$, $\mathbf{x}^{(1)}_{[k-1]} \in V^{k-1}$
and one over $x_k^{(0)}$, $x_k^{(1)} \in V$.
We can then write $\left\langle (f_{\omega})_{\omega \in \{0, 1\}^k} \right\rangle_{\oct^k}$ as
\begin{align*}
    \mathbb{E}_{\mathbf{x}^{(0)}_{[k-1]},\, \mathbf{x}^{(1)}_{[k-1]}} \Bigg[ \mathbb{E}_{x^{(0)}_k} \Bigg[ \prod_{\omega' \in \{0, 1\}^{k-1}} f_{\omega', 0} \big(\mathbf{x}^{(\omega')}, x_k^{(0)}\big) \Bigg] \,\mathbb{E}_{x^{(1)}_k} \Bigg[ \prod_{\omega' \in \{0, 1\}^{k-1}} f_{\omega', 1} \big(\mathbf{x}^{(\omega')}, x_k^{(1)}\big) \Bigg] \Bigg].
\end{align*}
Applying Cauchy-Schwarz to this outer expectation and collecting the terms, we obtain
$$\left\langle (f_{\omega})_{\omega \in \{0, 1\}^k} \right\rangle_{\oct^k} \leq
\left\langle (f_{\omega', 0})_{\omega \in \{0, 1\}^k} \right\rangle_{\oct^k}^{1/2} \left\langle (f_{\omega', 1})_{\omega \in \{0, 1\}^k} \right\rangle_{\oct^k}^{1/2},$$
where we write $\omega' := (\omega_1, \dots, \omega_{k-1}) \in \{0, 1\}^{k-1}$ for the first $k-1$ terms of $\omega$.
Similarly for any other choice of variables $x_i^{(0)}$, $x_i^{(1)}$ to be separated from the rest.

Applying this inequality consecutively for each pair $x_i^{(0)}$, $x_i^{(1)}$ of variables, we obtain at the end
$$\left\langle (f_{\omega})_{\omega \in \{0, 1\}^k} \right\rangle_{\oct^k} \leq \prod_{\omega \in \{0, 1\}^k} \left\langle f_{\omega}, f_{\omega}, \dots, f_{\omega} \right\rangle_{\oct^k}^{1/2^k}.$$
The result now follows from the identity
$\|f\|_{\oct^k} = \left\langle f, f, \dots, f \right\rangle_{\oct^k}^{1/2^k}$.
\end{proof}

With the Gowers-Cauchy-Schwarz inequality in hand it is easy to  show that the octahedral norm satisfies the triangle inequality, and is thus really a
norm.\footnote{That $\|f\|_{\oct^k} \neq 0$ whenever $f$ is non-zero follows immediately from Lemma \ref{cut<oct} given below, for instance.}
Indeed, by linearity of the inner product we have
\begin{align*}
    \|f + g\|_{\oct^k}^{2^k} &= \big\langle f + g,\, f + g,\, \dots,\, f + g \big\rangle_{\oct^k} \\
    &= \big\langle f,\, f + g,\, \dots,\, f + g \big\rangle_{\oct^k} + \big\langle g,\, f + g,\, \dots,\, f + g \big\rangle_{\oct^k} \\
    &\leq \|f\|_{\oct^k} \|f+g\|_{\oct^k}^{2^k-1} + \|g\|_{\oct^k} \|f+g\|_{\oct^k}^{2^k-1},
\end{align*}
from which we deduce that $\|f + g\|_{\oct^k} \leq \|f\|_{\oct^k} + \|g\|_{\oct^k}$.

Another important consequence of the Gowers-Cauchy-Schwarz inequality is that the octahedral norms are \emph{stronger} than the cut norm:

\begin{lem} \label{cut<oct}
For any function $f: V^k \rightarrow \R$, we have $\| f \|_{\square^{k}_{k-1}} \leq \| f \|_{\oct^k}$.
\end{lem}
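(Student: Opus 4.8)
The plan is to recognise the quantity defining the $(k,k-1)$-cut norm as a single value of the octahedral inner product $\langle\cdot\rangle_{\oct^k}$ and then apply the Gowers–Cauchy–Schwarz inequality (Lemma \ref{GowersCS}). Index the $(k-1)$-subsets of $[k]$ by the coordinate they omit, writing $u_i := u_{[k]\setminus\{i\}}$, so that by the weighted reformulation of the cut norm it suffices to show
$$\Big| \mathbb{E}_{\mathbf{x} \in V^{[k]}}\Big[ f(\mathbf{x}) \prod_{i=1}^k u_i(\mathbf{x}_{[k]\setminus\{i\}}) \Big] \Big| \leq \|f\|_{\oct^k}$$
for every choice of functions $u_i : V^{[k]\setminus\{i\}} \to [0,1]$.

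To do this I would build an auxiliary family $(f_\omega)_{\omega\in\{0,1\}^k}$ of functions $V^k \to \R$: put $f_{\mathbf{0}} := f$ at $\omega=(0,\dots,0)$; at each standard basis vector $\omega = e_i$ put the lift $f_{e_i}(\mathbf{y}) := u_i(\mathbf{y}_{[k]\setminus\{i\}})$, which simply ignores the $i$-th coordinate; and at every remaining $\omega$ (those of Hamming weight at least $2$) put the constant function $\mathbf{1}$ equal to $1$. The key observation is that in the expansion $\langle (f_\omega)_\omega\rangle_{\oct^k} = \mathbb{E}_{\mathbf{x}^{(0)},\mathbf{x}^{(1)}\in V^k}\big[\prod_\omega f_\omega(\mathbf{x}^{(\omega)})\big]$ the point $\mathbf{x}^{(e_i)}$ agrees with $\mathbf{x}^{(0)}$ in every coordinate except the $i$-th, and $f_{e_i}$ discards exactly the $i$-th coordinate, so $f_{e_i}(\mathbf{x}^{(e_i)}) = u_i\big((\mathbf{x}^{(0)})_{[k]\setminus\{i\}}\big)$ depends only on the variables $\mathbf{x}^{(0)}$; likewise $f_{\mathbf{0}}(\mathbf{x}^{(0)}) = f(\mathbf{x}^{(0)})$ depends only on $\mathbf{x}^{(0)}$, and the constant factors contribute nothing. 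Hence every variable $\mathbf{x}^{(1)}$ disappears and
$$\big\langle (f_\omega)_{\omega\in\{0,1\}^k}\big\rangle_{\oct^k} = \mathbb{E}_{\mathbf{x} \in V^{[k]}}\Big[ f(\mathbf{x}) \prod_{i=1}^k u_i(\mathbf{x}_{[k]\setminus\{i\}}) \Big],$$
which is precisely the quantity to be controlled.

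Finally, Lemma \ref{GowersCS} gives $\langle (f_\omega)_\omega\rangle_{\oct^k} \le \prod_\omega \|f_\omega\|_{\oct^k}$. Here $\|f_{\mathbf{0}}\|_{\oct^k} = \|f\|_{\oct^k}$, the constant function has octahedral norm $1$, and for each $i$ one has $\|f_{e_i}\|_{\oct^k}\le 1$: since $f_{e_i}$ takes values in $[0,1]$, the defining average $\mathbb{E}_{\mathbf{x}^{(0)},\mathbf{x}^{(1)}}\big[\prod_\omega f_{e_i}(\mathbf{x}^{(\omega)})\big]$ is an average of numbers in $[0,1]$ and therefore lies in $[0,1]$. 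Consequently $\langle (f_\omega)_\omega\rangle_{\oct^k} \le \|f\|_{\oct^k}$; replacing $f$ by $-f$ (whose octahedral norm is unchanged) yields the matching lower bound, and taking the maximum over all $u_1,\dots,u_k$ gives $\|f\|_{\square^k_{k-1}} \le \|f\|_{\oct^k}$. I do not expect any genuine obstacle: the only delicate point is choosing the auxiliary family $(f_\omega)_\omega$ so that the doubled variables $\mathbf{x}^{(1)}$ collapse — placing the $u_i$ at the basis vectors $e_i$ rather than at heavier $\omega$'s is exactly what makes this work — after which the bound is immediate from Gowers–Cauchy–Schwarz.
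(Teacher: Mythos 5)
Your proof is correct and takes essentially the same approach as the paper: both express the cut-norm functional as one value of the octahedral inner product by lifting $f$ and the $u_i$'s to a family $(f_\omega)$ and then apply Gowers--Cauchy--Schwarz, the only (cosmetic) difference being that the paper places $f$ at $\omega=\mathbf{1}$ and the $u_B$'s at weight $k-1$, whereas you use the mirror image under $\omega\mapsto\mathbf{1}-\omega$, placing $f$ at $\mathbf{0}$ and the $u_i$'s at the basis vectors $e_i$. Your extra sentence dealing with the absolute value (replacing $f$ by $-f$) is in fact a small improvement in rigor, since Lemma~\ref{GowersCS} as stated bounds the inner product and not its absolute value.
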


\begin{proof}
Given functions $u_B: V^B \rightarrow [0, 1]$, $B \in \binom{[k]}{k-1}$, let $f_{\omega_B}: V^{[k]} \rightarrow \R$ be the function defined by $f_{\omega_B}(\mathbf{x}_{[k]}) = u_B(\mathbf{x}_B)$, where $\omega_B \in \{0, 1\}^{[k]}$ is the indicator vector of the set $B$.
Denote also $f_{\mathbf{1}} = f$ and $f_{\omega} \equiv 1$ for all $\omega \in \{0, 1\}^{[k]} \setminus \{\mathbf{1}\}$ not contained in the set $\big\{\omega_B: B \in \binom{[k]}{k-1}\big\}$.

Using the Gowers-Cauchy-Schwarz inequality we conclude that
\begin{align*}
    \Bigg| \mathbb{E}_{\mathbf{x} \in V^{[k]}} \Bigg[ f(\mathbf{x}) \prod_{B \in \binom{[k]}{k-1}}{u_{B}(\mathbf{x}_{B})} \Bigg] \Bigg| &= \Bigg|\mathbb{E}_{\mathbf{x}^{(0)}, \mathbf{x}^{(1)} \in V^k} \Bigg[ \prod_{\omega \in \{0, 1\}^k} f_{\omega} \big(\mathbf{x}^{(\omega)}\big) \Bigg] \Bigg| \\
    &\leq \prod_{\omega \in \{0, 1\}^k} \| f_{\omega} \|_{\oct^k}.
\end{align*}
Since clearly $\| f_{\omega} \|_{\oct^k} \leq \| f_{\omega} \|_{L^{\infty}} \leq 1$ for all $\omega \in \{0, 1\}^{[k]} \setminus \{\mathbf{1}\}$, the last product is at most $\| f \|_{\oct^k}$.
As this inequality is valid for all functions $u_B: V^B \rightarrow [0, 1]$, $B \in \binom{[k]}{k-1}$, the claim follows.
\end{proof}

As a special case of this lemma, we see that $\| f \|_{\oct^k} \geq \left|\mathbb{E}[f]\right|$.
Applying this to a hypergraph $H$ of edge density $\delta$ we conclude that $t(\oct^{(k)}, H) \geq \delta^{2^k}$, showing that any $k$-graph $H$ will contain at least $\delta^{2^k} v(H)^{2k}$ (homomorphic) copies of $\oct^{(k)}$ as a subhypergraph.
This explains why it is enough to require that $t(\oct^{(k)}, H) \leq \delta^{2^k} + c_3$ in item $(iii)$ of Theorem \ref{octequiv}.

We are now ready to prove the equivalence theorem for strong quasirandomness.

\begin{proof}[Proof of Theorem \ref{octequiv}]
$(i) \Rightarrow (ii)$:
This follows immediately from the counting lemma (Lemma \ref{count_d-linear}), and we may take $c_2 = c_1$.

\medskip
$(ii) \Rightarrow (iii)$:
This is a special case, and we may take $c_3 = 2^k c_2$.

\medskip
$(iii) \Rightarrow (iv)$:
We are given that $t(\oct^{(k)}, H) \leq \delta^{2^k} + c_3$, and we wish to bound
\begin{align*}
    t(\oct^{(k)}, H - \delta) &= \mathbb{E}_{\mathbf{x} \in V(H)^{V(\oct^{(k)})}} \Bigg[ \prod_{e \in \oct^{(k)}} \left( H(\mathbf{x}_e) - \delta \right) \Bigg] \\
    &= \mathbb{E}_{\mathbf{x} \in V(H)^{V(\oct^{(k)})}} \Bigg[ \sum_{F \subseteq \oct^{(k)}} \prod_{e \in F} H(\mathbf{x}_e) \prod_{e \in \oct^{(k)} \setminus F} (-\delta) \Bigg] \\
    &= \sum_{F \subseteq \oct^{(k)}} t(F, H) \cdot (-\delta)^{2^k-|F|}
\end{align*}
(where the sum is over all labeled subhypergraphs $F$ of $\oct^{(k)}$).
The main issue in bounding this last sum is that it contains both positive and negative terms;
the idea to get around this problem is to consider instead the related expression
$$t(\oct^{(k)}, H + \delta) + t(\oct^{(k)}, H - \delta) = \sum_{F \subseteq \oct^{(k)}} t(F, H) \big( \delta^{2^k-|F|} + (-\delta)^{2^k-|F|} \big),$$
which contains only nonnegative terms.

Using Gowers-Cauchy-Schwarz we see that $t(F, H) \leq \|H\|_{\oct^k}^{|F|}$ holds whenever $F$ is a subhypergraph of $\oct^{(k)}$:
we can write $t(F, H)$ as the inner product
\begin{equation*}
    \langle (f_{\omega})_{\omega \in \{0, 1\}^k} \rangle_{\oct^k}
    \hspace{3mm} \text{where }
    f_{\omega} =
    \begin{cases}
        H &\text{ if } \omega \text{ is an edge of }F, \\ 
        1 &\text{ otherwise}
    \end{cases}
\end{equation*}
and so $t(F, H) \leq \|H\|_{\oct^k}^{|F|} \|1\|_{\oct^k}^{2^k-|F|} = \|H\|_{\oct^k}^{|F|}$.
We then obtain
\begin{align*}
    t(\oct^{(k)}, H + \delta) + t(\oct^{(k)}, H - \delta)
    &\leq \sum_{F \subseteq \oct^{(k)}} \|H\|_{\oct^k}^{|F|} \big( \delta^{2^k-|F|} + (-\delta)^{2^k-|F|} \big) \\
    &= \big( \|H\|_{\oct^k} + \delta \big)^{2^k} + \big( \|H\|_{\oct^k} - \delta \big)^{2^k},
\end{align*}
where this last equality follows from the binomial expansion.

By assumption we have that $\delta = \mathbb{E}[H] \leq \|H\|_{\oct^k} \leq \delta + c_3^{1/2^k}$, which implies
$$\big( \|H\|_{\oct^k} + \delta \big)^{2^k} + \big( \|H\|_{\oct^k} - \delta \big)^{2^k} \leq \big(2\delta + c_3^{1/2^k}\big)^{2^k} + c_3.$$
A quick computation using that $c_3, \delta \leq 1$ permits us to bound the right-hand side above by $(2\delta)^{2^k} + 2^{2^{k+1}} c_3^{1/2^k}$.
Since $t(\oct^{(k)}, H + \delta) = \|H + \delta\|_{\oct^k}^{2^k} \geq (2\delta)^{2^k}$, we finally conclude that
$$t(\oct^{(k)}, H - \delta) + (2\delta)^{2^k} \leq (2\delta)^{2^k} + 2^{2^{k+1}} c_3^{1/2^k},$$
which is exactly property $(iv)$ with $c_4 = 2^{2^{k+1}} c_3^{1/2^k}$.

\medskip
$(iv) \Rightarrow (i)$:
If we suppose $t(\oct^{(k)},\, H - \delta) \leq c_4$, then $\| H - \delta \|_{\oct^k} \leq c_4^{1/2^k}$ and the claim follows from the inequality $\| H - \delta \|_{\square^{k}_{k-1}} \leq \| H - \delta \|_{\oct^k}$ given in Lemma \ref{cut<oct} (with $c_1 = c_4^{1/2^k}$).
\end{proof}


\subsection{Partite hypergraphs} \label{partiteHypergraphs}

As in the case of graphs, it is useful to also consider notions of quasirandomness for \emph{partite hypergraphs}.

A hypergraph is said to be \emph{$\ell$-partite} if its vertex set can be partitioned into $\ell$ classes in such a way that every edge of $H$ contains at most one vertex from any of these classes.
We shall assume such a partition $V(H) = V_1 \cup \dots \cup V_{\ell}$ is fixed and part of the description of the $\ell$-partite hypergraph $H$ in consideration.

Given a collection $(V_i)_{i \in [\ell]}$ of non-empty sets and any $B \subseteq [\ell]$, let us write $V_B := \prod_{i \in B} V_i$ for the Cartesian product.
For a given hypergraph $H$ and $k$ disjoint subsets $U_1, \dots, U_k \subset V(H)$, we denote by $H[U_1, \dots, U_k]$ the $k$-partite $k$-graph on $U_1 \cup \dots \cup U_k$ whose edges are the restriction of $H$ to $U_{[k]}$.
Note that we can write any $\ell$-partite $k$-graph $H$ on $V_1 \cup \dots \cup V_{\ell}$ as the edge-disjoint union of $\binom{\ell}{k}$ $k$-partite $k$-graphs:
$$H \,=\, \bigcup_{B \in \binom{[\ell]}{k}} H \big[(V_i)_{i \in B}\big] \,=\, \bigcup_{1 \leq i_1 < \dots < i_k \leq \ell} H \big[V_{i_1},\, \dots,\, V_{i_k}\big].$$

We then have the following definitions, which are the natural extensions of our earlier notions of quasirandomness and homomorphism density for partite hypergraphs.

\begin{definition}
Let $(V_i)_{i \in [k]}$ be a collection of $k$ non-empty sets.
Given a function $f: V_{[k]} \rightarrow \R$ and an integer $1 \leq d \leq k-1$, we define the \emph{$(k, d)$-cut norm} of $f$ by
$$\|f\|_{\square^k_d} := \max_{S_B \subseteq V_{B}\; \forall B \in \binom{[k]}{d}} \Bigg| \mathbb{E}_{\mathbf{x} \in V_{[k]}} \Bigg[f(\mathbf{x}) \prod_{B \in \binom{[k]}{d}}{S_B(\mathbf{x}_{B})}\Bigg] \Bigg|,$$
where the maximum is over all collections of sets $(S_B)_{B \in \binom{[k]}{d}}$ where each $S_B$ is a subset of $\prod_{i \in B} V_i$.
We say that a $k$-partite $k$-graph $H$ on $(V_i)_{i \in [k]}$ is \emph{$\varepsilon$-quasirandom of order $d$} if $\|H - \delta\|_{\square^k_d} \leq \varepsilon$, where $\delta := |H|/|V_{[k]}|$ denotes its edge density.
\end{definition}

\begin{definition}
Let $F$ and $H$ be $\ell$-partite hypergraphs with partition classes $(U_i)_{i \in [\ell]}$ and $(V_i)_{i \in [\ell]}$, respectively.
We say that a map $\phi: V(F) \rightarrow V(H)$ is \emph{$\ell$-partite} if it maps each $U_i$ into $V_i$, i.e. if $\phi(U_i) \subseteq V_i$ for all $1 \leq i \leq \ell$.
We define the \emph{canonical homomorphism density} of $F$ on $H$, denoted $t_{can}(F, H)$, as the probability that a uniformly chosen $\ell$-partite map $\phi: V(F) \rightarrow V(H)$ preserves edges;
in formulas:
$$t_{can}(F, H) := \mathbb{E}_{x_{u_1} \in V_1\; \forall u_1 \in U_1} \dots \mathbb{E}_{x_{u_{\ell}} \in V_{\ell}\; \forall u_{\ell} \in U_{\ell}} \Bigg[ \prod_{e \in F} H(\mathbf{x}_e) \Bigg].$$
\end{definition}

We note that the octahedral norms can also be naturally extended to the `partite case' of functions $f: V_{[k]} \rightarrow \R$ by defining
$$\|f\|_{\oct^k} := t_{can}(\oct^{(k)}, f)^{1/2^k} = \mathbb{E}_{\mathbf{x}^{(0)},\, \mathbf{x}^{(1)} \in V_{[k]}} \Bigg[ \prod_{\omega \in \{0, 1\}^k} f\big(\mathbf{x}^{(\omega)}\big) \Bigg]^{1/2^k}.$$
All of its properties, such as the Gowers-Cauchy-Schwarz inequality and the inequality $\|f\|_{\square^k_{k-1}} \leq \|f\|_{\oct^k}$, continues to hold in this case
(with unchanged proofs).

With these definitions in hand, one can easily obtain an analogue of the counting lemma (Lemma \ref{count_d-linear}) for partite hypergraphs.
Indeed, it is interesting to note that the counting lemma in the partite case rests valid also for \emph{non-uniform} hypergraphs, that is, when the host hypergraph $H$ (and also the smaller hypergraph $F$ being counted) contains edges of different sizes.
For it to hold it suffices to require the hypergraph $F$ to be $d$-linear:

\begin{lem}
Let $F$ be a $d$-linear hypergraph on $[\ell]$ and let $H$ be an $\ell$-partite hypergraph on $V_1 \cup \dots \cup V_{\ell}$.
Suppose that, for all edges $e \in F$, the
$|e|$-partite $|e|$-graph $H[(V_i)_{i \in e}]$
is $\varepsilon$-quasirandom of order $d$ with edge density $\delta_e$.
Then we have:
$$t_{can}(F, H) = \prod_{e \in F} \delta_e \pm \varepsilon |F|.$$
\end{lem}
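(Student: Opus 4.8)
The plan is to run the same telescoping argument used in the proof of Lemma~\ref{count_d-linear}, now keeping track of the facts that different edges carry different densities and that $F$ need not be uniform. Write $F = \{e_1, \dots, e_{|F|}\}$ and abbreviate $\delta_i := \delta_{e_i}$. A canonical homomorphism of $F$ into $H$ is simply a choice of $x_j \in V_j$ for each $j \in [\ell]$, and for an edge $e \in F$ one has $H(\mathbf{x}_e) = H[(V_i)_{i \in e}]\big((x_i)_{i \in e}\big)$. Applying the telescoping identity $\prod_i a_i - \prod_i b_i = \sum_i \big(\prod_{j<i} b_j\big)(a_i - b_i)\big(\prod_{j>i} a_j\big)$ with $a_i = H(\mathbf{x}_{e_i})$ and $b_i = \delta_i$, taking $\mathbb{E}_{\mathbf{x}}$, and using $\delta_j \le 1$, one obtains
$$\Big| t_{can}(F, H) - \prod_{e \in F} \delta_e \Big| \,\le\, \sum_{i=1}^{|F|} \Bigg| \mathbb{E}_{\mathbf{x} \in V_{[\ell]}} \Bigg[ \big( H(\mathbf{x}_{e_i}) - \delta_i \big) \prod_{j > i} H(\mathbf{x}_{e_j}) \Bigg] \Bigg|.$$

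Next I would bound each summand by $\varepsilon$. Fix $i$ and freeze all coordinates $x_m$ with $m \notin e_i$. For every $j > i$ the factor $H(\mathbf{x}_{e_j})$ then becomes a $[0,1]$-valued function of the coordinates indexed by $e_i \cap e_j$, and $d$-linearity of $F$ gives $|e_i \cap e_j| \le d$. Grouping these factors according to the coordinate set on which they depend (padding each such set to a $d$-subset of $e_i$, and multiplying together factors attached to the same $d$-set, which preserves being $[0,1]$-valued), and absorbing into a constant $c \in \{0,1\}$ the contribution of those edges $e_j$, $j>i$, that are disjoint from $e_i$, one can write $\prod_{j > i} H(\mathbf{x}_{e_j}) = c \cdot \prod_{B \in \binom{e_i}{d}} u_B(\mathbf{x}_B)$ with $u_B : V_B \to [0,1]$. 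The weighted form of the $(|e_i|,d)$-cut norm then yields
$$\Bigg| \mathbb{E}_{(x_j)_{j \in e_i}} \Bigg[ \big( H(\mathbf{x}_{e_i}) - \delta_i \big) \prod_{B \in \binom{e_i}{d}} u_B(\mathbf{x}_B) \Bigg] \Bigg| \,\le\, \big\| H[(V_i)_{i \in e_i}] - \delta_i \big\|_{\square^{|e_i|}_d} \,\le\, \varepsilon,$$
the last inequality being precisely the hypothesis that $H[(V_i)_{i\in e_i}]$ is $\varepsilon$-quasirandom of order $d$. Averaging over the frozen coordinates preserves this bound, so each of the $|F|$ summands is at most $\varepsilon$, which gives $\big| t_{can}(F, H) - \prod_{e \in F} \delta_e \big| \le \varepsilon |F|$.

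Nothing in this is genuinely harder than in the uniform case of Lemma~\ref{count_d-linear}; the only step I would double-check carefully — and the only place where non-uniformity and $d$-linearity really enter — is the verification that, once the coordinates outside $e_i$ are frozen, the surviving factors really do organize into a product of $[0,1]$-valued functions indexed by $d$-subsets of $e_i$, so that the single-graph cut-norm hypothesis for $H[(V_i)_{i\in e_i}]$ can be invoked. (Edges $e$ with $|e| \le d$ are harmless: for $|e| < d$ the family $\binom{e}{d}$ is empty and the quasirandomness hypothesis holds automatically since $H[(V_i)_{i\in e}]-\delta_e$ has mean zero, and in the telescoping such factors can simply be absorbed into the constant $c$.)
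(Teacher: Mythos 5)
Your argument is correct and follows exactly the approach the paper intends: the survey explicitly says this lemma's proof "is essentially identical" to that of Lemma~\ref{count_d-linear}, and you have carried out precisely that telescoping-plus-cut-norm argument with the right adaptations for edge-dependent densities $\delta_e$ and non-uniform $F$ (freezing the variables outside $e_i$, using $d$-linearity so the surviving factors depend on at most $d$ of the $e_i$-coordinates, and invoking the weighted form of the $(|e_i|,d)$-cut norm). The only small imprecision is in your closing parenthetical: a factor $H(\mathbf{x}_{e_j})$ with $|e_j|\le d$ and $e_j\cap e_i\ne\emptyset$ is not absorbed into the constant $c$ but rather into one of the functions $u_B$ (which is harmless, since $|e_j\cap e_i|\le d$ still holds), and the hypothesis "$\varepsilon$-quasirandom of order $d$" is only meaningfully stated for edges with $|e|>d$, so treating the degenerate cases is more a matter of convention than something the lemma is asserting.
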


The proof of this result is essentially identical to that of our counting lemma for quasirandomness of order $d$ (Lemma \ref{count_d-linear}), and so we refrain from giving it here.

The equivalence theorem for strong quasirandomness in the partite hypergraph setting was explicitly worked out by Kohayakawa, R\"odl and Skokan \cite{HypergraphQuasirandomnessRegularity}, who in fact used it as a step in their proof of its non-partite version.
The next theorem, which deals with quasirandomness of any fixed order $1 \leq d < k$ for $k$-partite $k$-graphs, follows (in a qualitative, asymptotically equivalent form) from the arguments of Towsner \cite{SigmaAlgebrasHypergraphs};
the polynomial bounds as stated follow from the methods presented in \cite{QuasirandomnessHypergraphs, CayleyTypeHypergraphs}.
As in the case of partite graphs, the notion of polynomial equivalence in this theorem must take into account the size of each one of the partition classes.

\begin{thm}
Let $1 \leq d < k$ be integers and let $H$ be a $k$-partite $k$-uniform hypergraph with edge density $\delta$.
Then the following properties are polynomially equivalent:
\begin{itemize}
    \item[$(i)$] $H$ is quasirandom of order $d$: \hspace{2mm}
    $\| H - \delta \|_{\square^{k}_d} \leq c_1$.
    \item[$(ii)$] $H$ correctly counts all $k$-partite $d$-linear hypergraphs:
    $$t_{can}(F, H) = \delta^{|F|} \pm c_2 |F| \hspace{5mm} \forall F \in \mathcal{L}^{(k)}_{d} \, k \text{-partite}.$$
    \item[$(iii)$] $H$ has few copies of $M = \textsc{M}^{(k)}_{d}$: \hspace{2mm}
    $t_{can}(M, H) \leq \delta^{|M|} + c_3$.
    \item[$(iv)$] $H$ has small deviation with respect to $M = \textsc{M}^{(k)}_{d}$:
    $$\mathbb{E}_{\mathbf{x} \in V(H)^{V(M)}} \Bigg[ \prod_{e \in M} \left( H(\mathbf{x}_e) - \delta \right) \Bigg] \leq c_4.$$
\end{itemize}
\end{thm}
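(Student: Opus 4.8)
The plan is to establish the cycle $(i) \Rightarrow (ii) \Rightarrow (iii) \Rightarrow (iv) \Rightarrow (i)$ with explicit polynomial control on all constants, in direct parallel with the proof of Theorem~\ref{octequiv}. The implication $(i) \Rightarrow (ii)$ is immediate from the preceding counting lemma for partite hypergraphs, applied with all the edge densities $\delta_e$ equal to $\delta$ (as $H$ is $k$-uniform), so one may take $c_2 = c_1$; and $(ii) \Rightarrow (iii)$ is the special case $F = M := \textsc{M}^{(k)}_d$, which is a $k$-partite $d$-linear $k$-graph, so $c_3 = |M| c_2$ works. The implications $(iii) \Rightarrow (iv)$ and $(iv) \Rightarrow (i)$ are the ones requiring genuine work, and both rest on a single new ingredient: a ``box norm'' $\|\cdot\|_M$ attached to $M$, generalizing the octahedral norm (which is the case $d = k-1$, where $M = \oct^{(k)}$).

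That norm comes from the product structure of $M$. The construction of $M$ by successive doublings $\db_I$, one for each $I \in \binom{[k]}{d}$, endows it with a structure indexed by $\{0,1\}^{\binom{[k]}{d}}$: the vertices of class $i$ are naturally labelled by $V_i \times \{0,1\}^{\{I \in \binom{[k]}{d}:\, i \notin I\}}$, the edges by $\omega \in \{0,1\}^{\binom{[k]}{d}}$, and the incidence between them mirrors exactly the assignment $\omega \mapsto \mathbf{x}^{(\omega)}$ in the octahedral case. (One should first verify that this description is independent of the order in which the doublings are performed, as is implicit in the definition of $M$.) Using this one defines a generalized inner product $\langle (f_\omega)_{\omega \in \{0,1\}^{\binom{[k]}{d}}} \rangle_M$ and sets $\|f\|_M := \langle f, \dots, f \rangle_M^{1/|M|} = t_{can}(M, f)^{1/|M|}$. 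The crucial point is that $\|\cdot\|_M$ still satisfies a Gowers--Cauchy--Schwarz inequality $\langle (f_\omega) \rangle_M \leq \prod_\omega \|f_\omega\|_M$, proved exactly as Lemma~\ref{GowersCS}: one applies ordinary Cauchy--Schwarz once for each coordinate $I \in \binom{[k]}{d}$, splitting the variables attached to classes not in $I$ from those attached to classes in $I$; since the doubling $\db_I$ fixed precisely the classes in $I$, this split is legitimate, and each step replaces $(f_\omega)$ by two families with repeated entries, just as in the octahedral argument. Positivity of $\|f\|_M$ for real $f$ (hence well-definedness) follows from the same splitting with all entries equal.

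With $\|\cdot\|_M$ available, the analogue of Lemma~\ref{cut<oct} reads $\|f\|_{\square^k_d} \leq \|f\|_M$, and is proved by the same device: given functions $u_B : V_B \to [0,1]$ for $B \in \binom{[k]}{d}$, place $f$ at the coordinate $\mathbf{1}$, place $u_B$ at the coordinate $\omega^{(B)} \in \{0,1\}^{\binom{[k]}{d}}$ that is $1$ everywhere except $0$ in position $B$, and set all remaining entries to the constant function $1$; since position $B$ is an inactive coordinate for every class $i \in B$, the $M$-inner product of this family collapses to $\mathbb{E}_{\mathbf{x} \in V_{[k]}}\big[f(\mathbf{x}) \prod_{B} u_B(\mathbf{x}_B)\big]$, which Gowers--Cauchy--Schwarz bounds by $\|f\|_M$, and taking the maximum over the $u_B$ gives the claim. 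Then $(iv) \Rightarrow (i)$ follows with $c_1 = c_4^{1/|M|}$, since $\|H - \delta\|_M^{|M|} = t_{can}(M, H - \delta) \leq c_4$ and $\|H-\delta\|_{\square^k_d} \leq \|H-\delta\|_M$. For $(iii) \Rightarrow (iv)$ one reruns the octahedral computation essentially verbatim: expand $t_{can}(M, H \pm \delta) = \sum_{F \subseteq M} t_{can}(F, H)(\pm\delta)^{|M|-|F|}$ over labelled subhypergraphs of $M$; note that $t_{can}(M, H+\delta) + t_{can}(M, H-\delta)$ has only nonnegative terms; bound each $t_{can}(F, H) \leq \|H\|_M^{|F|}$ via Gowers--Cauchy--Schwarz; sum the resulting binomial series to $(\|H\|_M + \delta)^{|M|} + (\|H\|_M - \delta)^{|M|}$; and combine this with $\delta = \mathbb{E}[H] \leq \|H\|_M \leq \delta + c_3^{1/|M|}$ (using $\|\cdot\|_M \geq \|\cdot\|_{\square^k_d} \geq |\mathbb{E}[\cdot]|$) and $t_{can}(M, H+\delta) = \|H+\delta\|_M^{|M|} \geq (2\delta)^{|M|}$ to extract $c_4 = O_k\!\big(c_3^{1/|M|}\big)$.

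I expect the main obstacle to be organizational rather than conceptual: pinning down the combinatorial description of the box structure of $\textsc{M}^{(k)}_d$ — which coordinate of $\{0,1\}^{\binom{[k]}{d}}$ is active for which vertex class, and why applying the elementary doublings in an arbitrary order yields the same hypergraph — so that the Cauchy--Schwarz splittings underlying the Gowers--Cauchy--Schwarz inequality for $\|\cdot\|_M$ are manifestly valid. Once this bookkeeping is in place, each remaining step is a faithful transcription of the $d = k-1$ arguments already carried out for Theorem~\ref{octequiv}, with $2^k$ replaced throughout by $|M| = 2^{\binom{k}{d}}$, and the partite formulation introduces no further difficulty since the homomorphism counts $t_{can}$ are exact averages (so no lower-order error terms or size restrictions intervene beyond those already built into the partite notion of polynomial equivalence).
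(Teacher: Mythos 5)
Your proposal is correct, and it reconstructs precisely the argument the survey gestures at but does not write out: the paper states that this partite theorem follows from "the methods presented in \cite{QuasirandomnessHypergraphs, CayleyTypeHypergraphs}" — that is, exactly the box-norm machinery used for Theorem~\ref{octequiv} — and your proof is a faithful transcription of that template with $\oct^{(k)}$ replaced by $\textsc{M}^{(k)}_d$ and $2^k$ by $|M| = 2^{\binom{k}{d}}$. The key things you correctly identify are (a) that the iterated doubling construction endows $M$ with a box structure indexed by $\{0,1\}^{\binom{[k]}{d}}$, with coordinate $I$ "inactive" precisely for the vertex classes in $I$; (b) that this yields a Gowers–Cauchy–Schwarz inequality by applying ordinary Cauchy–Schwarz once per $I$, separating the $\{0,1\}^{S_i}$-indexed variables for $i \notin I$ by their $I$-coordinate while the variables for $i \in I$ stay fixed; and (c) that placing $f$ at $\mathbf{1}$ and $u_B$ at the coordinate flipped only at $B$ makes the $M$-inner-product collapse to the cut-norm pairing (because for $i \in B$ the edge $\omega^{(B)}$ uses the same vertex as $\mathbf{1}$, so $u_B$ reads off the "main" $\mathbf{x}_B$). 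One small caveat: the paper's partite counting lemma is phrased for an $F$ with one vertex per class of $H$, whereas here $F$ is $k$-partite with possibly several vertices per class; the statement and proof extend verbatim (or via the standard blow-up of $H$ into $|V(F)|$ classes), but it is worth noting this when invoking $(i) \Rightarrow (ii)$.
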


\section{Comparing quasirandomness in additive groups and in hypergraphs} \label{Cayley}

We have seen that both in the setting of additive groups and in that of hypergraphs there is a natural hierarchy of notions of quasirandomness, depending on the `order' or `degree' of the structures it can detect.
The main goal of this section is to understand how these two different classes of quasirandomness notions relate to each other.

In order to do this it will be necessary to consider these two families of combinatorial objects in the same framework.
A simple and convenient way of doing so is by defining the \emph{Cayley hypergraph} $H^{(k)}\!A$ of an additive set $A \subseteq G$,
whose vertex set is the underlying group $G$ and elements $x_1, \dots, x_k \in G$ form an edge iff $x_1 + \dots + x_k \in A$.


More generally, one can define a `Cayley-type hypergraph' related to an additive set by any given linear form $\phi: G^k \rightarrow G$, or any system of such linear forms.
Such generalizations are also interesting and will be considered in Sections \ref{generalCayley} and \ref{LinearConfigSection}, but for now we concentrate on the simpler case of Cayley hypergraphs as given above.

Recall that we have already seen a strong connection between \emph{linearly} uniform sets and their associated Cayley graphs, which coincide with the definition above when $k = 2$.
Indeed, by the equivalence theorem for uniform sets (Theorem \ref{linearsets}) a set $A \subseteq G$ is linearly uniform if and only if its Cayley graph $\Gamma_A$ is quasirandom;
we shall now see how this generalizes to higher orders.



\subsection{Quasirandomness for additive sets and their Cayley hypergraphs} \label{set-hypergraph}

Let us start with a couple of definitions which will facilitate our study.
The first one is meant to simplify the notation somewhat:

\begin{definition}
Given an integer $k$ and an additive group $G$, we denote by $s: G^k \rightarrow G$ its \emph{summing operator}
$$s(x_1, x_2, \dots, x_k) := x_1 + x_2 + \dots + x_k.$$
\end{definition}

\begin{remark}
There is a slight abuse of notation here since the same designation is used no matter how many terms are being summed or which additive group the summands belong to.
These `hidden parameters' may change each time the operator is used.
\end{remark}

Note that, if we allow for repeated vertices inside edges of the Cayley hypergraph $H^{(k)}\!A$, then its indicator function can be written more economically as $A \circ s$ on $G^k$.
Since there are at most $\binom{k}{2} |G|^{k-1}$ tuples $\mathbf{x} \in G^k$ with a repeated element and $|G|^k$ $k$-tuples in total, when averaging the distinction will be of order $O(k^2/|G|)$ and thus negligible for our purposes.

In order not to clutter our estimates and proofs with these negligible error terms, we will assume from now on that \emph{a Cayley hypergraph $H^{(k)}\!A$ may have loops}:
its edges are all unordered $k$-tuples of (not necessarily distinct) elements $x_1, \dots, x_k$ whose sum lies in the set $A$.
We can similarly define a weighted Cayley hypergraph associated to a function $f: G \rightarrow \R$ by
$H^{(k)}f(x_1, \dots, x_k) = f \circ s(x_1, \dots, x_k)$.

A simple but important property of our notions of quasirandomness for Cayley hypergraphs, which allows them to be analyzed by more `arithmetical' means, is their \emph{translation invariance}:

\begin{definition}
Given an element $a \in G$, we define the \emph{translation operator} $T^{a}$ on $\R^G$ by $T^{a} f(x) := f(x+a)$.
If $A$ is (the indicator function of) a set, then $T^a A$ is (the indicator function of) the translated set $A - a$.
\end{definition}

For any function $f: G \rightarrow \R$ and any group element $a$, we then have that
$$\|H^{(k)} T^{a}f\|_{\square^k_d} = \|H^{(k)}f\|_{\square^k_d} \hspace{5mm} \text{for all } 1 \leq d < k;$$
this follows immediately from the easily checked identity
$$\mathbb{E}_{\mathbf{x} \in G^{[k]}} \Bigg[ T^a f \circ s(\mathbf{x}) \prod_{B \in \binom{[k]}{d}}{u_{B}(\mathbf{x}_{B})} \Bigg]
= \mathbb{E}_{\mathbf{x} \in G^{[k]}} \Bigg[ f \circ s(\mathbf{x}) \prod_{B \in \binom{[k]}{d}} v_{B}(\mathbf{x}_{B}) \Bigg],$$
where $v_B = u_B$ if $1 \notin B$ and $v_B(x_1, \mathbf{x}_{B \setminus \{1\}}) = u_B(x_1-a, \mathbf{x}_{B \setminus \{1\}})$ if $1 \in B$.

As the first step in formally connecting the notion of quasirandomness in additive groups to that in hypergraphs, we will now show a strong connection between the $U^k$ uniformity norms (defined in Section \ref{HigherDegUniformity}) and the $\oct^k$ octahedral norms:

\begin{lem}[Relationship between the $U^k$ and $\oct^{k}$ norms] \label{RelationUkOctk}
For every real function $f: G \rightarrow \R$ we have that $\| f \circ s \|_{\oct^{k}} = \| f \|_{U^k}$.
\end{lem}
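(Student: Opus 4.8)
The plan is to prove the identity by a single change of variables, reducing it to checking that the $2^k$-th powers of the two sides coincide (both sides being nonnegative reals, this is enough; the octahedral expression is nonnegative by the discussion around Lemma~\ref{cut<oct}, and $\|\cdot\|_{U^k}$ is a norm).

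First I would unfold the left-hand side. Since $s(\mathbf{x}^{(\omega)}) = \sum_{i=1}^k x_i^{(\omega_i)}$ for each $\omega \in \{0,1\}^k$, the definition of the octahedral norm gives
$$\| f \circ s \|_{\oct^{k}}^{2^k} = \mathbb{E}_{\mathbf{x}^{(0)},\, \mathbf{x}^{(1)} \in G^k} \Bigg[ \prod_{\omega \in \{0,1\}^k} f\Bigg( \sum_{i=1}^k x_i^{(\omega_i)} \Bigg) \Bigg].$$
Next I would substitute $a_i := x_i^{(0)}$ and $h_i := x_i^{(1)} - x_i^{(0)}$ for each $i \in [k]$. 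This is a bijection of $G^k \times G^k$ onto itself, with inverse $x_i^{(0)} = a_i$, $x_i^{(1)} = a_i + h_i$, hence measure-preserving for the uniform distribution; and under it $\sum_{i=1}^k x_i^{(\omega_i)} = \sum_{i=1}^k (a_i + \omega_i h_i) = \big(\sum_{i=1}^k a_i\big) + \sum_{i=1}^k \omega_i h_i$. So the above equals
$$\mathbb{E}_{a_1, \dots, a_k,\, h_1, \dots, h_k \in G} \Bigg[ \prod_{\omega \in \{0,1\}^k} f\Bigg( \sum_{i=1}^k a_i + \sum_{i=1}^k \omega_i h_i \Bigg) \Bigg].$$

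The key observation is that the integrand depends on $(a_1, \dots, a_k)$ only through the single element $x := \sum_{i=1}^k a_i \in G$. Since $G$ is finite, for every fixed $x \in G$ there are exactly $|G|^{k-1}$ tuples $(a_1, \dots, a_k)$ with $\sum_i a_i = x$, so the pushforward of the uniform distribution on $G^k$ under the summing map is the uniform distribution on $G$. Averaging first over $(a_i)$ with the $(h_i)$ held fixed, the displayed expectation becomes
$$\mathbb{E}_{h_1, \dots, h_k \in G} \Bigg[ \mathbb{E}_{x \in G} \Bigg[ \prod_{\omega \in \{0,1\}^k} f\Bigg( x + \sum_{i=1}^k \omega_i h_i \Bigg) \Bigg] \Bigg] = \| f \|_{U^k(G)}^{2^k}$$
by definition of the $U^k$ norm. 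Taking positive $2^k$-th roots yields $\| f \circ s \|_{\oct^{k}} = \| f \|_{U^k}$, as claimed.

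There is no genuine obstacle here; the two points that deserve a moment's care are (i) that the octahedral norm averages over \emph{ordered} pairs $(\mathbf{x}^{(0)}, \mathbf{x}^{(1)}) \in G^k \times G^k$, which is what makes the substitution $(a_i, h_i) \leftrightarrow (x_i^{(0)}, x_i^{(1)})$ an honest bijection, and (ii) the marginalization step, which collapses the $2k$ averaging variables on the octahedral side to the $k+1$ variables $x, h_1, \dots, h_k$ appearing in the $U^k$ norm and uses the finiteness of $G$ (so that summing is measure-preserving onto $G$).
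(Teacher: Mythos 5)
Your proof is correct and takes essentially the same route as the paper: the change of variables $x = s(\mathbf{x}^{(0)})$, $h_i = x_i^{(1)} - x_i^{(0)}$ converts the octahedral average directly into the $U^k$ average. The paper states the substitution and stops; you spell out the intermediate marginalization step (that averaging the $a_i$ with fixed sum is the same as averaging over $x \in G$), which is the only detail left implicit in the paper's one-line argument.
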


\begin{proof}
We make the change of variables
$$x := s \big(\mathbf{x}^{(0)}\big) = x_1^{(0)} + \dots + x_k^{(0)}, \hspace{5mm} h_i := x^{(1)}_i - x^{(0)}_i \hspace{3mm} \forall i \in [k].$$
Then $s \big(\mathbf{x}^{(\omega)}\big) = x + \sum_{i=1}^k \omega_i h_i$ for all $\omega \in \{0, 1\}^k$, and the result follows.
\end{proof}

As a special case of this relationship, we note that
$$\|H^{(k)}\!A - \delta\|_{\oct^{k}} = \|(A - \delta)\circ s\|_{\oct^{k}} = \|A - \delta\|_{U^k};$$
since the octahedral norm is stronger than the cut norm, it follows that $H^{(k)}\!A$ is quasirandom of order $k-1$ whenever $A$ is uniform of degree $k-1$.

The next theorem shows that a similar phenomenon holds for any degree $1 \leq d < k$ of uniformity:
a uniform set $A$ of some degree $d$ generates quasirandom Cayley hypergraphs of order $d$ having any edge-size.
Moreover, it suffices for one of those hypergraphs to be quasirandom of order $d$ for us to conclude that $A$ is uniform of degree $d$.
This result is due to Castro-Silva \cite{CayleyTypeHypergraphs}, and generalizes a theorem of Aigner-Horev and H\`an \cite{LinearQuasirandomness} who showed a similar relationship for linearly uniform sets and quasirandomness of order $1$.

\begin{thm} \label{AignerHan}
Let $G$ be a finite additive group and $A \subseteq G$ be a subset.
\begin{itemize}
    \item[$a)$] If $A$ is $\varepsilon$-uniform of degree $d$, then for all $k \geq d + 1$ the Cayley hypergraph $H^{(k)}\!A$ is $\varepsilon$-quasirandom of order $d$.
    \item[$b)$] Conversely, if $H^{(d + 1)}A$ is $\varepsilon$-quasirandom of order $d$, then $A$ is $(2\varepsilon^{1/2^{d+1}})$-uniform of degree $d$.
\end{itemize}
\end{thm}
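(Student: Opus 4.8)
The plan is to handle the two implications separately: part $a)$ will follow from a dimension-reduction estimate for the cut norm, and part $b)$ from the counting lemma together with an algebraic telescoping identity. Throughout I write $f := A - \delta$; in the ``loops allowed'' convention one has $H^{(k)}A - \delta = f \circ s$ on $G^k$, and ``$A$ is $\varepsilon$-uniform of degree $d$'' means exactly $\|f\|_{U^{d+1}(G)} \le \varepsilon$.

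For part $a)$ I would first isolate the clean inequality $\|f \circ s\|_{\square^k_d} \le \|f\|_{U^{d+1}(G)}$, valid for every $f: G \to \R$ and every $k \ge d+1$; applying it to $f = A - \delta$ and using $\|f\|_{U^{d+1}} \le \varepsilon$ gives precisely that $H^{(k)}A$ is $\varepsilon$-quasirandom of order $d$. This inequality I would prove by induction on $k$. The base case $k = d+1$ is immediate: Lemma \ref{cut<oct} gives $\|f\circ s\|_{\square^{d+1}_d} \le \|f\circ s\|_{\oct^{d+1}}$, and Lemma \ref{RelationUkOctk} identifies the right side with $\|f\|_{U^{d+1}}$. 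For the inductive step with $k \ge d+2$, fix test functions $u_B: G^B \to [0,1]$ for $B \in \binom{[k]}{d}$ and freeze the last coordinate $x_k$: each factor $u_B$ with $k \notin B$ is already a function on $G^B$ with $B \in \binom{[k-1]}{d}$, while each factor with $k \in B$ becomes a $[0,1]$-valued function of $\mathbf{x}_{B\setminus\{k\}}$ with $B\setminus\{k\} \in \binom{[k-1]}{d-1}$, which I absorb into a fixed $d$-superset inside $[k-1]$ to obtain functions $v_B: G^B \to [0,1]$, $B \in \binom{[k-1]}{d}$, with $\prod_B v_B = \prod_B u_B$. Since $f(x_1+\dots+x_k) = (T^{x_k}f)(x_1+\dots+x_{k-1})$, the inner expectation over $x_1,\dots,x_{k-1}$ is at most $\|(T^{x_k}f)\circ s\|_{\square^{k-1}_d}$ in absolute value, and this equals $\|f \circ s\|_{\square^{k-1}_d}$ by the translation invariance of Cayley hypergraphs established above; averaging over $x_k$ and taking the supremum over the $u_B$ yields $\|f\circ s\|_{\square^k_d} \le \|f\circ s\|_{\square^{k-1}_d}$, closing the induction.

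For part $b)$ I would set $H := H^{(d+1)}A$ and $g := H - \delta = f \circ s$ on $G^{d+1}$, so that by Lemma \ref{RelationUkOctk} one has $\|A-\delta\|_{U^{d+1}}^{2^{d+1}} = \|g\|_{\oct^{d+1}}^{2^{d+1}} = t(\oct^{(d+1)}, H-\delta)$; it thus suffices to bound this octahedral deviation in terms of $\varepsilon := \|H-\delta\|_{\square^{d+1}_d}$. The key structural fact is that $\oct^{(d+1)} = \textsc{M}^{(d+1)}_{d}$ is $d$-linear, hence so is every subhypergraph $F \subseteq \oct^{(d+1)}$; therefore the counting lemma (Lemma \ref{count_d-linear}) gives $t(F,H) = \delta^{|F|} \pm |F|\varepsilon$ for all such $F$. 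Expanding as in the proof of Theorem \ref{octequiv},
$$t(\oct^{(d+1)}, H - \delta) \,=\, \sum_{F \subseteq \oct^{(d+1)}} t(F,H)\,(-\delta)^{2^{d+1}-|F|},$$
the main contribution $\sum_F \delta^{|F|}(-\delta)^{2^{d+1}-|F|} = (\delta-\delta)^{2^{d+1}} = 0$ cancels, leaving an error bounded by $\varepsilon \sum_F |F|\,\delta^{2^{d+1}-|F|}$. After replacing $A$ by its complement if necessary (which negates $g$ and so changes neither norm), I may assume $\delta \le 1/2$, and since $\sum_{j}\binom{2^{d+1}}{j} j (1/2)^{2^{d+1}-j} = 2^{d+1}(3/2)^{2^{d+1}-1} \le 2^{2^{d+1}}$ (for $d \ge 1$), this last sum is at most $2^{2^{d+1}}\varepsilon$; taking $2^{d+1}$-th roots gives $\|A-\delta\|_{U^{d+1}} \le 2\varepsilon^{1/2^{d+1}}$.

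I expect the difficulty to be almost entirely bookkeeping. In part $a)$ one must verify carefully that the product over $d$-subsets of $[k]$ degenerates correctly when a coordinate is frozen, and must genuinely invoke translation invariance — this is exactly the point where the special structure of Cayley hypergraphs enters, and where the argument fails for arbitrary functions. In part $b)$ the only slightly delicate step is extracting the clean constant $2$ from the alternating-sum estimate; conceptually this direction is nothing more than the chain $(i)\Rightarrow(ii)\Rightarrow(iv)$ of Theorem \ref{octequiv} specialised to $k = d+1$ and to Cayley hypergraphs, with Lemma \ref{RelationUkOctk} converting the resulting octahedral deviation back into a Gowers uniformity norm.
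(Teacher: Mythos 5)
Your proof is correct. Part a) is essentially the paper's argument rearranged: the paper groups the first $d+1$ variables and separates them from the rest in a single step, while you peel one variable off at a time by induction on $k$; in both cases the real content is the same — translation invariance of the Cayley cut norm, plus Lemmas \ref{cut<oct} and \ref{RelationUkOctk} at the core case $k = d+1$ — and the constant comes out identically.

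Part b), however, takes a genuinely different route. The paper chooses $\mathbf{x}^{(0)}$ by pigeonhole so that an inner expectation is at least $\|H^{(d+1)}f\|_{\oct^{d+1}}^{2^{d+1}}$, then decomposes $f = f^+ - f^-$, expands the resulting product of $2^{d+1}-1$ signed factors, and uses the triangle inequality to extract a single term with nonnegative test functions, which in turn witnesses a lower bound for $\|H^{(d+1)}f\|_{\square^{d+1}_d}$. You instead feed the counting lemma (Lemma \ref{count_d-linear}) to every subhypergraph $F \subseteq \oct^{(d+1)}$ (automatically $d$-linear, since any two distinct $(d+1)$-edges share at most $d$ vertices), expand $t(\oct^{(d+1)}, H-\delta)$ by inclusion–exclusion as $\sum_F t(F,H)(-\delta)^{2^{d+1}-|F|}$, and exploit the exact binomial cancellation $\sum_F \delta^{|F|}(-\delta)^{2^{d+1}-|F|}=0$ to leave a directly bounded error term. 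This avoids the positive/negative decomposition and the pigeonhole step entirely, and sits more naturally inside the counting-lemma framework of the paper; the complementation trick to reduce to $\delta \le 1/2$ is a tidy way to force the final constant to be exactly $2$, matching the statement. Both routes deliver the same bound, but yours is arguably the more transparent of the two.
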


\begin{proof}
We will prove the result more generally for bounded functions $f: G \rightarrow [-1, 1]$ instead of sets $A \subseteq G$.
The statement then follows by taking $f = A - \delta$ to be the balanced indicator function of the considered set $A$.

\medskip
$a)$
Choose optimal functions $u_B: G^B \rightarrow [0, 1]$, $B \in \binom{[k]}{d}$, so that
$$\|H^{(k)}f\|_{\square^{k}_{d}} = \Bigg| \mathbb{E}_{\mathbf{x} \in G^{k}} \Bigg[ f(s(\mathbf{x})) \prod_{B \in \binom{[k]}{d}} u_B(\mathbf{x}_B) \Bigg] \Bigg|.$$
We may separate the first $d+1$ variables $\mathbf{x}_{[d+1]}$ from the rest and write
\begin{equation*}
    \|H^{(k)}f\|_{\square^{k}_{d}}
    = \Bigg| \mathbb{E}_{\mathbf{x}_{[k] \setminus [d+1]}} \mathbb{E}_{\mathbf{x}_{[d+1]}} \Bigg[ f \big( s(\mathbf{x}_{[d+1]}) + s(\mathbf{x}_{[k] \setminus [d+1]}) \big) \prod_{B \in \binom{[k]}{d}} u_B(\mathbf{x}_B) \Bigg] \Bigg|,
\end{equation*}
where the first expectation is over $G^{[k] \setminus [d+1]}$ and the second is over $G^{[d+1]}$.

Let us now fix $\mathbf{x}_{[k] \setminus [d+1]} \in G^{[k] \setminus [d+1]}$ and consider the inner expectation in the last expression.
Writing $y := s(\mathbf{x}_{[k] \setminus [d+1]})$, this expression can be written as
$$\mathbb{E}_{\mathbf{x}_{[d+1]}} \Bigg[ T^{y}f \circ s(\mathbf{x}_{[d+1]}) \prod_{D \in \binom{[d+1]}{d}} v_{D}(\mathbf{x}_{D}) \Bigg]$$
for some suitable functions $v_{D}: G^D \rightarrow [0, 1]$, $D \in \binom{[d+1]}{d}$, and thus has absolute value at most
$$\| T^{y}f \circ s \|_{\square^{d+1}_{d}} = \| H^{(d+1)}T^{y}f \|_{\square^{d+1}_{d}} = \| H^{(d+1)}f \|_{\square^{d+1}_{d}}.$$
Since the octahedral norm is stronger than the cut norm (Lemma \ref{cut<oct}), this last term is at most
$\| H^{(d+1)}f \|_{\oct^{d+1}} = \| f \|_{U^{d+1}}$, which by assumption is bounded by $\varepsilon$.
Averaging over $\mathbf{x}_{[k] \setminus [d+1]} \in G^{[k] \setminus [d+1]}$ and using the triangle inequality we conclude that $\|H^{(k)}f\|_{\square^{k}_{d}} \leq \varepsilon$, as wished.

\medskip
$b)$
By definition we have
$$\| H^{(d+1)}f \|_{\oct^{d+1}}^{2^{d+1}}
= \mathbb{E}_{\mathbf{x}^{(0)} \in G^{d+1}} \mathbb{E}_{\mathbf{x}^{(1)} \in G^{d+1}} \Bigg[ \prod_{\omega \in \{0, 1\}^{d+1}} f \circ s \big(\mathbf{x}^{(\omega)}\big) \Bigg],$$
and so there is a choice of $\mathbf{x}^{(0)} \in G^{d+1}$ for which the inner expectation above is at least $\| H^{(d+1)}f \|_{\oct^{d+1}}^{2^{d+1}}$.
Fix such a value of $\mathbf{x}^{(0)}$ and decompose $f = f^+ - f^-$ into its positive and negative parts, so that
$$\mathbb{E}_{\mathbf{x}^{(1)} \in G^{d+1}} \Bigg[ H^{(d+1)}f \big(\mathbf{x}^{(1)}\big) \prod_{\omega \in \{0, 1\}^{d+1} \setminus \{\mathbf{1}\}} \big(f^+ \circ s \big(\mathbf{x}^{(\omega)}\big) - f^- \circ s \big(\mathbf{x}^{(\omega)}\big)\big) \Bigg]$$
is at least $\| H^{(d+1)}f \|_{\oct^{d+1}}^{2^{d+1}}$.

Expanding this product and using the triangle inequality, we conclude there is a choice of functions $u_{\omega} \in \{f^+ \circ s, f^- \circ s\}$, $\omega \in \{0, 1\}^{d+1} \setminus \{\mathbf{1}\}$, for which
$$\Bigg|\mathbb{E}_{\mathbf{x}^{(1)} \in G^{d+1}} \Bigg[ H^{(d+1)}f \big(\mathbf{x}^{(1)}\big) \prod_{\omega \in \{0, 1\}^{d+1} \setminus \{\mathbf{1}\}} u_{\omega} \big(\mathbf{x}^{(\omega)}\big) \Bigg]\Bigg| \geq 2^{-(d+1)}\| H^{(d+1)}f \|_{\oct^{d+1}}^{2^{d+1}}.$$
Since these functions $u_{\omega}$ take values in $[0, 1]$ and each depends on at most $d$ of the variables $x_1^{(1)}, \dots, x_{d+1}^{(1)}$, the expression on the left-hand side is at most $\|H^{(d+1)}\!f\|_{\square^{d+1}_{d}}$.
The claim now follows from the identity $\|f\|_{U^{d+1}} = \|H^{(d+1)}f\|_{\oct^{d+1}}$.
\end{proof}

In the special case where $d = 1$, we can generalize the result given in $b)$ and show that the implication
$$H^{(k)}\!A \text{ is quasirandom of order } 1 \,\implies\, A \text{ is linearly uniform}$$
holds for any fixed $k \geq 2$.
Indeed, since $\|f\|_{U^2} = \|\widehat{f}\|_{\ell^4}$ the condition that $\| A - \delta \|_{U^2} \geq \varepsilon$ implies that
$|\widehat{A}(\gamma)| = \big|\mathbb{E}_{x} \big[ A(x) \overline{\gamma(x)} \big] \big| \geq \varepsilon^2$
for some character $\gamma \in \widehat{G} \setminus \{\mathbf{1}\}$.
Decomposing $\gamma = \text{Re}(\gamma) + i \text{Im}(\gamma)$ into its real and imaginary parts and using the triangle inequality, we see there is a choice of $u_j \in \{\text{Re}(\gamma), \text{Im}(\gamma)\}$, $1 \leq j \leq k$, for which
\begin{align*}
    &\Bigg| \mathbb{E}_{x_1, \dots, x_k \in G} \Bigg[\big( A(x_1 + \dots + x_k) - \delta \big) \prod_{j=1}^k u_j(x_j)\Bigg] \Bigg| \\
    &\hspace{3cm} \geq 2^{-k} \Bigg| \mathbb{E}_{x_1, \dots, x_k \in G} \Bigg[\big( A(x_1 + \dots + x_k) - \delta \big) \prod_{j=1}^k \gamma(x_j)\Bigg] \Bigg| \\
    &\hspace{3cm} = 2^{-k} \big| \mathbb{E}_{x \in G} \big[( A(x) - \delta) \gamma(x)\big] \big|
    \geq 2^{-k}\varepsilon^2.
\end{align*}
By further decomposing each $u_j$ into its positive and negative parts and then using the triangle inequality again, we conclude that
$\|H^{(k)}\!A - \delta\|_{\square^k_1} \geq 4^{-k} \varepsilon^2$
holds for all $k \geq d$ whenever $\|A - \delta\|_{U^2} \geq \varepsilon$.
Together with item $a)$ from the last theorem, this shows a complete equivalence (with polynomial bounds) between an additive set being linearly uniform and its $k$-uniform Cayley hypergraph being weakly quasirandom, for any fixed $k \geq 2$;
this equivalence was first obtained (in a different way) by Aigner-Horev and H\`an \cite{LinearQuasirandomness}.

\begin{remark}
It is natural to wonder if the same holds when $d > 1$, and one can relax the condition on item $b)$ to requiring that $H^{(k)}\!A$ is quasirandom for any one fixed $k > d$.
It is an easy consequence of the (very difficult) \emph{inverse theorem for the uniformity norms} on $\F_p^n$ \cite{InverseGowersAction, InverseConjectureFiniteFields, InverseConjectureLowCharacteristic} that this is indeed the case whenever the ambient group is $\F_p^n$ for some fixed prime $p$ and very large $n$
(though with far worse quantitative bounds);
see the author's paper \cite{CayleyTypeHypergraphs} for the details.
We leave the generalization to other additive groups $G$ as an open question.
\end{remark}

It follows from the last theorem and the counting lemma (Lemma \ref{count_d-linear}) given in the last section that one can count all $d$-linear subhypergraphs inside Cayley hypergraphs of sets that are uniform of degree $d$.
Interestingly, the extra symmetries satisfied by Cayley hypergraphs imply that a much \emph{stronger} result is true.

In order to show this we need to define another family of hypergraphs:

\begin{definition}
Given $d \geq 1$, we say that a hypergraph $F$ is \emph{$d$-simple} if the following is true:
for every edge $e \in F$, there exists a set of $d$ vertices $\{v_1, \dots, v_{d}\} \subseteq e$ which is not contained in any other edge of $F$ (i.e. $\{v_1, \dots, v_{d}\} \nsubseteq e'$ for all $e' \in F \setminus \{e\}$).
We denote the set of all $d$-simple $k$-graphs by $\mathcal{S}^{(k)}_{d}$.
\end{definition}

It is easy to see from the definition that all $d$-linear hypergraphs are $(d+1)$-simple, but as the next example shows the converse is false.

\begin{ex}
Let $F$ be the connected $k$-graph on $2k-d$ vertices and two edges (also considered last section in Example \ref{simplekgraph}).
Then $F$ is only $d$-linear, but it is $1$-simple.
\end{ex}

This very easy example shows that the difference between `how linear' and `how simple' a hypergraph can be is unbounded.
It also shows (in view of Example \ref{simplekgraph}) that one cannot hope to control the count of all $d$-simple subhypergraphs by using only quasirandomness of order $d$, say.

The next example will be very important in what follows;
it might be instructive to think of it as the `cheapest' way of transforming the $d$-octahedron into a $k$-uniform hypergraph.

\begin{ex}
Given $1 \leq d \leq k$, define the \emph{squashed octahedron} $\oct^{(k)}_{d}$ as the $k$-graph on vertex set $\big\{x^{(0)}_1,\, x^{(1)}_1,\, \dots,\, x^{(0)}_d,\, x^{(1)}_d,\, y_{d+1},\, \dots,\, y_{k}\big\}$ given by
$$\oct^{(k)}_{d} = \Big\{ \big\{x^{(\omega_1)}_1,\, \dots,\, x^{(\omega_d)}_d,\, y_{d+1},\, \dots,\, y_{k}\big\}:\, \omega \in \{0, 1\}^d \Big\}.$$
This hypergraph is only $(k-1)$-linear, but it is $d$-simple.
\end{ex}

The importance of the squashed octahedron $\oct^{(k)}_{d}$ stems from the fact that it is \emph{complete} for counting $d$-simple $k$-graphs inside Cayley hypergraphs $H^{(k)}\!A$, and also for concluding uniformity of degree $d-1$ for this set $A$.
(For a clearer exposition of the result we have changed the considered degree of uniformity from $d$ to $d-1$.)
More precisely, we have the equivalence theorem:

\begin{thm}[Equivalence theorem for quasirandom Cayley hypergraphs] \label{Cayley_equiv}
Let $A$ be a set of density $\delta$ in $G$ and let $d \geq 2$ be an integer.
Then for every fixed $k \geq d$ the following statements are polynomially equivalent:
\begin{itemize}
    \item[$(i)$] $A$ is uniform of degree $d-1$: \hspace{3mm}
    $\| A - \delta \|_{U^{d}} \leq c_1$.
    \item[$(ii)$] $H^{(k)}\!A$ correctly counts all $d$-simple hypergraphs:
    $$t(F,\, H^{(k)}\!A) = \delta^{|F|} \pm c_2 |F| \hspace{5mm} \forall F \in \mathcal{S}^{(k)}_{d}.$$
    \item[$(iii)$] $H^{(k)}\!A$ has few squashed octahedra $\oct^{(k)}_{d}$:
    $$t(\oct^{(k)}_{d},\, H^{(k)}\!A) \leq \delta^{2^{d}} + c_3.$$
    \item[$(iv)$] $H^{(k)}\!A$ has small $d$-deviation:
    $$\hspace{-1cm} \mathbb{E}_{\mathbf{x}^{(0)}, \mathbf{x}^{(1)} \in G^d} \,\mathbb{E}_{y_{d+1}, \dots, y_k \in G} \Bigg[ \prod_{\omega \in \{0, 1\}^d} \big( H^{(k)}\!A \big(x_1^{(\omega_1)}, \dots, x_d^{(\omega_d)}, y_{d+1}, \dots, y_k\big) - \delta \big) \Bigg] \leq c_4.$$
\end{itemize}
\end{thm}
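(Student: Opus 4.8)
The plan is to prove the cycle of implications $(i) \Rightarrow (ii) \Rightarrow (iii) \Rightarrow (iv) \Rightarrow (i)$, all with polynomial bounds. Two ingredients drive everything: the translation invariance of the cut norms $\|H^{(k)}f\|_{\square^k_d}$ recorded just above the theorem, and the identity $\|f \circ s\|_{\oct^k} = \|f\|_{U^k}$ of Lemma \ref{RelationUkOctk}. Combined with the (immediate) translation invariance of the $U^k$ norm, these let one recognise the $\oct^d$-structure living on the $d$ ``doubled'' coordinates of the squashed octahedron $\oct^{(k)}_d$ directly as the $U^d$ norm of $A$, after the remaining $k-d$ coordinates are collapsed into a single constant $c = y_{d+1} + \dots + y_k$ that can be absorbed by a shift (when $k = d$ one simply has $c = 0$).

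For $(i) \Rightarrow (ii)$ I would mimic the proof of Lemma \ref{count_d-linear}: writing $F = \{e_1, \dots, e_m\}$ and telescoping $\prod_{e} A\circ s(\mathbf{x}_e) - \delta^m$, it suffices to bound each $\big|\mathbb{E}_{\mathbf{x} \in G^{V(F)}}\big[(A\circ s(\mathbf{x}_{e_i}) - \delta)\prod_{j>i}A\circ s(\mathbf{x}_{e_j})\big]\big|$ by $\|A-\delta\|_{U^d}$. Since $F$ is $d$-simple, fix a $d$-set $W = \{v_1, \dots, v_d\} \subseteq e_i$ not contained in any other edge; fixing all coordinates outside $W$, the factor $A\circ s(\mathbf{x}_{e_i}) - \delta$ becomes $T^{c}(A - \delta)$ evaluated at $x_{v_1} + \dots + x_{v_d}$ for some constant $c$, while each factor with $j>i$ depends on at most $d-1$ of $x_{v_1}, \dots, x_{v_d}$ (because $W \nsubseteq e_j$) and so, after grouping, is absorbed into the functions $u_D(\mathbf{x}_D)$, $D \in \binom{[d]}{d-1}$, appearing in the definition of $\|\cdot\|_{\square^d_{d-1}}$. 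Hence the inner expectation is at most $\|H^{(d)}(T^c(A-\delta))\|_{\square^d_{d-1}} = \|H^{(d)}(A-\delta)\|_{\square^d_{d-1}} \leq \|H^{(d)}(A-\delta)\|_{\oct^d} = \|A - \delta\|_{U^d} \leq c_1$, using translation invariance, Lemma \ref{cut<oct} and Lemma \ref{RelationUkOctk} in turn; averaging over the fixed coordinates and summing the $m$ terms gives $c_2 = c_1$. Then $(ii) \Rightarrow (iii)$ is immediate, since $\oct^{(k)}_d$ is a $d$-simple $k$-graph with $2^d$ edges, so one may take $c_3 = 2^d c_2$.

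For the last two implications I would first record the two identities $t(\oct^{(k)}_d,\, H^{(k)}A) = \|A\|_{U^d}^{2^d}$ and $d\text{-deviation} = \|A - \delta\|_{U^d}^{2^d}$: writing $c = y_{d+1} + \dots + y_k$, for each fixed $c$ the average over $\mathbf{x}^{(0)}, \mathbf{x}^{(1)} \in G^d$ equals $\|(T^cA)\circ s\|_{\oct^d}^{2^d}$ (respectively with $A-\delta$ in place of $A$), which after a shift equals $\|A\|_{U^d}^{2^d}$ independently of $c$. Consequently $(iv) \Rightarrow (i)$ is trivial with $c_1 = c_4^{1/2^d}$, and $(iii) \Rightarrow (iv)$ follows the argument of $(iii)\Rightarrow(iv)$ in Theorem \ref{octequiv}: since $A \geq 0$, the sum $\|A + \delta\|_{U^d}^{2^d} + \|A - \delta\|_{U^d}^{2^d}$, expanded multilinearly over the $2^d$ slots, has only nonnegative terms, each bounded via Gowers--Cauchy--Schwarz (Lemma \ref{GowersCS}) by the corresponding term of $(\|A\|_{U^d} + \delta)^{2^d} + (\|A\|_{U^d} - \delta)^{2^d}$; combining this with $\delta = \mathbb{E}[A] \leq \|A\|_{U^d} \leq (\delta^{2^d} + c_3)^{1/2^d} \leq \delta + c_3^{1/2^d}$ (from $(iii)$ and Lemma \ref{cut<oct}) and $\|A + \delta\|_{U^d}^{2^d} \geq (2\delta)^{2^d}$ yields $\|A - \delta\|_{U^d}^{2^d} = O\big(c_3^{1/2^d}\big)$, i.e. $(iv)$ with a polynomial $c_4$.

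I expect the main obstacle to be the bookkeeping in $(i) \Rightarrow (ii)$: one must check carefully that, once the $d$ free vertices $W \subseteq e_i$ are isolated, every other edge genuinely reduces to a $[0,1]$-valued function of at most $d-1$ of the free coordinates, so that the whole product over $j > i$ repackages as $\prod_{D \in \binom{[d]}{d-1}} u_D(\mathbf{x}_D)$. This is precisely where $d$-simplicity (rather than mere $d$-linearity, which would only guarantee intersections of size $\leq d$) is essential, and it is also the only point where the hypothesis $k \geq d$ — as opposed to $k \geq d+1$ — needs a word of comment.
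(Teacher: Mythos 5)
Your proposal is correct and follows essentially the same route as the paper's own proof: the cycle $(i)\Rightarrow(ii)\Rightarrow(iii)\Rightarrow(iv)\Rightarrow(i)$, with $(i)\Rightarrow(ii)$ via the telescoping argument together with $d$-simplicity to isolate a $d$-set $f_i\subseteq e_i$ and bound the inner average by $\|H^{(d)}(A-\delta)\|_{\square^d_{d-1}}\leq\|A-\delta\|_{U^d}$, with $(iii)\Rightarrow(iv)$ reusing the Gowers--Cauchy--Schwarz/binomial-expansion device from Theorem~\ref{octequiv}, and with $(iv)\Rightarrow(i)$ immediate from the identity $t(\oct^{(k)}_d,H^{(k)}(A-\delta))=\|A-\delta\|_{U^d}^{2^d}$. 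The only cosmetic difference is that the paper factors the last two steps through the intermediate identity $t(\oct^{(k)}_d,H^{(k)}f)=t(\oct^{(d)},H^{(d)}f)$ before invoking Theorem~\ref{octequiv}, whereas you state the $U^d$-norm identities directly; these are equivalent, and your constants match the paper's.
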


\begin{proof}
$(i) \Rightarrow (ii)$:
Write $F = \{e_1, \dots, e_{|F|}\}$, $V = V(F)$, and for each $1 \leq i \leq |F|$ let $f_i \subseteq e_i$ be a set of $d$ elements which is not contained in any other edge $e_j$.
By the usual telescoping sum argument we have
\begin{align*}
    &\big| t(F, H^{(k)}A) - \delta^{|F|} \big| \\
    &\hspace{5mm}\leq \sum_{i = 1}^{|F|} \Bigg|\mathbb{E}_{\mathbf{x}_V \in G^{V}} \Bigg[ \big(A \circ s(\mathbf{x}_{e_i}) - \delta\big) \prod_{j = i+1}^{|F|}{A \circ s(\mathbf{x}_{e_j})} \Bigg]\Bigg| \\
    &\hspace{5mm}\leq \sum_{i = 1}^{|F|} \mathbb{E}_{\mathbf{x}_{V \setminus f_i}} \Bigg|\mathbb{E}_{\mathbf{x}_{f_i}} \Bigg[ \big(A \big( s(\mathbf{x}_{f_i}) + s(\mathbf{x}_{e_i \setminus f_i}) \big) - \delta\big)
    \prod_{j = i+1}^{|F|} A \big( s(\mathbf{x}_{e_j \cap f_i}) + s(\mathbf{x}_{e_j \setminus f_i}) \big) \Bigg]\Bigg|.
\end{align*}

Consider the $i$-th term in the last sum.
For a fixed $\mathbf{x}_{V \setminus f_i} \in G^{V \setminus f_i}$ and each $i+1 \leq j \leq |F|$, define on $G^{e_j \cap f_i}$ the function $u_j = u_{j, \mathbf{x}_{V \setminus f_i}} := T^{s(\mathbf{x}_{e_j \setminus f_i})}A \circ s$;
the last sum then becomes
\begin{align*}
    \sum_{i = 1}^{|F|} \mathbb{E}_{\mathbf{x}_{V \setminus f_i}} \Bigg|\mathbb{E}_{\mathbf{x}_{f_i}} \Bigg[ \big(T^{s(\mathbf{x}_{e_i \setminus f_i})}A
    &\circ s(\mathbf{x}_{f_i}) - \delta\big) \prod_{j = i+1}^{|F|} u_j(\mathbf{x}_{e_j \cap f_i}) \Bigg]\Bigg| \\
    &\leq \sum_{i = 1}^{|F|} \mathbb{E}_{\mathbf{x}_{V \setminus f_i}} \big\| T^{s(\mathbf{x}_{e_i \setminus f_i})}A \circ s - \delta \big\|_{\square^{d}_{d-1}} \\
    &= |F| \cdot \| A \circ s - \delta \|_{\square^{d}_{d-1}}.
\end{align*}
Item $(ii)$ now follows from the fact that the octahedral norm is stronger than the cut norm (Lemma \ref{cut<oct}), since
$$\| A \circ s - \delta \|_{\square^{d}_{d-1}} = \| H^{(d)}\!A - \delta \|_{\square^{d}_{d-1}} \leq \| H^{(d)}A - \delta \|_{\oct^{d}} = \| A - \delta \|_{U^{d}} \leq c_1,$$
and we may take $c_2 = c_1$.

\medskip
$(ii) \Rightarrow (iii)$:
This is a special case, and we may take $c_3 = 2^{d}c_2$.

\medskip
$(iii) \Rightarrow (iv)$:
First we note that $t(\oct^{(k)}_{d}, H^{(k)}f) = t(\oct^{(d)}, H^{(d)}f)$ holds for all functions $f: G \rightarrow \R$.
Indeed, we have that
\begin{align*}
    t\big(\oct^{(k)}_{d}, H^{(k)}f\big) &= \mathbb{E}_{\mathbf{y} \in G^{k-d}} \mathbb{E}_{\mathbf{x}^{(0)}, \mathbf{x}^{(1)} \in G^{d}} \Bigg[ \prod_{\omega \in \{0, 1\}^{d}} f \big( s(\mathbf{y}) + s \big(\mathbf{x}^{(\omega)}\big) \big) \Bigg] \\
    &= \mathbb{E}_{\mathbf{y} \in G^{k-d}} \big[ t\big( \oct^{(d)}, H^{(d)}T^{s(\mathbf{y})}f\big) \big] \\
    &= t\big(\oct^{(d)}, H^{(d)}f\big).
\end{align*}
Item $(iii)$ is then the same as requiring that $t(\oct^{(d)}, H^{(d)}\!A) \leq \delta^{2^d} + c_3$.
By the equivalence theorem for strong quasirandomness (Theorem \ref{octequiv}) and its proof, we conclude that $t(\oct^{(d)}, H^{(d)}\!A - \delta) \leq 2^{2^{d+1}} c_3^{1/2^d}$;
using the identity above for $f = A - \delta$, this is the same as saying that $t(\oct^{(k)}_{d}, H^{(k)}A - \delta) \leq 2^{2^{d+1}} c_3^{1/2^d}$, which is exactly item $(iv)$ with $c_4 = 2^{2^{d+1}} c_3^{1/2^d}$.

\medskip
$(iv) \Rightarrow (i)$:
As discussed in the previous equivalence, item $(iv)$ is the same as requiring that $t(\oct^{(d)}, H^{(d)}\!A - \delta) \leq c_4$.
Since
$$t\big(\oct^{(d)}, H^{(d)}\!A - \delta\big) = \|H^{(d)}\!A - \delta\|_{\oct^d}^{2^d} = \|A - \delta\|_{U^d}^{2^d},$$
we obtain item $(i)$ with $c_1 = c_4^{1/2^d}$.
\end{proof}

This last result is due to Castro-Silva \cite{CayleyTypeHypergraphs}, and nicely illustrates one way in which the notion of quasirandomness
of order $d$ for hypergraphs differs from that of uniformity of degree $d$ for additive sets
(compare it with Theorem \ref{Towsnerthm} for hypergraph quasirandomness of order $d$).

The property of having small $d$-deviation (item $(iv)$ in the last theorem) was also studied in the general hypergraph setting by Chung \cite{QuasiClasses, QuasiClassesRevisited}.
Among other results, Chung claimed that this property was asymptotically equivalent to some given notions of discrepancy;
unfortunately, as explained by Lenz and Mubayi \cite{PosetQuasirandomness}, the proofs presented for one of the directions of equivalence contained a mistake, and it turns out that the claimed equivalences were incorrect.

Inspired by the case of Cayley hypergraphs we will next give a new notion of hypergraph quasirandomness which mimics the one induced by uniformity of degree $d-1$,
in particular obtaining quasirandom properties which are polynomially equivalent to having small $d$-deviation.

\subsection{Quasirandomness from counting $d$-simple hypergraphs} \label{NewQuasirandomSection}

We saw in the last theorem that Cayley hypergraphs of sets which are uniform of degree $d-1$ contain the expected number of all $d$-simple hypergraphs, even though they are only guaranteed to be quasirandom of order $d-1$.
The ability to correctly count all $d$-simple hypergraphs is due to some extra `symmetries' satisfied by Cayley hypergraphs, which we now describe.

Suppose we are given a $k$-graph $H$ on vertex set $V$ and a $t$-tuple of vertices $\mathbf{y} = (y_1, \dots, y_t) \in V^{t}$, $1 \leq t < k$.
We define the \emph{link of $H$ at $\mathbf{y}$} as the $(k-t)$-graph $H_{\mathbf{y}}$ corresponding to all sets of vertices which, together with $y_1, \dots, y_t$, form an edge of $H$;
more precisely, the vertex set of $H_{\mathbf{y}}$ is $V$ and its edges are all sets $\{x_1, \dots, x_{k-t}\} \subseteq V$ such that
$\{x_1, \dots, x_{k-t}, y_1, \dots, y_t\} \in H$.

The links of a Cayley hypergraph $H^{(k)}\!A$ are all quite similar to each other, and in fact can also be written as Cayley hypergraphs of translates of the original set $A$:
for all $\mathbf{y} \in G^t$ we have $(H^{(k)}\!A)_{\mathbf{y}} = H^{(k-t)}(T^{s(\mathbf{y})}\!A)$.
Hypergraphs induced by translates of the same set behave very similarly, in particular having a similar cut structure which allows one to control the count of subhypergraphs;
recall that $\|H^{(k)} T^{a}f\|_{\square^k_d} = \|H^{(k)}f\|_{\square^k_d}$ holds for all elements $a \in G$ and functions $f: G \rightarrow \R$.

If $A \subseteq G$ is uniform of degree $d-1$ and $k > d$, then all link $d$-graphs $(H^{(k)}\!A)_{\mathbf{y}}$ (with $\mathbf{y} \in G^{k-d}$) are strongly quasirandom and have the same density, which is what we actually used in the proof that $H^{(k)}\!A$ contains the correct count of $d$-simple hypergraphs.
We will now show that such a property, which can be seen as a new notion of quasirandomness for hypergraphs, is in fact \emph{necessary and sufficient} for correctly counting all $d$-simple hypergraphs
(or even for counting only $\oct^{(k)}_{d}$):

\begin{thm} \label{d-deviationThm}
Let $1 \leq d \leq k$ be integers and let $H$ be a $k$-uniform hypergraph with edge density $\delta$.
Then the following properties are polynomially equivalent:
\begin{itemize}
    \item[$(i)$] For all but at most $c_1 v(H)^{k-d}$ tuples $\mathbf{y} = (y_1, \dots, y_{k-d}) \in V^{k-d}$, the hypergraph $H_{\mathbf{y}}$ is strongly $c_1$-quasirandom and has edge density $\delta \pm c_1$.
    \item[$(ii)$] $H$ correctly counts all $d$-simple hypergraphs:
    $$t(F,\, H) = \delta^{|F|} \pm |F|c_2 \hspace{5mm} \forall F \in \mathcal{S}^{(k)}_{d}.$$
    \item[$(iii)$] $H$ has few squashed octahedra $\oct^{(k)}_{d}$:
    $$t(\oct^{(k)}_{d},\, H) \leq \delta^{2^{d}} + c_3.$$
    \item[$(iv)$] $H$ has small $d$-deviation:
    $$\hspace{-1cm} \mathbb{E}_{\mathbf{x}^{(0)}, \mathbf{x}^{(1)} \in V^d} \,\mathbb{E}_{y_{d+1}, \dots, y_k \in V} \Bigg[ \prod_{\omega \in \{0, 1\}^d} \big( H\big(x_1^{(\omega_1)}, \dots, x_d^{(\omega_d)}, y_{d+1}, \dots, y_k\big) - \delta \big) \Bigg] \leq c_4.$$
\end{itemize}
\end{thm}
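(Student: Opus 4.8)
The plan is to establish the cycle of implications $(i) \Rightarrow (ii) \Rightarrow (iii) \Rightarrow (iv) \Rightarrow (i)$, leaning throughout on the \emph{slicing identity}
$$t\big(\oct^{(k)}_{d},\, f\big) \,=\, \mathbb{E}_{\mathbf{y} \in V^{k-d}}\big[\, t\big(\oct^{(d)},\, f_{\mathbf{y}}\big) \,\big] \,=\, \mathbb{E}_{\mathbf{y} \in V^{k-d}}\big[\, \|f_{\mathbf{y}}\|_{\oct^{d}}^{2^{d}} \,\big],$$
valid for every $f : V^{k} \rightarrow \R$, where $f_{\mathbf{y}}$ is the $d$-variable slice $f_{\mathbf{y}}(\mathbf{x}) := f(\mathbf{x}, \mathbf{y})$ and the link $d$-graph $H_{\mathbf{y}}$ corresponds to the slice $H(\,\cdot\,, \mathbf{y})$. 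Taking $f = H - \delta$ shows that the $d$-deviation of $H$ equals $t(\oct^{(k)}_{d},\, H - \delta) = \mathbb{E}_{\mathbf{y}}[\|H_{\mathbf{y}} - \delta\|_{\oct^{d}}^{2^{d}}] \geq 0$, which already clarifies the shape of property $(iv)$.

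For $(i) \Rightarrow (ii)$ I would copy the telescoping-sum argument used for $(i) \Rightarrow (ii)$ in Theorem \ref{Cayley_equiv}. Given $F = \{e_{1}, \dots, e_{|F|}\} \in \mathcal{S}^{(k)}_{d}$, pick for each $i$ a $d$-set $f_{i} \subseteq e_{i}$ lying in no other edge; writing $\prod_{j} H(\mathbf{x}_{e_{j}}) - \delta^{|F|}$ as a telescoping sum and fixing, in its $i$-th term, all coordinates outside $f_{i}$, the factors $H(\mathbf{x}_{e_{j}})$ with $j > i$ each become functions of at most $d-1$ of the free coordinates (since $f_{i} \not\subseteq e_{j}$ forces $|e_{j} \cap f_{i}| \leq d-1$), so they fold into the weight functions of the cut norm and the $i$-th term is at most $\mathbb{E}_{\mathbf{y} \in V^{k-d}}[\|H_{\mathbf{y}} - \delta\|_{\square^{d}_{d-1}}]$. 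Splitting this average into the at most $c_{1}$-fraction of \emph{bad} tuples $\mathbf{y}$ (each contributing at most $\|H_{\mathbf{y}} - \delta\|_{L^{\infty}} \leq 1$) and the \emph{good} ones, for which $\|H_{\mathbf{y}} - \delta\|_{\square^{d}_{d-1}} \leq \|H_{\mathbf{y}} - \delta_{\mathbf{y}}\|_{\square^{d}_{d-1}} + |\delta_{\mathbf{y}} - \delta| \leq 2c_{1}$ with $\delta_{\mathbf{y}} := \mathbb{E}[H_{\mathbf{y}}]$, gives $|t(F, H) - \delta^{|F|}| \leq 3c_{1}|F|$, so $c_{2} = 3c_{1}$. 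Since $\oct^{(k)}_{d} \in \mathcal{S}^{(k)}_{d}$ has $2^{d}$ edges, $(ii) \Rightarrow (iii)$ is immediate with $c_{3} = 2^{d}c_{2}$.

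For $(iii) \Rightarrow (iv)$ I would first check that $f \mapsto t(\oct^{(k)}_{d}, f)^{1/2^{d}}$ is a genuine seminorm enjoying a Gowers–Cauchy–Schwarz inequality: setting $\langle (f_{\omega})_{\omega \in \{0,1\}^{d}} \rangle := \mathbb{E}_{\mathbf{x}^{(0)}, \mathbf{x}^{(1)} \in V^{d},\, \mathbf{y} \in V^{k-d}}[\prod_{\omega} f_{\omega}(\mathbf{x}^{(\omega)}, \mathbf{y})]$, applying Lemma \ref{GowersCS} in the slice variable $\mathbf{y}$ and then Hölder's inequality (with $2^{d}$ exponents each equal to $2^{d}$) yields $\langle (f_{\omega}) \rangle \leq \prod_{\omega} t(\oct^{(k)}_{d}, f_{\omega})^{1/2^{d}}$, while Lemma \ref{cut<oct} together with convexity of $t \mapsto |t|^{2^{d}}$ gives $t(\oct^{(k)}_{d}, f)^{1/2^{d}} \geq |\mathbb{E} f|$. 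With these two facts the proof of $(iii) \Rightarrow (iv)$ of Theorem \ref{octequiv} transfers essentially verbatim, with $\oct^{(k)}$ replaced by $\oct^{(k)}_{d}$ and the exponent $2^{k}$ by $2^{d}$: one bounds $t(\oct^{(k)}_{d}, H + \delta) + t(\oct^{(k)}_{d}, H - \delta)$ above by $(N + \delta)^{2^{d}} + (N - \delta)^{2^{d}}$, where $N := t(\oct^{(k)}_{d}, H)^{1/2^{d}}$ satisfies $\delta \leq N \leq \delta + c_{3}^{1/2^{d}}$, expands the binomials, and subtracts $t(\oct^{(k)}_{d}, H + \delta) \geq (2\delta)^{2^{d}}$, reaching property $(iv)$ with $c_{4} = 2^{2^{d+1}+1}c_{3}^{1/2^{d}}$ (as in Theorem \ref{octequiv}). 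Finally $(iv) \Rightarrow (i)$ is a short Markov argument: from $\mathbb{E}_{\mathbf{y}}[\|H_{\mathbf{y}} - \delta\|_{\oct^{d}}^{2^{d}}] \leq c_{4}$ fewer than a $c_{4}^{1/2}$-fraction of the $\mathbf{y} \in V^{k-d}$ have $\|H_{\mathbf{y}} - \delta\|_{\oct^{d}} > c_{4}^{1/2^{d+1}}$; for every other $\mathbf{y}$, Lemma \ref{cut<oct} gives $|\delta_{\mathbf{y}} - \delta| \leq c_{4}^{1/2^{d+1}}$ and the triangle inequality for $\|\cdot\|_{\oct^{d}}$ followed by Lemma \ref{cut<oct} gives $\|H_{\mathbf{y}} - \delta_{\mathbf{y}}\|_{\square^{d}_{d-1}} \leq 2c_{4}^{1/2^{d+1}}$, so $H_{\mathbf{y}}$ is strongly $(2c_{4}^{1/2^{d+1}})$-quasirandom of density $\delta \pm 2c_{4}^{1/2^{d+1}}$; since $c_{4}^{1/2} \leq c_{4}^{1/2^{d+1}}$ for $c_{4} \leq 1$, we may take $c_{1} = 2c_{4}^{1/2^{d+1}}$.

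The main obstacle is the implication $(iii) \Rightarrow (iv)$. In the superficially similar Theorem \ref{Cayley_equiv} all links of a Cayley hypergraph are translates of a single $d$-graph, so one can reduce to Theorem \ref{octequiv} directly; here the links $H_{\mathbf{y}}$ form a genuinely inhomogeneous family, and to run the sign-cancellation trick of Theorem \ref{octequiv} one must first repackage the $d$-deviation as a single \emph{mixed} octahedral functional on $V^{k}$ and re-prove a Gowers–Cauchy–Schwarz inequality for it — which is precisely the slice-wise application of Lemma \ref{GowersCS} combined with Hölder described above.
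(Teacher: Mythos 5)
Your proposal is correct and follows essentially the same route as the paper's proof: the slicing identity $t(\oct^{(k)}_d, f) = \mathbb{E}_{\mathbf{y}}[\|f_{\mathbf{y}}\|_{\oct^d}^{2^d}]$, the telescoping argument for $(i)\Rightarrow(ii)$ giving $c_2 = 3c_1$, and the Markov/cut-norm argument for $(iv)\Rightarrow(i)$ all match. The one place you genuinely reorganize the argument is $(iii)\Rightarrow(iv)$: you first establish a Gowers--Cauchy--Schwarz inequality for the mixed functional $\langle (f_\omega)\rangle = \mathbb{E}_{\mathbf{x}^{(0)},\mathbf{x}^{(1)},\mathbf{y}}[\prod_\omega f_\omega(\mathbf{x}^{(\omega)},\mathbf{y})]$ (via slicewise GCS plus H\"older with exponents $2^d$), identify $t(\oct^{(k)}_d,\cdot)^{1/2^d}$ as a seminorm, and then rerun the $\oct^{(k)}$ proof verbatim for it --- whereas the paper keeps everything slicewise, bounding $t(\oct^{(d)}, H_{\mathbf{y}}\pm\delta)$ for each fixed $\mathbf{y}$ and only at the end taking $\mathbb{E}_{\mathbf{y}}$ and invoking Jensen to replace $\mathbb{E}_{\mathbf{y}}[\|H_{\mathbf{y}}\|_{\oct^d}^{|F|}]$ by $\mathbb{E}_{\mathbf{y}}[\|H_{\mathbf{y}}\|_{\oct^d}^{2^d}]^{|F|/2^d}$. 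Unwinding your mixed GCS (applied with $f_\omega\in\{H,1\}$) gives exactly the paper's Jensen step, so the two presentations compute the same thing; yours packages the ingredients into a reusable seminorm inequality, which is a slightly cleaner abstraction, while the paper's keeps the convexity application visible. Both are fine; the constants differ by harmless factors.
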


Note that when $d = 1$ the condition of `quasirandomness of order 0' in the first item is trivially satisfied, and when $d = k$ this result is equivalent to Theorem \ref{octequiv} on strongly quasirandom properties of hypergraphs.

The theorem as stated above and the proof we will give below were taken from the author's paper \cite{CayleyTypeHypergraphs};
a qualitative version of this result can be also obtained from the main theorem of Towsner \cite{SigmaAlgebrasHypergraphs}, by considering the collection $\mathcal{I} = \left\{S \in \binom{k}{k-1}:\, [d] \subseteq S \right\}$.

\begin{proof}
$(i) \Rightarrow (ii)$:
Write $U$ for the vertex set of $F$ and $\{e_1, \dots, e_{|F|}\}$ for its edge set.
Proceeding as in the proof of the last theorem, we obtain that
$$\big| t(F, H) - \delta^{|F|} \big| \,\leq\, \sum_{i = 1}^{|F|} \mathbb{E}_{\mathbf{x}_{U \setminus f_i}} \Bigg|\mathbb{E}_{\mathbf{x}_{f_i}} \Bigg[ \big( H(\mathbf{x}_{e_i}) - \delta \big) \prod_{j = i+1}^{|F|}{H(\mathbf{x}_{e_j})} \Bigg]\Bigg|,$$
where each $f_i \subseteq e_i$ is a set of $d$ vertices in $U$ which is not completely contained in any other edge of $F$.

Let us consider the $i$-th term in the last sum.
For a fixed $\mathbf{x}_{U \setminus f_i} \in V^{U \setminus f_i}$ and each $i+1 \leq j \leq |F|$, define the function $u_j = u_{j, \mathbf{x}_{U \setminus f_i}}$ on $V^{e_j \cap f_i}$ by $u_j(\mathbf{x}_{e_j \cap f_i}) = H(\mathbf{x}_{e_j})$.
Since $|e_j \cap f_i| \leq d-1$ for $j \neq i$, this last sum is
\begin{align*}
    \sum_{i = 1}^{|F|} \mathbb{E}_{\mathbf{x}_{U \setminus f_i}} \Bigg|\mathbb{E}_{\mathbf{x}_{f_i}} \Bigg[ \big(H_{\mathbf{x}_{e_i \setminus f_i}}(\mathbf{x}_{f_i}) - \delta\big) &\prod_{j = i+1}^{|F|}{u_j(\mathbf{x}_{e_j \cap f_i})} \Bigg]\Bigg| \\
    &\leq \sum_{i = 1}^{|F|} \mathbb{E}_{\mathbf{x}_{U \setminus f_i}} \| H_{\mathbf{x}_{e_i \setminus f_i}} - \delta \|_{\square^d_{d-1}} \\
    &= |F| \cdot \mathbb{E}_{\mathbf{y} \in V^{k-d}} \left\| H_{\mathbf{y}} - \delta \right\|_{\square^d_{d-1}}.
\end{align*}
From the condition in item $(i)$ we have
$\mathbb{E}_{\mathbf{y} \in V^{k-d}} \left\| H_{\mathbf{y}} - \delta \right\|_{\square^d_{d-1}} \leq 3c_1$,
so we obtain item $(ii)$ with $c_2 = 3c_1$.

\medskip
$(ii) \Rightarrow (iii)$:
This is a special case, and we may take $c_3 = 2^{d}c_2$.

\medskip
$(iii) \Rightarrow (iv)$:
First we note that
\begin{align*}
    t(\oct^{(k)}_{d}, H) \,&=\, \mathbb{E}_{\mathbf{y} \in V^{k-d}} \,\mathbb{E}_{\mathbf{x}^{(0)}, \mathbf{x}^{(1)} \in V^{d}} \Bigg[ \prod_{\omega \in \{0, 1\}^{d}} H\big(\mathbf{x}^{(\omega)}, \mathbf{y}\big) \Bigg] \\
    &=\, \mathbb{E}_{\mathbf{y} \in V^{k-d}} \big[ t(\oct^{(d)}, H_{\mathbf{y}}) \big].
\end{align*}

By expanding the left-hand side into a sum and using Gowers-Cauchy-Schwarz as we did in the proof of Theorem \ref{octequiv} on strongly quasirandom hypergraphs, we see that for each fixed $\mathbf{y} \in V^{k-d}$ we have
$$t(\oct^{(d)}, H_{\mathbf{y}} + \delta) + t(\oct^{(d)}, H_{\mathbf{y}} - \delta) \leq \sum_{F \subseteq \oct^{(d)}} \|H_{\mathbf{y}}\|_{\oct^d}^{|F|} \big( \delta^{2^d - |F|} + (-\delta)^{2^d - |F|} \big)$$
Taking the expectation over all $\mathbf{y} \in V^{k-d}$ and then using convexity, we conclude that
\begin{align*}
    \mathbb{E}_{\mathbf{y}}\big[ t(\oct^{(d)}, H_{\mathbf{y}} + \delta) &+ t(\oct^{(d)}, H_{\mathbf{y}} - \delta) \big] \\
    &\leq \sum_{F \subseteq \oct^{(d)}} \mathbb{E}_{\mathbf{y}} \big[\|H_{\mathbf{y}}\|_{\oct^d}^{|F|}\big] \big( \delta^{2^d - |F|} + (-\delta)^{2^d - |F|} \big) \\
    &\leq \sum_{F \subseteq \oct^{(d)}} \mathbb{E}_{\mathbf{y}} \big[\|H_{\mathbf{y}}\|_{\oct^d}^{2^d}\big]^{|F|/2^d} \big( \delta^{2^d - |F|} + (-\delta)^{2^d - |F|} \big) \\
    &= \Big( \mathbb{E}_{\mathbf{y}} \big[\|H_{\mathbf{y}}\|_{\oct^d}^{2^d}\big]^{1/2^d} + \delta \Big)^{2^d} + \Big( \mathbb{E}_{\mathbf{y}} \big[\|H_{\mathbf{y}}\|_{\oct^d}^{2^d}\big]^{1/2^d} - \delta \Big)^{2^d},
\end{align*}
where the last equality follows from the binomial expansion.

Let us denote the edge density of each link hypergraph $H_{\mathbf{y}}$ by $\delta_{\mathbf{y}}$;
it is clear that $\mathbb{E}_{\mathbf{y} \in V^{k-d}} [\delta_{\mathbf{y}}] = \delta$.
Using convexity and our assumption from item $(iii)$ we have that
$$\delta \leq \mathbb{E}_{\mathbf{y}} \big[\delta_{\mathbf{y}}^{2^d}\big]^{1/2^d} \leq \mathbb{E}_{\mathbf{y}} \big[ \|H_{\mathbf{y}}\|_{\oct^d}^{2^d} \big]^{1/2^d} \leq \delta + c_3^{1/2^d},$$
and
$$\mathbb{E}_{\mathbf{y}} \big[t(\oct^{(d)}, H_{\mathbf{y}} + \delta)\big] = \mathbb{E}_{\mathbf{y}} \big[\|H_{\mathbf{y}} + \delta\|_{\oct^d}^{2^d}\big] \geq \mathbb{E}_{\mathbf{y}} \big[(\delta_{\mathbf{y}} + \delta)^{2^d}\big] \geq (2\delta)^{2^d}.$$

Taking stock of everything, we conclude that
$$\mathbb{E}_{\mathbf{y}} \big[t(\oct^{(d)}, H_{\mathbf{y}} - \delta)\big] + (2\delta)^{2^d} \leq \big( 2\delta + c_3^{1/2^d} \big)^{2^d} + c_3.$$
By a simple computation this implies
$$t(\oct^{(k)}_{d}, H - \delta) = \mathbb{E}_{\mathbf{y}} \big[t(\oct^{(d)}, H_{\mathbf{y}} - \delta)\big] \leq 2^{2^{d+1}} c_3^{1/2^d},$$
which is item $(iv)$ with constant $c_4 = 2^{2^{d+1}} c_3^{1/2^d}$.

\medskip
$(iv) \Rightarrow (i)$:
As noted before, the $d$-deviation of $H$ can be written as
$$t(\oct^{(k)}_{d}, H - \delta) = \mathbb{E}_{\mathbf{y} \in V^{k-d}} \big[t(\oct^{(d)}, H_{\mathbf{y}} - \delta)\big] = \mathbb{E}_{\mathbf{y} \in V^{k-d}} \big[ \|H_{\mathbf{y}} - \delta\|_{\oct^d}^{2^d} \big].$$
If this is at most $c_4$, then at most $c_4^{1/2} |V|^{k-d}$ tuples $\mathbf{y} \in V^{k-d}$ can satisfy the inequality $\|H_{\mathbf{y}} - \delta\|_{\oct^d}^{2^d} \geq c_4^{1/2}$.
For all other choices of $\mathbf{y} \in V^{k-d}$ we have
$$\|H_{\mathbf{y}} - \delta\|_{\square^d_{d-1}} \leq \|H_{\mathbf{y}} - \delta\|_{\oct^d} \leq c_4^{1/2^{d+1}}$$
(where we have used Lemma \ref{cut<oct} for the first inequality);
this implies item $(i)$ with constant $c_1 = c_4^{1/2^{d+1}}$.
\end{proof}


\subsection{Generalized Cayley hypergraphs} \label{generalCayley}

It is possible (and also useful) to study `Cayley-type' hypergraphs in greater generality, thus considering hypergraphs associated to an additive set via any given system $\Phi$ of linear forms.
Our methods from Section \ref{hypergraphs} will then allow us to count the number of such configurations inside suitably uniform additive sets, which is an important statistic to have for several applications.

We will start by studying the simpler case where $\Phi = \{\phi\}$ consists of only one linear form, generalizing the results of Section \ref{set-hypergraph} to hypergraphs associated to an additive set $A$ by means of any given linear form $\phi$.
The more general case of multiple linear forms will be analyzed in the next subsection.

We formally define a \emph{linear form} $\phi: G^k \rightarrow G$ as any map of the type
$$(x_1, \dots, x_k) \,\mapsto\, \lambda_1 x_1 + \dots + \lambda_k x_k,$$
where $\lambda_1, \dots, \lambda_k$ are integers.
Since such forms are not necessarily symmetric in their variables, we will need to consider $k$-partite hypergraphs associated to them;
the notion of quasirandomness in this case was defined in Section \ref{partiteHypergraphs}.

\begin{definition}
Given a linear form $\phi: G^k \rightarrow G$ and a set $A \subseteq G$, we define the $k$-partite $k$-graph $H_{\phi}A$ as follows:
each vertex class $V_i$, $1 \leq i \leq k$, is a copy of $G$ and $\{x_1, \dots, x_k\} \in \prod_{i\leq k} V_i$ is an edge of $H_{\phi}A$ if $\phi(x_1, \dots, x_k) \in A$.
\end{definition}

It is then possible to obtain a result relating quasirandomness of some order $d$ of $H_{\phi}A$ to uniformity of the same degree $d$ for the set $A$, as we did in Theorem \ref{AignerHan} for usual Cayley hypergraphs.
The possibility of divisibility issues caused by the form $\phi$ in the group considered, however, makes both the statement and the proof of such a result somewhat more complicated than those of Theorem \ref{AignerHan}.

\begin{thm} \label{unifQsrForms}
Let $G$ be a finite additive group and $A \subseteq G$ be a subset.
\begin{itemize}
    \item[$a)$] If $A$ is $\varepsilon$-uniform of degree $d$, then for any $k \geq d+1$ and any linear form $\phi(x_1, \dots, x_k) = \lambda_1 x_1 + \dots + \lambda_k x_k$ the hypergraph $H_{\phi}A$ is $\varepsilon'$-quasirandom of order $d$, where
    $$\varepsilon' = \varepsilon \Bigg( \prod_{i=1}^{d+1} \frac{|G|}{|\lambda_i G|} \Bigg)^{1/2^{d}}.$$
    \item[$b)$] Conversely, if $\phi: G^{d+1} \rightarrow G$ is surjective and $H_{\phi}A$ is $\varepsilon$-quasirandom of order $d$, then $A$ is $\varepsilon'$-uniform of degree $d$ with $\varepsilon' = 2 \varepsilon^{1/2^{d+1}}$.
\end{itemize}
\end{thm}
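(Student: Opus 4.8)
The plan is to follow the blueprint of Theorem~\ref{AignerHan}, adapting each half so as to keep track of the coefficients $\lambda_i$. Throughout write $f := A - \delta$, so that $\|f\|_{U^{d+1}} = \|A-\delta\|_{U^{d+1}}$ and $\|H_\phi A - \delta\|_{\square^{k}_d} = \|H_\phi f\|_{\square^k_d}$; set $K_i := \lambda_i G$ for the image subgroups and $H_0 := K_1+\dots+K_{d+1}$, which is the image of the truncated form $\phi'(x_1,\dots,x_{d+1}) := \lambda_1 x_1 + \dots + \lambda_{d+1} x_{d+1}$. The engine behind part~$a)$ is the comparison estimate
\[
\|g\circ\phi'\|_{\oct^{d+1}}^{2^{d+1}} \;\le\; [G:K_1]\cdots[G:K_{d+1}]\,[G:H_0]\cdot\|g\|_{U^{d+1}}^{2^{d+1}} \qquad (g:G\to\R),
\]
which generalizes Lemma~\ref{RelationUkOctk} (the case $\lambda_i\equiv 1$, where it becomes an equality). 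Since $K_i\le H_0$ forces $[G:H_0]\le[G:K_i]$ for every $i$, the right-hand side is at most $\big(\prod_{i=1}^{d+1}[G:\lambda_iG]\big)^2\|g\|_{U^{d+1}}^{2^{d+1}}$; taking $2^{d+1}$-th roots produces exactly the factor appearing in $\varepsilon'$.

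For part~$a)$ I would first reduce to $k = d+1$. In the defining maximum for $\|H_\phi f\|_{\square^k_d}$, separate the first $d+1$ coordinates from the last $k-d-1$; fixing the latter to values whose $\lambda$-weighted sum is $y\in G$, every remaining factor $u_B$ restricts to a $[0,1]$-valued function of at most $d$ of the free coordinates, so (after absorbing the constant factors and regrouping) the inner expectation is bounded by $\|(T^y f)\circ\phi'\|_{\square^{d+1}_d}$, hence by $\|(T^y f)\circ\phi'\|_{\oct^{d+1}}$ via Lemma~\ref{cut<oct}. The comparison estimate and translation-invariance of the $U^{d+1}$ norm then bound this by $\big(\prod_i[G:\lambda_iG]\big)^{1/2^d}\|f\|_{U^{d+1}}\le\varepsilon'$, uniformly in $y$, and averaging over the fixed coordinates finishes the case. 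To prove the comparison estimate itself, start from the change of variables $u=\phi'(\mathbf{x}^{(0)})$, $v_i=\lambda_i(x_i^{(1)}-x_i^{(0)})$, which yields
\[
\|g\circ\phi'\|_{\oct^{d+1}}^{2^{d+1}} = \mathbb{E}_{u\in H_0,\; v_i\in K_i}\Bigg[\prod_{\omega\in\{0,1\}^{d+1}} g\big(u + \textstyle\sum_i\omega_i v_i\big)\Bigg];
\]
now rewrite each subgroup average as an average over $G$ against the weights $\frac{|G|}{|K_i|}\mathbf{1}_{K_i}=\sum_{\gamma_i\in K_i^{\perp}}\gamma_i$ and $\frac{|G|}{|H_0|}\mathbf{1}_{H_0}=\sum_{\gamma_0\in H_0^{\perp}}\gamma_0$, and expand. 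Each of the $[G:H_0]\prod_i[G:K_i]$ resulting terms is a (complex) Gowers inner product $\langle(\chi_\omega g)_\omega\rangle_{U^{d+1}}$ for characters $\chi_\omega$ with $\prod_\omega\chi_\omega=\gamma_0$ and $\prod_{\omega:\,\omega_i=1}\chi_\omega=\gamma_i$; the Gowers--Cauchy--Schwarz inequality together with phase-invariance $\|\chi g\|_{U^{d+1}}=\|g\|_{U^{d+1}}$ bounds each term by $\|g\|_{U^{d+1}}^{2^{d+1}}$, and counting terms gives the estimate.

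For part~$b)$ (so $k=d+1$) I would run the converse half of Theorem~\ref{AignerHan}: expand $\|H_\phi f\|_{\oct^{d+1}}^{2^{d+1}}=\mathbb{E}_{\mathbf{x}^{(0)},\mathbf{x}^{(1)}\in G^{d+1}}[\prod_\omega f(\phi(\mathbf{x}^{(\omega)}))]$, fix an optimal $\mathbf{x}^{(0)}$, split $f=f^+-f^-$, and note that for $\omega\ne\mathbf{1}$ the factor $f(\phi(\mathbf{x}^{(\omega)}))$ depends on at most $d$ of the free coordinates $x_1^{(1)},\dots,x_{d+1}^{(1)}$; multiplying out and using the triangle inequality exhibits a choice of $[0,1]$-valued factors in $d$ coordinates each, so that $\|H_\phi f\|_{\oct^{d+1}}^{2^{d+1}}\le 2^{d+1}\|H_\phi f\|_{\square^{d+1}_d}\le 2^{d+1}\varepsilon$. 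It remains to convert this into a bound on $\|f\|_{U^{d+1}}$, and this is where surjectivity is used: $H_0=G$, so the change of variables above gives $\|H_\phi f\|_{\oct^{d+1}}^{2^{d+1}}=\mathbb{E}_{u\in G,\,v_i\in\lambda_iG}[\prod_\omega f(u+\sum_i\omega_iv_i)]$, and the character expansion of the indicators $\mathbf{1}_{\lambda_iG}$ (now only on the difference variables) isolates a principal term equal to $\|f\|_{U^{d+1}}^{2^{d+1}}$. Granting that this principal term dominates the total — i.e. $\|f\|_{U^{d+1}}\le\|H_\phi f\|_{\oct^{d+1}}$ for surjective $\phi$ — we conclude $\|f\|_{U^{d+1}}\le(2^{d+1}\varepsilon)^{1/2^{d+1}}\le 2\varepsilon^{1/2^{d+1}}$.

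The main obstacle is exactly that last comparison in part~$b)$. Unlike for the summing operator, the octahedral average of $H_\phi f$ only probes differences inside the proper subgroups $\lambda_iG$, so $\|f\|_{U^{d+1}}\le\|H_\phi f\|_{\oct^{d+1}}$ is not a formal consequence of restricting a nonnegative quantity; establishing it should require genuinely using that the $\lambda_iG$ jointly generate $G$, for instance by pairing the character-twisted Gowers inner products with their conjugates and invoking nonnegativity of $\|H_\phi f\|_{\oct^{d+1}}^{2^{d+1}}$.
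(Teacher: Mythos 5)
The paper itself does not spell out a proof of this theorem --- it only points to the external reference [CayleyTypeHypergraphs] and remarks that the argument refines the proof of Theorem~\ref{AignerHan}. So the comparison below is against your own argument and against the proof of Theorem~\ref{AignerHan}, which you correctly take as the model.

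Your part~$a)$ is sound. The change of variables $u=\phi'(\mathbf{x}^{(0)})$, $v_i=\lambda_i(x_i^{(1)}-x_i^{(0)})$ does make $(u,v_1,\dots,v_{d+1})$ uniform on $H_0\times K_1\times\cdots\times K_{d+1}$, the character expansion of the subgroup indicators together with the factorization $\gamma_0(u)\prod_i\gamma_i(v_i)=\prod_\omega\chi_\omega(u+\omega\cdot v)$, the Gowers--Cauchy--Schwarz inequality, and phase invariance of $U^{d+1}$ give your comparison estimate, and the crude bound $[G:H_0]\le\prod_i[G:K_i]$ produces exactly the factor $\big(\prod_i[G:\lambda_iG]\big)^{1/2^d}$ in the theorem statement. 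The reduction from $k$ variables to $d+1$ via the cut-norm manipulation and $\|\cdot\|_{\square^{d+1}_d}\le\|\cdot\|_{\oct^{d+1}}$ mirrors Theorem~\ref{AignerHan}$a)$ correctly.

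Part~$b)$ has the gap you yourself flag: everything reduces to the inequality $\|f\|_{U^{d+1}}\le\|H_\phi f\|_{\oct^{d+1}}$ for surjective $\phi$, and you do not establish it. The hint you give --- pairing conjugate character-twisted Gowers inner products --- does not work directly: pairing $(\alpha,\beta)$ with $(\alpha^{-1},\beta^{-1})$ in the character expansion yields real numbers, but not visibly nonnegative ones, and the overall nonnegativity of $\|H_\phi f\|_{\oct^{d+1}}^{2^{d+1}}$ only tells you the sum is $\ge 0$, not $\ge\|f\|_{U^{d+1}}^{2^{d+1}}$. The claim is true, but the right mechanism is an iterative ``unrestriction'' of the $v_i$: after the change of variables (and using surjectivity so that $u$ ranges over all of $G$), you have
\[
\|H_\phi f\|_{\oct^{d+1}}^{2^{d+1}}=\mathbb{E}_{u\in G}\mathbb{E}_{v_1\in K_1}\cdots\mathbb{E}_{v_{d+1}\in K_{d+1}}\Big[\prod_{\omega}f(u+\omega\cdot v)\Big],
\]
and replacing any single $\mathbb{E}_{v_i\in K_i}$ by $\mathbb{E}_{v_i\in G}$ cannot increase this quantity: freezing the other variables and isolating the $v_i$ and $u$ averages, the inner function of $v_i$ has the form $\mathbb{E}_u[g(u)g(u+v_i)]$ for a real function $g$ depending on the frozen variables, which has nonnegative Fourier transform $|\widehat{g}|^2$; averaging it over the subgroup $K_i$ therefore dominates averaging it over $G$, since $\sum_{\gamma\in K_i^\perp}\widehat{\psi}(\gamma)\ge\widehat{\psi}(\mathbf{1})$ when $\widehat\psi\ge 0$. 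Iterating over $i=1,\dots,d+1$ lands on $\|f\|_{U^{d+1}}^{2^{d+1}}$. This is the ingredient that genuinely uses that the $\lambda_iG$ jointly generate $G$ --- it guarantees $u$ already runs over $G$, so the unrestriction of the $v_i$'s can be done one at a time --- and this is what is missing from your write-up.
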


We refer the reader to \cite{CayleyTypeHypergraphs} for the proof of this result, which proceeds via a more careful analysis of the arguments used in our proof of Theorem \ref{AignerHan}.

An interesting (if somewhat undesirable) aspect of this last theorem is the \emph{asymmetry} in the conditions required in each item:
for item $a)$ we require the subgroups $\lambda_i G$ to not be much smaller than $G$ itself (otherwise the conclusion still holds but is trivial), while for item $b)$ we instead require the linear form $\phi$ to be surjective.
The next simple examples, both easy to generalize, show that this asymmetry is not a defect of the proof but is in fact \emph{necessary}:

\begin{ex} \label{aexample}
    Let $G = \F_2^n \oplus \F_3^n$ and define the linear form $\phi: G^2 \rightarrow G$ by
    $$\phi(x_1, x_2) = 3x_1 + 2x_2;$$
    this form is clearly surjective.
    Let $R \subset \F_2^n$ be a very linearly uniform set of density $1/2$ (for instance $R$ can be a set of $2^{n-1}$ elements of $\F_2^n$ chosen uniformly at random),
    and consider the set
    $$A := R \oplus \F_3^n = \{(a, b) \in \F_2^n \oplus \F_3^n:\, a \in R\}.$$
    This set has density $1/2$, and it is easy to see that $3x_1 + 2x_2 \in A$ if and only if $x_1 \in A$.
    The bipartite graph $H_{\phi}A$ will then have edge density $1/2$ and satisfies
    $$\|H_{\phi}A - 1/2\|_{\square^2_1} \geq
    \mathbb{E}_{x_1, x_2 \in G}\big[ (A(3x_1 + 2x_2) - 1/2) A(x_1) \big] = 1/4;$$
    $H_{\phi}A$ is therefore \emph{not} quasirandom (of order $1$).
    
    We now compute the Fourier coefficients of $A$.
    Any character $\gamma \in \widehat{G}$ can be decomposed as $\gamma = \gamma_1 \cdot \gamma_2$ for some $\gamma_1 \in \widehat{\F_2^n}$ and $\gamma_2 \in \widehat{\F_3^n}$;
    thus
    \begin{align*}
        |\widehat{A}(\gamma)|
        &= \big|\mathbb{E}_{(a, b) \in \F_2^n \oplus \F_3^n}\big[ A((a, b)) \overline{\gamma_1(a)} \overline{\gamma_2(b)} \big]\big| \\
        &= \big|\mathbb{E}_{a \in \F_2^n}\big[ R(a) \overline{\gamma_1(a)} \big]\big|\, \big|\mathbb{E}_{b \in \F_3^n}\big[ \gamma_2(b) \big]\big|.
    \end{align*}
    If $\gamma_2$ is \emph{not} the trivial character $\mathbf{1}$, then this last expression is zero.
    If $\gamma_2 = \mathbf{1}$ then it is equal to $|\widehat{R}(\gamma_1)|$, which by uniformity of $R$ will be $o(1)$ whenever $\gamma_1 \neq \mathbf{1}$.
    Thus $|\widehat{A}(\gamma)| = o(1)$ whenever $\gamma$ is a non-trivial character, showing that $A$ is very linearly uniform.
\end{ex}

\begin{ex} \label{bexample}
    Let $G = \Z_2 \oplus \Z_N$ with $N$ being a large odd integer, and define the linear form $\phi: G^{d+1} \rightarrow G$ by
    $$\phi(x_1, x_2, \dots, x_{d+1}) = 2x_1 + 2x_2 + \dots + 2x_{d+1}.$$
    This form is not surjective, but the subgroup $2G$ is quite `large': $|2G|/|G| = 1/2$.
    Take a set $R \subset \Z_N$ which is very uniform of degree $d$, and consider
    $$A := \{0\} \oplus 2R = \{(0, 2y) \in \Z_2 \oplus \Z_N:\, y \in R\}.$$
    The hypergraph $H_{\phi}A$ will have the same density as $R$ and will be very quasirandom of order $d$, but since $A$ is concentrated on the subgroup $\{0\} \oplus \Z_N$ it will \emph{not} be uniform of degree $d$ (or even linearly uniform).
\end{ex}

The simplest way to do away with the divisibility issues illustrated in these last two examples is to assume that all coefficients of the linear form considered are coprime to the order $|G|$ of the group;
if this happens we say that $\phi$ is \emph{coprime}.
For such linear forms we then obtain in item $a)$ of the last theorem that $H_{\phi}A$ is $\varepsilon$-quasirandom of order $d$ whenever $A$ is $\varepsilon$-uniform of degree $d$;
and in item $b)$ that $A$ is $(2 \varepsilon^{1/2^{d+1}})$-uniform of degree $d$ whenever $H_{\phi}A$ is $\varepsilon$-quasirandom of order $d$.

Under this same assumption we can also easily generalize our quasirandom equivalence theorem for Cayley hypergraphs:

\begin{thm}
Let $A \subseteq G$ be a set of density $\delta$ in $G$, $1 < d \leq k$ be integers and let $\phi: G^k \rightarrow G$ be a coprime linear form.
Then the following statements are polynomially equivalent:
\begin{itemize}
    \item[$(i)$] $A$ is uniform of degree $d-1$: \hspace{3mm}
    $\| A - \delta \|_{U^{d}} \leq c_1$.
    \item[$(ii)$] $H_{\phi}A$ correctly counts all $k$-partite $d$-simple hypergraphs:
    $$t_{can}(F,\, H_{\phi}A) = \delta^{|F|} \pm c_2 |F| \hspace{5mm} \forall F \in \mathcal{S}^{(k)}_{d} \hspace{2mm} k\text{-partite}.$$
    \item[$(iii)$] $H_{\phi}A$ has few copies of $\oct^{(k)}_{d}$:
    $$t_{can}(\oct^{(k)}_{d},\, H_{\phi}A) \leq \delta^{2^{d}} + c_3.$$
    \item[$(iv)$] $H_{\phi}A$ has small $d$-deviation:
    $$\hspace{-1cm} \mathbb{E}_{\mathbf{x}^{(0)}, \mathbf{x}^{(1)} \in G^d} \,\mathbb{E}_{y_{d+1}, \dots, y_k \in G} \Bigg[ \prod_{\omega \in \{0, 1\}^d} \big( H_{\phi}A \big(x_1^{(\omega_1)}, \dots, x_d^{(\omega_d)}, y_{d+1}, \dots, y_k\big) - \delta \big) \Bigg] \leq c_4.$$
\end{itemize}
\end{thm}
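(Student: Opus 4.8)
The plan is to prove the cycle of implications $(i) \Rightarrow (ii) \Rightarrow (iii) \Rightarrow (iv) \Rightarrow (i)$, following the proof of Theorem~\ref{Cayley_equiv} almost verbatim; the one genuinely new ingredient is a bookkeeping device for the (possibly non-symmetric) coprime form $\phi$. Since $\phi$ is coprime, every coefficient $\lambda_i$ is invertible modulo $|G|$, so $x \mapsto \lambda_i^{-1}x$ is a bijection of $G$; the coordinatewise substitution $x_i \mapsto \lambda_i^{-1}x_i$ thus replaces $\phi$ by the summing operator $s$ and preserves canonical homomorphism densities, $(k,d)$-cut norms, and the $d$-deviation. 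In effect this identifies $H_{\phi}A$ (as a $k$-partite weighted $k$-graph) with a relabelling of the Cayley hypergraph $H^{(k)}\!A$ of Section~\ref{set-hypergraph}; running the cycle directly simply keeps the constant-tracking transparent and avoids the mismatch that Theorem~\ref{Cayley_equiv}$(ii)$ quantifies over a slightly larger family of graphs.

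For $(i) \Rightarrow (ii)$ I would use the telescoping-sum argument of Theorem~\ref{Cayley_equiv}: writing the $d$-simple $k$-partite graph $F$ as $\{e_1,\dots,e_{|F|}\}$ and choosing for each $e_i$ a $d$-set $f_i \subseteq e_i$ contained in no other edge, one bounds $|t_{can}(F,H_{\phi}A) - \delta^{|F|}|$ by a sum of $|F|$ terms, the $i$-th being, after fixing all coordinates outside $f_i$, the average over $\mathbf{x}_{f_i}$ of $\big(H_{\phi}A(\mathbf{x}_{e_i}) - \delta\big)$ times a product of $[0,1]$-valued functions each depending on at most $d-1$ of the $f_i$-coordinates (this is where $d$-simplicity is used). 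Fixing the coordinates of $e_i$ outside $f_i$ contributes a constant $c\in G$, so the distinguished factor equals $(T^cA-\delta)$ composed with $\sum_{j}\lambda_j z_j$ over the $d$ free variables; substituting $z_j \mapsto \lambda_j^{-1}z_j$ turns it into $H^{(d)}(T^cA)-\delta$, whence the $i$-th term is at most $\|H^{(d)}(T^cA)-\delta\|_{\square^d_{d-1}} \le \|H^{(d)}(T^cA)-\delta\|_{\oct^d} = \|T^cA-\delta\|_{U^d} = \|A-\delta\|_{U^d} \le c_1$, using Lemma~\ref{cut<oct}, Lemma~\ref{RelationUkOctk}, and translation invariance of $\|\cdot\|_{U^d}$. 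Summing gives $(ii)$ with $c_2 = c_1$, and $(ii)\Rightarrow(iii)$ is immediate since $\oct^{(k)}_d$ is itself $k$-partite and $d$-simple, so $c_3 = 2^d c_2$.

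For the last two implications I would first record the identities $t_{can}(\oct^{(k)}_d, H_{\phi}A) = \|A\|_{U^d}^{2^d} = t(\oct^{(d)}, H^{(d)}\!A)$ and (the $d$-deviation of $H_{\phi}A$) $= \|A-\delta\|_{U^d}^{2^d} = t(\oct^{(d)}, H^{(d)}\!A - \delta)$, both obtained by fixing $y_{d+1},\dots,y_k$ — whose $\phi$-contribution is a constant absorbed into a translate — and then applying the substitution $x_i^{(\omega_i)} \mapsto \lambda_i^{-1}x_i^{(\omega_i)}$ together with Lemma~\ref{RelationUkOctk}. Thus $(iii)$ says $t(\oct^{(d)},H^{(d)}\!A) \le \delta^{2^d}+c_3$, and applying the equivalence theorem for strong quasirandomness (Theorem~\ref{octequiv}, implication $(iii)\Rightarrow(iv)$ and its proof) to the $d$-uniform hypergraph $H^{(d)}\!A$ yields $t(\oct^{(d)},H^{(d)}\!A-\delta) \le 2^{2^{d+1}}c_3^{1/2^d}$, which is exactly $(iv)$ with $c_4 = 2^{2^{d+1}}c_3^{1/2^d}$; finally $(iv)$ says $\|A-\delta\|_{U^d}^{2^d} \le c_4$, i.e.\ $\|A-\delta\|_{U^d} \le c_4^{1/2^d}$, which is $(i)$ with $c_1 = c_4^{1/2^d}$.

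The conceptual content is essentially that of Theorem~\ref{Cayley_equiv} (equivalently, of Theorem~\ref{d-deviationThm} applied to $H^{(k)}\!A$), so I do not expect a serious obstacle; the main care is in verifying that the coprimality substitution $x_i \mapsto \lambda_i^{-1}x_i$ genuinely preserves the $(k,d)$-cut norm, the canonical homomorphism densities, and the $d$-deviation, and that fixing the coordinates outside a chosen $d$-set really reduces the relevant factor to a translate of $A$ composed with the summing operator. Keeping the partite indexing straight in the telescoping step for $(i)\Rightarrow(ii)$ is the part most prone to error, but it follows the template already used in the proofs of Theorem~\ref{Cayley_equiv} and Theorem~\ref{d-deviationThm}.
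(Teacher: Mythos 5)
Your proposal is correct and follows the paper's own proof: the key observation in both is that the coordinatewise bijection $x_i \mapsto \lambda_i^{-1}x_i$ (equivalently, relabelling each vertex class via $a \mapsto \lambda_i a$) identifies $H_{\phi}A$ with the $k$-partite Cayley sum hypergraph, and preserves the partite cut norms, canonical homomorphism densities, and $d$-deviation. After this reduction the paper simply invokes the proof of Theorem~\ref{Cayley_equiv}, whereas you rerun the cycle $(i)\Rightarrow(ii)\Rightarrow(iii)\Rightarrow(iv)\Rightarrow(i)$ explicitly with matching constants, which is the same argument spelled out in more detail.
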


\begin{proof}
Let $\lambda_1, \dots, \lambda_k \in \Z$ be the integer coefficients of $\phi$, so that
$$\phi(x_1, \dots, x_k) = \lambda_1 x_1 + \dots + \lambda_k x_k.$$
By assumption each $\lambda_i$ is coprime with $|G|$, implying that the maps $x \mapsto \lambda_i x$ are all bijective on $G$.

Relabel each vertex class $V_i = G$ of $H_{\phi}A$ using this bijection, i.e. each vertex $v \in V_i$ initially labeled $a \in G$ should be relabeled $\lambda_i a \in G$.
This new hypergraph, isomorphic to $H_{\phi}A$, is exactly the $k$-partite Cayley sum hypergraph $H_s A$:
$\{y_1, \dots, y_k\} \in \prod_{i \leq k} V_i$ is an edge of $H_s A$ iff $y_1 + \dots + y_k \in A$.
The rest of the proof is essentially identical to that of Theorem \ref{Cayley_equiv}.
\end{proof}

\subsection{Linear configurations in additive sets} \label{LinearConfigSection}

We next consider in full generality Cayley-type hypergraphs associated to an additive set via any given system of linear forms $\Phi = (\phi_1, \dots, \phi_m)$.

For a linear form $\phi$ in $k$ variables $x_1, \dots, x_k$, we define the \emph{support} of $\phi$ as the set of indices $i$ such that $\phi$ depends on $x_i$;
that is, if $\phi(x_1, \dots, x_k) = \lambda_1 x_1 + \dots + \lambda_k x_k$, then the support of $\phi$ is $\{i \in [k]: \lambda_i \neq 0\}$.

\begin{definition}
Let $\Phi = (\phi_1, \dots, \phi_m): G^k \rightarrow G^m$ be a system of $m$ linear forms on $k$ variables, and denote the support of $\phi_i$ by $\sigma_i$ for each $1 \leq i \leq m$.
We define the \emph{support hypergraph of $\Phi$}, denoted $\Sigma_{\Phi}$, as the hypergraph with vertex set $[k]$ and edge set $\{\sigma_i: i \in [m]\}$.
\end{definition}

Note that the support hypergraph of a linear system may have edges of several different sizes;
it needs not be a uniform hypergraph as we have considered up to now.
The same remark holds for their associated Cayley-type hypergraphs:

\begin{definition}
Let $\Phi = (\phi_1, \dots, \phi_m): G^k \rightarrow G^m$ be a system of linear forms, and denote the support of each form $\phi_i$ by $\sigma_i$.
Given a set $A \subseteq G$, we define the \emph{generalized Cayley hypergraph} $H_{\Phi} A$ as the $k$-partite hypergraph with vertex classes $V_1, \dots, V_k$, where each $V_i$ is a copy of $G$, and with edge set
$$H_{\Phi} A = \bigcup_{i = 1}^m \big\{\mathbf{x}_{\sigma_i} \in V_{\sigma_i}: \phi_i(\mathbf{x}) \in A\big\}.$$
In other words, we have a copy $V_j$ of $G$ corresponding to each variable $x_j$, and each linear form $\phi_i \in \Phi$ with support $\sigma_i$ induces on $\prod_{\ell \in \sigma_i} V_{\ell}$ a hypergraph which satisfies the relation $H_{\Phi}A(\mathbf{x}_{\sigma_i}) = A(\phi_i(\mathbf{x}))$.
\end{definition}


Our interest in these constructions
comes from the following simple identity, which is a direct consequence of the definitions:
$$t_{can}(\Sigma_{\Phi}, H_{\Phi}A) = \mathbb{E}_{\mathbf{x} \in G^k} \Bigg[ \prod_{i=1}^m A(\phi_i(\mathbf{x})) \Bigg].$$
We are then able to count linear configurations inside uniform additive sets by using the hypergraph-theoretical tools seen in the last section.

This motivates the following definition, which was
introduced\footnote{Green and Tao did not explicitly deal with hypergraphs, so their definition is differently worded but essentially equivalent.}
by Green and Tao \cite{LinearEquationsPrimes} in the context of studying linear equations in primes.
Intuitively, it represents which systems we can hope to control using our methods for uniformity of degree $s$.

\begin{definition}
Let $\Phi$ be a system of linear forms and $s \geq 0$ be an integer.
We say that $\Phi$ is in \emph{$s$-normal form} if its support hypergraph $\Sigma_{\Phi}$ is $(s+1)$-simple.
\end{definition}

We will soon give several examples of linear systems which are in some $s$-normal form, but first it is important to remark on one
characteristic of this definition:
being in $s$-normal form is a property not only of the arithmetic structure of the linear system $\Phi$ in consideration, but also of its \emph{representation}.
In other words, by a simple change of variables it is possible to change the `degree' $s$ for which $\Phi$ is in normal form, or even to make it cease being in any normal form whatsoever.

It is then important to consider distinct formulations of linear systems, akin to how one can consider different labellings of a graph.
This leads us to the notion of \emph{equivalent systems}:
we say that two linear systems $\Phi: G^k \rightarrow G^m$ and $\Phi': G^{k'} \rightarrow G^m$ are \emph{equivalent} if $\Phi(\mathbf{x})$ has the same distribution as $\Phi'(\mathbf{y})$ when $\mathbf{x}$ and $\mathbf{y}$ are uniformly distributed on $G^k$ and $G^{k'}$, respectively.
(This means that
$|\Phi^{-1}(S)|/|G^k| = |\Phi'^{-1}(S)|/|G^{k'}|$
for all subsets $S \subseteq G^m$.)

\begin{ex}
    Consider the examples from Section \ref{countLinearGraph}, namely 3-term arithmetic progressions $(x,\, x+r,\, x+2r)$, additive quadruples $(x,\, x + h_1,\, x + h_2,\, x + h_1 + h_2)$ and Schur triples $(x,\, y,\, x+y)$.
    None of these linear systems are in $s$-normal form for any $s$;
    however, they are equivalent to the systems $(-2x_1 - x_2,\, -x_1 + x_3,\, x_2 + 2x_3)$, $(x_1 + x_2,\, x_1 + x_2',\, x_1' + x_2,\, x_1' + x_2')$ and $(x_1 - x_2,\, x_3 - x_1,\, x_3 - x_2)$ respectively, which are all in $1$-normal form.
\end{ex}

\begin{ex}
    Consider the systems
    \begin{align*}
        &\Phi_1(x,\, r) \,=\, \big(x,\, x+r,\, \dots,\, x + (k-1)r\big), \\
        &\Phi_2(x,\, h_1,\, \dots,\, h_k) \,=\, \bigg( x + \sum_{i = 1}^k \omega_i h_i \bigg)_{\omega \in \{0, 1\}^k}
    \end{align*}
    corresponding to $k$-term arithmetic progressions and $k$-dimensional parallelepipeds respectively.
    They are not in $s$-normal form for any $s$, but they are equivalent to the systems
    \begin{align*}
        &\Phi'_1(x_1,\, \dots,\, x_k) \,=\, \bigg( \sum_{j = 1}^k (i - j) x_j \bigg)_{i \in [k]} \text{ and} \\
        &\Phi'_2(x_1,\, \dots,\, x_k,\, x_1',\, \dots,\, x_k') \,=\, \bigg( \sum_{i=1}^k \big( (1 - \omega_i) x_i + \omega_i x_i' \big) \bigg)_{\omega \in \{0, 1\}^k}
    \end{align*}
    which are in $(k-2)$-normal form and in $(k-1)$-normal form, respectively.
\end{ex}

In order to avoid unwanted linear dependencies due to divisibility issues in the group
(as illustrated in Examples \ref{aexample} and \ref{bexample}),
we shall from now on deal only with \emph{coprime} linear systems, meaning those where each integer coefficient in each linear form is either zero or coprime with the order $|G|$ of the group.

The next theorem is a special case of an important result by Green and Tao \cite{LinearEquationsPrimes}, which they called a `generalized von Neumann theorem'.
Due to their need to work with unbounded functions
(or rather with functions which are bounded by a pseudorandom measure $\nu$ instead of the constant function $1$),
the proof presented in their paper is fairly complicated;
by dealing only with bounded functions we are able to rely on our usual hypergraph-theoretic methods which greatly simplify the proof.

\begin{thm}[Counting lemma for systems in $s$-normal form] \label{control_snormalthm}
Let $\Phi: G^k \rightarrow G^m$ be a system of linear forms in $s$-normal form, and suppose that $\Phi$ is coprime.
Then for any functions $f_1$, $\dots$, $f_m$, $g_1$, $\dots$, $g_m: G \rightarrow [0, 1]$ we have
\begin{equation} \label{control_snormal}
    \Bigg| \mathbb{E}_{\mathbf{x} \in G^k} \Bigg[ \prod_{i=1}^m f_i(\phi_i(\mathbf{x})) \Bigg] - \mathbb{E}_{\mathbf{x} \in G^k} \Bigg[ \prod_{i=1}^m g_i(\phi_i(\mathbf{x})) \Bigg] \Bigg| \leq \sum_{i=1}^m \|f_i - g_i\|_{U^{s+1}}.
\end{equation}
\end{thm}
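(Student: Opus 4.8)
The plan is to reduce \eqref{control_snormal} to a statement about a single linear form by a hybrid (telescoping) argument, and then to exploit the $(s+1)$-simplicity of $\Sigma_{\Phi}$ in exactly the way that the $d$-simplicity of $F$ was used in the proofs of Theorems \ref{Cayley_equiv} and \ref{d-deviationThm}. Setting $P_r(\mathbf{x}) := \prod_{i \le r} g_i(\phi_i(\mathbf{x})) \cdot \prod_{i > r} f_i(\phi_i(\mathbf{x}))$ for $0 \le r \le m$, the left-hand side of \eqref{control_snormal} equals $\big|\mathbb{E}_{\mathbf{x}}[P_0] - \mathbb{E}_{\mathbf{x}}[P_m]\big| \le \sum_{r=1}^m \big|\mathbb{E}_{\mathbf{x}}[P_{r-1}] - \mathbb{E}_{\mathbf{x}}[P_r]\big|$, and since $(P_{r-1} - P_r)(\mathbf{x}) = (f_r - g_r)(\phi_r(\mathbf{x})) \prod_{i \ne r} u_i(\phi_i(\mathbf{x}))$ with $u_i := g_i$ for $i < r$ and $u_i := f_i$ for $i > r$ all taking values in $[0,1]$, it suffices to prove that
$$\Bigg| \mathbb{E}_{\mathbf{x} \in G^k} \Bigg[ h(\phi_r(\mathbf{x})) \prod_{i \ne r} u_i(\phi_i(\mathbf{x})) \Bigg] \Bigg| \le \|h\|_{U^{s+1}}$$
for every index $r$, every $h : G \to [-1,1]$, and all $u_i : G \to [0,1]$ ($i \ne r$).

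To prove this, write $\sigma_i$ for the support of $\phi_i$ and fix $r$. Since $\Sigma_{\Phi}$ is $(s+1)$-simple, there is a set $\tau \subseteq \sigma_r$ of $s+1$ vertices which is contained in no edge $\sigma_i$ with $i \ne r$; in particular $|\sigma_i \cap \tau| \le s$ for all $i \ne r$. I would freeze the coordinates $\mathbf{x}_{[k] \setminus \tau}$ and analyze the remaining expectation over $\mathbf{x}_\tau \in G^\tau$. Once $\mathbf{x}_{[k] \setminus \tau}$ is fixed we have $\phi_r(\mathbf{x}) = \psi(\mathbf{x}_\tau) + c_r$, where $\psi$ is the linear form given by the terms of $\phi_r$ involving a variable from $\tau$ (a \emph{coprime} form in the $s+1$ variables indexed by $\tau$, since $\tau \subseteq \sigma_r$ and $\Phi$ is coprime, so its coefficients are all nonzero and invertible in $G$) and $c_r \in G$ is fixed; and for each $i \ne r$, $\phi_i(\mathbf{x})$ becomes a fixed function of $\mathbf{x}_{\sigma_i \cap \tau}$ only. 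Grouping the factors $u_i(\phi_i(\mathbf{x}))$ according to the set $\sigma_i \cap \tau$, inflating each such set to one of size exactly $s$, and multiplying (which preserves $[0,1]$-valuedness), the inner expectation takes the shape
$$\mathbb{E}_{\mathbf{x}_\tau \in G^{\tau}} \Bigg[ (T^{c_r}h) \circ \psi (\mathbf{x}_\tau) \prod_{B \in \binom{\tau}{s}} w_B(\mathbf{x}_B) \Bigg], \qquad w_B : G^B \to [0,1].$$
By the $[0,1]$-valued (weighted) formulation of the $(s+1,s)$-cut norm, this is at most $\|(T^{c_r}h) \circ \psi\|_{\square^{s+1}_s}$.

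The final step is to bound $\|(T^{c_r}h) \circ \psi\|_{\square^{s+1}_s}$ by $\|h\|_{U^{s+1}}$, \emph{uniformly} in the frozen coordinates, so that averaging over $\mathbf{x}_{[k] \setminus \tau}$ and applying the triangle inequality closes the argument. Since $\psi$ is coprime, relabelling each vertex class $V_\ell$ ($\ell \in \tau$) by the bijection $x \mapsto \lambda_\ell x$ carries $\psi$ to the summing operator $s$ and leaves all cut norms unchanged, so $\|(T^{c_r}h) \circ \psi\|_{\square^{s+1}_s} = \|H^{(s+1)}(T^{c_r}h)\|_{\square^{s+1}_s} = \|H^{(s+1)}h\|_{\square^{s+1}_s}$, the last equality by the translation invariance recorded in Section \ref{set-hypergraph}. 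Finally Lemmas \ref{cut<oct} and \ref{RelationUkOctk} give
$$\|H^{(s+1)}h\|_{\square^{s+1}_s} \le \|H^{(s+1)}h\|_{\oct^{s+1}} = \|h\|_{U^{s+1}},$$
which is precisely the function-valued form of Theorem \ref{AignerHan}$(a)$ (equivalently, the coprime case of Theorem \ref{unifQsrForms}$(a)$ with $d = s$ and $k = s+1$). This completes the plan; here $s \ge 1$, the case $s = 0$ being degenerate.

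I expect the substantive content to be concentrated in two places. First, the reduction to the $(s+1,s)$-cut norm: this is exactly where $(s+1)$-simplicity is indispensable, since it is the condition $|\sigma_i \cap \tau| \le s$ that lets every competing factor be written as a function of at most $s$ of the variables $\mathbf{x}_\tau$ and thus be absorbed into the cut norm — without it one would be forced to a weaker norm that does not control $U^{s+1}$. Second, the role of coprimality, which excludes the divisibility pathologies exhibited in Examples \ref{aexample} and \ref{bexample} and guarantees both that $\psi$ straightens to the summing operator and that passing through Theorem \ref{AignerHan}$(a)$ incurs no loss constant. Beyond these, the main obstacle is bookkeeping: cleanly splitting $[k]$ as $\tau \cup ([k]\setminus\tau)$, tracking which forms degenerate to constants after freezing, and verifying that the regrouping into the functions $w_B$ indexed by $\binom{\tau}{s}$ is legitimate — routine, but to be written with some care.
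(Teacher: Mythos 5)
Your argument is correct and follows essentially the same route as the paper's: the same telescoping hybrid decomposition, the same choice of an $(s+1)$-set $\tau \subseteq \sigma_r$ witnessing $(s+1)$-simplicity, freezing $\mathbf{x}_{[k]\setminus\tau}$, bounding the inner expectation by an $(s+1,s)$-cut norm, and then chaining through coprimality, translation invariance, Lemma \ref{cut<oct} and Lemma \ref{RelationUkOctk} to reach $\|f_r - g_r\|_{U^{s+1}}$. The only differences are cosmetic — you permute the order of the relabelling/translation/octahedral steps and you spell out a detail (padding the sets $\sigma_i \cap \tau$ to size exactly $s$) that the paper leaves implicit.
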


\begin{proof}
As usual, we use a telescoping sum and the triangle inequality to bound the left-hand side of the expression above by
$$\sum_{i=1}^{m} \Bigg| \mathbb{E}_{\mathbf{x} \in G^k} \Bigg[ \prod_{j=1}^{i-1} g_j(\phi_j(\mathbf{x})) \cdot \big( f_i(\phi_i(\mathbf{x})) - g_i(\phi_i(\mathbf{x})) \big) \cdot \prod_{j=i+1}^m f_j(\phi_j(\mathbf{x})) \Bigg] \Bigg|.$$
It then suffices to show that the $i$-th term in this sum is bounded by $\|f_i - g_i\|_{U^{s+1}}$ for each $1 \leq i \leq m$.
For notational convenience we will prove this for $i = 1$;
the argument is the same for any other choice of $i \in [m]$.

Denote the support of each $\phi_j \in \Phi$ by $\sigma_j$, and let $\tau_1 \subseteq \sigma_1$ be a set of $s+1$ indices which is not contained in any $\sigma_j$ for $j > 1$.
For each $j \in [m]$, decompose the linear form $\phi_j$ into its component $\phi_j'$ which uses the variables indexed by $\tau_1$ and its component $\phi_j''$ which does not use any of these variables;
that is, we can write $\phi_j(\mathbf{x}) = \phi_j'(\mathbf{x}_{\sigma_j \cap \tau_1}) + \phi_j''(\mathbf{x}_{\sigma_j \setminus \tau_1})$.

We can then separate the dependence on $\mathbf{x}_{\tau_1}$ and $\mathbf{x}_{[k] \setminus \tau_1}$ in the first term of the sum above, obtaining
\begin{multline*}
    \Bigg| \mathbb{E}_{\mathbf{x}_{[k] \setminus \tau_1}} \mathbb{E}_{\mathbf{x}_{\tau_1}} \Bigg[ \Big( f_1 \big( \phi_1'(\mathbf{x}_{\tau_1}) + \phi_1''(\mathbf{x}_{\sigma_1 \setminus \tau_1}) \big) - g_1 \big( \phi_1'(\mathbf{x}_{\tau_1}) + \phi_1''(\mathbf{x}_{\sigma_1 \setminus \tau_1}) \big) \Big) \\
    \times \prod_{j=2}^m f_j \big( \phi_j'(\mathbf{x}_{\sigma_j \cap \tau_1}) + \phi_j''(\mathbf{x}_{\sigma_j \setminus \tau_1}) \big) \Bigg] \Bigg|.
\end{multline*}
Let us now fix a choice of $\mathbf{x}_{[k] \setminus \tau_1} \in G^{[k] \setminus \tau_1}$, and for each $j \in [m]$ we denote $y_j := \phi_j''(\mathbf{x}_{\sigma_j \setminus \tau_1})$ and $u_j := T^{y_j}f_j \circ \phi_j'$.
The inner expectation in the last expression is then equal to
$$\mathbb{E}_{\mathbf{x}_{\tau_1}} \Bigg[ \big( T^{y_1}f_1 (\phi_1'(\mathbf{x}_{\tau_1})) - T^{y_1}g_1 (\phi_1'(\mathbf{x}_{\tau_1})) \big) \prod_{j=2}^m u_j(\mathbf{x}_{\sigma_j \cap \tau_1}) \Bigg].$$
Since $0 \leq u_j \leq 1$ and $|\sigma_j \cap \tau_1| \leq s$ for all $j > 1$, by definition this has absolute value at most
\begin{align*}
    \|H_{\phi_1'}(T^{y_1}f_1 - T^{y_1}g_1)\|_{\square_s^{s+1}}
    &= \|H_{\phi_1'}(f_1 - g_1)\|_{\square_s^{s+1}} \\
    &\leq \|H_{\phi_1'}(f_1 - g_1)\|_{\oct^{s+1}} \\
    &= \|H^{(s+1)}(f_1 - g_1)\|_{\oct^{s+1}}
    = \|f_1 - g_1\|_{U^{s+1}},
\end{align*}
where in the second-to-last equality we have used that $\phi_1'$ is coprime.
The result follows by averaging over all choices of $\mathbf{x}_{[k] \setminus \tau_1} \in G^{[k] \setminus \tau_1}$.
\end{proof}

As an immediate corollary of this result, we obtain that
$$\mathbb{E}_{\mathbf{x} \in G^k} \Bigg[ \prod_{i=1}^m A(\psi_i(\mathbf{x})) \Bigg] = \delta^m \pm m \|A - \delta\|_{U^{s+1}}$$
whenever $A \subseteq G$ is a set and $\psi_1, \dots, \psi_m: G^k \rightarrow G$ are coprime \emph{affine-linear}
forms\footnote{An affine-linear form $\psi: G^k \rightarrow G$ is the sum $\psi = \phi + c$ of a linear form $\phi: G^k \rightarrow G$ (its linear part) with a constant term $c \in G$.}
whose linear part is in $s$-normal form;
the study of affine-linear systems taking values in the primes was the main motivation for Green and Tao to prove
(a stronger version of)
this counting lemma.
We also conclude the related estimate
$$\mathbb{E}_{\mathbf{x} \in G^k} \Bigg[ \prod_{i=1}^m A_i(\psi_i(\mathbf{x})) \Bigg] = \prod_{i=1}^m \delta_i \pm \sum_{i=1}^m \|A_i - \delta_i\|_{U^{s+1}},$$
where each individual term $\psi_i(\mathbf{x})$ is now required to lie on possibly distinct target sets $A_1, \dots, A_m \subseteq G$.
This shows it is possible to control the count of any linear configuration equivalent to a (coprime) system in $s$-normal form in terms of the Gowers $U^{s+1}$ norm
(note that equivalent systems give the same value for expressions like those appearing in inequality (\ref{control_snormal})).


    

The main issue with using this counting lemma to estimate the count of linear configurations is that the considered system $\Phi$ should be known to have an equivalent representation in $s$-normal form for some reasonable (and hopefully optimal) integer $s$, and it is not clear at first how to find such a value of $s$.
In the same paper \cite{LinearEquationsPrimes} where they defined the notion of $s$-normal form, Green and Tao provided a simple linear algebra recipe for computing such a sufficient value $s(\Phi)$,
which holds whenever there is an underlying field structure on the group $G$
(as in the two important cases $G = \F_p^n$ and $G = \Z_N$ with $N$ prime).

\begin{definition}
The \emph{Cauchy-Schwarz complexity} of a linear system $\Phi = \{\phi_1, \dots, \phi_m\}$ is the minimal integer $s$ such that the following holds.
For every $1 \leq i \leq m$, one can partition the $m - 1$ forms $\{\phi_j:\, j \in [m]\setminus\{i\}\}$ into $s + 1$ classes, so that $\phi_i$ does not lie in the linear span of any of these classes.
\end{definition}

We shall henceforth abbreviate the expression `Cauchy-Schwarz complexity' by `CS-complexity';
this name was coined by Gowers and Wolf \cite{TrueComplexity}, both to highlight the main tool employed when using this notion to count linear configurations, and to distinguish it from the notion of true complexity which we will see later.
The definition can be trivially modified in order to hold also for affine-linear systems, and all results presented here will continue to hold in this slightly greater generality.

The notion of CS-complexity implicitly assumes that the group $G$ under consideration has the linear structure of a vector space,
so that one can define the linear span of the classes considered.
For the rest of this section we shall then assume that $G = \F_p^n$ for some field $\F_p$ of
prime order\footnote{The assumption of prime order gives no loss of generality, since a field of order $q = p^r$ is (as an additive group) isomorphic to $\F_p^r$.}
and an integer $n \geq 1$;
note that the important cyclic case $G = \Z_N$ with $N$ prime is also of this form, with $p = N$ and dimension $n = 1$.

In such cases a linear form $\phi$ on $k$ variables can be written as
$$\phi(x_1, \dots, x_k) = c_1 x_1 + \dots + c_k x_k$$
with $c_1, \dots, c_k \in \F_p$, and so we may regard it as an element of $\F_p^k$:
$\phi \simeq (c_1, \dots, c_k)$.
Note that \emph{all} linear forms and systems in these groups considered are coprime.

It is easy to see that every system which is in $s$-normal form has CS-complexity at most $s$, but not every system of CS-complexity $s$ is in $s$-normal form.
However, as
shown\footnote{They proved this result for $G$ being the integers and using the field structure of the rationals, but the proof is essentially the same in the cases we consider here.}
by Green and Tao, every system of CS-complexity $s$ is \emph{equivalent} to a system in $s$-normal form:

\begin{lem}
Suppose $G = \F_p^n$ for some prime $p$ and integer $n \geq 1$, and let $\Phi: G^k \rightarrow G^m$ be a system of linear forms of CS-complexity $s$.
Then there exists a system $\Phi': G^{k'} \rightarrow G^m$ in $s$-normal form which is equivalent to $\Phi$, where $k' = k + m(s+1)$.
\end{lem}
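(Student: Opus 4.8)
The plan is to build $\Phi'$ from $\Phi$ by adjoining $m(s+1)$ fresh variables — one new variable $y_{i,j}$ for each form index $i \in [m]$ and each $j \in \{1,\dots,s+1\}$ — and by adding to every form a carefully chosen linear combination of these new variables, so that the resulting system stays equidistributed with $\Phi$ while its support hypergraph becomes $(s+1)$-simple. Which new variables get fed into which form is dictated by the Cauchy--Schwarz complexity hypothesis via linear-algebraic duality. Concretely, for each $i \in [m]$ I would first fix a partition of the index set $[m] \setminus \{i\}$ into classes $C_{i,1}, \dots, C_{i,s+1}$ such that $\phi_i$ lies in the linear span of none of the families $\{\phi_\ell : \ell \in C_{i,j}\}$; such a partition exists precisely because $\Phi$ has CS-complexity $s$.

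Regarding each form $\phi_\ell = (c_{\ell,1}, \dots, c_{\ell,k})$ as a vector in $\F_p^k$ and using the pairing $\langle \phi, v\rangle := \phi(v) = \sum_t c_t v_t$, the fact that $\phi_i \notin \mathrm{span}\{\phi_\ell : \ell \in C_{i,j}\}$ yields, by elementary linear algebra over the field $\F_p$, a vector $v_{i,j} \in \F_p^k$ with $\phi_\ell(v_{i,j}) = 0$ for all $\ell \in C_{i,j}$ and $\phi_i(v_{i,j}) \neq 0$. I would then define, for $\mathbf{x} \in G^k$ and $\mathbf{y} = (y_{i,j}) \in G^{m(s+1)}$,
\[
\Phi'(\mathbf{x}, \mathbf{y}) := \Phi\!\left(\mathbf{x} + \sum_{i=1}^m \sum_{j=1}^{s+1} y_{i,j}\, v_{i,j}\right),
\]
so that by linearity each form becomes the linear form $\phi'_\ell(\mathbf{x}, \mathbf{y}) = \phi_\ell(\mathbf{x}) + \sum_{i,j} \phi_\ell(v_{i,j})\, y_{i,j}$ in the $k' = k + m(s+1)$ variables.

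Equivalence of $\Phi$ and $\Phi'$ is then immediate: for each fixed $\mathbf{y}$ the map $\mathbf{x} \mapsto \mathbf{x} + \sum_{i,j} y_{i,j} v_{i,j}$ is a bijection of $G^k$, so $\Phi'(\mathbf{x},\mathbf{y})$ has the same distribution as $\Phi(\mathbf{x})$ conditionally on $\mathbf{y}$, hence also after averaging over $\mathbf{y}$. It remains to check that $\Sigma_{\Phi'}$ is $(s+1)$-simple. The coefficient of the new variable $y_{i,j}$ in $\phi'_\ell$ is exactly $\phi_\ell(v_{i,j})$; thus $\phi'_i$ depends on all of $y_{i,1}, \dots, y_{i,s+1}$ (since $\phi_i(v_{i,j}) \neq 0$ for every $j$), so the $(s+1)$-element vertex set $T_i := \{y_{i,1}, \dots, y_{i,s+1}\}$ lies in $\mathrm{supp}(\phi'_i)$, while for any $\ell \neq i$, letting $j$ be the class of $\phi_\ell$ in the partition chosen for $i$, we have $\phi_\ell(v_{i,j}) = 0$, so $y_{i,j} \notin \mathrm{supp}(\phi'_\ell)$ and hence $T_i \nsubseteq \mathrm{supp}(\phi'_\ell)$. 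So $T_i$ witnesses that the edge $\mathrm{supp}(\phi'_i)$ contains an $(s+1)$-set lying in no other edge; as $i$ was arbitrary, $\Sigma_{\Phi'}$ is $(s+1)$-simple, i.e. $\Phi'$ is in $s$-normal form, and it has $k' = k+m(s+1)$ variables as required.

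I do not expect a serious obstacle here; the content is confined to choosing the vectors $v_{i,j}$ by duality and to tracking supports. The only point that needs a little care is verifying that $T_i$ is contained in no edge other than $\mathrm{supp}(\phi'_i)$, which relies on the classes $C_{i,1},\dots,C_{i,s+1}$ genuinely partitioning $[m]\setminus\{i\}$ (so every other form lands in exactly one class and therefore omits the corresponding new variable); one should also note that over a finite field no coprimality or integrality issues arise, in contrast with Green and Tao's original argument over $\Z$.
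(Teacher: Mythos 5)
Your proof is correct and is essentially the same argument as in the paper: choose, for each $i\in[m]$, a CS-complexity partition of $[m]\setminus\{i\}$ and use linear-algebraic duality over $\F_p$ to find vectors $v_{i,j}$ with $\phi_i(v_{i,j})\neq 0$ and $\phi_\ell(v_{i,j})=0$ for $\ell$ in the $j$-th class, then shift the variables of $\Phi$ along the span of these vectors. The only cosmetic difference is that the paper introduces the $s+1$ new variables one form at a time and iterates over $i\in[m]$, whereas you adjoin all $m(s+1)$ fresh variables in a single step; your one-shot formulation is slightly cleaner because it avoids having to note that the normal-form property created at stage $i$ is unaffected by later stages.
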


\begin{proof}
Let us fix $i \in [m]$.
We will construct a system $\Phi': G^{k+s+1} \rightarrow G^m$ which is equivalent to $\Phi$ and which is in $s$-normal form at $\phi_i'$, in the sense that there is a set $\tau \subset [k+s+1]$ of size $s+1$ that is contained in the support of $\phi_i'$ but is not contained in the support of any other form $\phi_j' \in \Phi' \setminus \{\phi_i'\}$.
Applying this extension procedure once to each $i \in [m]$ will prove the result.

By hypothesis we can partition $[m] \setminus \{i\}$ into $s+1$ classes $A_1, \dots, A_{s+1}$ so that $\phi_i$ is \emph{not} in $\text{span}_{\F_p} \{\phi_j: j \in A_r\}$ for any $r \in [s+1]$.
Since each form $\phi_j$ can be seen as an element of $\F_p^k$, by basic linear algebra there exist vectors $f_1, \dots, f_{s+1} \in \F_p^k$ such that, for each $1 \leq r \leq s+1$, $\phi_i(f_r) \neq 0$ and yet $\phi_j (f_r) = 0$ for all $j \in A_r$.

Let $\phi_j': G^{k+s+1} \rightarrow G$ be the linear form given by
$$\phi_j'(\mathbf{x}_{[k]}, y_1, \dots, y_{s+1}) = \phi_j \bigg( \mathbf{x}_{[k]} + \sum_{r=1}^{s+1} f_r y_r \bigg),$$
where $\mathbf{x}_{[k]} \in G^k$, $y_r \in G$ for $r \in [s+1]$, and we write $f_r y_r$ for the element in $G^k$ whose $j$-th coordinate is $(f_r)_j y_r$ for each $j \in [k]$
(recall that $(f_r)_j \in \F_p$).
The coefficient of $\phi_j'$ at every variable $y_r$ is $\phi_j(f_r)$, which is zero if $j \in A_r$ and non-zero if $j=i$;
the form $\phi_i'$ is then the only one which contains all of the variables $y_1, \dots, y_{s+1}$ in its support, which is what we wanted.
\end{proof}

In light of this result, we see that any linear system $\Phi$ of CS-complexity at most $s$ in a group of the form $\F_p^n$ will be controlled by the Gowers $U^{s+1}$ norm, in the sense that the inequality
\begin{equation} \label{truecomplexity}
    \Bigg| \mathbb{E}_{\mathbf{x} \in G^k} \Bigg[ \prod_{i=1}^m A(\phi_i(\mathbf{x})) \Bigg] - \delta^m \Bigg| \leq m \|A - \delta\|_{U^{s+1}}
\end{equation}
holds for all sets $A \subseteq G$
(and in fact also the stronger inequality (\ref{control_snormal}) from Theorem \ref{control_snormalthm} holds).
Moreover, every linear system $\Phi: G^k \rightarrow G^m$ containing no two terms which are multiple of one another will clearly have CS-complexity at most $m-2$;
this justifies our assertion made in Section \ref{HigherDegUniformity} that every non-degenerate linear system is controlled by some $U^s$
norm.\footnote{This assertion is easily seen to be false when two terms of the system are linearly dependent.
In additive groups not of the form $\F_p^n$ there will be elements of multiple distinct orders, so one must also take some care with divisibility issues for a similar assertion to hold.}

But is this the best one can do?
The example of $(k+1)$-dimensional parallelepipeds and that of $(k+2)$-term arithmetic progressions show there are important cases for which the bound obtained with the CS-complexity is sharp:
both have CS-complexity $k$ and one can construct sets which are very uniform of degree $k-1$ but which contain neither of these patterns in the expected number.

However, as observed by Gowers and Wolf \cite{TrueComplexity}, the bound thus obtained is not always optimal:
for every $s \geq 2$ there are linear systems $\Phi$ of CS-complexity $s$ but for which uniformity of some degree $r < s$ is sufficient to control their count inside any set.
Since having small $U^s$ norm is a significantly stronger condition than that of having small $U^r$ norm for $r < s$, it is of interest to have the best result possible.

Gowers and Wolf then made a deep and beautiful conjecture on what the optimal degree of uniformity needed to control \emph{any} given linear system $\Phi$ is
(on groups of the form $G = \F_p^n$ as we are considering here).
They conjectured that this value,
which they called the \emph{true complexity} of the system,
is the smallest $d$ for which the $(d+1)$-th powers of the linear forms contained in $\Phi$ are linearly
independent.\footnote{More precisely, this means that the symmetric multilinear forms $\phi_1^{d+1}, \dots, \phi_m^{d+1}: (\F_p)^{d+1} \rightarrow \F_p$ given by $\phi_i^{d+1}(x_1, \dots, x_{d+1}) := \phi_i(x_1) \cdots \phi_i(x_{d+1})$ are linearly independent.}

The necessity of having \emph{at least} this degree of uniformity can be shown by generalizing our `quadratic' Example \ref{quad_ex}.
In that example we saw that the linear dependence between the squares of each term on a 4-term arithmetic progression allows for them to be more concentrated than one would expect in the quadratically structured set $\{x \in \mathbb{Z}_N:\, \|x^2/N\|_{\R/\Z} \leq \delta/2\}$, which is itself very linearly uniform.
A similar argument can be used to show concentration of linear configurations $\{\phi_1(x), \dots, \phi_m(x)\}$
whose $d$-th powers are linearly dependent
inside some sets exhibiting structure of degree $d$;
see Section 3.1 of \cite{TrueComplexity} for a more detailed discussion.

Gowers and Wolf's conjecture on the true complexity of linear systems has since been proven in most interesting cases.
On groups $\F_p^n$ of bounded characteristic $p$ this was established by Gowers and Wolf \cite{TrueComplexity, QuadraticUniformityFpn, HigherDegreeUniformityFpn} when the CS-complexity of the linear system is less than $p$,
and by Hatami, Hatami and Lovett \cite{GeneralSystemsComplexity} in the general case;
and on cyclic groups $\Z_N$ with $N$ prime this was proven by Gowers and Wolf \cite{QuadraticUniformityZn} when the CS-complexity is at most $2$, and by Green and Tao \cite{ArithmeticRegularity} for all linear systems satisfying some general condition called the flag property.\footnote{This is a technical condition regarding the linear subspaces spanned by successive powers of the linear system in consideration;
see the arXiv version \cite{ArithmeticRegularityArxiv} of Green and Tao's paper for its definition and for a discussion on why it is necessary in their proof.
We note that this condition is satisfied, for instance, by all translation-invariant linear systems such as arithmetic progressions and parallelepipeds.}

The proofs of these results are fairly difficult and proceed via techniques from higher-order Fourier analysis, being reliant in particular on the very deep
\emph{inverse theorems for the Gowers norms},
which characterize the arithmetic obstructions to uniformity of each degree.
It is natural to wonder whether they can be proven in a simpler way, by making use of hypergraph theoretical tools as we used throughout this section.
Other than giving a much simpler proof, such an argument would have the extra advantage of obtaining vastly improved \emph{quantitative} control on the count of linear configurations inside suitably uniform sets or functions.

A surprising example of Manners \cite{GoodBounds} in a way shows that such an approach cannot work.
Manners constructed, for each sufficiently large prime $N \equiv \pm 1 \mod 8$, a system $\Phi_N$ of six forms in three variables on $\Z_N$ which have the following properties:
\begin{itemize}
    \item their squares are linearly independent (so they have true complexity $1$);
    \item there exist (complex) functions $f_1, \dots, f_6$ on $\Z_N$, bounded in magnitude by $1$, such that $\|f_i\|_{U^2} \leq N^{-1/8}$, $1 \leq i \leq 6$, but
    $$\Bigg| \mathbb{E}_{x_1, x_2, x_3 \in \Z_N} \Bigg[ \prod_{i=1}^6 f_i(\phi_i(x_1, x_2, x_3)) \Bigg] \Bigg| \geq 10^{-12}.$$
\end{itemize}
In other words, even though each system $\Phi_N$ must be controlled by the $U^2$ norm, the quantitative control obtained must take into account also the specific \emph{coefficients} of the linear system in consideration, not only the powers for which the forms are linearly independent or the number of forms and variables.

Since all our hypergraph-theoretic methods invariably give bounds independent of the coefficients of the forms involved
(as long as they do not incur in divisibility issues on the group considered),
it follows that these methods cannot be used to prove that the systems $\Phi_N$ constructed really do have true complexity $1$.

One way of looking at this conclusion is to say that the true complexity of a linear system is a purely arithmetical result, which cannot be deduced from a coarser structure that can be encoded in Cayley-type hypergraphs representing such systems.
This beautifully illustrates another fundamental way in which the notions of quasirandomness for additive groups and for hypergraphs are different, despite their many connections to each other.

\section{Regularity lemmas} \label{regularity}

As was mentioned in the introduction, results on quasirandomness can be used to help analyze arbitrary objects (quasirandom or not) by making use of some decomposition theorems usually known in combinatorics as \emph{regularity lemmas}.
In this section we will give a quick exposition of such results in order to illustrate how this might be accomplished.

Before diving into the details, let us first give an intuitive idea of what these results say.
Regularity lemmas may be thought of as rough structure theorems, where we decompose an arbitrary object of a given type into either two or three terms:
\begin{itemize}
    \item The first component is a combination of a few simpler, highly structured objects which should be relatively easy to analyze directly.
    This is the `structured part' of our original object.
    \item The second component is quasirandom, in the way that we have studied throughout this paper, and its contribution to the statistics one is interested in may be estimated using the methods here presented.
    \item In order to obtain a better control on the first two components, it might be necessary to introduce a third component which is small in $L^2$ norm and may be thought of as a minor error term.
\end{itemize}

By virtue of being small, the error term will contribute little to the statistics we are interested in and may be readily discarded.\footnote{This is the main idea, but in practice one has to take some care as the error introduced by this term might swamp the other terms in some small part of the domain.
In applications it is necessary to first `localize' this error term and exploit some type of positivity condition on the terms or some equidistribution property of the substructures being counted.}
Using our methods and results on quasirandom objects, we can also disregard the contribution of the second term and see it as `random noise'.
We then reduce the analysis of our original (possibly very complex) object to its much simpler structured part.

We shall now give more details on the most important results of this type which were established in the three settings considered in this paper.

\subsection{Graph regularity} \label{GraphReg}

The first and most well-known of the regularity lemmas is the so-called \emph{Szemer\'edi regularity lemma}, which was obtained by Szemer\'edi \cite{SzemerediTheorem, RegularPartitionsGraphs} as a step in his celebrated proof of the Erd\H{o}s-Tur\'an conjecture.\footnote{This conjecture, now proven and known as Szemer\'edi's theorem, states that any set of integers with positive upper density contains arbitrarily long arithmetic progressions.}

This important result roughly asserts that the vertices of \textit{any} graph $G$ may be partitioned into a bounded number of equal-sized parts, in such a way that for almost all pairs of partition classes the bipartite graph between them is quasirandom.
Both the upper bound we get for the size of this partition and the quality of the quasirandomness behaviour of the graph between these pairs depend only on an accuracy parameter $\varepsilon > 0$ we are at liberty to choose.

This theorem has become a cornerstone of extremal combinatorics and has found a very large number of applications in both combinatorics and theoretical computer science (see the surveys \cite{SzemerediRegularityApplications, SzemerediRegularityApplications2, SzemerediRegularityQuasirandomness}).
In applications it is usually used together with a counting lemma, which is essentially the same as the one given in Section \ref{PartiteGraphs}.

As a final piece of notation before stating the regularity lemma, suppose we have a graph $G$ on vertex set $V$ and two disjoint subsets $U$, $W$ of $V$.
Recall that $G[U, W]$ denotes the bipartite graph on vertex set $U \cup W$ whose edges are the restriction of $E(G)$ to $U \times W$.
For a fixed parameter $\varepsilon > 0$, we then say that the pair $(U, W)$ is \emph{$\varepsilon$-regular for $G$} if the bipartite graph $G[U, W]$ is $\varepsilon$-quasirandom (as defined in Section \ref{PartiteGraphs}).

\begin{thm}[Szemer\'edi regularity lemma] \label{SzRL}
For every $\varepsilon > 0$ and $k_0 \geq 1$, there exists an integer $K_0$ such that the following holds.
Every graph $G = (V, E)$ admits a partition $\mathcal{P}: V = V_1 \cup V_2 \cup \cdots \cup V_k$ of its vertex set with the following properties:
\begin{itemize}
\item $k_0 \leq k \leq K_0$;
\item $\big| |V_i| - |V_j| \big| \leq 1$ for all $1 \leq i, j \leq k$;
\item all but at most $\varepsilon \binom{k}{2}$ of the pairs $(V_i, V_j)$ are $\varepsilon$-regular for $G$.
\end{itemize}
\end{thm}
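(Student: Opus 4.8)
The plan is to prove this by the standard \emph{energy increment} (or \emph{mean-square density}) argument. To each partition $\mathcal{P}: V = V_1 \cup \dots \cup V_k$ I would attach the numerical quantity
$$q(\mathcal{P}) := \sum_{i=1}^{k} \sum_{j=1}^{k} \frac{|V_i|\,|V_j|}{|V|^2}\, d(V_i, V_j)^2, \qquad d(V_i,V_j) := \frac{|E_G(V_i,V_j)|}{|V_i|\,|V_j|},$$
called its \emph{energy}. A trivial bound gives $0 \le q(\mathcal{P}) \le 1$ for every partition, and it is this boundedness that forces the iteration below to terminate.

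The two key lemmas I would establish first are: (1) refining a partition never decreases its energy — a direct consequence of convexity (Jensen/Cauchy--Schwarz), since inside each cell the density is replaced by a weighted average of the densities of its sub-cells, and $t\mapsto t^2$ is convex; and (2) a \emph{defect} version of this: if the pair $(V_i, V_j)$ fails to be $\varepsilon$-regular, then by the very definition of the cut norm as a maximum over subsets there are sets $A \subseteq V_i$, $B \subseteq V_j$ with $\big| d(A,B) - d(V_i,V_j)\big| > \varepsilon$, and partitioning $V_i$ along $\{A, V_i \setminus A\}$ and $V_j$ along $\{B, V_j \setminus B\}$ raises the contribution of the $(i,j)$ block to $q$ by at least $\varepsilon^2 |V_i||V_j|/|V|^2$. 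Combining these with the hypothesis that $\mathcal{P}$ has at least $\varepsilon\binom{k}{2}$ irregular pairs: the common refinement obtained by cutting every class simultaneously along all the witness sets of all irregular pairs is a partition $\mathcal{P}'$ with $q(\mathcal{P}') \ge q(\mathcal{P}) + c\,\varepsilon^3$ for an absolute constant $c>0$ (here the near-equality of the $|V_i|$ is used to turn $\varepsilon\binom{k}{2}$ many blocks of relative size $\approx 1/k^2$ into a total gain of order $\varepsilon^3$), while $|\mathcal{P}'| \le |\mathcal{P}|\cdot 2^{|\mathcal{P}|}$ since each class is cut by at most $|\mathcal{P}|-1$ witness sets.

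The main body of the argument is then the iteration. Starting from an equitable partition into $k_0$ parts, repeatedly apply the step above; since the energy stays in $[0,1]$ but jumps up by a fixed amount $\ge c\,\varepsilon^3$ at every non-regular stage, the process halts after at most $O(\varepsilon^{-3})$ rounds at an $\varepsilon$-regular partition, and tracking the (at most exponential) growth of the number of parts at each round yields the tower-type bound $K_0 = K_0(\varepsilon,k_0)$; refining a few more times if necessary ensures $k\ge k_0$. It remains to enforce $\big||V_i| - |V_j|\big|\le 1$, which I would do by further subdividing every class into blocks of a common size $\lfloor |V|/m\rfloor$ for a suitable bounded $m$, distributing the $O(m)$ leftover vertices among the parts; this perturbs each density by $O(m/|V|)$ together with a relabelling, and by running the equitization at every round (rather than only at the end) and choosing the accuracy parameters appropriately one absorbs the loss into a final re-parametrisation $\varepsilon\mapsto\varepsilon/2$.

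I expect the main obstacle to be the bookkeeping in this last equitability step rather than the energy increment, which is essentially forced: one must carry the constraint $\big||V_i|-|V_j|\big|\le 1$ through the refinement loop without letting the accumulated rounding error swamp the $\sim\varepsilon^3$ energy gain per round, and one must verify that subdividing into equal pieces does not undo too much of the regularity already achieved. These are routine but somewhat delicate, and it is precisely at this point that the constants, and the tower-type value of $K_0$, get pinned down.
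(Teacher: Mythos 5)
Your plan is the standard energy-increment (mean-square density) argument, which is precisely what the paper sketches in the paragraph following Theorem~\ref{SzRL}; the energy functional, the witness-set refinement, the convexity monotonicity, and the bounded-iteration conclusion all coincide, so the approach is the same.

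One step in your defect lemma needs tightening. Having recorded only $\bigl|d(A,B) - d(V_i,V_j)\bigr| > \varepsilon$, the variance lower bound you get is
$$\mathrm{Var}(Z) \geq \frac{|A|\,|B|}{|V_i|\,|V_j|}\bigl(d(A,B) - d(V_i,V_j)\bigr)^2 > \frac{|A|\,|B|}{|V_i|\,|V_j|}\,\varepsilon^2,$$
which can be far smaller than $\varepsilon^2$ when the witness sets $A, B$ are tiny, so the claimed per-block gain $\geq \varepsilon^2 |V_i||V_j|/|V|^2$ does not follow from that inequality alone. What the cut norm actually hands you is the stronger statement
$$\frac{|A|\,|B|}{|V_i|\,|V_j|}\,\bigl|d(A,B) - d(V_i,V_j)\bigr| > \varepsilon,$$
which, plugged into the same variance bound, gives $\mathrm{Var}(Z) > \varepsilon^2\, |V_i||V_j| / (|A||B|) \geq \varepsilon^2$ and hence your claimed gain. (With the classical Szemer\'edi definition of $\varepsilon$-regularity, where instead one imposes $|A| \geq \varepsilon|V_i|$ and $|B| \geq \varepsilon|V_j|$, the gain is only of order $\varepsilon^4$ per irregular block and hence $\sim \varepsilon^5$ per round — this is where the constant $\varepsilon^5/10$ quoted in the paper's sketch comes from, whereas your $\varepsilon^3$-per-round figure is the correct one for the cut-norm definition used in this survey.) With that one inequality replaced, the rest of your outline — including the honest acknowledgement that the equitability bookkeeping is where the constants get pinned down — is sound.
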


Let us now see how this result fits into our general description given in the beginning of this section.
Here the object to be decomposed is the edge set of a given graph.
The structured component in this decomposition then represents the pairs $(V_i, V_j)$ of partition classes together with the density of edges between them, and it has a very simple `cut structure' which makes it easy to analyze.
The quasirandom component represents the actual edges between those pairs which are $\varepsilon$-quasirandom, and the small error term accounts for the $\varepsilon \binom{k}{2}$ pairs of partition classes which are not necessarily $\varepsilon$-regular.

The usual proof of the regularity lemma is not hard and proceeds by an `energy increment' argument, where one starts with an arbitrary partition of $V$ into $k_0$ parts and iteratively refines it while it doesn't satisfy the third condition of the statement.
This refinement is done using sets which `witness' the irregularity of those pairs which are not $\varepsilon$-regular, and by defining a suitable notion of energy of the partition (which is the average squared edge density between pairs of its classes) one can show that it must grow significantly in this refinement.
At each refinement step the number of parts in the partition will increase at most exponentially, while its energy increases by at least $\varepsilon^5/10$ (say);
since this energy is bounded between $0$ and $1$, in at most $10/\varepsilon^5$ steps the algorithm must stop and we obtain a regular partition of $V$ with a bounded number of classes.

The main issue with Szemer\'edi's regularity lemma, which severely limits its applications, is the very poor bound it gives for the maximal size $K_0$ of the promised partition:
the proof outlined above obtains a bound on $K_0$ which is given by an exponential tower of $2$s of height proportional to $1/\varepsilon^5$.
Somewhat amazingly, such terrible bounds cannot be avoided:
Gowers \cite{GowersLowerBound} constructed graphs for which the smallest vertex partition satisfying the requirements of the theorem
(and in fact even weaker requirements)
has size a tower of $2$s of height proportional to $1/\varepsilon^{1/16}$.
See also \cite{TightLowerBound} for a tight lower bound of $O(1/\varepsilon^2)$ on the tower height in a version the regularity lemma.

We note that there are several other variants of the regularity lemma for graphs, each one tailored to be be more useful for a specific application.
The most famous of these variants are Frieze and Kannan's `weak regularity lemma' \cite{FriezeKannan},
which has weaker regularity properties but much better bounds, and
a `strong regularity lemma' by Alon, Fischer, Krivelevich and Szegedy \cite{AFKS}, which gives stronger regularity properties but has a more complicated statement and even worse bounds.
For the statements and proofs of these results (and also other variants of the graph regularity lemma) we refer the interested reader to R\"odl and Schacht's survey \cite{RegularityGraphs}.

\subsection{Hypergraph regularity} \label{HyperReg}

It is possible to generalize the regularity lemmas seen from the setting of graphs to that of hypergraphs.
As when passing from the study of graph quasirandomness to that of hypergraph quasirandomness, this will require the introduction of a somewhat heavy notation and gives rise to several different results depending on which order of quasirandomness one considers.

The first version of regularity lemmas of each order $d$ for hypergraphs was developed by Chung \cite{RegularityHypergraphsQuasirandomness}.
Roughly speaking, this result states that one can partition the underlying structure $\binom{V}{d}$ of order $d$ in the vertex set of a given hypergraph $H$ into boundedly many parts, in such a way that the restriction of the hypergraph $H$ above almost all of those parts is quasirandom of order $d$.

In order to formally state Chung's theorem, let us introduce the following piece of notation.
Let $V$ be a vertex set and let $S_1, S_2, \dots, S_{\binom{k}{d}}$ be disjoint subsets of $\binom{V}{d}$.
The \emph{cell of $\binom{V}{k}$ induced by $S_1, S_2, \dots, S_{\binom{k}{d}}$} is defined by
$$\mathcal{C}^{(k)} \big(S_1, \dots, S_{\binom{k}{d}}\big) := \bigg\{ \mathbf{x} \in \binom{V}{k}:\, \bigg| \binom{\mathbf{x}}{d} \cap S_i \bigg| = 1 \text{ for all } 1 \leq i \leq \binom{k}{d} \bigg\}.$$
This can be seen as a $k$-uniform hypergraph which has a very strong cut structure of order $d$.
Given a $k$-uniform hypergraph $H$ on $V$ and disjoint sets $S_1, S_2, \dots, S_{\binom{k}{d}} \subset \binom{V}{d}$, define $H\big[S_{1}, \dots, S_{\binom{k}{d}}\big] := H \cap \mathcal{C}^{(k)} \big(S_1, \dots, S_{\binom{k}{d}}\big)$ as the part of $H$ which `sits above' the cell of $\binom{V}{k}$ induced by these sets.

As in the case of partite hypergraphs, we think of $H[S_{1}, \dots, S_{\binom{k}{d}}]$ as being quasirandom of order $d$ if it does not correlate with cut structure of order $d$ \emph{other than} the one induced by the cell $\mathcal{C}^{(k)}(S_1, \dots, S_{\binom{k}{d}})$;
that is, if $H$ `sits quasirandomly' above this cell.
One way of measuring this notion is given as follows.

Denote the density of $H$ above the cell $\mathcal{C}^{(k)} \big(S_1, \dots, S_{\binom{k}{d}}\big)$ by
$$\delta_H \big(S_{1}, \dots, S_{\binom{k}{d}}\big) := \frac{\big| H \big[S_{1}, \dots, S_{\binom{k}{d}}\big] \big|}{\big| \mathcal{C}^{(k)} \big(S_1, \dots, S_{\binom{k}{d}}\big) \big|}.$$
We say that $\big(S_1, \dots, S_{\binom{k}{d}}\big)$ is \emph{$\varepsilon$-regular of order $d$} for $H$ if
$$\big| \delta_H \big(T_1, \dots, T_{\binom{k}{d}}\big) - \delta_H \big(S_1, \dots, S_{\binom{k}{d}}\big) \big| \leq \varepsilon$$
whenever $T_i \subseteq S_i$ are subsets with
$$\big| \mathcal{C}^{(k)} \big(T_1, \dots, T_{\binom{k}{d}}\big) \big| \geq \varepsilon \big|\mathcal{C}^{(k)} \big(S_1, \dots, S_{\binom{k}{d}}\big) \big|.$$
Up to substituting $\varepsilon$ by some small power of $\varepsilon$, this is equivalent to requiring that
$$\|(H - \delta_H(\mathcal{S})) \,\mathcal{C}^{(k)}(\mathcal{S})\|_{\square^k_d} \leq \varepsilon \|\mathcal{C}^{(k)}(\mathcal{S})\|_{\square^k_d},$$
where $\mathcal{S} := \big(S_1, \dots, S_{\binom{k}{d}}\big)$,
which is more similar to how we have measured quasirandomness throughout this paper.
Note the presence of the term $\mathcal{C}^{(k)}(\mathcal{S})$ on both sides of the inequality, which is needed since this is a notion of quasirandomness \emph{relative} to that cell.

Chung's regularity lemma \cite{RegularityHypergraphsQuasirandomness} may then be stated as follows:

\begin{thm} \label{HyperRL}
For all integers $1 \leq d < k$ and every $\varepsilon > 0$ there exists an integer $M \geq 1$ for which the following holds.
For any $k$-uniform hypergraph $H$ on vertex set $V$, $\binom{V}{d}$ can be partitioned into sets $S_1, \dots, S_m$ for some $m \leq M$ so that all but at most $\varepsilon n^k$ edges of $H$ are contained in $H\big[S_{i_1}, \dots, S_{i_{\binom{k}{d}}}\big]$ for some $i_1, \dots, i_{\binom{k}{d}}$, where $1 \leq i_1 < \dots < i_{\binom{k}{d}} \leq m$ are distinct indices and $\big(S_{i_1}, \dots, S_{i_{\binom{k}{d}}}\big)$ is $\varepsilon$-regular of order~$d$.
\end{thm}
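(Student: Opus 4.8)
The plan is to prove Chung's hypergraph regularity lemma (Theorem \ref{HyperRL}) by an \emph{index-increment} (energy-increment) argument, directly generalizing the proof of the Szemer\'edi regularity lemma sketched in Section \ref{GraphReg}. The object being refined is the partition of $\binom{V}{d}$; the quantity being incremented is a suitable notion of \emph{index} (energy) of such a partition, defined as an average of squared densities $\delta_H(\cdot)$ over the induced cells in $\binom{V}{k}$.

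\medskip
\textbf{Step 1: Define the index of a partition.} Given a partition $\mathcal{P}: \binom{V}{d} = S_1 \cup \dots \cup S_m$, set
\begin{equation*}
    \mathrm{ind}(\mathcal{P}) := \frac{1}{|V|^k} \sum_{1 \leq i_1 < \dots < i_{\binom{k}{d}} \leq m} \big| \mathcal{C}^{(k)}\big(S_{i_1}, \dots, S_{i_{\binom{k}{d}}}\big) \big| \cdot \delta_H\big(S_{i_1}, \dots, S_{i_{\binom{k}{d}}}\big)^2,
\end{equation*}
which is a weighted average of the quantities $\delta_H(\cdot)^2 \in [0,1]$ and hence lies in $[0,1]$. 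The first task is the standard convexity computation showing that \emph{refining} $\mathcal{P}$ never decreases $\mathrm{ind}(\mathcal{P})$: if each $S_i$ is subdivided, then within each cell of $\binom{V}{k}$ the density $\delta_H$ is replaced by a finer average, and by the Cauchy--Schwarz (or Jensen) inequality the average of the squares goes up. One should phrase this cleanly using the identity $\mathbb{E}[X^2] = \mathbb{E}[\mathbb{E}[X \mid \mathcal{F}]^2] + \mathbb{E}[\mathrm{Var}(X \mid \mathcal{F})]$ applied to the random variable $H(\mathbf{x})$ with $\mathbf{x}$ uniform in $\binom{V}{k}$ and $\mathcal{F}$ the $\sigma$-algebra generated by which cell $\mathbf{x}$ falls into.

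\medskip
\textbf{Step 2: Irregularity forces a large index increment.} Suppose the conclusion fails, i.e. more than $\varepsilon n^k$ edges of $H$ lie above cells $(S_{i_1}, \dots, S_{i_{\binom{k}{d}}})$ that are \emph{not} $\varepsilon$-regular of order $d$. For each such bad cell we have, by the equivalent cut-norm formulation of $\varepsilon$-regularity quoted just before Theorem \ref{HyperRL}, a witnessing collection of sets $T^{(B)} \subseteq V^B$, $B \in \binom{[k]}{d}$, along which the density of $H$ (relative to the cell) deviates by more than some power of $\varepsilon$ from $\delta_H$ of that cell. I would intersect these witnesses with the current partition: each $T^{(B)}$, viewed appropriately as a subset of $\binom{V}{d}$ after identifying coordinates, induces a subdivision of the $S_i$'s. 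The point is then a \emph{local} index-increment estimate: above a single bad cell, subdividing according to its witness increases the local contribution $|\mathcal{C}(\cdot)|\delta_H(\cdot)^2$ by at least $\mathrm{poly}(\varepsilon) \cdot |\mathcal{C}(\cdot)|$. Summing over all bad cells, whose total volume exceeds $\varepsilon n^k$, yields a global increment $\mathrm{ind}(\mathcal{P}') \geq \mathrm{ind}(\mathcal{P}) + \mathrm{poly}(\varepsilon)$ for the common refinement $\mathcal{P}'$ of $\mathcal{P}$ with all the witnesses.

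\medskip
\textbf{Step 3: Iterate and bound the number of parts.} Starting from the trivial partition and repeatedly applying Step 2, the index strictly increases by $\mathrm{poly}(\varepsilon)$ each round, so the process terminates after $O(\mathrm{poly}(1/\varepsilon))$ rounds, at which point the resulting partition is $\varepsilon$-regular of order $d$ as required. Each round multiplies the number of parts by a bounded factor (each bad cell contributes $\binom{k}{d}$ witnesses, each splitting parts into at most two, and there are at most $m^{\binom{k}{d}}$ cells), so $m$ stays bounded by an iterated-exponential function $M = M(d, k, \varepsilon)$ --- as expected, and as is necessary given Gowers' lower bounds mentioned in Section \ref{GraphReg}.

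\medskip
\textbf{Main obstacle.} The genuinely delicate point is Step 2, and specifically the bookkeeping of how a cut-norm witness for irregularity of a cell $\mathcal{C}^{(k)}(S_{i_1}, \dots, S_{i_{\binom{k}{d}}})$ --- which is a tuple of sets $T^{(B)} \subseteq V^B$ indexed by $B \in \binom{[k]}{d}$ --- gets converted into an honest refinement of the partition of $\binom{V}{d}$, together with the quantitative loss incurred when passing between the ``relative'' $\varepsilon$-regularity condition (the inequality with $\mathcal{C}^{(k)}(\mathcal{S})$ appearing on both sides) and a usable lower bound on the \emph{local} variance of $H$. This is exactly the ``substituting $\varepsilon$ by some small power of $\varepsilon$'' caveat noted in the text, and it is where the polynomial dependence of all constants on $\varepsilon$ must be tracked carefully; everything else is a routine adaptation of the graph case. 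One should also verify the elementary but slightly fiddly fact that the ordered-tuple vs.\ unordered-set conventions in the definitions of $\mathcal{C}^{(k)}$, $\delta_H$ and $\|\cdot\|_{\square^k_d}$ match up, which I would dispatch with a remark rather than a computation.
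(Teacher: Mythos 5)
The paper itself offers no detailed proof of Theorem~\ref{HyperRL}; it only records that the argument is an energy-increment argument generalizing the graph case, and your Steps~1 and~3 are the standard skeleton of exactly that argument, so the overall approach matches. However, your sketch has two concrete problems. First, you have made Step~2 harder than it needs to be. You route the irregularity witness through the cut-norm formulation and then face the (genuinely awkward) task of converting tuples of \emph{ordered} sets $T^{(B)} \subseteq V^B$, one for each $B \in \binom{[k]}{d}$, into subsets of $\binom{V}{d}$ that refine the $S_i$. But the paper's \emph{primary} definition of $\varepsilon$-regularity — the one stated before the cut-norm reformulation — already hands you witnesses $T_i \subseteq S_i$ that live directly in $\binom{V}{d}$; an irregular cell therefore directly yields the split $S_i = T_i \cup (S_i \setminus T_i)$ of its constituent parts, exactly as in the graph case, with no coordinate-identification or symmetrization needed. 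Using the density formulation throughout both avoids the ``main obstacle'' you flag and makes the local variance increment a clean Cauchy--Schwarz computation on $\delta_H$.

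Second, and more seriously, there is a genuine gap: your argument never accounts for \emph{degenerate} edges, i.e.\ $k$-sets $e$ two of whose $d$-subsets land in the same class $S_i$. Such $e$ are not contained in any cell $\mathcal{C}^{(k)}(S_{i_1}, \dots, S_{i_{\binom{k}{d}}})$ with distinct indices, hence are automatically uncovered and must be absorbed into the $\varepsilon n^k$ error — but the energy increment only forces regularity of the non-degenerate cells and does nothing to limit how many edges are degenerate. (In the extreme case $m = 1$ every edge is degenerate and the index is maximal, so there is nothing for the iteration to push against.) The standard fix, which you need to add, is to start from an initial partition guaranteed to make the number of degenerate $k$-sets at most, say, $\frac{\varepsilon}{2} n^k$ — for instance, a uniformly random partition of $\binom{V}{d}$ into $\gg \binom{k}{d}^2/\varepsilon$ parts works, or one built from a vertex partition of $V$ into sufficiently many classes so that transversal $k$-sets automatically have their $d$-subsets in distinct classes. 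One then notes that refinement can only decrease the degenerate count, so this bound persists through the iteration; together with the energy increment this gives the theorem.
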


The proof of this theorem proceeds by an energy increment argument quite similar to that of the original regularity lemma, which is essentially the case $k=2$, $d=1$ of this last result.


For many applications, however, such a result is unsuitable as the partition $\binom{V}{d} = S_1 \cup \dots \cup S_m$ obtained may be very complex and have no `regularity properties' themselves;
in fact, it is not clear even how to estimate the size of the cells of $\binom{V}{k}$ induced by such a partition.
In order to remedy this issue, it is necessary to further regularize the classes $S_i$ of this partition (which may be seen as $d$-uniform hypergraphs on $V$) in terms of a partition of $\binom{V}{d-1}$, whose classes themselves should be regularized in terms of a partition of $\binom{V}{d-2}$ and so on.

The size of each partition should stay bounded independently of the size of $H$, and the strength of quasirandomness obtained for the cells of each partition should be sufficiently strong in order to make up for the small errors and the increase in size of the partitions at lower orders.
This can all be done, but it is much harder than in the case of graphs and the details and notation get somewhat complicated.

The first to obtain such a strong regularity lemma for hypergraphs were Gowers \cite{Regularity3graphs, HypergraphRegularityGowers} and, independently, R\"odl and Skokan \cite{RegularityRodlSkokan};
we refer the reader to the original papers for the precise statement and proof of their results.
Gowers' papers \cite{Regularity3graphs, HypergraphRegularityGowers} introduced the octahedral norms we saw in Section \ref{OctNormsSection} and obtained its main properties (most notably the Gowers-Cauchy-Schwarz inequality).
The analysis of the regular hypergraph partitions obtained by R\"odl and Skokan's regularity lemma requires a counting lemma developed by Nagle, R\"odl and Schacht \cite{HypergraphCountingLemma},
which is reminiscent of (but more complicated than) our Lemma \ref{count_d-linear} which bounds the contribution of the quasirandom component when counting subhypergraphs.

\subsection{Arithmetic regularity}

Let us now turn our attention towards arithmetic regularity lemmas, where the objects we wish to regularize are (bounded) functions $f$ defined on a given additive group $G$.

In order to get a feeling for such results, we start by considering a very simple regularity lemma which is valid for any additive group $G$.

\begin{lem} \label{EasyRegLem}
Let $f: G \rightarrow [-1, 1]$ be a bounded function and let $\varepsilon > 0$.
Then we can decompose $f = f_{\str} + f_{\qsr}$ into structured and quasirandom parts, with $f_{\str}$ being a linear combination of at most $1/\varepsilon^2$ characters with coefficients bounded in magnitude by $1$, and $f_{\qsr}$ being Fourier $\varepsilon$-uniform.
\end{lem}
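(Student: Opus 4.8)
The plan is to isolate the "large" Fourier coefficients of $f$ and collect them into the structured part, leaving only small Fourier coefficients in the remainder. First I would define the set of heavy frequencies
$$\Lambda := \big\{ \gamma \in \widehat{G} : |\widehat{f}(\gamma)| > \varepsilon \big\},$$
and then set $f_{\str} := \sum_{\gamma \in \Lambda} \widehat{f}(\gamma)\, \gamma$ and $f_{\qsr} := f - f_{\str}$. By the Fourier inversion formula this is indeed a valid decomposition, and by construction $f_{\str}$ is a linear combination of the $|\Lambda|$ characters in $\Lambda$; since for a bounded function $|\widehat{f}(\gamma)| = |\mathbb{E}_{x \in G}[f(x)\overline{\gamma(x)}]| \leq \|f\|_{L^\infty} \leq 1$, the coefficients are bounded in magnitude by $1$ as required.

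The two remaining points are bounding $|\Lambda|$ and checking that $f_{\qsr}$ is Fourier $\varepsilon$-uniform. For the size bound, I would invoke Parseval's identity: $\sum_{\gamma \in \widehat{G}} |\widehat{f}(\gamma)|^2 = \|f\|_{L^2(G)}^2 \leq 1$, since $|f| \leq 1$. Each $\gamma \in \Lambda$ contributes more than $\varepsilon^2$ to this sum, so $|\Lambda| \cdot \varepsilon^2 < 1$, giving $|\Lambda| < 1/\varepsilon^2$ as claimed. For the uniformity of $f_{\qsr}$, one computes $\widehat{f_{\qsr}}(\gamma) = \widehat{f}(\gamma) - \widehat{f_{\str}}(\gamma)$; since the characters are orthonormal, $\widehat{f_{\str}}(\gamma) = \widehat{f}(\gamma)$ if $\gamma \in \Lambda$ and $0$ otherwise. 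Hence $\widehat{f_{\qsr}}(\gamma) = 0$ for $\gamma \in \Lambda$ and $\widehat{f_{\qsr}}(\gamma) = \widehat{f}(\gamma)$ for $\gamma \notin \Lambda$, so in all cases $|\widehat{f_{\qsr}}(\gamma)| \leq \varepsilon$ — in particular for all nontrivial $\gamma$ — which is exactly Fourier $\varepsilon$-uniformity.

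There is no real obstacle here; the only mild subtlety is that one must make sure the definition of $f_{\qsr}$ being Fourier $\varepsilon$-uniform only requires control at nontrivial characters, but since our bound $|\widehat{f_{\qsr}}(\gamma)| \leq \varepsilon$ holds at \emph{every} character (whether or not $\mathbf{1} \in \Lambda$), this is automatic. I would also briefly note that $f_{\qsr}$ inherits being real-valued from $f$ and $f_{\str}$ — this follows because $\overline{\widehat{f}(\gamma)} = \widehat{f}(\gamma^{-1})$ for real $f$, so $\Lambda$ is symmetric under $\gamma \mapsto \gamma^{-1}$ and the sum defining $f_{\str}$ is real — though the statement as given does not actually demand this, so it can be omitted or mentioned in passing. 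The whole argument is a one-paragraph application of Parseval plus Fourier inversion.
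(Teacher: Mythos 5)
Your proof is correct and takes essentially the same route as the paper: isolate the heavy Fourier coefficients via Parseval, bundle them into $f_{\str}$, and verify the remainder has all coefficients below $\varepsilon$. The only cosmetic differences are a strict vs.\ non-strict inequality in defining the heavy set and that you bound $\|\widehat{f}\|_{\ell^\infty}$ directly by $\|f\|_{L^\infty}$ rather than through $\|\widehat{f}\|_{\ell^2}$; both are fine, and your extra remark on $f_{\qsr}$ being real-valued is a nice (if unrequired) observation.
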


\begin{proof}
Let $R \subseteq \widehat{G}$ be the set of characters $\chi$ for which $|\widehat{f}(\chi)| \geq \varepsilon$.
By hypothesis we have that $\|\widehat{f}\|_{\ell^2} = \|f\|_{L^2} \leq 1$, which easily implies that $|R| \leq 1/\varepsilon^2$.

Now let $f_{\str} := \sum_{\chi \in R} \widehat{f}(\chi) \chi$ and $f_{\qsr} := \sum_{\chi' \in \widehat{G} \setminus R} \widehat{f}(\chi') \chi'$.
Then $f_{\str}$ is a linear combination of at most $1/\varepsilon^2$ characters with coefficients bounded in magnitude by $\|\widehat{f}\|_{\ell^{\infty}} \leq \|\widehat{f}\|_{\ell^2} \leq 1$, $f_{\qsr}$ is Fourier $\varepsilon$-uniform and $f = f_{\str} + f_{\qsr}$ by the Fourier inversion formula.
\end{proof}

Suppose then we have a bounded function $f$ which we wish to analyze, and we use the above lemma to decompose it as $f_{\str} + f_{\qsr}$ (for some suitable $\varepsilon > 0$).
By the results of Section \ref{GroupsSection}, the quasirandom term $f_{\qsr}$ will give only a negligible contribution to the count of some linear patterns such as 3-term arithmetic progressions or additive quadruples.
Moreover, since characters are highly structured functions for which one can explicitly compute multilinear averages, one might expect that dealing with the structured term $f_{\str}$ will be simple given that it is just a bounded linear combination of characters.

There are, however, two main problems with this simple lemma.
The first is that the complexity of the structured term (which can depend on up to $1/\varepsilon^2$ distinct characters) is too high for the relatively weak control we get for the quasirandom term (which is only Fourier $\varepsilon$-uniform).
The second problem is that, while the function $f$ we started with was bounded in magnitude by 1, the structured part $f_{\str}$ might take values of magnitude $O(1/\varepsilon^2)$.
In other words, the bounds we have on the original function are not preserved (even approximately) when passing to the structured component, which causes many issues when estimating the count of linear configurations.

A more involved (and more useful) regularity lemma for Fourier uniformity which has neither of these issues was first obtained by Green in \cite{AbelianRegularity}, where some applications in additive combinatorics are also shown.
While the formal statement of this result for a general additive group $G$ is rather complicated, let us quickly describe it in the particular case where $G = \F_p^n$ is a vector space over a (small) prime field $\F_p$;
in this setting the result is easy to understand, and it is also more suitable for noticing the similarities with Szemer\'edi's regularity lemma.

Suppose then we are given a subset $A \subseteq \F_p^n$ and an accuracy parameter $\varepsilon > 0$.
Green's regularity lemma states that one can decompose $\F_p^n$ into cosets of a subspace $H \leq \F_p^n$ of bounded codimension, in such a way that the restriction of $A$ to
all but an $\varepsilon$-fraction of
these cosets $g + H$ is Fourier $\varepsilon$-uniform.\footnote{More precisely, their translates $(A - g) \cap H$ are Fourier $\varepsilon$-uniform when considered as subsets of the additive group $H$.}
As in the case of the graph regularity lemma, the codimension of this subspace $H$ is bounded by a function of $\varepsilon$ which is independent of the dimension $n$ of the space, but which has a quite bad dependence on $\varepsilon$:
building upon the methods of Gowers \cite{GowersLowerBound}, Green showed that for sufficiently large $n$ there are sets $A \subset \F_2^n$ for which the largest such subspace
has codimension \emph{at least} a tower of $2$s of height logarithmic in $1/\varepsilon$.
This lower bound was later improved by Hosseini, Lovett, Moshkovitz and Shapira \cite{AbelianRegularityBound} to an exponential tower of height about $1/\varepsilon$, thus of a similar type as that needed for graph regularity.

Stronger arithmetic regularity lemmas which deal with higher-degree uniformity have since then been obtained for functions on vector spaces $\F_p^n$ over (bounded) prime fields \cite{NewBoundsI, QuadraticUniformityFpn, HigherDegreeUniformityFpn, GeneralSystemsComplexity}, and for functions on cyclic groups $\Z_N$ \cite{NewBoundsII, QuadraticUniformityZn, ArithmeticRegularity}.
These results are rather deep, and their proofs rely heavily on the inverse theorem for the uniformity norms,
whose general form was obtained by Bergelson, Tao and Ziegler \cite{InverseGowersAction, InverseConjectureFiniteFields, InverseConjectureLowCharacteristic} for vector spaces $\F_p^n$
and by Green, Tao and Ziegler \cite{InverseTheoremGowers} for cyclic groups $\Z_N$.

In the case of finite vector spaces $\F_p^n$ the regularity lemma is in some ways quite similar to our simple Lemma \ref{EasyRegLem}.
Its main idea is that one can decompose an arbitrary function $f: \F_p^n \rightarrow [-1, 1]$ into a linear combination
$f_{\str}(x) = \sum_{j=1}^{C(\varepsilon)} \lambda_j e^{2\pi i P_j(x)}$
with boundedly many terms, with coefficients $|\lambda_j| \leq 1$ and each $P_j: \F_p^n \rightarrow \R/\Z$ being a
polynomial\footnote{When the degree $d$ is higher than the characteristic $p$ of the field, it is necessary to also allow for \emph{non-classical polynomials} as defined in \cite{InverseConjectureLowCharacteristic}.}
of degree at most $d$, plus a component $f_{\qsr}$ which is $\varepsilon$-uniform of degree $d$.
Moreover, the linear combination $f_{\str}$ can
be made to have same bounds as the original function $f$, and the polynomials $P_j$ can be required to have `high rank' so that
their linear combinations are all highly uniformly distributed on $\F_p^n$.

In many applications it is important to have a stronger control on the quasirandomness of $f_{\qsr}$ relative to the number $C$ of terms $\lambda_j e^{2\pi i P_j(x)}$ which form the structured component $f_{\str}$.
This can be achieved by allowing a third `error term' $f_{\err}$ into this decomposition, which is small in the sense that $\|f_{\err}\|_{L^2} \leq \varepsilon$.
Then, for any fixed function $\eta: \mathbb{N} \rightarrow (0, 1]$ representing the relative control on $f_{\qsr}$ over $f_{\str}$ we wish to have,
we can require that $\|f_{\qsr}\|_{U^{d+1}} \leq \eta(C)$;
this is done for instance in \cite{HigherDegreeUniformityFpn, GeneralSystemsComplexity}.
For much more information about arithmetic regularity lemmas on $\F_p^n$ and several applications in combinatorics and computer science, we refer the reader to Hatami, Hatami and Lovett's book \cite{HigherFourierCS}.

A similar decomposition result which holds in the more technically challenging case of functions on the cyclic group $\Z_N$
(and even on the discrete interval $[N] \subset \Z$)
was obtained by Green and Tao \cite{ArithmeticRegularity},
who derived from it several interesting theorems in additive combinatorics.
Here we will only be able to give a very high-level informal overview of this important result, as the details are somewhat complicated and it would take us too far afield to even properly define the notions needed.

The main difficulty in understanding Green and Tao's regularity lemma is to understand what notion of structure is captured by the structured term.
Contrary to the case of vector spaces $\F_p^n$, in cyclic groups $\Z_N$ a function having large correlation with phase polynomials $e^{2\pi i P(x)}$ no longer constitutes the only source of obstruction to having small uniformity norms;
one must consider also a much greater class of functions called \emph{nilsequences}.

Nilsequences are generalizations of almost periodic sequences
first introduced by Bergelson, Host and Kra \cite{Nilsequences} in the context of studying multiple recurrence in ergodic theory.
Their formal definition will not be recalled here, but we note that
they are related to the dynamics of orbits on objects known as \emph{nilmanifolds},\footnote{A $k$-step nilmanifold is a compact symmetric space $G/\Gamma$, where $G$ is a $k$-step nilpotent Lie group (i.e. all $(k+1)$-fold commutators of $G$ are trivial) and $\Gamma$ is a discrete subgroup.}
and have a lot of structure which permits them to be analyzed.
The class of $k$-step nilsequences contains all polynomial phases $n \mapsto e^{2\pi i P(n)}$ with $P$ being a polynomial of degree at most $k$, and they \emph{characterize} functions with non-negligible $U^{k+1}(\Z_N)$ norm:
if $N$ is prime, then a bounded function $f: \Z_N \rightarrow [-1, 1]$ has non-negligible $U^{k+1}$ norm if and only if it correlates with a $k$-step nilsequence of bounded complexity
(see \cite{InverseTheoremGowers} for a precise statement).

Green and Tao's regularity lemma for uniformity of degree $d \geq 1$ then permits one to decomposes an arbitrary function $f: [N] \rightarrow [-1, 1]$ into a sum of three terms $f_{\str} + f_{\qsr} + f_{\err}$.
The error term $f_{\err}$ is small in the sense that $\|f_{\err}\|_{L^2} \leq \varepsilon$ (for some previously chosen quantity $\varepsilon > 0$), and one checks that its contribution to multilinear averages involving the function $f$ is negligible if the parameter $\varepsilon$ is small enough.

The quasirandom term $f_{\qsr}$ is extremely uniform of degree $d$, in the following sense.
Since the discrete interval $[N]$ is not a group,
we must first embed $f_{\qsr}$ into a cyclic group $G = \Z_{\tilde{N}}$ for some integer $\Tilde{N} \geq 2^{d+1} N$
(this restriction is made to prevent `wrapping around' issues):
define $\Tilde{f}_{\qsr}: G \rightarrow \R$ by
$\Tilde{f}_{\qsr}(x) = f_{\qsr}(x)$ for $x = 1, \dots, N$ and $\Tilde{f}_{\qsr}(x) = 0$ otherwise.
We then set
$\|f_{\qsr}\|_{U^{d+1}[N]} := \|\Tilde{f}_{\qsr}\|_{U^{d+1}(G)}/\|\mathbbm{1}_{[N]}\|_{U^{d+1}(G)}$,
where $\mathbbm{1}_{[N]}$ is the indicator function of $[N]$ in $G$;
this definition is easily checked to be independent of the choice of $\Tilde{N}$.
Our quasirandomness condition on $f_{\qsr}$ is that its norm $\|f_{\qsr}\|_{U^{d+1}[N]}$ is smaller than any (previously defined) quantity depending on the parameter $\varepsilon$ and on the complexity of the structured term $f_{\str}$;
its contribution can then be easily dealt with using some version of our counting lemma from Section \ref{LinearConfigSection}.

Finally, the structured term $f_{\str}$ is (a more general variant of) a $d$-step nilsequence having bounded complexity,
which can be analyzed through the quantitative equidistribution theory of nilmanifolds also developed by Green and Tao on an earlier paper \cite{PolynomialOrbitsNilmanifolds}.
To complement the regularity lemma, they also provide in \cite{ArithmeticRegularity, ArithmeticRegularityArxiv} an arithmetic counting lemma which gives an integral formula for counting linear configurations weighted by such generalized nilsequences.

\section*{Acknowledgements}

This paper grew out of a mini-course given by the author at the University of Cologne, and he would like to thank his advisor Frank Vallentin for the opportunity of giving this mini-course, and for helpful comments.
The author is also indebted to Victor Souza for corrections and many helpful suggestions on an earlier version of this paper.

This work is supported by the European Union’s EU Framework Programme for Research and Innovation Horizon 2020 under the Marie Sk\l{}odowska-Curie Actions Grant Agreement No 764759 (MINOA).

\appendix

\section{Basic probabilistic notions and results} \label{App}

In this appendix we provide the definitions and results in finite probability theory which are most useful for our purposes.

Let $(\Omega, 2^{\Omega}, \mathbb{P})$ be a finite probability space;
thus $\Omega$ is a finite set and $\mathbb{P}: \Omega \rightarrow [0, 1]$ is a nonnegative function satisfying $\sum_{\omega \in \Omega} \mathbb{P}(\omega) = 1$.
A \emph{random event} is simply a subset $E \subseteq \Omega$, and its probability is denoted $\mathbb{P}(E) := \sum_{\omega \in E} \mathbb{P}(\omega)$.
Two events $E_1, E_2$ are \emph{independent} if
$\mathbb{P}(E_1 \cap E_2) = \mathbb{P}(E_1) \mathbb{P}(E_2)$.
A family of events $E_1, \dots, E_n$ is \emph{jointly independent} if
$\mathbb{P}\left( \bigcap_{j=1}^k E_{i_j} \right) = \prod_{j=1}^k \mathbb{P}(E_{i_j})$
for all $k \leq n$ and all $1 \leq i_1 < \dots < i_k \leq n$.

A simple but very useful inequality is the \emph{union bound}:
for every collection of random events $E_1, \dots, E_n$ we have
$\mathbb{P}\left( \bigcup_{i=1}^n E_i \right) \leq \sum_{i=1}^n \mathbb{P}(E_i)$;
the proof is immediate from the definitions.

A (real-valued) \emph{random variable} is a function $X: \Omega \rightarrow \R$;
its \emph{mean} or \emph{expectation} is given by
$$\mathbb{E}[X] := \sum_{\omega \in \Omega} X(\omega) \mathbb{P}(\omega) = \sum_{x} x \,\mathbb{P}(X = x).$$
From this formula it is clear that $\min X \leq \mathbb{E}[X] \leq \max X$.
Another immediate consequence is the \emph{linearity of expectation}:
for all random variables $X_1, \dots, X_n$ and all constants $c_1, \dots, c_n$ we have
$$\mathbb{E}[c_1 X_1 + \dots + c_n X_n] = c_1 \mathbb{E}[X_1] + \dots + c_n \mathbb{E}[X_n].$$
Given an event $E$ we denote by $\mathbf{1}_E$
its \emph{indicator random variable}, which equals $1$ if $E$ occurs and $0$ otherwise;
note that $\mathbb{E}\left[\mathbf{1}_{E}\right] = \mathbb{P}(E)$.

We define the \emph{variance} of a random variable $X$ by
$$\var(X) := \mathbb{E}\big[|X - \mathbb{E}[X]|^2\big] = \mathbb{E}[X^2] - \mathbb{E}[X]^2,$$
where this last equality follows easily from linearity of expectation.
If the random variables $X_1, \dots, X_n$ are pairwise independent then
$\var(X_1 + \dots + X_n) = \var(X_1) + \dots + \var(X_n)$,
an identity which is \emph{not} valid in general.

Many times we will have to deal with random events $E_n$ which depend on some asymptotic parameter $n \in \mathbb{N}$, for instance when considering random graphs on $n$ vertices.
In such cases, we say that the event $E_n$ holds \emph{with high probability} (sometimes written `w.h.p.') if the probability that it holds tends to $1$ as the parameter gets large.

If $X$ is a nonnegative random variable, then for all $\lambda > 0$ we have $X \geq \lambda \mathbf{1}_{\{X \geq \lambda\}}$.
Taking expectation on both sides and dividing by $\lambda$ we obtain \emph{Markov's inequality}:
$\mathbb{P}(X \geq \lambda) \leq \mathbb{E}[X]/\lambda$.
Applying this inequality to the nonnegative random variable $|X - \mathbb{E}[X]|^2$ we obtain:

\begin{lem}[Chebyshev's inequality]
Let $X$ be a random variable of mean $\mu = \mathbb{E}[X]$ and variance $\var(X)$.
For any $\lambda > 0$ we have
$$\mathbb{P}\left( |X - \mu| \geq \lambda \right) \leq \frac{\var(X)}{\lambda^2}.$$
\end{lem}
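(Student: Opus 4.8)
The plan is to deduce Chebyshev's inequality directly from Markov's inequality, which has just been established in the text. The key observation is that the event $\{|X - \mu| \geq \lambda\}$ is identical to the event $\{|X - \mu|^2 \geq \lambda^2\}$, since both sides of the inequality $|X-\mu| \geq \lambda$ are nonnegative and squaring is monotone on $[0,\infty)$.

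First I would set $Y := |X - \mu|^2$, which is a nonnegative random variable, and note that by definition $\mathbb{E}[Y] = \var(X)$. Then, since $\lambda > 0$ implies $\lambda^2 > 0$, I would apply Markov's inequality to $Y$ with threshold $\lambda^2$ to get
$$\mathbb{P}(Y \geq \lambda^2) \leq \frac{\mathbb{E}[Y]}{\lambda^2} = \frac{\var(X)}{\lambda^2}.$$
Finally I would rewrite the left-hand side using the equivalence of events $\{Y \geq \lambda^2\} = \{|X-\mu| \geq \lambda\}$ to conclude
$$\mathbb{P}(|X - \mu| \geq \lambda) \leq \frac{\var(X)}{\lambda^2},$$
as claimed.

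There is no real obstacle here: the entire argument is a single substitution into an inequality already proved in the preceding paragraph, together with the trivial observation that $t \mapsto t^2$ is strictly increasing on the nonnegative reals. The only point worth stating carefully is that equivalence of events, so that the probability bound for $|X-\mu|^2$ transfers to one for $|X-\mu|$.
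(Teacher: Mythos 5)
Your proof is correct and matches the paper's own argument exactly: the paper also obtains Chebyshev's inequality by applying Markov's inequality to the nonnegative random variable $|X - \mathbb{E}[X]|^2$, and you have simply spelled out the routine details (the threshold $\lambda^2$ and the equivalence of events) that the paper leaves implicit.
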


Recall that a function $f: [a, b] \rightarrow \R$ is \emph{convex} if
$$f(tx + (1-t)y) \leq t f(x) + (1-t) f(y)$$
for all $x, y \in [a, b]$ and all $0 \leq t \leq 1$.
By a simple induction argument (which will not be given here) one obtains the fundamental \emph{Jensen's inequality}:
if $f$ is a convex function on $[a, b]$ and $X$ is a random variable taking values on this interval, then
$f\left(\mathbb{E}[X]\right) \leq \mathbb{E}\left[f(X)\right]$.
This basic fact is sometimes referred to by writing only `by convexity'.

We will also have cause to use the following more advanced (but standard) result, which is an instance of a class of large deviation inequalities usually known as `Chernoff bounds'.
The result in this form was taken from Tao and Vu's book \cite{TaoVu}, and we refer the reader to this book for its proof.

\begin{lem}[Chernoff's inequality] \label{Chernoff}
Suppose $X_1, X_2, \dots, X_n$ are jointly independent real random variables satisfying $|X_i - \mathbb{E}[X_i]| \leq 1$ for all $i \in [n]$.
Set $X := X_1 + \dots + X_n$ and let $\sigma := \sqrt{\var(X)}$ be the standard deviation of $X$.
Then for any $\lambda > 0$ we have
$$\mathbb{P}\big(| X - \mathbb{E}[X]| \geq \lambda \sigma \big) \leq 2 \max \big\{e^{-\lambda^2/4},\, e^{-\lambda \sigma/2}\big\}.$$
\end{lem}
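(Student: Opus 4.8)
The plan is to run the standard exponential moment (Bernstein–Chernoff) argument and then optimize the free parameter, splitting into two regimes according to the size of $\lambda$ relative to $\sigma$. First I would reduce to the centered case: replacing each $X_i$ by $X_i - \mathbb{E}[X_i]$ changes nothing in the statement, so assume $\mathbb{E}[X_i] = 0$, hence $\mathbb{E}[X] = 0$ and $|X_i| \leq 1$ for all $i$. For $t > 0$, Markov's inequality applied to the nonnegative variable $e^{tX}$ gives
$$\mathbb{P}(X \geq \lambda \sigma) \,=\, \mathbb{P}\big(e^{tX} \geq e^{t\lambda\sigma}\big) \,\leq\, e^{-t\lambda\sigma}\,\mathbb{E}\big[e^{tX}\big] \,=\, e^{-t\lambda\sigma} \prod_{i=1}^n \mathbb{E}\big[e^{tX_i}\big],$$
where the last equality uses joint independence of the $X_i$.

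Next I would bound each factor $\mathbb{E}[e^{tX_i}]$. Restricting to $0 < t \leq 1$ ensures $|tX_i| \leq 1$, and one has the elementary inequality $e^{u} \leq 1 + u + u^2$ for all $u \in [-1, 1]$ (proved by checking the sign of the derivative of $g(u) = 1 + u + u^2 - e^u$ on $[-1,0]$ and $[0,1]$ separately). Applying this with $u = tX_i$, taking expectations and using $\mathbb{E}[X_i] = 0$ yields $\mathbb{E}[e^{tX_i}] \leq 1 + t^2 \mathbb{E}[X_i^2] = 1 + t^2 \var(X_i) \leq e^{t^2 \var(X_i)}$. Multiplying over $i$ and using $\sum_i \var(X_i) = \var(X) = \sigma^2$ (again by independence), we get $\mathbb{P}(X \geq \lambda\sigma) \leq \exp(t^2\sigma^2 - t\lambda\sigma)$ for every $0 < t \leq 1$. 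The exponent $h(t) = t^2\sigma^2 - t\lambda\sigma$ is minimized on $\R$ at $t^* = \lambda/(2\sigma)$ with $h(t^*) = -\lambda^2/4$. If $\lambda \leq 2\sigma$ then $t^* \leq 1$ is admissible, giving $\mathbb{P}(X \geq \lambda\sigma) \leq e^{-\lambda^2/4}$. If $\lambda > 2\sigma$ then $h$ is decreasing on $(0,1]$, so the best admissible choice is $t = 1$, giving exponent $\sigma^2 - \lambda\sigma < \lambda\sigma/2 - \lambda\sigma = -\lambda\sigma/2$, hence $\mathbb{P}(X \geq \lambda\sigma) \leq e^{-\lambda\sigma/2}$. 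In either case $\mathbb{P}(X \geq \lambda\sigma) \leq \max\{e^{-\lambda^2/4},\, e^{-\lambda\sigma/2}\}$.

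Finally, applying the same bound to the variables $-X_i$ (which also satisfy the hypotheses and whose sum $-X$ has the same variance) gives $\mathbb{P}(X \leq -\lambda\sigma) \leq \max\{e^{-\lambda^2/4}, e^{-\lambda\sigma/2}\}$, and the union bound combines the two tails into $\mathbb{P}(|X| \geq \lambda\sigma) \leq 2\max\{e^{-\lambda^2/4}, e^{-\lambda\sigma/2}\}$, as claimed. The argument is essentially routine; the only points requiring care are verifying the elementary bound $e^u \leq 1 + u + u^2$ on the full interval $[-1,1]$ (so that it applies to both signs of $X_i$) and correctly tracking the constraint $t \leq 1$ when optimizing, which is exactly what forces the two-regime form of the final estimate — this case split is the main (minor) obstacle.
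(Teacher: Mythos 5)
Your argument is correct, and it is the standard exponential-moment (Chernoff) proof. The paper does not include a proof of this lemma at all — it simply cites Tao and Vu's book and refers the reader there — but the argument in that reference is essentially identical to yours: center the variables, bound $\mathbb{E}[e^{tX_i}]$ via $e^u \leq 1+u+u^2$ on $[-1,1]$ and $1+x\leq e^x$, optimize the exponent $t^2\sigma^2 - t\lambda\sigma$ over $t\in(0,1]$ with a case split at $\lambda = 2\sigma$, and symmetrize with the union bound. The one place your sketch is slightly imprecise is the verification of $e^u \leq 1+u+u^2$ on $[-1,1]$: since $g''(u)=2-e^u$ changes sign at $u=\ln 2$, one needs a small extra step (e.g.\ checking $g'(1)=3-e>0$) to conclude $g'$ stays positive on $(0,1]$; the claim is still true, but "checking the sign of the derivative on $[-1,0]$ and $[0,1]$" underplays this. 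Everything else, including the observation that $h(t)=t^2\sigma^2-t\lambda\sigma$ is decreasing on $(0,1]$ exactly when $\lambda>2\sigma$ and the inequality $\sigma^2-\lambda\sigma < -\lambda\sigma/2$ in that regime, checks out.
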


\bibliography{main}
\bibliographystyle{siam}

\Addresses

\end{document}